\theoremstyle{plain}
\newtheorem{proposition}{Proposition}[section]
\newtheorem{theorem}[proposition]{Theorem}
\newtheorem{lemma}[proposition]{Lemma}
\newtheorem{corollary}[proposition]{Corollary}
\theoremstyle{definition}
\newtheorem{example}[proposition]{Example}
\newtheorem{definition}[proposition]{Definition}
\newtheorem{observation}[proposition]{Observation}
\theoremstyle{remark}
\newtheorem{remark}[proposition]{Remark}
\DeclareMathOperator{\Aut}{Aut}
\DeclareMathOperator{\diam}{diam}
\DeclareMathOperator{\SL}{SL}
\DeclareMathOperator{\GL}{GL}
\DeclareMathOperator{\PGL}{PGL}
\DeclareMathOperator{\End}{End}
\DeclareMathOperator{\Spanset}{Span} 
\DeclareMathOperator{\Gr}{Gr} 
\DeclareMathOperator{\id}{id} 
\DeclareMathOperator{\Haus}{Haus} 
\DeclareMathOperator{\CAT}{CAT} 
\DeclareMathOperator{\Isom}{Isom}
\DeclareMathOperator{\Stab}{Stab}
\DeclareMathOperator{\relint}{relint}
\DeclareMathOperator{\Span}{Span}
\DeclareMathOperator{\Min}{Min}
\DeclareMathOperator{\ConvHull}{ConvHull}
\DeclareMathOperator{\dist}{d}
\DeclareMathOperator{\partiali}{\partial_i}
\DeclareMathOperator{\partialn}{\partial_n}
\DeclareMathOperator{\ev}{\lambda}
\DeclareMathOperator{\hil}{d_{\Omega}}
\DeclareMathOperator{\Cc}{\mathcal{C}}
\DeclareMathOperator{\Ec}{\mathcal{E}}
\DeclareMathOperator{\Fc}{\mathcal{F}}
\DeclareMathOperator{\Gc}{\mathcal{G}}
\DeclareMathOperator{\Lc}{\mathcal{L}}
\DeclareMathOperator{\Nc}{\mathcal{N}}
\DeclareMathOperator{\Sc}{\mathcal{S}}
\DeclareMathOperator{\Tc}{\mathcal{T}}
\DeclareMathOperator{\Ab}{\mathbb{A}}
\DeclareMathOperator{\Cb}{\mathbb{C}}
\DeclareMathOperator{\Db}{\mathbb{D}}
\DeclareMathOperator{\Hb}{\mathbb{H}}
\DeclareMathOperator{\Nb}{\mathbb{N}}
\DeclareMathOperator{\Pb}{\mathbb{P}}
\DeclareMathOperator{\Rb}{\mathbb{R}}
\DeclareMathOperator{\Sb}{\mathbb{S}}
\DeclareMathOperator{\Zb}{\mathbb{Z}}
\newcommand{\abs}[1]{\left|#1\right|}
\newcommand{\wt}[1]{\widetilde{#1}}
\newcommand{\wh}[1]{\widehat{#1}}
\newcommand{\ip}[1]{\left\langle #1\right\rangle}
\begin{document}

\title{Convex co-compact representations of 3-manifold groups}
\author{Mitul Islam}
\address{\emph{Current Address:} Max Planck Institute for Mathematics in the Sciences, 04103 Leipzig}
\address{Department of Mathematics, University of Michigan, Ann Arbor, MI 48109.}
\email{mitul.islam@mis.mpg.de}
\author{Andrew Zimmer}\address{Department of Mathematics, University of Wisconsin-Madison, Madison, WI 53706.}
\email{amzimmer2@wisc.edu}
\date{\today}
\keywords{}
\subjclass[2010]{}

\begin{abstract} A representation of a finitely generated group into the projective general  linear group is called convex co-compact if it has finite kernel and its image acts convex co-compactly on a properly convex domain in real projective space. We prove that the fundamental group of a closed irreducible orientable 3-manifold can admit such a representation only when the manifold is geometric (with Euclidean, Hyperbolic, or Euclidean $\times$ Hyperbolic geometry) or when every component in the geometric decomposition is hyperbolic.  In each case, we describe the structure of such examples. 
\end{abstract}

\maketitle

\tableofcontents

\section{Introduction}

This paper is motivated by the theory of Anosov representations. These are representations of word hyperbolic groups into semisimple Lie groups with discrete image and very nice geometric properties. We are particularly interested in understanding how the definition of Anosov representations can be relaxed to admit representations of non-word hyperbolic groups while still preserving nice geometric properties. 

Recently, Kapovich--Leeb~\cite{KL2018} have introduced various notions of relative Anosov representations of relatively hyperbolic groups. Later, Zhu~\cite{Zhu2021} proposed another notion using the framework in~\cite{BPS2019}. The basic example of a group admitting these types of representations is a non-uniform lattice in $\Isom(\Hb^d)$.  Another class of representations into $\PGL_d(\Rb)$, called convex co-compact, was introduced by Danciger--Gu{\'e}ritaud--Kassel~\cite{DGK2017}. A convex co-compact representation of a word hyperbolic group is an Anosov representation, but non-word hyperbolic groups can also admit convex co-compact representations (see for instance \cite[Section 4]{B2006}, \cite[Section 4.4]{LM2022}, and \cite{BDL2018}).

In earlier work~\cite{IZ2019b}, we studied relatively hyperbolic groups admitting convex co-compact representations and showed that their images have similar structure to groups acting properly discontinuously and co-compactly on a $\CAT(0)$ space with isolated flats. In particular, convex co-compact representations of relatively hyperbolic groups are very different from relatively Anosov representations (in the sense of Kapovich--Leeb or Zhu).

In this paper we consider convex co-compact representations of 3-manifold groups. We will show that the fundamental group of a closed irreducible orientable 3-manifold can admit such a representation only when the manifold is either geometric (with $\Rb^3$, $\Hb^3$, or $\Rb \times \Hb^2$ geometry) or when every component in the geometric decomposition is hyperbolic. Hence in the non-geometric case, the fundamental group is relatively hyperbolic with respect to a collection of rank two Abelian subgroups. In this case we will show that the convex co-compact representation, like an Anosov representation, induces an equivariant embedding of the boundary of the group (in this case the Bowditch boundary). However, unlike an Anosov representation, the image is not into a flag manifold but instead into a naturally defined quotient. 

We now state the results of this paper more precisely. We begin with the definition of a convex co-compact representation.

A \emph{properly convex domain} is an open subset $\Omega$ of $\Pb(\Rb^d)$ which is a bounded convex subset of some affine chart.  The \emph{automorphism group} of a properly convex domain $\Omega\subset\Pb(\Rb^d)$ is defined to be
\begin{align*}
\Aut(\Omega) : = \{ g \in \PGL_d(\Rb) : g \Omega = \Omega\}.
\end{align*}
One can associate a natural (possibly empty) convex subset to any subgroup of $\Aut(\Omega)$. In particular, for a subgroup $\Gamma \leq \Aut(\Omega)$, the  \emph{full orbital limit set of $\Gamma$ in $\Omega$}, denoted by $\Lc_{\Omega}(\Gamma)$, is the set of all $x \in \partial \Omega$ where there exist $p \in \Omega$ and a sequence $(g_n)_{n \geq 1}$ in  $\Gamma$ such that $g_n(p) \rightarrow x$. Then let $\Cc_\Omega(\Gamma)$ denote the convex hull of $\Lc_\Omega(\Gamma)$ in $\Omega$. The \emph{ideal boundary} of $\Cc_\Omega(\Gamma)$ is the set $\partiali \Cc_\Omega(\Gamma) = \overline{\Cc_\Omega(\Gamma)} \cap \partial \Omega$.

\begin{definition}[{Danciger--Gu{\'e}ritaud--Kassel~\cite[Definition 1.10]{DGK2017}}] Suppose $\Omega \subset \Pb(\Rb^d)$ is a properly convex domain. An infinite discrete subgroup $\Gamma \leq \Aut(\Omega)$ is called \emph{convex co-compact} if $\Cc_\Omega(\Gamma)$ is non-empty and $\Gamma$ acts co-compactly on $\Cc_\Omega(\Gamma)$. 
\end{definition}

A convex co-compact representation is then defined as follows.

\begin{definition} Suppose $\Gamma$ is a finitely generated infinite group. A representation $\rho : \Gamma \rightarrow \PGL_d(\Rb)$ is \emph{convex co-compact} if $\ker \rho$ is finite and there exists a properly convex domain $\Omega \subset \Pb(\Rb^d)$ where $\rho(\Gamma) \leq \Aut(\Omega)$ is convex co-compact. 
\end{definition} 

When $\Gamma$ is word hyperbolic there is a close connection between this class of discrete groups in $\PGL_d(\Rb)$ and Anosov representations, see~\cite{DGK2017} for details and~\cite{DGK2018,Z2017} for related results.

Our main result describes the geometric decomposition of 3-manifolds whose fundamental groups can admit convex co-compact representations. Recall, that the geometric decomposition theorem says that given a closed irreducible orientable 3-manifold, one can remove a (possibly empty) collection of embedded tori and Klein bottles (that are unique up to isotopy), such that each of the resulting connected components supports one of the eight Thurston geometries, for more details see \Cref{sec:three_manifold_theory}. In terms of this decomposition, we will prove the following.

\begin{theorem}\label{thm:main}(see Section~\ref{sec:pf_main_thm} below) Suppose $M$ is a closed irreducible orientable 3-manifold. If $\rho : \pi_1(M) \rightarrow \PGL_d(\Rb)$ is a convex co-compact representation, then either
\begin{enumerate}
\item $M$ is geometric with geometry $\Rb^3$, $\Rb \times \Hb^2$, or $\Hb^3$,
\item $M$ is non-geometric and every component in the geometric decomposition is hyperbolic (i.e. has $\Hb^3$ geometry).
\end{enumerate}
\end{theorem}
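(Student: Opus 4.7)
The plan is to combine the structural theory for convex co-compact subgroups of $\Aut(\Omega)$ developed in Islam-Zimmer 2019b with the geometric and JSJ decompositions of closed irreducible orientable 3-manifolds, performing a case analysis over Thurston's eight geometries and over the pieces of a non-trivial JSJ decomposition.

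As a preliminary reduction, since $\ker\rho$ is finite and 3-manifold groups are residually finite (by Hempel's theorem, via geometrization), I pass to a finite cover of $M$ and assume $\rho$ is injective. Writing $\Lambda := \rho(\pi_1(M)) \leq \Aut(\Omega)$, the main structural inputs on $\Lambda$ I would use, drawing on Islam-Zimmer 2019b and Benoist, are:
\begin{itemize}
\item[(a)] every infinite discrete virtually solvable subgroup of $\Aut(\Omega)$ is virtually abelian; and
\item[(b)] in the $\CAT(0)$-with-isolated-flats style picture of a convex co-compact group, the centralizer in $\Lambda$ of any infinite-order element virtually splits as a direct product $\Zb^k \times \Lambda_0$, with the $\Zb^k$ factor stabilizing a flat and $\Lambda_0$ acting convex co-compactly on a lower-dimensional properly convex domain.
\end{itemize}

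For $M$ geometric, I walk through Thurston's eight geometries. The $S^3$ case is vacuous ($\pi_1(M)$ is finite), and no closed irreducible orientable $M$ has $S^2 \times \Rb$ geometry. The Nil and Sol cases are excluded by (a), since their fundamental groups are virtually solvable but not virtually abelian. For $\widetilde{\SL}_2(\Rb)$, $\pi_1(M)$ is a \emph{non-trivial} central extension $1 \to Z \to \pi_1(M) \to Q \to 1$ with $Z \cong \Zb$ and $Q$ a non-elementary hyperbolic $2$-orbifold group; by centrality, $\Lambda$ equals the centralizer of $\rho(z)$ for any generator $z$ of $Z$, but (b) would force $\pi_1(M)$ to virtually split as a direct product $\Zb \times H$, contradicting non-triviality of the extension class in $H^2(Q;\Zb)$. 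The remaining cases $\Hb^3$, $\Rb^3$, and $\Rb \times \Hb^2$ are consistent with (a) and (b) and are realized by explicit convex co-compact constructions.

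For $M$ non-geometric, each JSJ piece is either hyperbolic or Seifert fibered with hyperbolic base, and I must exclude the Seifert possibility. Let $N$ be a Seifert piece with circle fiber generator $z$. Incompressibility of the JSJ tori embeds $\pi_1(N)$ into $C_{\pi_1(M)}(z)$; moreover, for any JSJ torus $T \subset \partial N$, the rank-$2$ abelian subgroup $\pi_1(T) \subset \pi_1(M)$ contains $z$. A Bass-Serre argument, using that in the adjacent hyperbolic JSJ piece the peripheral $\pi_1(T)$ is malnormal and self-centralizing, shows $C_{\pi_1(M)}(z) = \pi_1(N)$. Combined with (b), $\pi_1(N)$ would virtually split as $\Zb^k \times \Lambda_0$ with $\Lambda_0$ convex co-compact on a lower-dimensional domain and $k \geq 2$ (the presence of the rank-$2$ peripheral); then the non-elementary surface-group quotient $\pi_1(N)/\langle z\rangle$ forces $\Lambda_0$ to contain a non-elementary hyperbolic surface group whose action is incompatible with the gluing constraints coming from the adjacent hyperbolic piece, producing the contradiction. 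The hardest step is this last one: abstractly $\pi_1(N)$ looks identical to the \emph{allowed} $\Rb \times \Hb^2$ fundamental group, so the obstruction must come from the global geometry of $\Lambda$ and how the flat stabilized by $\rho(z)$ sits inside $\Omega$, rather than from any intrinsic algebraic obstruction to $\pi_1(N)$, and extracting this global constraint is where the full strength of the Islam-Zimmer 2019b framework is needed.
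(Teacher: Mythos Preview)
Your geometric case is broadly on track, though your property (b) overstates what is directly available: the centralizer theorem in this paper (Theorem~\ref{thm:center}) gives that $C_\Lambda(A)$ acts cocompactly on a lower-dimensional properly convex domain $\Omega \cap \Pb(V)$, hence is virtually the fundamental group of a closed aspherical $(\dim V - 1)$-manifold (Corollary~\ref{cor:fund_gp_of_manifold}); a virtual direct-product splitting is not asserted. The paper handles $\widetilde{\SL_2(\Rb)}$ not by an extension-class argument but by analyzing the decomposition $V = \oplus V_j$ in Theorem~\ref{thm:center}(5) and showing $M$ must then carry $\Rb^3$ or $\Rb \times \Hb^2$ geometry, which by uniqueness of geometric type excludes $\widetilde{\SL_2(\Rb)}$.

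Your non-geometric case has a genuine gap, and your diagnosis of where the difficulty lies is mistaken. You write that ``abstractly $\pi_1(N)$ looks identical to the allowed $\Rb \times \Hb^2$ fundamental group'' and conclude that the obstruction must be global and delicate. This is false: the Seifert piece $N$ in a non-trivial geometric decomposition is \emph{non-compact}, so its base orbifold is a non-compact finite-volume hyperbolic $2$-orbifold, and hence $\pi_1(N)$ is virtually $\Zb \times \mathsf{F}_m$ with $\mathsf{F}_m$ a free group of rank $m \geq 2$. This is algebraically quite different from the closed $\Rb \times \Hb^2$ case, where the second factor is a closed surface group. The paper's argument is then short and purely algebraic: since $C_{\pi_1(M)}(h) = C_{\pi_1(S)}(h)$ has finite index in $\pi_1(N)$, Corollary~\ref{cor:fund_gp_of_manifold} forces $\pi_1(N)$ to be virtually the fundamental group of a closed aspherical $k$-manifold; cohomological dimension gives $k = 2$; but $\Zb \times \mathsf{F}_m$ is neither virtually $\Zb^2$ (it contains a non-abelian free group) nor virtually a closed hyperbolic surface group (it has infinite center). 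No gluing constraints, no global geometry of $\Omega$, and nothing from \cite{IZ2019b} beyond the centralizer theorem is needed.
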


Theorem~\ref{thm:main} provides a higher dimensional generalization of Benoist's~\cite{B2006} well known description of properly convex divisible domains in $\Pb(\Rb^4)$. Recall, a properly convex domain $\Omega \subset \Pb(\Rb^4)$ is called \emph{divisible} if there exists a discrete group $\Gamma \leq \Aut(\Omega)$ which acts properly discontinuously, freely, and co-compactly on $\Omega$. In the case when $d=4$, $M := \Gamma \backslash \Omega$ is a closed 3-manifold and the natural isomorphism $\pi_1(M) \rightarrow \Gamma$ is a convex co-compact representation into $\PGL_4(\Rb)$. Then Benoist's results can be deduced from Theorem~\ref{thm:main} and the structural results described in Section~\ref{sec:non_geom_structure} below.

Benoist's arguments in the $d=4$ case relies on the low dimensionality (see~\cite[Sections 2.3, 2.5]{B2006}) and also work of Morgan-Shalen~\cite{MS1988} about 3-manifold groups acting on $\Rb$-trees. In contrast, the main tool in our proof is the following structure theorem for centralizers in convex co-compact groups. 

\begin{theorem}\label{thm:center_intro}(see Theorem~\ref{thm:center} below) Suppose $\Omega \subset \Pb(\Rb^d)$ is a properly convex domain, $\Gamma \leq \Aut(\Omega)$ is convex co-compact, $A \leq \Gamma$ is an infinite Abelian subgroup, and $Z_{\Gamma}(A)$ is the centralizer of $A$ in $\Gamma$. If 
\begin{align*}
V := \Span \left\{ v \in \Rb^d\backslash \{0\} :  [v] \in \overline{\Cc_\Omega(\Gamma)} \text{ and } a [v] = [v] \text{ for all } a \in A \right\},
\end{align*}
then $\Omega \cap \Pb(V)$ is a non-empty $Z_{\Gamma}(A)$-invariant properly convex domain in $\Pb(V)$ and the quotient $Z_{\Gamma}(A) \backslash \Omega \cap \Pb(V)$ is compact. 
\end{theorem}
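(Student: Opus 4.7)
The plan is to establish the three assertions---$C_{\Lambda}(A)$-invariance of $V$, non-emptiness of $\Omega \cap \Pb(V)$, and cocompactness of the action---roughly in increasing order of difficulty. Throughout I will use the shorthand
\begin{align*}
F := \bigl\{[v] \in \overline{\Cc_\Omega(\Lambda)} : a[v]=[v] \text{ for all } a \in A\bigr\},
\end{align*}
so that by definition $V$ is the linear span of (representatives of) points in $F$.

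The invariance is essentially formal: $\overline{\Cc_\Omega(\Lambda)}$ is preserved by $\Lambda$, and the common projective fixed-point set of $A$ is preserved by anything commuting with $A$; hence $C_{\Lambda}(A)$ preserves $F$ and therefore $V$. Proper convexity of $\Omega \cap \Pb(V)$ in $\Pb(V)$ is automatic from proper convexity of $\Omega$. For non-emptiness I would exploit the dynamics of a single infinite-order element $a \in A$, which must exist since $\Lambda$ is virtually torsion-free (Selberg's lemma). Such an $a$ has positive Hilbert translation length on $\Cc_\Omega(\Lambda)$, with a unique attracting fixed point $a^+$ and a unique repelling fixed point $a^-$ in $\partial\Omega \cap \Lc_\Omega(\Lambda)$; the projective line through them meets $\Omega$ along the translation axis of $a$. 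Since $A$ is abelian and $a^{\pm}$ are simple eigendirections of $a$, any $b \in A$ must projectively fix both $a^+$ and $a^-$. Therefore $a^{\pm} \in F$, so $\Omega \cap \Pb(V)$ contains the axis of $a$.

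The main step, and the essential obstacle, is cocompactness of the $C_{\Lambda}(A)$-action on $\Omega \cap \Pb(V)$. My strategy is to reduce it to the known cocompact action of $\Lambda$ on $\Cc_\Omega(\Lambda)$ in two moves. First I would verify $\Omega \cap \Pb(V) \subseteq \Cc_\Omega(\Lambda)$, which is plausible because every extreme point of the convex slice $\overline{\Omega}\cap\Pb(V)$ ought to be an $A$-fixed boundary point of $\Omega$ and therefore lie in $F \subseteq \Lc_\Omega(\Lambda)$. Granting this, for any $x_n \in \Omega \cap \Pb(V)$ convex co-compactness of $\Lambda$ supplies $\gamma_n \in \Lambda$ with $\gamma_n x_n$ in a fixed compact set $K \subseteq \Cc_\Omega(\Lambda)$. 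The hard part, which I expect to be the principal obstacle, is to upgrade the $\gamma_n$ to lie in $C_{\Lambda}(A)$. Since the $A$-fixed configuration $F$ is sent by $\gamma_n$ to the $\gamma_n A \gamma_n^{-1}$-fixed configuration $\gamma_n F$, and a portion of $\gamma_n F$ accumulates in a fixed compact piece of $\overline{\Omega}$, the almost malnormal peripheral structure of $\Lambda$ established in \cite{IZ2019b} should force $\gamma_n A \gamma_n^{-1} = A$ for all large $n$, so $\gamma_n \in N_{\Lambda}(A)$. A bounded correction using finiteness of $N_{\Lambda}(A)/C_{\Lambda}(A)$ then places $\gamma_n$ inside $C_{\Lambda}(A)$ after multiplying by a bounded element. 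The central technical difficulty is precisely this rigidification from ``translates staying near $\Pb(V)$'' to ``translates by centralizing elements'', and it is where the structure theory of \cite{IZ2019b} is indispensable.
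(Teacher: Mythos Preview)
Your non-emptiness argument has a real gap: an infinite-order element $a$ in a convex co-compact group does have positive translation length, but nothing guarantees that $a$ is biproximal, i.e.\ that $a^{\pm}$ are \emph{simple} eigendirections. Without simplicity the ``attracting fixed point'' may be an entire face, and other elements of $A$ need only preserve that face, not fix each of its points. The paper avoids this by invoking the simplex theorem for maximal Abelian subgroups from \cite{IZ2019} (Theorem~\ref{thm:max_abelian} here): a maximal Abelian $A' \supseteq A$ fixes the vertices of a properly embedded simplex $S \subset \Cc_\Omega(\Lambda)$, and those vertices lie in $F$, so $S \subset \Omega \cap \Pb(V)$.

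Your cocompactness strategy is more seriously off track. The almost-malnormal peripheral structure of \cite{IZ2019b} is only available once one already knows the group is relatively hyperbolic with respect to rank-$\geq 2$ Abelian subgroups; the present theorem is stated for an \emph{arbitrary} infinite Abelian $A$, in particular infinite cyclic, where there is no peripheral structure to invoke and conjugates of $A$ can certainly share fixed points. The paper's route is entirely different: it decomposes $V = \bigoplus V_j$ into $A$-weight spaces, proves the structural identity $\Omega \cap \Pb(V) = \relint\bigl(\ConvHull_\Omega\{F_1,\ldots,F_m\}\bigr)$ (this is the genuine work, using the dynamics of sequences in $C_\Lambda(A)$ via Propositions~\ref{prop:dynamics_of_automorphisms_1}--\ref{prop:dynamics_of_automorphisms_2}), deduces $\Omega \cap \Pb(V) \subset \Min_\Cc(A)$ from Proposition~\ref{prop:min_set_inv_simplex}, and then cocompactness drops out of the prior result (Theorem~\ref{thm:centralizers_ncc}) that $C_\Lambda(A)$ acts cocompactly on the convex hull of $\Min_\Cc(A)$. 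Your heuristic that extreme points of $\overline{\Omega}\cap\Pb(V)$ ``ought to be'' $A$-fixed is precisely the content of that structural identity, and it is not obvious: a linear combination of $A$-fixed vectors from different weight spaces is not $A$-fixed.
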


\begin{remark} Theorem~\ref{thm:center_intro} builds upon earlier work in~\cite{IZ2019} which considered centralizers of Abelian subgroups of so-called naive convex co-compact subgroups. This is a larger class of groups and the results in~\cite{IZ2019} had weaker conclusions. In Section~\ref{sec:failure of center theorem for naive convex co-compact groups} we provides examples of naive convex co-compact groups where the stronger conclusions of Theorem~\ref{thm:center_intro} fail to hold. \end{remark}

Theorem~\ref{thm:center_intro} implies that the centralizer of an infinite Abelian subgroup of a convex co-compact subgroup is virtually the fundamental group of a closed aspherical $k$-manifold where $k =\dim \Pb(V)$. This greatly restricts the groups that can admit convex co-compact representations.

Next we strengthen our classification result (Theorem \ref{thm:main}) by describing the structure of each of the four types of examples which can appear in Theorem~\ref{thm:main}. 
 In these results, properly embedded simplices feature prominently. These are defined precisely in Section~\ref{sec:cones and simplices}, but informally a set $S \subset \Pb(\Rb^d)$ is a $k$-dimensional simplex if it is the interior of the convex hull of $(k+1)$ linearly independent points in $\Pb(\Rb^d)$. A simplex $S \subset \Omega$ (more generally, any set $X \subset \Omega$) is properly embedded if the set  inclusion map is proper.

\subsection{Euclidean and Euclidean $\times$ hyperbolic manifolds} 

In these cases the representations are particularly simple. Delaying the definition of a properly convex cone until Section~\ref{sec:cones and simplices}, we will prove the following. 

\begin{proposition}\label{prop:boring_examples}(see Proposition~\ref{prop:geom_case} below) Suppose $M$ is a closed 3-manifold with $\Rb^3$ or $\Rb \times \Hb^2$ geometry. If $\rho : \pi_1(M) \rightarrow \PGL_d(\Rb)$ is a convex co-compact representation and $\Omega \subset \Pb(\Rb^d)$ is a properly convex domain where $\Gamma:=\rho(\pi_1(M)) \leq \Aut(\Omega)$ is convex co-compact, then there exists a four dimensional linear subspace $V \subset \Rb^d$ such that 
\begin{align*}
\Cc_\Omega(\Gamma) = \Omega \cap \Pb(V).
\end{align*}
Moreover, 
\begin{enumerate}
\item If $M$ has $\Rb^3$ geometry, then $\Cc_\Omega(\Gamma)$ is a properly embedded simplex in $\Omega$,
\item If $M$ has $\Rb \times \Hb^2$ geometry, then $\Cc_\Omega(\Gamma)$ is a properly embedded cone in $\Omega$ with strictly convex base. 
\end{enumerate}
\end{proposition}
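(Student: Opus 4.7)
My plan is to apply Theorem~\ref{thm:center_intro} to a well-chosen infinite Abelian subgroup $A \leq \Lambda$ determined by the geometry of $M$. After replacing $\Lambda$ with a torsion-free finite-index subgroup $\Lambda_0$, I have $\Lambda_0 \cong \Zb^3$ in Case (1) and $\Lambda_0 \cong \Zb \times \pi_1(\Sigma)$ for a closed hyperbolic surface $\Sigma$ in Case (2). I take $A := \Lambda_0$ in Case (1) and $A$ equal to the central $\Zb$-factor in Case (2); in both cases $A$ is central in $\Lambda_0$, so $C_\Lambda(A) \supseteq \Lambda_0$ has finite index in $\Lambda$, and Theorem~\ref{thm:center_intro} supplies a subspace $V \subseteq \Rb^d$ such that $C_\Lambda(A)$ acts cocompactly on the properly convex domain $\Omega \cap \Pb(V)$.

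Next I would verify $\Cc_\Omega(\Lambda) = \Omega \cap \Pb(V)$. For the inclusion $\subseteq$, finite index implies $\Lc_\Omega(\Lambda) = \Lc_\Omega(A)$, and a standard commutativity argument (each $a \in A$ is a homeomorphism of $\overline{\Omega}$ commuting with the sequence converging to any limit point) shows every point of $\Lc_\Omega(A)$ is $A$-fixed; hence $\Lc_\Omega(\Lambda) \subseteq \overline{\Cc_\Omega(\Lambda)} \cap \Fix(A) \subseteq \Pb(V)$ by definition of $V$, and taking the convex hull yields $\Cc_\Omega(\Lambda) \subseteq \Omega \cap \Pb(V)$. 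For the reverse inclusion, since $C_\Lambda(A)$ divides $\Omega \cap \Pb(V)$, its orbital limit set contains all of $\partial_{\Pb(V)}(\Omega \cap \Pb(V)) \subseteq \partial \Omega$, whose convex hull is $\Omega \cap \Pb(V)$ itself, and this is contained in $\Cc_\Omega(\Lambda)$. Moreover, the consequence of Theorem~\ref{thm:center_intro} noted in the text, namely that $C_\Lambda(A)$ is virtually the fundamental group of a closed aspherical $(\dim \Pb(V))$-manifold, pins $\dim V = 4$ because $C_\Lambda(A)$ has finite index in $\Lambda$ and $\Lambda$ is virtually a closed aspherical $3$-manifold group.

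It remains to identify the structure of the $4$-dimensional divisible domain $\Omega \cap \Pb(V)$. In Case (1), $C_\Lambda(A)$ is virtually $\Zb^3$, and the classical theorem classifying properly convex domains divided by virtually Abelian groups forces $\Omega \cap \Pb(V)$ to be a $3$-simplex. In Case (2), $C_\Lambda(A)$ is virtually $\Zb \times \pi_1(\Sigma)$: I would show the central generator of $A$ is biproximal, fixing an extreme apex $p_0 \in \partial(\Omega \cap \Pb(V))$ together with a complementary projective hyperplane $H \subset \Pb(V)$, so that $\Omega \cap \Pb(V)$ is precisely the cone over the $2$-dimensional slice $H \cap \Omega$. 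The quotient $C_\Lambda(A)/A$ then acts as a dividing group on this slice, and word hyperbolicity of $\pi_1(\Sigma)$ together with Benoist's strict convexity criterion gives the desired strictly convex base.

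I expect the last step of Case (2) to be the main obstacle: rigorously extracting the cone structure, in particular ruling out that the central $A$ acts unipotently (which would yield a larger $A$-fixed set in $\overline{\Omega \cap \Pb(V)}$) or that the complementary hyperplane fails to cleanly cut out a strictly convex $2$-dimensional base. This should follow from the principle that a central infinite-order element of a convex co-compact subgroup must translate with positive rate along a geodesic in the Hilbert metric—hence be biproximal—but care will be needed both to set up the eigenvalue analysis and to verify that the induced $\pi_1(\Sigma)$-action on the slice genuinely divides a strictly convex $2$-dimensional domain rather than some degenerate lower-dimensional object.
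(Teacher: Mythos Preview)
Your overall strategy---apply the centralizer theorem to an infinite Abelian subgroup coming from the Seifert-fiber structure---is exactly the paper's approach. However, there are two genuine problems.

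\medskip

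\textbf{The inclusion $\Cc_\Omega(\Lambda)\subseteq\Omega\cap\Pb(V)$ is not proved.} Your ``commutativity argument'' asserts that every point of $\Lc_\Omega(A)$ is $A$-fixed; this is false. Take $A=\Zb^2$ acting diagonally on a $2$-simplex: the full orbital limit set is the entire boundary triangle, but $A$ fixes only the three vertices. The implication $ax=\lim a\gamma_n p=\lim\gamma_n(ap)$ only shows $ax\in\Lc_\Omega(A)$, not $ax=x$. Moreover, in Case~(2) your $A$ is the central $\Zb$, which is infinite index, so the equality $\Lc_\Omega(\Lambda)=\Lc_\Omega(A)$ fails outright. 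The paper's fix is short: since $\Lambda_0\leq C_\Lambda(A)$ has finite index in $\Lambda$, one has $\Cc_\Omega(\Lambda)=\Cc_\Omega(\Lambda_0)$; and $\Omega\cap\Pb(V)$ is a nonempty closed convex $\Lambda_0$-invariant subset of $\Omega$, so minimality of $\Cc_\Omega(\Lambda_0)$ among such subsets (see \cite[Lemma~4.1(2)]{DGK2018}, recorded in the paper as Observation~\ref{obs:cc_observations}(4)) gives $\Cc_\Omega(\Lambda_0)\subseteq\Omega\cap\Pb(V)$.

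\medskip

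\textbf{Case~(2) via biproximality is the hard road.} You correctly flag this step as the obstacle, and indeed biproximality of the central generator on $V$ is not obvious (and not what the paper proves). The paper instead uses the \emph{stronger} centralizer theorem (Theorem~\ref{thm:center}, not the summary Theorem~\ref{thm:center_intro}), whose part~(5) hands you a $C_\Lambda(A)$-invariant decomposition $V=\bigoplus_j V_j$ with $A$ acting by scalars on each $V_j$ and $\Omega\cap\Pb(V)=\relint\ConvHull_\Omega\{F_1,\dots,F_m\}$. With $\dim V=4$ the dimension vector is one of $(1,1,1,1)$, $(1,1,2)$, $(2,2)$, $(1,3)$; the first three give a simplex outright, and in the $(1,3)$ case the cone structure is immediate, with base $F_2\subset\Pb(V_2)$ a divisible $2$-dimensional domain. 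Benoist's dichotomy then says $F_2$ is a simplex or strictly convex, and the non-virtually-Abelian hypothesis in the $\Rb\times\Hb^2$ case rules out the simplex. This bypasses any eigenvalue analysis of the central element.

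\medskip

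A minor stylistic point: in Case~(1) you take $A=\Lambda_0\cong\Zb^3$, whereas the paper uniformly takes $A=\langle h\rangle$ generated by a regular fiber in both cases. Your choice works (and your appeal to the classical simplex characterization of virtually-Abelian divisible domains is legitimate), but the paper's uniform choice lets the decomposition in Theorem~\ref{thm:center}(5) do the structural work simultaneously for both geometries.
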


\subsection{Hyperbolic manifolds} In the case when the manifold $M$ has $\Hb^3$ geometry we will show that the class of convex co-compact representations of $\pi_1(M)$ coincides with the class of projective Anosov representations. 

 We will give a precise definition of Anosov representations (into $\PGL_d(\Rb)$) in Section~\ref{sec:Anosov}, but informally: if $\Gamma$ is a word hyperbolic group with Gromov boundary $\partial_\infty \Gamma$, $G$ is a semisimple Lie group, and $P \leq G$ is a parabolic subgroup, then  a representation $\rho: \Gamma \rightarrow G$ is called $P$-Anosov if there exists an injective, continuous, $\rho$-equivariant map $\xi: \partial_\infty \Gamma \rightarrow G/P$ satisfying certain dynamical properties.

When $G=\PGL_d(\Rb)$ and $P_1$ is the stabilizer of a line the quotient $\PGL_d(\Rb)/P_1$ can be identified with the $(d-1)$-dimensional real projective space $\Pb(\Rb^d)$ and $P_1$-Anosov representations are often called \emph{projective Anosov}. 

For many word hyperbolic groups, including fundamental groups of closed real hyperbolic 3-manifolds, the class of convex co-compact and projective Anosov representations coincide. 

\begin{theorem}\label{thm:anosov_intro}(see Section~\ref{sec:Anosov} below) Suppose $\Gamma$ is a one-ended word hyperbolic group which is not commensurable to a surface group. If $\rho : \Gamma \rightarrow \PGL_d(\Rb)$ is a representation, then the following are equivalent:
\begin{enumerate}
\item  $\rho$ is convex co-compact 
\item $\rho$ is projective Anosov.
\end{enumerate}
In this case, if $\Omega \subset \Pb(\Rb^d)$ is a properly convex domain such that $\rho(\Gamma) \leq \Aut(\Omega)$ is convex co-compact, $\Cc:=\Cc_\Omega(\rho(\Gamma))$, and $\xi^{(1)}:\partial_\infty \Gamma \rightarrow \Pb(\Rb^d)$ is the Anosov boundary map, then
\begin{enumerate}[label={(\alph*)}]
\item $\xi^{(1)} \left( \partial_\infty \Gamma\right) = \partiali \Cc$,
\item $\partiali\Cc$ contains no non-trivial line segments, and
\item every point in $\partiali\Cc$ is a $\Cc^1$-smooth point of $\partial\Omega$ (i.e. admits a unique supporting hyperplane).
\end{enumerate}
\end{theorem}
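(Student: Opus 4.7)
The plan is to derive the equivalence from existing results on convex co-compact and projective Anosov representations of word hyperbolic groups, and then to read off the properties (a), (b), (c) from the dynamics of the boundary maps.

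For $(1)\Rightarrow(2)$ I would invoke the results of Danciger-Gu{\'e}ritaud-Kassel \cite{DGK2017}: since $\Gamma$ is word hyperbolic, any convex co-compact representation $\rho:\Gamma\to\PGL_d(\Rb)$ is automatically projective Anosov, and the boundary map $\xi^{(1)}$ arises as a $\rho$-equivariant homeomorphism from $\partial_\infty\Gamma$ onto the full orbital limit set $\Lc_\Omega(\Lambda)$. Cocompactness of $\Lambda$ on $\Cc_\Omega(\Lambda)$ forces $\Lc_\Omega(\Lambda)=\partiali\Cc_\Omega(\Lambda)$, which is property (a).

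For $(2)\Rightarrow(1)$ I would appeal to the construction in \cite{Z2017}. From the dual boundary map $\xi^{(d-1)}:\partial_\infty\Gamma\to\Pb((\Rb^d)^*)$, one forms the open subset
\[
U:=\Pb(\Rb^d)\setminus\bigcup_{x\in\partial_\infty\Gamma}\Pb(\ker\xi^{(d-1)}(x)),
\]
and shows that a connected component $\Omega$ of $U$ is a $\rho(\Gamma)$-invariant properly convex domain with $\xi^{(1)}(\partial_\infty\Gamma)\subset\partial\Omega$. One-endedness ensures $\xi^{(1)}(\partial_\infty\Gamma)$ is connected, and the non-commensurability with a surface group rules out the exceptional Fuchsian-type configurations in which the construction would degenerate. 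Once $\Omega$ is in hand, the convex hull of $\xi^{(1)}(\partial_\infty\Gamma)$ in $\Omega$ coincides with $\Cc_\Omega(\rho(\Gamma))$, and the Anosov property yields a quasi-isometric embedding of $\Gamma$ into $(\Cc_\Omega(\rho(\Gamma)),\hil)$, from which cocompactness follows.

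Properties (b) and (c) then come from the interplay between $\xi^{(1)}$ and $\xi^{(d-1)}$. For (b), if $S\subset\partiali\Cc_\Omega(\Lambda)=\xi^{(1)}(\partial_\infty\Gamma)$ were a non-trivial line segment, the injectivity of $\xi^{(1)}$ together with the density of the attracting fixed points of proximal elements in $\xi^{(1)}(\partial_\infty\Gamma)$ would produce proximal $\gamma_n\in\rho(\Gamma)$ with attracting fixed lines converging to distinct points of $S$. Transversality $\xi^{(1)}(y)\oplus\xi^{(d-1)}(x)=\Rb^d$ for $y\ne x$, combined with the uniform singular-value gap along the Anosov flow, then forces a Jordan block obstruction in the style of Benoist's argument for divisible convex domains, a contradiction. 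Property (c) follows at once from (b): the hyperplane $\xi^{(d-1)}(x)$ is a supporting hyperplane of $\Omega$ at $\xi^{(1)}(x)$, transversality plus the absence of segments in $\partiali\Cc_\Omega(\Lambda)$ rule out any second supporting hyperplane at this point, and uniqueness of the supporting hyperplane is equivalent to $\xi^{(1)}(x)$ being a $C^1$ point of $\partial\Omega$.

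The main obstacle I anticipate is the direction $(2)\Rightarrow(1)$, namely constructing the invariant properly convex domain from the Anosov data and verifying cocompactness on its convex core. This is precisely the step where the hypotheses of one-endedness and of non-commensurability with a surface group do their essential work.
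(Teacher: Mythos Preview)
Your $(1)\Rightarrow(2)$ direction and properties (a), (b), (c) are handled in the paper exactly as you suggest, by citing \cite[Theorem~1.15]{DGK2017}. Your additional sketches for (b) and (c) are therefore unnecessary, and the one for (c) does not work as written: the absence of segments in $\partiali\Cc_\Omega(\Lambda)$ says nothing about multiple supporting hyperplanes to $\Omega$ at a point $\xi^{(1)}(x)$, since $C^1$-smoothness is a property of $\partial\Omega$ and is dual to extremality rather than implied by it.

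The genuine gap is in your $(2)\Rightarrow(1)$ direction. Taking a connected component of
\[
U=\Pb(\Rb^d)\setminus\bigcup_{x\in\partial_\infty\Gamma}\Pb(\ker\xi^{(d-1)}(x))
\]
does not, in general, produce a convex set, let alone a properly convex one; you have not explained why it should here, and your invocation of the hypotheses (``rules out the exceptional Fuchsian-type configurations'') is too vague to carry the argument. Your separate cocompactness step via a quasi-isometric embedding is also superfluous: once $\rho(\Gamma)$ preserves \emph{some} properly convex domain, \cite[Theorem~1.15]{DGK2017} already gives convex co-compactness.

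The paper proceeds differently. It reduces the problem to showing only that $\rho(\Gamma)$ preserves a properly convex domain, and does this in two steps. First, the hypotheses ``one-ended and not commensurable to a surface group'' are used in a precise topological way: they guarantee the existence of two points $v_1,v_2\in\partial_\infty\Gamma$ such that $\partial_\infty\Gamma\setminus\{v_1,v_2\}$ is connected, and transversality then forces $\xi^{(1)}(\partial_\infty\Gamma)$ to be bounded in some affine chart. Second, using that $\partial_\infty\Gamma$ (and $\partial_\infty\Gamma\setminus\{x\}$ for each $x$) is connected, one shows that no hyperplane $\xi^{(d-1)}(x)$ meets the relative interior of the convex hull $C_0$ of $\xi^{(1)}(\partial_\infty\Gamma)$; the Anosov dynamics then give a small open set near a point of $\relint(C_0)$ whose entire $\rho(\Gamma)$-orbit stays bounded, and the invariant domain is obtained as the relative interior of the convex hull of $\xi^{(1)}(\partial_\infty\Gamma)$ together with this orbit. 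This convex-hull construction, not a hyperplane-complement component, is what makes the argument go through.
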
 

\begin{remark} The equivalence in Theorem~\ref{thm:anosov_intro} fails for surface groups (and hence free groups), for instance Danciger--Gu\'{e}ritaud--Kassel~\cite{DGK2017} observed that a Hitchin representation into $\PGL_{2d}(\Rb)$ never preserves a properly convex domain, but is projective Anosov by results of Labourie~\cite{L2006}. \end{remark} 

In the special case when $\rho$ is irreducible, Theorem~\ref{thm:anosov_intro} was established by the second author~\cite{Z2017} (using different terminology). In full generality, the $(1) \Rightarrow(2)$ direction and the ``in this case'' assertions are a consequence of a result of Danciger--Gu\'{e}ritaud--Kassel~\cite[Theorem 1.15]{DGK2017}. 

In Section~\ref{sec:Anosov}, we will explain how the argument in~\cite{Z2017} and a result from~\cite{DGK2017} can be used to prove that $(2) \Rightarrow(1)$ in the reducible case. Canary~\cite{canary-notes}, in his lecture notes on Anosov representations, also provided a proof of Theorem~\ref{thm:anosov_intro} along similar lines. 

In the context of this section, we should also mention recent work of Canary--Tsouvalas  and Tsouvalas.  Canary--Tsouvalas~\cite{CK2020} proved that any torsion-free word hyperbolic group that admits a projective Anosov representation into $\SL_4(\Rb)$ is isomorphic to the fundamental group of a compact (not necessarily closed) $\Hb^3$-manifold. Recent work of Tsouvalas~\cite{T2020} further explores the connection between convex co-compact and Anosov representations. 

\subsection{Non-geometric manifolds}\label{sec:non_geom_structure} 

We now describe the case when $M$ is a non-geometric closed irreducible orientable 3-manifold.

For the rest of this section, suppose $M$ is a non-geometric closed irreducible orientable 3-manifold and $\rho : \pi_1(M) \rightarrow \PGL_d(\Rb)$ is a convex co-compact representation. 

Then $\Gamma:=\rho(\pi_1(M))$ is a relatively hyperbolic group with respect to $\{ \rho(\pi_1(T)): T \in \Tc\}$, where $\Tc$ is a  collection of embedded tori  and Klein bottle in the geometric decomposition (see \Cref{thm:geom-decomp}) of $M$. Moreover, each $\pi_1(T)$ is virtually isomorphic to $\Zb^2$. This  follows from \Cref{thm:main} part (2) and Dahmani's~\cite{D2003} combination theorem, see \Cref{prop:non_geom_eg_rel_hyp_fund_gp} below for details.

Next, we will describe the structure of $\Cc_{\Omega}(\Gamma)$ as well as define a boundary map from the Bowditch boundary of $\pi_1(M)$ to a quotient of $\partiali \Cc_{\Omega}(\Gamma)$. To ease notation in the discussion that follows, let $\Cc:=\Cc_\Omega(\Gamma)$.

\subsubsection{The structure of the domain}\label{sec:structure_of_the_domain_intro} In previous work~\cite{IZ2019b}, we studied the structure of convex co-compact subgroups which are relatively hyperbolic with respect to a collection of subgroups which are virtually free Abelian groups of rank at least two. In this subsection we briefly describe some of the consequences of these results. For more detail, see Section~\ref{subsec:struct-C} below. 
 
Let $\Sc$ be the collection of \textbf{all} properly embedded simplices in $\Cc$ of dimension at least two. By Theorems 1.7 and 1.8 in~\cite{IZ2019b}, $\Sc$ has the following properties:
\begin{enumerate}
\item $(\Cc,\hil)$ is relatively hyperbolic with respect to $\Sc$.
\item $\Sc$ is closed and discrete in the local Hausdorff topology.
\item Every line segment in $\partiali\Cc$ is contained in the boundary of a simplex in $\Sc$.
\item If $x \in \partiali \Cc$ is not a $\Cc^1$-smooth point of $\partial \Omega$, then there exists  $S \in \Sc$ with $x \in \partial S$.
\end{enumerate}
Properties (3) and (4) should be compared to Properties (b) and (c) in Theorem~\ref{thm:anosov_intro}. 

Further, Theorem 1.7 in~\cite{IZ2019b} implies the following correspondence between simplices in $\Sc$ and Abelian subgroups of $\Gamma$:
\begin{itemize}
\item If $S \in \Sc$, then $S$ is two dimensional, $\Stab_{\Gamma}(S)$ acts co-compactly on $S$, and $\Stab_{\Gamma}(S)$ is virtually isomorphic to $\Zb^2$. 
\item If $A \leq \Gamma$ is an Abelian subgroup with rank at least two, then $A$ is virtually isomorphic to $\Zb^2$ and there exists a unique $S \in \Sc$ such that $A \leq \Stab_{\Gamma}(S)$.  
\end{itemize}

\subsubsection{Equivariant boundary maps} We will also establish an analogue of Property (a) in Theorem~\ref{thm:anosov_intro}. By a result of Leeb, we can assume that $M$ is a non-positively curved Riemannian manifold~\cite{L1995}. Let $\wt{M}$ be the universal cover of $M$ endowed with the Riemannian metric making the covering map $\wt{M} \rightarrow M$ a local isometry. Then let $\wt{M}(\infty)$ be the $\CAT(0)$-boundary of $\wt{M}$. By a result of Hruska--Kleiner~\cite{HK2005} this boundary is a group invariant of $\pi_1(M)$, more precisely: if $\pi_1(M)$ acts  properly discontinuously and co-compactly on a $\CAT(0)$ space $X$, then there exists an equivariant homeomorphism $\wt{M}(\infty) \rightarrow X(\infty)$. 

Based on the existence of boundary maps in the hyperbolic case, it seems reasonable to ask if there exists a $\rho$-equivariant homeomorphism between $\wt{M}(\infty)$ and $\partiali\Cc$. However there is an obvious obstruction: if $A \leq \pi_1(M)$ is isomorphic to $\Zb^2$, then $A$ stabilizes an isometrically embedded flat $F \subset \wt{M}$ and $\rho(A)$ stabilizes a properly embedded 2-simplex $S \subset \Cc$. Any  $\rho$-equivariant homeomorphism of $\wt{M}(\infty)$ and $\partiali\Cc$ would map $F(\infty)$ to $\partial S$. This is impossible since the action of $A$ on $F(\infty)$ is trivial while the action of $\rho(A)$ on $\partial S$ is not. Thus a $\rho$-equivariant homeomorphism cannot exist between $\wt{M}(\infty)$ and $\partiali \Cc$.

To overcome this obstruction we introduce the following quotients. Let $\wt{M}(\infty) / {\sim}$ denote the quotient of $\wt{M}(\infty)$ obtained by identifying points which are in the geodesic boundary of the same flat and let $\partiali\Cc / {\sim}$ denote the quotient of $\partiali\Cc$ obtained by identifying points which are in the boundary of the same simplex  in $\Sc$. 

A general result of Tran~\cite{T2013} says that the quotient $ \wt{M}(\infty) / {\sim}$ naturally identifies with the Bowditch boundary of $\Gamma$. We will then establish the following analogue of the Anosov representation boundary maps.

\begin{theorem}\label{thm:bd_maps}(see Theorem~\ref{thm:bd_extensions} below) There exists a $\rho$-equivariant homeomorphism 
\begin{align*}
 \wt{M}(\infty) / {\sim} \longrightarrow \partiali\Cc / {\sim}.
\end{align*}
\end{theorem}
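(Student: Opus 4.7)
The plan is to identify both quotients with the Bowditch boundary of $\Gamma$ relative to its peripheral structure $\Pc$ (the fundamental groups of the JSJ tori and Klein bottles, each virtually isomorphic to $\Zb^2$). Tran's theorem~\cite{T2013} already supplies a $\Gamma$-equivariant homeomorphism $\wt{M}(\infty)/{\sim} \to \partial_B(\Gamma,\Pc)$, so it suffices to build a $\rho$-equivariant homeomorphism $\Phi\colon \partiali\Cc/{\sim} \to \partial_B(\Lambda,\rho(\Pc))$ and compose.

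To construct $\Phi$, I would invoke the Bowditch--Yaman characterization of Bowditch boundaries: if a relatively hyperbolic group acts as a geometrically finite convergence group on a compact metrizable perfect space with the prescribed collection of maximal parabolic stabilizers, then that space is canonically and equivariantly homeomorphic to the Bowditch boundary. Setting $X := \partiali\Cc/{\sim}$, the four things to verify about the $\Lambda$-action on $X$ are: (i) $X$ is compact, Hausdorff, metrizable, and perfect; (ii) each simplex stabilizer $\Stab_\Lambda(S)$ for $S \in \Sc$ fixes the image $[S]$ of $S$ in $X$ and this exhausts the collection of maximal parabolic subgroups; (iii) every other point of $X$ is a conical limit point; and (iv) $\Lambda$ acts on $X$ as a convergence group.

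For (i), compactness of $\partiali\Cc$ is immediate, while the topological properties of the quotient follow from the facts recalled in Section~\ref{sec:structure_of_the_domain_intro}: $\Sc$ is closed and discrete in the local Hausdorff topology, which forces the equivalence relation ${\sim}$ to be closed (so $X$ is Hausdorff and metrizable), and perfectness is inherited from $\partiali\Cc$. For (ii), the correspondence between rank-two Abelian subgroups of $\Lambda$ and simplices in $\Sc$ from \cite{IZ2019b} identifies the simplex stabilizers with exactly the peripheral subgroups $\rho(\Pc)$, and each such stabilizer visibly fixes the corresponding point of $X$. For (iii) and (iv) the essential input is that $(\Cc,\hil)$ is relatively hyperbolic with respect to $\Sc$: sequences in $\Lambda$ that drive an orbit to a non-simplex boundary point give conical convergence after quotienting by ${\sim}$, and the relatively hyperbolic geometry combined with the standard proximal dynamics of divergent sequences in $\PGL_d(\Rb)$ yields the required convergence dynamics on $X$.

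The main obstacle is (iv): establishing the convergence group property on the quotient. The full orbital limit set and the limit set of individual proximal elements need not be contained in $\partiali\Cc$, so one cannot simply restrict the projective dynamics. Instead, one has to use the thick--thin structure coming from the relatively hyperbolic metric $(\Cc,\hil)$ to control, for an arbitrary divergent sequence $\gamma_n \in \Lambda$, the behavior of $\gamma_n$ on each simplex boundary $\partial S$ and to verify that collapsing these boundaries produces genuine north--south dynamics on $X$. Once (iv) is in hand, Yaman's theorem provides $\Phi$ uniquely and equivariantly, and composition with Tran's homeomorphism finishes the proof.
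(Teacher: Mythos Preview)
Your strategy is genuinely different from the paper's. The paper never invokes the Bowditch boundary or Yaman's criterion; instead it takes a $\rho$-equivariant quasi-isometry $\Phi\colon \wt{M}\to\Cc$ (from the \v{S}varc--Milnor lemma) and extends it directly to the quotients. The extension is built by hand: first a bijection $F\mapsto S_F$ between flats and simplices via Theorems~\ref{thm:rh_intersections_of_neighborhoods} and~\ref{thm:rh_embeddings_of_flats}; then a key lemma showing that if $x_n\to\xi$ in $\wt{M}$ and $\Phi(x_n)\to\eta$ in $\overline{\Cc}$, one has $\xi\in F(\infty)$ iff $\eta\in\partial S_F$. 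This lemma and the well-definedness of the extension off the peripheral locus are proved using the Relative Fellow Traveller Property (Proposition~\ref{prop:fellow_traveling}) together with the distance estimates of Propositions~\ref{prop:Crampons_dist_est} and~\ref{prop:CAT0_dist}. The paper's route is thus purely geometric and never needs to check convergence-group axioms.

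Your route is conceptually clean but, as you yourself flag, step~(iv) is the crux and your sketch does not close it. The proximal dynamics of a limit $T=\lim g_n$ in $\Pb(\End(\Rb^d))$ give convergence to ${\rm image}(T)$ only off $\Pb(\ker T)$; to get north--south dynamics on $X=\partiali\Cc/{\sim}$ you must show that $\Pb(\ker T)\cap\partiali\Cc$ collapses to a \emph{single} class under $\sim$, and likewise that $T(\partiali\Cc)$ lies in a single class. Neither is automatic: $\partiali\Cc$ is not convex in $\partial\Omega$, so $\Pb(\ker T)\cap\partiali\Cc$ need not be connected, and Proposition~\ref{prop:dynamics_of_automorphisms_1} only gives ${\rm image}(T)\subset\Spanset F_\Omega(x)$, not $T(\partiali\Cc)\subset\partial S$ for a single $S\in\Sc$. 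Sorting this out requires exactly the kind of simplex-by-simplex control that the paper's fellow-traveling argument provides. Step~(ii) also needs more than you state: Yaman's criterion asks that each $[S]$ be a \emph{bounded} parabolic point, i.e.\ that $\Stab_\Lambda(S)$ act cocompactly on $X\setminus\{[S]\}$, which does not follow just from knowing the stabilizer is virtually $\Zb^2$. So while the Bowditch-boundary framework could in principle be made to work, the present proposal defers rather than resolves the essential difficulty.
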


We remark that the above theorem, in fact, holds more generally: whenever  $M$ is a compact non-positively curved Riemannian manifold with isolated flats and $\rho:\pi_1(M) \to \PGL_d(\Rb)$ is a convex co-compact representation, see \cref{rem:generalizes_to_nonpos_curv}.

\subsubsection{Dynamics} We will use Theorem~\ref{thm:bd_maps} to study two dynamical systems associated to a convex co-compact representation. The first is the action of the group on the ideal boundary.

\begin{theorem}\label{thm:minimal_intro}(see Theorem~\ref{thm:minimal} below) The action of $\Gamma$ on $\partiali\Cc$ is minimal.\end{theorem}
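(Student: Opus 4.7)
The plan is to pull back the minimality of $\Lambda$ on $\partiali\Cc/{\sim}$ (which itself comes from the classical minimality of a non-elementary relatively hyperbolic group acting on its Bowditch boundary) to minimality on $\partiali\Cc$ via a Baire category argument.

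First, since Theorem~\ref{thm:main} yields that every piece in the geometric decomposition of $M$ is hyperbolic, Dahmani's combination theorem shows that $\Gamma := \pi_1(M)$ is non-elementary and relatively hyperbolic with respect to its collection of virtually $\Zb^2$ JSJ-torus subgroups. Standard convergence-group theory then gives that $\Gamma$ acts minimally on its Bowditch boundary $\partial_B\Gamma$. Tran's identification $\wt{M}(\infty)/{\sim} \cong \partial_B\Gamma$, composed with the $\rho$-equivariant homeomorphism $\wt{M}(\infty)/{\sim} \to \partiali\Cc/{\sim}$ supplied by Theorem~\ref{thm:bd_maps}, transports this to minimality of $\Lambda$ on $\partiali\Cc/{\sim}$.

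Now fix $x \in \partiali\Cc$, set $C := \overline{\Lambda \cdot x}$, and let $q : \partiali\Cc \to \partiali\Cc/{\sim}$ denote the quotient map. Then $q(C)$ is closed (as the continuous image of a compact set in a Hausdorff space), is $\Lambda$-invariant, and contains $\Lambda \cdot q(x)$, so the previous paragraph forces $q(C) = \partiali\Cc/{\sim}$. Hence $C$ meets every fiber of $q$; since the non-singleton fibers are precisely the sets $\partial S$ for $S \in \Sc$, this at once gives
\[
C \supset \partiali\Cc \setminus \bigcup_{S \in \Sc} \partial S.
\]

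It then suffices to show that this conical locus is dense in $\partiali\Cc$, for then $C = \overline{C} = \partiali\Cc$ and minimality follows. Because $\Lambda$ is countable and, by the structural results of Section~\ref{sec:structure_of_the_domain_intro}, there are only finitely many $\Lambda$-orbits in $\Sc$, the family $\Sc$ itself is countable. Each $\partial S$ is a topological circle and hence a compact one-dimensional subset of $\partiali\Cc$, while $\partiali\Cc$ has topological dimension two (since $\Lambda$ acts cocompactly on the contractible set $\Cc$ with quotient homotopy equivalent to the closed aspherical 3-manifold $M$). By invariance of dimension each $\partial S$ has empty interior in $\partiali\Cc$, and the Baire category theorem applied to the compact metric space $\partiali\Cc$ shows that $\bigcup_{S\in\Sc}\partial S$ is meager. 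The main potential obstacle is the dimension claim on $\partiali\Cc$: should it not be immediate in the present setting, an alternative route to the nowhere-density of $\partial S$ is to exploit the local-Hausdorff discreteness of $\Sc$ (property~(2) of Section~\ref{sec:structure_of_the_domain_intro}) together with the density of loxodromic fixed points in $\partial_B\Gamma$ to produce, inside any would-be interior point of $\partial S$, approaching points of $\partiali\Cc$ lying outside $\partial S$.
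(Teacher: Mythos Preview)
Your reduction via Bowditch-boundary minimality is a legitimate and cleaner route to the intermediate step: using Theorem~\ref{thm:bd_maps} to transport the standard minimality of a non-elementary relatively hyperbolic group on its Bowditch boundary, you correctly conclude that any closed $\Lambda$-invariant $C\subset\partiali\Cc$ surjects onto $\partiali\Cc/{\sim}$ and hence contains the conical locus $\partiali\Cc\setminus\bigcup_{S\in\Sc}\partial S$. The paper instead proves this containment by hand, building a projection operator (Lemma~\ref{lem:projection}) and showing directly that every extreme point lies in $C$. Both routes then invoke Baire category and reduce to the same claim: each $\partial S$ is nowhere dense in $\partiali\Cc$.

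The gap is in your argument for that claim. The assertion that $\partiali\Cc$ has topological dimension two does not follow from $\Lambda\backslash\Cc\simeq M$: the convex hull $\Cc$ is a closed convex subset of $\Omega$ which in general has non-empty non-ideal boundary $\partialn\Cc$, so as a convex set it may span a projective subspace of dimension strictly larger than $3$, and $\partiali\Cc$ is not known to be a $2$-manifold. Invariance of domain is therefore unavailable. Your fallback via density of loxodromic fixed points in $\partial_B\Gamma$ is also incomplete: the quotient map $q:\partiali\Cc\to\partiali\Cc/{\sim}$ is not a priori open, so density downstairs does not place conical points near a prescribed point of $\partial S$ upstairs.

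The paper fills exactly this hole with two lemmas you have not reproduced. First, it pushes $\partiali\Cc\setminus\partial S$ through the homeomorphism of Theorem~\ref{thm:bd_extensions} and invokes the Jordan curve theorem on $\wt M(\infty)\cong S^2$ to show that $\partiali\Cc\setminus\partial S$ has at most two connected components. Second, it uses the projection of Lemma~\ref{lem:projection} (a limit $T$ of elements of $\Stab_\Lambda(S)$ sending $\partiali\Cc\setminus\partial S$ into an open edge $(v_1,v_2)$ of $S$) together with the cocompact action of $\Stab_\Lambda(S)$ on that edge to force $T(\partiali\Cc\setminus\partial S)=(v_1,v_2)$, whence $\partial S\subset\overline{\partiali\Cc\setminus\partial S}$. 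Plugging either of these ingredients into your framework would complete your proof; without them the nowhere-density of $\partial S$ remains unproven.
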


We also study a natural geodesic flow associated to a convex co-compact representation. Let $T^1 \Omega$ be the unit tangent bundle of $\Omega$ relative to the Hilbert infinitesimal metric \cite[Section 3.2]{B2004}. Given $v \in T^1 \Omega$, let $\gamma_v : \Rb \rightarrow \Omega$ be the projective line geodesic with $\gamma_v^\prime(0)=v$. The \emph{(projective line) geodesic flow} on $T^1 \Omega$ is then defined by
\begin{align*}
\phi_t &: T^1 \Omega \rightarrow T^1 \Omega\\
\phi_t &(v) = \gamma_v^\prime(t).
\end{align*}  
Associated to $\Gamma$ is a natural flow invariant subset of $T^1 \Omega$ defined by 
\begin{align*}
\Gc_\Omega(\Gamma) =\left\{ v \in T^1 \Omega : \pi_{\pm}(v) \in \partiali\Cc\right\}
\end{align*}
where 
\begin{align*}
\pi_{\pm}(v) = \lim_{t \rightarrow \pm \infty} \gamma_v(t) \in \partial\Omega.
\end{align*}
The geodesic flow descends to a flow on the compact quotient $\Gamma \backslash \Gc_\Omega(\Gamma)$ which we also denote by $\phi_t$. 

\begin{remark} If one applies this construction to a convex co-compact representation of a word hyperbolic group, then one obtains a model of Gromov's geodesic flow space (see~\cite[Section 3]{ZZ2019} for details). 
\end{remark}

We will prove the following. 

\begin{theorem}\label{thm:transitive_intro}(see Theorem~\ref{thm:top-trans-equiv-minimal} below) The geodesic flow on $\Gamma \backslash \Gc_\Omega(\Gamma)$ is topologically transitive. \end{theorem}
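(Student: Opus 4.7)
\emph{Proof plan.} The strategy is to reduce topological transitivity of the geodesic flow on $\Lambda \backslash \Gc_\Omega(\Lambda)$ to a topological transitivity statement for the diagonal $\Lambda$-action on the space of distinct ordered pairs $\partiali\Cc^{(2)} := (\partiali\Cc \times \partiali\Cc) \setminus \Delta$, and to prove the latter using the minimality established in Theorem~\ref{thm:minimal_intro}.

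\emph{Step 1 (Reduction to pairs).} The endpoint map $\Pi : \Gc_\Omega(\Lambda) \to \partiali\Cc^{(2)}$, $v \mapsto (\pi_-(v), \pi_+(v))$, is continuous, $\Lambda$-equivariant, and open, with fibers equal to the orbits of $\phi_t$. Given non-empty open $\Oc_1, \Oc_2 \subset \Lambda \backslash \Gc_\Omega(\Lambda)$ with $\Lambda$-invariant lifts $\wt{\Oc}_i \subset \Gc_\Omega(\Lambda)$, set $W_i := \Pi(\wt{\Oc}_i) \subset \partiali\Cc^{(2)}$. If one produces $\gamma \in \Lambda$ with $\gamma \cdot W_1 \cap W_2 \neq \emptyset$, then choosing $(a,b) \in W_1$ with $\gamma(a,b) \in W_2$ and lifts $u_1 \in \wt{\Oc}_1$ above $(a,b)$, $u_2 \in \wt{\Oc}_2$ above $\gamma(a,b)$, the vectors $u_1$ and $\gamma^{-1} u_2$ lie on the same projective line geodesic, so $\phi_t(u_1) = \gamma^{-1} u_2$ for some $t \in \Rb$; projecting to the quotient gives $\phi_t(\Oc_1) \cap \Oc_2 \neq \emptyset$.

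\emph{Step 2 (Density of biproximal fixed pairs).} Since $\Lambda$ acts minimally on the infinite set $\partiali\Cc$ (Theorem~\ref{thm:minimal_intro}), it is non-elementary and contains two biproximal elements $g_1, g_2 \in \Lambda$ with pairwise disjoint fixed-point sets on $\partial\Omega$. For arbitrary disjoint open sets $U, V \subset \partiali\Cc$, apply minimality to choose $\alpha, \beta \in \Lambda$ so that the conjugates $h_1 := \alpha g_1 \alpha^{-1}$, $h_2 := \beta g_2 \beta^{-1}$ satisfy $h_1^+ \in U$ and $h_2^- \in V$, with the remaining fixed points $h_1^-, h_2^+$ placed outside $\overline U \cup \overline V$ and in general position. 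A standard projective ping-pong argument, using the north-south dynamics of biproximal elements on suitable neighborhoods of their fixed points in $\Pb(\Rb^d)$, then shows that $h_1^N h_2^N$ is biproximal for $N$ sufficiently large with attracting fixed point in $U$ and repelling fixed point in $V$. Varying $U$ and $V$ yields density of biproximal fixed-point pairs in $\partiali\Cc^{(2)}$.

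\emph{Step 3 (North-south dynamics on pairs).} Given non-empty open $W_1, W_2 \subset \partiali\Cc^{(2)}$, shrink each to a basic open set $U_i \times V_i \subset W_i$ with $\overline{U_i} \cap \overline{V_i} = \emptyset$. By Step 2, pick a biproximal $\gamma \in \Lambda$ with $\gamma^+ \in V_2$ and $\gamma^- \in U_1$. Since $\partiali\Cc \setminus U_1$ is a compact subset of $\partiali\Cc \setminus \{\gamma^-\}$, the iterates $\gamma^n$ converge uniformly to $\gamma^+$ on it, so $\gamma^n(U_1) \supset \partiali\Cc \setminus B_\epsilon(\gamma^+)$ for any $\epsilon > 0$ and all sufficiently large $n$. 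After possibly shrinking $V_2$ around $\gamma^+$, one has $U_2 \cap B_\epsilon(\gamma^+) = \emptyset$, whence $U_2 \subset \gamma^n(U_1)$. Symmetrically, $V_1$ lies in a compact subset of $\partiali\Cc \setminus \{\gamma^-\}$, so $\gamma^n(V_1) \subset B_\epsilon(\gamma^+) \subset V_2$ for large $n$. Choosing $x \in U_1$ with $\gamma^n x \in U_2$ and $y \in V_1$ with $\gamma^n y \in V_2$ yields $\gamma^n(x,y) \in U_2 \times V_2 \subset W_2$. Combined with Step 1 this gives topological transitivity of $\phi_t$.

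The main obstacle is Step 2: while biproximal elements of $\PGL_d(\Rb)$ exhibit north-south dynamics on subsets of $\Pb(\Rb^d)$ determined by their attracting/repelling eigenlines and invariant complementary hyperplanes, the projective ping-pong must be arranged carefully to ensure both that $h_1^N h_2^N$ is biproximal in $\PGL_d(\Rb)$ and that its attracting/repelling fixed points actually lie inside $\partiali\Cc$ (rather than elsewhere in $\partial\Omega$). In the hyperbolic $\Hb^3$ case this is facilitated by the projective Anosov structure of Theorem~\ref{thm:anosov_intro}; in the non-geometric case one should first carry out the ping-pong in the convergence action of $\Gamma$ on its Bowditch boundary and then transfer the resulting biproximal elements and their fixed points to $\partiali\Cc$ via the boundary map of Theorem~\ref{thm:bd_maps}.
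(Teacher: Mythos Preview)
Your reduction in Step~1 is sound and the overall architecture (produce elements with north-south dynamics on $\partiali\Cc$, show their fixed-point pairs are dense) is the same as the paper's. The genuine gap is that ``biproximal'' is too weak for Steps~2 and~3. In the non-geometric case $\partiali\Cc$ contains boundaries of two-dimensional simplices $S\in\Sc$; a generic element $\gamma\in\Stab_\Lambda(S)$ fixing the vertices is biproximal with $\gamma^+,\gamma^-$ equal to two of the vertices, but the third vertex also lies in $\partiali\Cc\cap\Pb(H_\gamma^-)$ and is fixed by $\gamma$. Hence $\gamma^n$ does \emph{not} converge to $\gamma^+$ uniformly on $\partiali\Cc\setminus\{\gamma^-\}$, and the north-south argument in Step~3 fails. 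The ping-pong in Step~2 has the same defect: even if $h_1,h_2$ are biproximal with $h_1^+\in U$, $h_2^-\in V$, the product $h_1^Nh_2^N$ need not have its eigenlines in $\partiali\Cc$, and the Bowditch-boundary detour you propose only controls equivalence classes in $\partiali\Cc/{\sim}$, not points of $\partiali\Cc$.

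The paper repairs this by introducing \emph{rank one automorphisms}: biproximal $g\in\Lambda$ with the extra condition $(g^+,g^-)\subset\Omega$. For such $g$ one proves $\Pb(H_g^\pm)\cap\overline{\Cc_\Omega(\Lambda)}=\{g^\pm\}$ (Proposition~\ref{prop:rank-one-auto}), so genuine north-south dynamics hold on all of $\partiali\Cc$. Existence of a rank one automorphism is extracted from minimality via an extreme-point argument, and density of their fixed-point pairs is obtained not by ping-pong but by a limit argument in $\Pb(\End(\Rb^d))$ (Proposition~\ref{prop:approx-by-periodic-orbits}): given extreme points $x_1,x_2\in\partiali\Cc$ with $(x_1,x_2)\subset\Cc$, sequences $g_n,h_n\in\Lambda$ with $g_np\to x_1$, $h_np\to x_2$ are combined so that $\psi_n:=g_nh_n^{-1}$ is eventually rank one with $\psi_n^\pm\to x_{1,2}$. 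Your Step~3 is then replaced by Proposition~\ref{prop:dist-estimate-rank-one-auto}, which shows that any projective-line geodesic asymptotic to $g^+$ becomes asymptotic to the axis $(g^-,g^+)$, allowing one to connect the two given open sets by a single flow line.
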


\begin{remark} Theorem~\ref{thm:transitive_intro} is a consequence of Theorem~\ref{thm:minimal_intro} and Theorem~\ref{thm:top-trans-equiv-minimal} below, which states that for a general convex co-compact subgroup, the action of group on the ideal boundary is minimal if and only if the geodesic flow on the quotient is topologically transitive. 
\end{remark} 

\begin{remark}
In the special case when $\Gamma$ is a strongly irreducible subgroup of $\PGL_d(\Rb)$, Theorem~\ref{thm:minimal_intro} combined with a recent result of Blayac~\cite{PB2020} shows that the geodesic flow is topologically mixing. 
\end{remark}

\subsection{Recent developments} In the time between when this paper first appeared on the arXiv and when it was accepted for publication, a number of related results have appeared. For the reader's convenience we briefly mention some of these developments.

\begin{enumerate}
\item \emph{Boundary maps:} A version of Theorem~\ref{thm:bd_maps} is now known to be true for every relatively hyperbolic (naive) convex co-compact group. In particular, very shortly after this paper appeared on the arXiv, Weisman~\cite{W2020} posted a preprint to the arXiv showing that: if $\Gamma \leq \Aut(\Omega)$ is a convex co-compact subgroup which is relatively hyperbolic and whose peripheral subgroups were also convex co-compact, then there is a natural equivariant homeomorphism between the Bowditch boundary and a quotient of $\partiali \Cc_\Omega(\Gamma)$. In recent work~\cite{IZ2022}, we prove that in the above setting, the peripheral subgroups are always convex co-compact. So Weisman's boundary extension result holds for any convex co-compact group which is relatively hyperbolic.  In ~\cite{IZ2022}, we also prove a similar boundary extension result for any naive convex co-compact group which is relatively hyperbolic (recall that naive convex co-compact groups are a strictly larger class than convex co-compact groups).

\item \emph{Dynamics of the geodesic flow:} There have been tremendous advances by Blayac~\cite{Bpl2021} and Blayac--Zhu~\cite{BZ2021} in understanding the ergodic theory of the geodesic flow on convex real projective manifolds. In the context of Theorem~\ref{thm:transitive_intro}, these results, when combined with Theorem~\ref{thm:minimal_intro}, imply that the geodesic flow on $\Gamma \backslash \Gc_\Omega(\Gamma)$ is mixing with respect to a natural Bowen-Margulis measure.

\item \emph{A new class of representations:} In very recent work, Weisman~\cite{W2022} defines a class of representations of relatively hyperbolic groups which contains both relatively Anosov representations and convex co-compact representations.

\end{enumerate}

\subsection*{Acknowledgements} The authors thank the referees for their careful reading of the paper and helpful suggestions. The authors also thank Richard Canary, Jeffrey Danciger, and Ralf Spatzier for helpful conversations. M. Islam also thanks Louisiana State University for hospitality during a visit where work on this project started. 

M. Islam was partially supported by the National Science Foundation under grant DMS-1607260 and A. Zimmer was partially supported by the National Science Foundation under grants DMS-1904099, DMS-2104381, and DMS-2105580.

\section{Preliminaries}

\subsection{Notation}\label{sec:notation in preliminary} In this section we fix some notations.

\begin{itemize}
\item If $V \subset \Rb^d$ is a non-zero linear subspace, we will let $\Pb(V) \subset \Pb(\Rb^d)$ denote its projectivization. In most other cases, we will use $[o]$ to denote the projective equivalence class of an object $o$, for instance: 
\begin{enumerate}
\item if $v \in \Rb^{d} \setminus \{0\}$, then $[v]$ denotes the image of $v$ in $\Pb(\Rb^{d})$, 
\item if $\phi \in \GL_{d}(\Rb)$, then $[\phi]$ denotes the image of $\phi$ in $\PGL_{d}(\Rb)$, and 
\item if $T \in \End(\Rb^{d}) \setminus\{0\}$, then $[T]$ denotes the image of $T$ in $\Pb(\End(\Rb^{d}))$. 
\end{enumerate}
We also identify $\Pb(\Rb^d) = \Gr_1(\Rb^d)$, so for instance: if $x \in \Pb(\Rb^d)$ and $V \subset \Rb^d$ is a linear subspace, then $x \in \Pb(V)$ if and only if $x \subset V$. 

\item Given a choice, we will always prefer a linear subspace over the projectivization of a linear subspace. For instance,
\begin{enumerate}
\item Given a non-empty subset $X \subset \Pb(\Rb^d)$ we will let $\Span X \subset \Rb^d$ denote the smallest vector subspace whose projectivization contains $X$, that is
$$
\Span X :=\Span \{ v \in \Rb^d \setminus \{0\} : [v] \in X \}.
$$
\item Given an element $T \in \Pb(\End(\Rb^d))$ we will view its image and kernel as subspaces of $\Rb^d$.  When we refer to the respective projectivized subspaces, we will write them as $\Pb({\rm image} ~T)$ and $\Pb(\ker T)$.
\end{enumerate}
\item If $g \in \PGL_d(\Rb)$, we will let 
\begin{align*}
\lambda_1(g) \geq \lambda_2(g) \geq \dots \geq \lambda_d(g)
\end{align*}
denote the absolute values of the eigenvalues of some (hence any) lift of $g$ to $\SL_d^{\pm}(\Rb):=\{ h \in \GL_d(\Rb) : \det h = \pm 1\}$. 
\item If $(X,\dist)$ is a metric space, $A \subset X$, and $r>0$, we will use the notation \[ \Nc(A;r):=\{ x \in X : \dist(x,a)<r \text{ for some } a \in A\}.\] Also, given non-empty subsets $A,B \subset X$ the \emph{Hausdorff pseudo-distance} between $A$ and $B$ is
\begin{align*}
\dist^{\Haus}(A,B) := \inf \left\{ r > 0 : B \subset \Nc(A;r) \text{ and } A \subset \Nc(B;r) \right\}.
\end{align*}
\end{itemize}

\subsection{Convexity}

A subset $C \subset \Pb(\Rb^d)$ is \emph{convex} (respectively \emph{properly convex}) if there exists an affine chart $\mathbb{A}$ of $\Pb(\Rb^d)$ where $C \subset \mathbb{A}$ is a convex subset (respectively a bounded convex subset). Notice that if $C \subset \Pb(\Rb^d)$ is convex, then $C$ is a convex subset of every affine chart that contains it. When $C$ is a properly convex set which is open in $\Pb(\Rb^d)$ we say that $C$ is a \emph{properly convex domain}.

We also make the following topological definitions.

\begin{definition}\label{defn:topology} Suppose $C \subset \Pb(\Rb^d)$ is a properly convex set. The \emph{relative interior of $C$}, denoted by $\relint(C)$, is  the interior of $C$ in $\Pb(\Spanset C)$. In the case that $C = \relint(C)$, then $C$ is  \emph{open in its span}. The \emph{boundary of $C$} is $\partial C : = \overline{C} \setminus \relint(C)$, the \emph{ideal boundary of $C$} is $\partiali C := \partial C \setminus C$ and the \emph{non-ideal boundary of $C$} is $\partialn C := \partial C \cap C$. If $B \subset C \subset \Pb(\Rb^d)$ are properly convex sets, then we say that $B$ is \emph{properly embedded} in $C$ if $B \hookrightarrow C$ is a proper map with respect to the subspace topology. Note that $B$ is properly embedded in $C$ if and only if $\partiali B \subset \partiali C$. 
\end{definition}

An important property of properly convex domains is the existence of supporting hyperplanes. A subset $H \subset \Pb(\Rb^d)$ is called a \emph{(projective) hyperplane}  if it is the projectivization of a codimension one linear subspace of $\Rb^d$. Given a properly convex domain $\Omega \subset \Pb(\Rb^d)$ and $x \in \partial \Omega$, a hyperplane $H$ is called a \emph{supporting hyperplane of $\Omega$ at $x$} if $x \in H$ and $H \cap \Omega = \emptyset$. A boundary point $x \in \partial \Omega$ is always contained in at least one supporting hyperplane. In the case when $x \in \partial \Omega$ is contained in a unique supporting hyperplane we say that $x$ is a \emph{$\Cc^1$-smooth point} of $\partial \Omega$ and denote this unique hyperplane by $T_x \partial \Omega$.

A \emph{line segment} in $\Pb(\Rb^{d})$ is a connected subset of a projective line. Given two points $x,y \in \Pb(\Rb^{d})$ there is no canonical line segment with endpoints $x$ and $y$, but we will use the following convention: if $C \subset \Pb(\Rb^d)$ is a properly convex set and $x,y \in \overline{C}$, then (when the context is clear) we will let $[x,y]$ denote the closed line segment joining $x$ to $y$ which is contained in $\overline{C}$. In this case, we will also let $(x,y)=[x,y]\setminus\{x,y\}$, $[x,y)=[x,y]\setminus\{y\}$, and $(x,y]=[x,y]\setminus\{x\}$.

Along similar lines, given a convex set $C \subset \Pb(\Rb^d)$ and a subset $X \subset C$ we will let 
\begin{align*}
\ConvHull_C(X)
\end{align*}
 denote the smallest convex subset of $C$ which contains $X$.

We include the following examples to clarify the notations introduced above. 
  
 \begin{example} \label{eg:relint}
 Suppose  $C$ is a properly convex set.
 \begin{enumerate}
 \item If $x,y \in \overline{C}$, then $[x,y] = \ConvHull_{\overline{C}}(\{x,y\})$ and $(x,y) = \relint([x,y])$.
\item If $X \subset \overline{C}$, then $$\overline{\ConvHull_{\overline{C}}(X)}=\ConvHull_{\overline{C}}(\overline{X}).$$ Indeed, it suffices to verify that $\overline{\ConvHull_{\overline{C}}(X)} \subset \ConvHull_{\overline{C}}(\overline{X})$. But since $C$ is a properly convex set, this is a straighforward consequence of Carath\'{e}odory's convex hull theorem.
\item If $A \subset \overline{C}$ is a convex subset, then $\relint(A)=\relint(\overline{A})$. In particular, if $X \subset \overline{C}$, then part (2) of this example implies that 
$$\relint(\ConvHull_{\overline{C}}(X))=\relint(\ConvHull_{\overline{C}}(\overline{X})).$$
 \end{enumerate}
 \end{example}

 \begin{example}
Suppose $\Omega $ is a properly convex domain and $\Gamma \leq \Aut(\Omega)$ is a discrete subgroup. Recall from the introduction that $\Lc_{\Omega}(\Gamma)$ is the full orbital limit set of $\Gamma$ in $\Omega$ and $\Cc_{\Omega}(\Gamma)$ is the convex hull of $\Lc_{\Omega}(\Gamma)$ in $\Omega$. Then, in the notation introduced above, $$\Cc_{\Omega}(\Gamma)=\ConvHull_{\overline{\Omega}}(\Lc_{\Omega}(\Gamma)) \cap \Omega.$$
\end{example}

\subsection{Open faces and the Hilbert metric}

\begin{definition}\label{defn:open_faces}
If $\Omega \subset \Pb(\Rb^d)$ is a properly convex domain  and $x \in \overline{\Omega}$, let $F_\Omega(x)$ denote the \emph{open face} of $x$, that is 
\begin{equation*}
F_\Omega(x) = \{ x\} ~\cup ~\{ y \in \overline{\Omega} :  \exists \text{ an open line segment in } \overline{\Omega} \text{ containing } x \text{ and }y\}.
\end{equation*}
\end{definition}

Directly from the definitions we have the following. 

\begin{observation}\label{obs:faces} Suppose $\Omega \subset \Pb(\Rb^d)$ is a properly convex domain. 
\begin{enumerate}
\item $F_\Omega(x) = \Omega$ when $x \in \Omega$,
\item $F_\Omega(x)$ is open in its span,
\item $y \in F_\Omega(x)$ if and only if $x \in F_\Omega(y)$ if and only if $F_\Omega(x) = F_\Omega(y)$.
\end{enumerate}
\end{observation}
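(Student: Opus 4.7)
My plan is to prove the three assertions in the order (1), (3), (2), since the argument for (2) will rely on (3). Each is a standard property of open faces of the closed convex set $\overline{\Omega}$, and I expect all three to follow from the definitions together with the basic convex analysis fact that, for a convex open set $\Omega$ with $x \in \Omega$ and $c \in \overline{\Omega}$, the half-open segment $[x, c)$ lies in $\Omega$.

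For (1), to see $\Omega \subseteq F_\Omega(x)$ when $x \in \Omega$, fix an affine chart in which $\overline{\Omega}$ is bounded. For any $y \in \Omega$ the projective line through $x$ and $y$ meets $\overline{\Omega}$ in a closed segment whose endpoints lie in $\partial \Omega$; the corresponding open segment lies in $\overline{\Omega}$ and contains $x, y$ as interior points, witnessing $y \in F_\Omega(x)$. For the reverse inclusion, take $y \in F_\Omega(x) \setminus \{x\}$ with witness open segment $(a, b) \subset \overline{\Omega}$, choose $c \in (y, b) \subset \overline{\Omega}$, and note that $y \in [x, c) \subset \Omega$ by the half-open segment fact.

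For (3), the equivalence $y \in F_\Omega(x) \iff x \in F_\Omega(y)$ is immediate from the symmetry of the definition. For transitivity, suppose $y \in F_\Omega(x)$ with witness $(a, b)$ and $z \in F_\Omega(y)$ with witness $(c, d)$; I need an open segment in $\overline{\Omega}$ containing both $x$ and $z$ as interior points. If $(a, b)$ and $(c, d)$ lie on a common projective line, they must overlap (both contain $y$ in their interiors), so their union is a larger open segment in $\overline{\Omega}$ containing $x, y, z$ in its interior and works directly. Otherwise the two segments are transverse at $y$ and span a projective $2$-plane, and the convex hull $Q$ of $\{a, b, c, d\}$ is a quadrilateral contained in $\overline{\Omega}$, with $x$ and $z$ lying in the relative interiors of the two diagonals $[a,b]$ and $[c,d]$ and therefore in the relative interior of $Q$. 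The projective line through $x$ and $z$ then admits a small extension past each endpoint inside $Q \subset \overline{\Omega}$, giving the desired witness.

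For (2), the case $x \in \Omega$ follows from (1) since then $\Span F_\Omega(x) = \Rb^d$. When $x \in \partial \Omega$, set $V = \Span F_\Omega(x)$ and take any $y \in F_\Omega(x)$. First, $F_\Omega(x)$ is convex: given $y, z \in F_\Omega(x)$, (3) gives $z \in F_\Omega(y)$, so there is a witness open segment $(e, f) \subset \overline{\Omega}$ with $y, z$ interior, and every point of $[y, z]$ is also interior to $(e, f)$ and hence in $F_\Omega(y) = F_\Omega(x)$. Next, for any $z \in F_\Omega(x)$ the same witness segment contains a point $z'$ on the opposite side of $y$ from $z$, and $z' \in F_\Omega(y) = F_\Omega(x)$ by the same witness; thus $y$ lies strictly between two points $z, z' \in F_\Omega(x)$. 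By the standard characterization of the relative interior of a convex set, $y$ lies in the relative interior of $F_\Omega(x)$ in $\Pb(V)$, and since $y$ was arbitrary, $F_\Omega(x)$ equals its own relative interior and is therefore open in $\Pb(V)$.

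The main obstacle is the transitivity step in (3); everything else is bookkeeping from the definitions. The transitivity argument requires separately handling the collinear and transverse configurations of the two witness segments and verifying that, in the transverse case, the line through $x$ and $z$ extends past both endpoints within the quadrilateral $Q$—which reduces to the observation that each of $x, z$ is an interior point of a diagonal of $Q$ and so lies in the relative interior of the $2$-dimensional region $Q$.
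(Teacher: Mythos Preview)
Your proof is correct. The paper itself gives no proof of this observation, stating only that it follows ``directly from the definitions,'' so your write-up supplies the routine details the authors chose to omit; in particular, your handling of the transitivity step in (3) via the quadrilateral argument is a standard and valid way to make this precise.
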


Next we recall the definition of the Hilbert distance. Suppose $\Omega \subset \Pb(\Rb^d)$ is a properly convex domain. If $x, y \in \Omega$, let $\overline{xy}$ be a projective line in $\overline{\Omega}$ containing them and let $a,b$ be the two points in $\overline{xy}\cap \partial \Omega$ ordered $a, x, y, b$ along $\overline{xy}$. Then the \emph{the Hilbert distance} between $x$ and $y$ is defined to be
\begin{align*}
\hil(x,y) = \frac{1}{2}\log [a, x,y, b]
\end{align*}
 where 
 \begin{align*}
 [a,x,y,b] = \frac{\abs{x-b}\abs{y-a}}{\abs{x-a}\abs{y-b}}
 \end{align*}
 is the cross ratio. Then $(\Omega, \hil)$ is a complete geodesic metric space and $\Aut(\Omega)$ acts properly and by isometries on $\Omega$ (see for instance~\cite[Section 28]{BK1953}). Further, the projective line segment $[x,y]$ is a geodesic for the Hilbert distance. 
 
The asymptotic behavior of the Hilbert distance connects naturally with the structure of open faces in the boundary.

\begin{observation}
\label{obs:dist_est_and_faces}
Suppose $\Omega \subset \Pb(\Rb^d)$ is a properly convex domain. If $(x_n)_{n \geq 1}$, $(y_n)_{n \geq 1}$ are sequences in $\Omega$, $x:=\lim_{n \to \infty} x_n \in \overline{\Omega}$, $y:=\lim_{n \to \infty} y_n \in \overline{\Omega}$, and $\sup_{n \geq 1} \hil(x_n,y_n) < + \infty$, then $y \in F_\Omega(x)$.
\end{observation}

\subsection{Cones and simplices}\label{sec:cones and simplices}

A subset $C \subset \Pb(\Rb^d)$ is a \emph{properly convex cone} if it is properly convex and there exists an affine chart $\mathbb{A}$ of $\Pb(\Rb^d)$ where $C \subset \mathbb{A}$ is a cone. 

In the case when $C \subset \Pb(\Rb^d)$ is a properly convex cone which is open in its span, there exists a direct sum decomposition $\Span C = V_1 \oplus V_2$ with $\dim V_1=1$ and there exists a properly convex domain $B \subset \Pb(V_2)$ such that 
\begin{align*}
C=\relint \left(\ConvHull_{\overline{C}}\left(\left\{v,B\right\}\right)\right)
\end{align*}
where $v := \Pb(V_1)$. Then we say that \emph{$v$ is a vertex of $C$ with base $B$} (a cone could have several decomposition of this type).

A subset $S \subset \Pb(\Rb^d)$ is called a \emph{$k$-dimensional simplex} in $\Pb(\Rb^d)$  if there exists $g \in \PGL_d(\Rb)$ such that 
\begin{align*}
g S = \left\{ [x_1:\dots:x_{k+1}:0:\dots:0] \in\Pb(\Rb^d) : x_1>0,\dots,x_{k+1} > 0 \right\}.
\end{align*}
In this case, we call the $k+1$ points 
\begin{align*}
g^{-1}[1:0:\dots:0], g^{-1}[0:1:0:\dots:0], \dots, g^{-1}[0:\dots:0:1:0:\dots:0] \in \partial S
\end{align*}
the \emph{vertices of} $S$.

\begin{remark}
Any simplex $S \subset \Rb(\Rb^d)$ of dimension at least one is a properly convex cone: if $v_1,\dots, v_{k+1}$ are the vertices of $S$, then $v_1$ is a vertex of $S$ with base $\relint \left(\ConvHull_{\overline{S}}\left(\left\{v_2,\dots,v_{k+1}\right\}\right)\right)$.
\end{remark}

We now explain some properties of simplices that we will use throughout the paper. 

\begin{example}\label{ex:basic_properties_of_simplices} 
Consider the $(d-1)$-dimensional simplex 
\begin{align*}
S = \left\{ [x_1:\dots:x_{d}] \in \Pb(\Rb^{d}) : x_1>0, \dots, x_{d}> 0\right\}.
\end{align*}
Then 
\begin{align*}
\Aut (S)=\{ [{\rm diag} (\lambda_1, \ldots,\lambda_{d})] \in \PGL_d(\Rb): \lambda_1>0, \ldots, \lambda_{d}>0]\} \rtimes S_d
\end{align*} 
where $S_d$ is the subgroup of permutation matrices in $\PGL_d(\Rb)$. In particular, $\Aut(S)$ is virtually Abelian. 
\end{example}

From this explicit description of the automorphism group we observe the following. 

\begin{observation}\label{obs:cocompact-action-on-simplices}
Suppose $S \subset \Pb(\Rb^d)$ is a simplex.  If $H \leq \PGL_d(\Rb)$ preserves $S$ and acts co-compactly on $S$, then:
\begin{enumerate}
\item  If $H_0 \leq H$ is the subgroup of elements that fix the vertices of $S$, then $H_0$ also acts co-compactly on $S$.
\item If $F \subset \partial S$ is a face of $S$, then 
\begin{align*}
\Stab_H(F):=\{ h \in H : hF=F\} 
\end{align*}
acts co-compactly on $F$. 
\end{enumerate}
\end{observation}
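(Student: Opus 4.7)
The plan is to leverage the explicit description of $\Aut(S)$ in Example~\ref{ex:basic_properties_of_simplices}. After conjugating in $\PGL_d(\Rb)$, I may assume $S$ is the standard open simplex $\{[x_1:\cdots:x_d] : x_i > 0\}$, so that $\Aut(S) = D \rtimes S_d$ where $D$ is the image in $\PGL_d(\Rb)$ of the positive diagonal matrices and $S_d$ permutes coordinates. Since $D$ is exactly the subgroup of $\Aut(S)$ fixing every vertex of $S$, while any nontrivial permutation moves at least one vertex, one has $H_0 = H \cap D$; hence $H/H_0$ embeds in $\Aut(S)/D \cong S_d$, giving $[H:H_0] \leq d!$. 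Since a finite-index subgroup of a co-compactly acting group also acts co-compactly, this proves (1).

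For (2), I will first observe that since $H_0$ fixes every vertex of $S$ it preserves every face; in particular $H_0 \leq \Stab_H(F)$, so $\Stab_H(F)$ has finite index in $H$ and acts co-compactly on $S$. To promote co-compactness from $S$ to $F$, I plan to construct a $\Stab_H(F)$-equivariant continuous surjection $S \to F$. After relabeling, assume $F$ has vertices $[e_1], \ldots, [e_k]$, and let $p : \Rb^d \to \Rb^k$ be projection onto the first $k$ coordinates. Since every point of $S$ has positive entries, $p$ does not vanish on $S$ and descends to a continuous surjection $[p] : S \to F$. Any element of $\Stab_H(F)$ permutes the first $k$ vertices among themselves (and the last $d-k$ among themselves), so its lift commutes with $p$ up to a scalar on each block, making $[p]$ equivariant. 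Taking a compact set $K \subset S$ with $\Stab_H(F) \cdot K = S$, the image $[p](K) \subset F$ is compact and $\Stab_H(F) \cdot [p](K) = [p](\Stab_H(F) \cdot K) = [p](S) = F$, completing (2).

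The only step that requires any care is verifying that elements of $\Stab_H(F)$ respect the splitting $\Rb^d = \Rb^k \oplus \Rb^{d-k}$; this holds because the underlying permutation must preserve the vertex set of $F$ setwise. I do not anticipate other obstacles.
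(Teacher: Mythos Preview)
Your argument is correct in substance and follows exactly the line the paper intends: the observation is stated without proof, immediately after Example~\ref{ex:basic_properties_of_simplices}, as a direct consequence of the description $\Aut(S) \cong D \rtimes S_{k+1}$. Your proof of (1) via the finite-index bound $[H:H_0] \leq (k+1)!$ and of (2) via an equivariant projection $S \to F$ is the natural way to unpack that.

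One small point of polish: you normalize so that $S$ is the full $(d-1)$-dimensional standard simplex, but the statement allows $S$ to be a $k$-simplex in $\Pb(\Rb^d)$ with $k < d-1$. In that case $H$ need not be block-diagonal in $\PGL_d(\Rb)$; it only acts through $\Aut(S) \subset \PGL(\Span S)$ after restriction to the $(k+1)$-dimensional subspace $\Span S$. The fix is immediate---replace $\Rb^d$ by $\Span S$ throughout, define the projection $p$ on $\Span S$, and observe that co-compactness on $S$ and on $F$ depends only on the restricted action. With that adjustment your equivariance check (that $h \in \Stab_H(F)$ permutes the vertices of $F$ among themselves and hence the remaining vertices among themselves, forcing the lift to be block-diagonal on $\Span S = \Span F \oplus \Span F'$) goes through verbatim.
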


\subsection{Limits of linear maps} Every $T \in \Pb(\End(\Rb^d))$ induces a map 
\begin{align*}
\Pb(\Rb^d) \setminus \Pb(\ker T) \rightarrow \Pb(\Rb^d)
\end{align*}
defined by $x \mapsto T(x)$. We will frequently use the following observation.

\begin{observation}\label{obs:limits_of_maps} If $(T_n)_{n \geq 1}$ is a sequence in $\Pb(\End(\Rb^d))$ converging to $T \in \Pb(\End(\Rb^d))$, then 
\begin{align*}
T(x) = \lim_{n \rightarrow \infty} T_n(x)
\end{align*}
for all $x \in \Pb(\Rb^d) \setminus \Pb(\ker T)$. Moreover, the convergence is uniform on compact subsets of $ \Pb(\Rb^d) \setminus \Pb(\ker T)$. 
\end{observation}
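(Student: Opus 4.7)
The plan is to reduce everything to a statement about lifts of the operators to $\End(\Rb^d)$ and then exploit continuity of linear evaluation together with continuity of the projection $\Rb^d \setminus \{0\} \to \Pb(\Rb^d)$.

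First, I would choose lifts carefully. Recall that convergence $T_n \to T$ in $\Pb(\End(\Rb^d))$ is equivalent to the existence of lifts $\wh{T}_n, \wh{T} \in \End(\Rb^d) \setminus \{0\}$ with $\wh{T}_n \to \wh{T}$. Concretely, fix any lift $\wh{T}$ of $T$; normalize $\|\wh{T}\| = 1$. For each $n$ pick a lift $\wh{T}_n$ with $\|\wh{T}_n\| = 1$. By compactness of the unit sphere in $\End(\Rb^d)$ and the fact that $T_n \to T$, every subsequence of $(\wh{T}_n)$ has a further subsequence converging to $\pm \wh{T}$; after replacing $\wh{T}_n$ by $-\wh{T}_n$ as needed along a subsequence argument, it suffices to prove both claims under the assumption that $\wh{T}_n \to \wh{T}$ in $\End(\Rb^d)$.

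Now for pointwise convergence, fix $x \in \Pb(\Rb^d) \setminus \Pb(\ker T)$ and choose a lift $v \in \Rb^d \setminus \{0\}$ of $x$, so $\wh{T}(v) \ne 0$. Continuity of the evaluation map $(A,w) \mapsto A(w)$ on $\End(\Rb^d) \times \Rb^d$ gives $\wh{T}_n(v) \to \wh{T}(v) \ne 0$ in $\Rb^d$, and since the projection $\Rb^d \setminus \{0\} \to \Pb(\Rb^d)$ is continuous we obtain $T_n(x) = [\wh{T}_n(v)] \to [\wh{T}(v)] = T(x)$.

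For uniform convergence on a compact set $K \subset \Pb(\Rb^d) \setminus \Pb(\ker T)$, lift $K$ to the compact set
\[
\wt{K} := \{ v \in \Rb^d : \|v\| = 1, \ [v] \in K \} \subset S^{d-1}.
\]
Since $\wt{K}$ is compact and disjoint from the closed set $\ker \wh{T}$, there exists $\epsilon > 0$ with $\|\wh{T}(v)\| \geq \epsilon$ for every $v \in \wt{K}$. The operator norm bound $\|\wh{T}_n(v) - \wh{T}(v)\| \leq \|\wh{T}_n - \wh{T}\|$ yields $\wh{T}_n(v) \to \wh{T}(v)$ uniformly on $\wt{K}$, and for $n$ large $\|\wh{T}_n(v)\| \geq \epsilon/2$ uniformly on $\wt{K}$. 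The projectivization map is uniformly continuous on $\{w \in \Rb^d : \epsilon/2 \leq \|w\| \leq 1 + \|\wh{T}\|\}$, so $T_n \to T$ uniformly on $K$. There is no real obstacle here beyond keeping the lifting conventions consistent; the whole content is compactness plus continuity of evaluation and projectivization away from the kernel.
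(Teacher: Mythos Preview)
Your argument is correct. The paper states this as an ``Observation'' without proof, treating it as a standard fact about projectivized linear maps; your lift-and-projectivize argument is exactly the routine verification one would supply, and there is nothing to compare.
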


We can also view $\Pb(\End(\Rb^d))$ as a compactification of $\PGL_d(\Rb)$ and then consider limits of automorphisms in this compactification. 

\begin{proposition}\cite[Proposition 5.6]{IZ2019}\label{prop:dynamics_of_automorphisms_1}
Suppose $\Omega \subset \Pb(\Rb^d)$ is a properly convex domain, $p_0 \in \Omega$, and $(g_n)_{n \geq 1}$ is a sequence in $\Aut(\Omega)$ such that 
\begin{enumerate}
\item $g_n (p_0) \rightarrow x \in \partial \Omega$, 
\item $g_n^{-1} (p_0) \rightarrow y \in \partial \Omega$, and
\item $g_n\rightarrow T \in \Pb(\End(\Rb^d))$. 
\end{enumerate}
Then ${\rm image}\, T \subset \Spanset  F_\Omega(x)$, $\Pb(\ker T) \cap \Omega = \emptyset$, and $y \in \Pb(\ker T)$. 
\end{proposition}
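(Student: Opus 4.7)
My plan is to prove the three conclusions in the order listed, using as main tools Observation~\ref{obs:limits_of_maps} (which passes the limit $g_n \to T$ through any point off $\Pb(\ker T)$, uniformly on compacta) and Proposition~\ref{prop:dist_est_and_faces} (which converts a bounded Hilbert distance into containment in a common open face), together with the fact that each $g_n \in \Aut(\Omega)$ acts by $\hil$-isometries. I shall also use repeatedly the auxiliary convex-geometric fact that $\Span F_\Omega(v) \cap \Omega = \emptyset$ for every $v \in \partial\Omega$: the closure $\overline{F_\Omega(v)}$ is the minimal face of $\overline{\Omega}$ containing $v$ and hence a face in the usual sense, so if some $p$ lay in $\Omega \cap \Span F_\Omega(v)$ then the line through $p$ and any $q \in F_\Omega(v)$ would meet $\overline{\Omega}$ at a second boundary point $q''$, and the face property of $\overline{F_\Omega(v)}$ applied to $q \in (p,q'')$ would force $p \in \overline{F_\Omega(v)} \subset \partial\Omega$.

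First, for ${\rm image}(T) \subset \Span F_\Omega(x)$: pick any $q \in \Omega \setminus \Pb(\ker T)$. Then $g_n(q) \to T(q)$ by Observation~\ref{obs:limits_of_maps}, and $\hil(g_n(q), g_n(p_0)) = \hil(q, p_0)$ is finite and independent of $n$; combined with $g_n(p_0) \to x \in \partial\Omega$ this rules out $T(q) \in \Omega$, and then Proposition~\ref{prop:dist_est_and_faces} gives $T(q) \in F_\Omega(x)$. Thus $T$ sends the non-empty open set $\Omega \setminus \Pb(\ker T) \subset \Pb(\Rb^d)$ into the projective subspace $\Pb(\Span F_\Omega(x))$; since $T$-preimages of projective subspaces are themselves projective subspaces, this forces ${\rm image}(T) \subset \Span F_\Omega(x)$. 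For $y \in \Pb(\ker T)$, arguing by contradiction: if $y \notin \Pb(\ker T)$ then the uniform-convergence clause of Observation~\ref{obs:limits_of_maps} applied to $g_n^{-1}(p_0) \to y$ gives $p_0 = g_n(g_n^{-1}(p_0)) \to T(y)$, so $T(y) = p_0 \in {\rm image}(T) \subset \Span F_\Omega(x)$, contradicting $p_0 \in \Omega$ via the auxiliary fact.

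The delicate conclusion is $\Pb(\ker T) \cap \Omega = \emptyset$. After passing to a subsequence I may assume $g_n^{-1}$ also converges in $\Pb(\End(\Rb^d))$ to some $S$; applying the previous steps to $(g_n^{-1}, S, y, x)$ in place of $(g_n, T, x, y)$ yields ${\rm image}(S) \subset \Span F_\Omega(y)$ and $x \in \Pb(\ker S)$. Suppose for contradiction that $z \in \Pb(\ker T) \cap \Omega$. By the Hilbert isometry property, $\hil(g_n(z), g_n(p_0)) = \hil(z, p_0) < \infty$, so after a further subsequence $g_n(z) \to z^* \in F_\Omega(x)$ by Proposition~\ref{prop:dist_est_and_faces}. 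Provided $z^* \notin \Pb(\ker S)$, Observation~\ref{obs:limits_of_maps} applied to the constant sequence $z = g_n^{-1}(g_n(z))$ yields $z = S(z^*) \in {\rm image}(S) \subset \Span F_\Omega(y)$, contradicting $z \in \Omega$ via the auxiliary fact.

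The main obstacle is the residual case $z^* \in \Pb(\ker S)$, in which one cannot push the limit through $S$ at $z^*$. I would address this by analyzing normalized linear lifts $\hat g_n$ together with the singular value decomposition $g_n = K_n A_n L_n$: the identity $\hat g_n \hat g_n^{-1} = c_n I$ with $c_n \to 0$ forces $\hat T \hat S = 0$ and $\hat S \hat T = 0$ in the limit, and combined with the containments just established, this should constrain the rank profile of $g_n$ enough to rule out the degenerate configuration via a careful count of ``fast'' versus ``slow'' singular directions of $g_n$.
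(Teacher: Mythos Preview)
The paper does not give a proof of this proposition; it is quoted verbatim from \cite{IZ2019}, so there is no in-paper argument to compare against. Your treatments of ${\rm image}(T)\subset\Span F_\Omega(x)$ and of $y\in\Pb(\ker T)$ are correct and essentially the standard ones.

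The genuine gap is in $\Pb(\ker T)\cap\Omega=\emptyset$. You correctly isolate the residual case $z^*\in\Pb(\ker S)$, but your proposed fix (``$\hat T\hat S=0$ together with an SVD count of fast versus slow directions'') is not carried out, and it is not clear how that identity alone excludes the configuration. The detour through $S=\lim g_n^{-1}$ is in fact unnecessary. A direct argument: choose lifts $\hat g_n\in\GL_d(\Rb)$ with $\hat g_n(\hat\Omega)=\hat\Omega$ for one fixed component $\hat\Omega\subset\Rb^d\setminus\{0\}$ of the preimage of $\Omega$, normalize so that $\|\hat g_n\|_{\mathrm{op}}=1$, and pass to a subsequence so that $\hat g_n\to\hat T$. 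If $z\in\Omega\cap\Pb(\ker T)$, pick $\hat z\in\hat\Omega$ over $z$ and a Euclidean ball $B(\hat z,r)\subset\hat\Omega$. Then $\hat g_n\big(B(\hat z,r)\big)\subset\hat\Omega$ is an ellipsoid centred at $\hat g_n(\hat z)\to 0$ whose longest semi-axis has length $r\|\hat g_n\|_{\mathrm{op}}=r$, hence contains a segment $[\hat g_n(\hat z)-rv_n,\ \hat g_n(\hat z)+rv_n]$ for some unit vector $v_n$. Passing to a further subsequence with $v_n\to v$ yields $rv,-rv\in\overline{\hat\Omega}$, contradicting the fact that $\overline{\hat\Omega}$ is a proper cone. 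This replaces your entire third step and avoids the residual case altogether.
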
 

In the case when the orbit stays within a uniform neighborhood of a projective line geodesic (i.e. the convergence is ``conical''), we can say more about the linear map obtained in the limit.   

\begin{proposition}\cite[Proposition 5.7]{IZ2019}\label{prop:dynamics_of_automorphisms_2}
Suppose $\Omega \subset \Pb(\Rb^d)$ is a properly convex domain, $p_0 \in \Omega$, $x \in \partial \Omega$, $(p_n)_{n \geq 1}$ is a sequence in $[p_0, x)$ converging to $x$, and $(g_n)_{n \geq 1}$ is a sequence in $\Aut(\Omega)$ such that 
\begin{align*}
\sup_{n \geq 0} \hil(g_n p_0, p_n) < + \infty.
\end{align*}
If $g_n \rightarrow T \in \Pb(\End(\Rb^d))$, then
\begin{align*}
T(\Omega) = F_\Omega(x).
\end{align*}
\end{proposition}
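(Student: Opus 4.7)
Set $C := \sup_n \hil(g_n p_0, p_n) < \infty$ and $W := \Span F_\Omega(x)$. The argument proceeds by establishing $T(\Omega) \subset F_\Omega(x)$, then $x \in T(\Omega)$, and finally $F_\Omega(x) \subset T(\Omega)$.

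For the inclusion $T(\Omega) \subset F_\Omega(x)$, fix $q \in \Omega$. By Proposition~\ref{prop:dynamics_of_automorphisms_1}, $\Pb(\ker T) \cap \Omega = \emptyset$, so $q \notin \Pb(\ker T)$ and Observation~\ref{obs:limits_of_maps} gives $g_n q \to T(q) \in \overline{\Omega}$. Since $g_n$ acts isometrically on $(\Omega, \hil)$,
\[
\hil(g_n q, p_n) \leq \hil(g_n q, g_n p_0) + \hil(g_n p_0, p_n) = \hil(q, p_0) + C,
\]
uniformly in $n$, and $p_n \to x$. Proposition~\ref{prop:dist_est_and_faces} then places $T(q)$ in $F_\Omega(x)$.

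To show $x \in T(\Omega)$, consider the preimages $q_n := g_n^{-1}(p_n) \in \Omega$. By isometry, $\hil(q_n, p_0) = \hil(p_n, g_n p_0) \leq C$, so the $q_n$ lie in a closed (hence compact) Hilbert ball around $p_0$ in $\Omega$. After passing to a subsequence, $q_n \to q \in \Omega$; then $q \notin \Pb(\ker T)$, and Observation~\ref{obs:limits_of_maps} yields $T(q) = \lim g_n q_n = \lim p_n = x$.

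For the reverse inclusion $F_\Omega(x) \subset T(\Omega)$, I would pass to a further subsequence so that $g_n p_0 \to x' \in \overline{\Omega}$ and $g_n^{-1} \to S \in \Pb(\End(\Rb^d))$. Proposition~\ref{prop:dist_est_and_faces} gives $x' \in F_\Omega(x)$, so by Proposition~\ref{prop:dynamics_of_automorphisms_1} ${\rm image}(T) \subset \Span F_\Omega(x') = W$. The key step is to upgrade this to equality ${\rm image}(T) = W$. One verifies that for any $z \in \overline{\Omega} \setminus \Pb(\ker S)$ with $S(z) \in \Omega$, the compositional identity $T(S(z)) = z$ holds (by uniform convergence of $g_n$ on a neighborhood of $S(z)$). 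This identity, combined with the fact that $T(\Omega)$ is a convex open subset of $\Pb({\rm image}(T))$ containing $x$, and a continuity-of-fibers argument in the appropriate Grassmannian (using $T^{-1}(x) \ni q \in \Omega$), shows that $T(\Omega)$ contains a $\Pb(W)$-neighborhood of $x$; this forces ${\rm image}(T) = W$. Finally, $T(\Omega)$ is then a convex open subset of $\Pb(W)$, contained in the convex open set $F_\Omega(x) \subset \overline{\Omega} \cap \Pb(W) = \overline{F_\Omega(x)}$, and the conical hypothesis allows one to produce, for any $y \in F_\Omega(x)$, a preimage $r \in \Omega$ with $T(r) = y$ by an argument analogous to the preimage construction for $x$.

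\emph{Main obstacle.} The difficulty concentrates in showing ${\rm image}(T) = W$ and then filling out $F_\Omega(x)$ entirely. This refinement over Proposition~\ref{prop:dynamics_of_automorphisms_1} relies essentially on the stronger hypothesis that $p_n$ approaches $x$ along the projective segment $[p_0, x)$: this conical alignment is what forces the limit $T$ to have maximal rank onto $\Span F_\Omega(x)$ and to surject onto the open face, rather than collapsing into a proper subspace of $W$.
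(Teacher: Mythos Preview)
The paper does not prove this proposition; it is cited from~\cite{IZ2019}, so there is no in-paper argument to compare against. I will therefore assess the proposal on its own merits.

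Your first two steps are clean and correct: the inclusion $T(\Omega)\subset F_\Omega(x)$ follows exactly as you say from Proposition~\ref{prop:dist_est_and_faces}, and the preimage construction $q_n=g_n^{-1}p_n$ gives $x\in T(\Omega)$.

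The third step, however, has a genuine gap. Your proposed route through the ``compositional identity'' $T(S(z))=z$ for $z\in\overline{\Omega}\setminus\Pb(\ker S)$ with $S(z)\in\Omega$ is vacuous: applying Proposition~\ref{prop:dynamics_of_automorphisms_1} to the sequence $g_n^{-1}$ shows ${\rm image}(S)\subset\Span F_\Omega(y^-)$ for some $y^-\in\partial\Omega$, hence $S(\overline{\Omega}\setminus\Pb(\ker S))\subset\partial\Omega$, and the condition $S(z)\in\Omega$ is never satisfied. The subsequent ``continuity-of-fibers argument in the appropriate Grassmannian'' is not an argument, and you correctly flag in your final paragraph that you have not actually established ${\rm image}(T)=W$ or the full surjectivity onto $F_\Omega(x)$.

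The right move is to \emph{generalize} your second step rather than change tack. Fix $y\in F_\Omega(x)$; then $x$ and $y$ lie in a common open segment $(a,b)\subset\partial\Omega$. Choose $y_n\in[p_0,y)$ with the same Hilbert parameter as $p_n$ along $[p_0,x)$ (equivalently, take $y_n$ on the line through $p_n$ parallel to $[x,y]$ in an affine chart). A direct cross-ratio computation---using that $x,y$ lie strictly inside $(a,b)$---shows $\sup_n\hil(p_n,y_n)<\infty$. Then $r_n:=g_n^{-1}y_n$ satisfies $\hil(r_n,p_0)\le C+\hil(p_n,y_n)$, so after a subsequence $r_n\to r\in\Omega$ and $T(r)=\lim g_n r_n=\lim y_n=y$. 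This is where the ``conical'' hypothesis $p_n\in[p_0,x)$ is actually used: without it one cannot control $\hil(p_n,y_n)$ uniformly.
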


 Proposition 5.7 in~\cite{IZ2019} is stated differently and a proof of the statement above can be found in~\cite[Proposition 2.13]{Z2020}.

\subsection{Background on relatively hyperbolic metric spaces}\label{sec:background_on_rel_hyp}

We define relative hyperbolic spaces and groups in terms of Dru{\c t}u and Sapir's tree-graded spaces (see~\cite[Definition 2.1]{DS2005}). 

\begin{definition} \ \begin{enumerate}
\item A complete geodesic metric space $(X,\dist)$ is \emph{relatively hyperbolic with respect to a collection of subsets $\Sc$} if all its asymptotic cones, with respect to a fixed non-principal ultrafilter, are tree-graded with respect to the collection of ultralimits of the elements of $\Sc$. 
\item A finitely generated group $G$ is \emph{relatively hyperbolic with respect to a family of subgroups $\{H_1,\dots, H_k\}$} if the Cayley graph of $G$ with respect to some (hence any) finite set of generators is relatively hyperbolic with respect to the collection of left cosets $\{g H_i : g \in G, i=1,\dots,k\}$. 
\end{enumerate}
\end{definition}

\begin{remark}The above definition is one of several equivalent definitions of relatively hyperbolic spaces/groups, see~\cite{DS2005} and the references therein for more details. 

\end{remark}

We now recall some useful properties of relatively hyperbolic spaces.  Recall the notation $\Nc(A;r)$ and $\dist^{\Haus}(A,B)$ from Section~\ref{sec:notation in preliminary}.

\begin{theorem}[{Dru{\c t}u--Sapir~\cite[Theorem 4.1]{DS2005}}]\label{thm:rh_intersections_of_neighborhoods} Suppose $(X,\dist)$ is relatively hyperbolic with respect to $\Sc$. For any $r > 0$ there exists $Q(r) > 0$ such that: if $S_1, S_2 \in \Sc$ are distinct, then 
\begin{align*}
\diam \big( \Nc(S_1;r)\cap \Nc(S_2;r) \big) \leq Q(r).
\end{align*}
\end{theorem}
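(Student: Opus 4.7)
The plan is to argue by contradiction via the asymptotic-cone characterization of relative hyperbolicity. Suppose the bounded intersection conclusion fails for some fixed $r > 0$; then for each $n \geq 1$ one can find distinct $S_{1,n}, S_{2,n} \in \Sc$ and points $x_n, y_n \in \Nc(S_{1,n};r) \cap \Nc(S_{2,n};r)$ with $D_n := \dist(x_n,y_n) \to \infty$. Fix a non-principal ultrafilter $\omega$ on $\Nb$ and form the asymptotic cone $X_\omega := \lim_\omega (X, \dist/D_n, x_n)$; by hypothesis, $X_\omega$ is tree-graded with respect to the family $\mathcal{P}$ of ultralimits of sequences in $\Sc$.

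Let $x_\omega, y_\omega \in X_\omega$ denote the ultralimits of $(x_n)$ and $(y_n)$; by construction they lie at distance exactly $1$, and are in particular distinct. For each $i \in \{1,2\}$, pick $s_{i,n}, t_{i,n} \in S_{i,n}$ with $\dist(s_{i,n},x_n), \dist(t_{i,n},y_n) \leq r$. Since $r/D_n \to 0$, the corresponding ultralimits satisfy $s_{i,\omega} = x_\omega$ and $t_{i,\omega} = y_\omega$, so the ultralimit piece $S_{i,\omega}$ associated to the sequence $(S_{i,n})$ contains both $x_\omega$ and $y_\omega$. Consequently $\{x_\omega, y_\omega\} \subset S_{1,\omega} \cap S_{2,\omega}$, and the defining axiom (T1) of tree-graded spaces (distinct pieces meet in at most one point) combined with $x_\omega \neq y_\omega$ forces $S_{1,\omega} = S_{2,\omega}$ as subsets of $X_\omega$.

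The main obstacle is converting this collapse of pieces in the ultralimit into a contradiction with the assumption $S_{1,n} \neq S_{2,n}$ for every $n$. The idea is to show that two sequences in $\Sc$ whose ultralimits coincide as pieces of $X_\omega$ must in fact agree $\omega$-almost surely. Concretely, one argues that each point $p$ of the common piece $P := S_{1,\omega} = S_{2,\omega}$ is approximated by sequences $p_n^{(1)} \in S_{1,n}$ and $p_n^{(2)} \in S_{2,n}$ with $\dist(p_n^{(1)}, p_n^{(2)})/D_n \to 0$; applying this to a net of points in $P$ yields an unbounded region in $X$ where $S_{1,n}$ and $S_{2,n}$ coarsely coincide. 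Combined with the \emph{a priori} separation of distinct elements of $\Sc$ (in the finitely generated group setting this is just the discreteness of the Cayley graph; in general it is part of Dru{\c t}u-Sapir's definition of an asymptotically tree-graded family), one concludes that $S_{1,n} = S_{2,n}$ for $\omega$-almost every $n$, contradicting the original choice.
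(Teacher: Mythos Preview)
The paper does not prove this statement at all; it is simply cited from Dru{\c t}u--Sapir~\cite[Theorem 4.1]{DS2005} as background. So there is no ``paper's proof'' to compare against, and your attempt is really an attempt at (one direction of) the cited result.

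Your overall strategy---contradict the hypothesis by passing to an asymptotic cone and invoking the tree-graded axiom (T1)---is the natural one and is indeed how the argument runs in~\cite{DS2005}. You correctly reach the point where $S_{1,\omega} = S_{2,\omega}$ as subsets of $X_\omega$, and you correctly identify that this is \emph{not yet} a contradiction: two distinct sequences of pieces can perfectly well have the same ultralimit.

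The gap is in your final paragraph. The claim that ``coarse coincidence on an unbounded region'' plus some ``\emph{a priori} separation of distinct elements of $\Sc$'' forces $S_{1,n} = S_{2,n}$ $\omega$-almost surely is not justified. No such separation hypothesis appears in the definition the paper uses (which is purely the asymptotic-cone condition), and knowing that representatives $p_n^{(1)} \in S_{1,n}$, $p_n^{(2)} \in S_{2,n}$ satisfy $\dist(p_n^{(1)},p_n^{(2)}) = o(D_n)$ says nothing about bounded distance, since $D_n \to \infty$. Even in the Cayley-graph setting, two distinct cosets at bounded distance from one another will have identical ultralimits under any divergent rescaling, so the implication you want is genuinely false without further input. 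In~\cite{DS2005} the contradiction is obtained by using more of the tree-graded structure (in particular the cut-point/transversal-tree description coming from axiom (T2)), not merely (T1); your sketch stops short of this and the missing step is nontrivial.
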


\begin{theorem}[{Dru{\c t}u--Sapir~\cite[Corollary 5.8]{DS2005}}]\label{thm:rh_embeddings_of_flats} Suppose $(X, \dist)$ is relatively hyperbolic with respect to $\Sc$. Then for any $A \geq 1$ and $B \geq 0$ there exists $M =M(A,B)$ such that: if $k \geq 2$ and $f : \Rb^k \rightarrow X$ is an $(A,B)$-quasi-isometric embedding, then there exists some $S \in \Sc$ such that 
\begin{align*}
f(\Rb^k) \subset \Nc(S;M).
\end{align*}
\end{theorem}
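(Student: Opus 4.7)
The plan is to argue by contradiction using the asymptotic cone characterization of relative hyperbolicity. Suppose the conclusion fails for some fixed constants $A \geq 1$ and $B \geq 0$. Then for every $n \geq 1$ there exists a $(A,B)$-quasi-isometric embedding $f_n : \Rb^k \to X$ such that $f_n(\Rb^k)$ is not contained in $\Nc(S; n)$ for any $S \in \Sc$.

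The first main step is to pass to an asymptotic cone. Fix a non-principal ultrafilter $\omega$, set the basepoint $x_n := f_n(0)$, and take the scaling sequence $d_n := n$. Let $X_\omega$ denote the asymptotic cone $\mathrm{Cone}_\omega(X, (x_n), (d_n))$, which by hypothesis is tree-graded with respect to the collection $\Pc_\omega$ of ultralimits of sequences of pieces in $\Sc$. The rescaled maps $f_n / d_n$ have uniform bi-Lipschitz control (with multiplicative constant $A$ and additive error that vanishes after rescaling), so they induce an $A$-bi-Lipschitz embedding
\begin{equation*}
f_\omega : \Rb^k \longrightarrow X_\omega.
\end{equation*}

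The second main step is to invoke the structure theorem for subsets of tree-graded spaces without cut points (\cite[Lemma 2.31]{DS2005}, or the argument from their Section~5): any connected subset of a tree-graded space which has no global cut points must be contained in a single piece. Since $k \geq 2$, the image $f_\omega(\Rb^k)$ is a topological $k$-manifold homeomorphic to $\Rb^k$, which has no cut points; therefore there exists a single piece $P_\omega \in \Pc_\omega$ with $f_\omega(\Rb^k) \subset P_\omega$. Choose a sequence $S_n \in \Sc$ with $P_\omega = \lim_\omega S_n / d_n$.

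The final step is to transfer this back to $X$. The inclusion $f_\omega(\Rb^k) \subset P_\omega$ means that for every $R > 0$ and every $\omega$-almost every $n$, the image $f_n(B_R^{\Rb^k}(0))$ lies in the $o(d_n)$-neighborhood of $S_n$ in $X$. Combined with the fact that an $(A,B)$-quasi-isometric embedding of $\Rb^k$ has image of polynomial growth, a standard covering argument then forces $f_n(\Rb^k) \subset \Nc(S_n; C \cdot n^{1/2})$ (or any other sublinear bound sufficient for the contradiction) for $\omega$-almost every $n$, which eventually beats the $n$-threshold by which $f_n$ was chosen to violate confinement. Actually, the cleaner route is to argue directly with $R_n \to \infty$ chosen growing fast enough that $f_n(B_{R_n}^{\Rb^k}(0))$ already witnesses the violation; then the ultralimit places a subset of $f_\omega(\Rb^k)$ of unbounded diameter outside every single piece, contradicting the previous paragraph.

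The main obstacle I expect is the translation between ``not contained in any $\Nc(S; n)$'' in $X$ and the precise geometric statement about ultralimits: one must choose the scaling sequence $d_n$ and the radii at which the violation is witnessed in a coordinated way so that after rescaling a positive-diameter piece of $\Rb^k$ survives and lies outside every piece of $X_\omega$. Once this synchronization is set up correctly, the tree-graded structure theorem applied to $f_\omega(\Rb^k)$ delivers the contradiction immediately. The rest of the argument—verifying that $f_\omega$ is bi-Lipschitz and that ultralimits of pieces are indeed the pieces of $X_\omega$—is standard from the definition of relative hyperbolicity via asymptotic cones.
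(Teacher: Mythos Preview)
The paper does not give its own proof of this statement: Theorem~\ref{thm:rh_embeddings_of_flats} is quoted from Dru\c{t}u--Sapir \cite[Corollary 5.8]{DS2005} and used as a black box. So there is no in-paper argument to compare against; your proposal is an attempt to reconstruct the original Dru\c{t}u--Sapir proof, and the asymptotic-cone strategy you outline is indeed the one they use.

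That said, your final step has a genuine gap. From $f_\omega(\Rb^k)\subset P_\omega=\lim_\omega S_n$ you only learn that for each \emph{fixed} $v\in\Rb^k$, $\dist\bigl(f_n(d_n v),S_n\bigr)=o(d_n)$ $\omega$-almost surely; equivalently, any ball of radius $O(d_n)$ around the basepoint lands in a sublinear neighborhood of $S_n$. This says nothing about all of $f_n(\Rb^k)$, whose violating point $v_n$ (with $\dist(f_n(v_n),S_n)>n$) may satisfy $|v_n|/d_n\to\infty$ and hence be invisible in the cone. Your ``polynomial growth / covering'' remark does not bridge this, and the ``cleaner route'' with $R_n\to\infty$ has the same problem: if $R_n/d_n$ stays bounded you still only see a bounded window in $X_\omega$, and if $R_n/d_n\to\infty$ the rescaled ball collapses to nothing useful.

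The standard repair is to first prove the statement for compact cubes $[-\ell,\ell]^k$ (where the entire domain survives in the cone, and uniform Lipschitz control of $v\mapsto \dist(f_n(\ell_n v),S_n)/\ell_n$ upgrades pointwise convergence to uniform convergence on $[-1,1]^k$), and then pass to $\Rb^k$ using Theorem~\ref{thm:rh_intersections_of_neighborhoods}: the pieces $S_\ell$ containing $f([-\ell,\ell]^k)$ must eventually stabilize, since distinct pieces have neighborhoods whose intersection has bounded diameter while $f([-\ell,\ell]^k)$ does not. Alternatively one can relocate the basepoint to where the violation occurs, but making that precise again requires the bounded-intersection property. Either way, the missing ingredient is exactly Theorem~\ref{thm:rh_intersections_of_neighborhoods}, which your sketch never invokes.
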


\section{The geometry of 3-manifolds}
\label{sec:three_manifold_theory}

This expository section is a compilation of facts about the geometry of 3-manifolds that will be used in the paper. For more details see \cite{S1983,Thurston1997,FB2002,MF2010,AFW2015}.  

\subsection{Geometric 3-manifolds}

A \emph{3-dimensional geometry} is a pair $(X,G)$ where $X$ is a simply connected smooth 3-manifold and $G$ is a Lie group acting smoothly, transitively, and effectively on $X$ with compact point-stabilizers. It is also assumed that $G$ is maximal among all Lie groups which satisfy the above property. Two geometries $(X,G)$ and $(X',G')$ are equivalent if $\phi:X \to X'$ is a diffeomorphism which is equivariant with respect to the actions of $G$ and $G'$ respectively. For example, the 3-dimensional Euclidean geometry corresponds to the pair $(\Rb^3, \Rb^3 \rtimes {\rm O}(3))$ while the 3-dimensional hyperbolic geometry corresponds to the pair $(\Hb^3, {\rm PO}(3,1))$.

\begin{definition}
A 3-manifold $M$ is \emph{geometric} with geometry $(X,G)$ if $M$ is homeomorphic to a quotient $\Gamma \backslash X$ where $\Gamma \leq G$ is a discrete subgroup and $\Gamma \backslash X$ has finite volume with respect to the pushforward of some (any) $G$-invariant Riemannian metric on $X$ (this follows the terminology in \cite[Lecture 1]{MF2010}). 
\end{definition}

This point onwards, for brevity, we will only write $X$ to denote the pair $(X,G)$ when discussing well-known geometries. The following are the eight \emph{3-dimensional Thurston geometries}:
\begin{align}
\label{eqn:3-dim-geom}
X= \Rb^3, \ \Sb^3, \ \Hb^3, \ \Rb \times \Sb^2, \ \Rb \times \Hb^2, \ \wt{\SL_2(\Rb)}, \ {\rm Nil}, \ {\rm Sol}.
\end{align}
Thurston showed that this is precisely the list of 3-dimensional geometries that can appear as a geometry on a 3-manifold $M$. For more details about these geometries, see \cite{S1983}. The next theorem states that a closed 3-manifold which is geometric supports a unique 3-dimensional geometry.

\begin{proposition} [{\cite[Theorem 5.2]{S1983}}]
\label{prop:closed-mfld-unique-gstr}
If $M$ is a closed 3-manifold, then $M$ admits at most one of the geometries in \eqref{eqn:3-dim-geom}.
\end{proposition}   

We also observe that with our (very restrictive) definition of geometric manifold, a non-compact 3-manifold can only possibly admit three of the geometries. 

\begin{proposition}\label{prop:finite volume implies compact, sometimes}
If $M$ is a 3-manifold which admits a $\Rb^3$, $\Sb^3$, $\Rb \times \Sb^2$, ${\rm Nil}$, or ${\rm Sol}$ geometry, then $M$ is closed.
\end{proposition} 

\begin{proof} In the case when $M$ admits a $\Rb^3$, $\Sb^3$ or $\Rb \times \Sb^2$ geometry, this follows in a straightforward way from the discussion in~\cite{S1983}. In the case when $M$ admits a ${\rm Nil}$ or ${\rm Sol}$ geometry, it is perhaps easier to use the theory of lattices in solvable Lie groups. For instance,  assume that $M$ admits a ${\rm Sol}$ geometry. In this case, $X = {\rm Sol}$ and ${\rm Sol}$ coincides with the connected component of the identity in $G$. So $M$ is homeomorphic to a quotient $\Gamma \backslash {\rm Sol}$ which has finite volume with respect to the pushforward of some (any) $G$-invariant Riemannian metric on $X$. Then $\Gamma_0 := \Gamma \cap {\rm Sol}$ has finite index in $\Gamma$ and hence is a lattice in ${\rm Sol}$. 
Then~\cite[Theorem 3.1]{R1972} implies that $\Gamma_0$ is a co-compact lattice. Hence $M \cong \Gamma \backslash {\rm Sol}$ is closed. The argument when $M$ has ${\rm Nil}$ geometry is very similar. 
\end{proof}

We next recall a result about manifolds with $\Rb^3$, ${\rm Nil}$, or ${\rm Sol}$ geometry.

\begin{proposition}
\label{prop:nil_or_sol_geom}
Suppose $M$ is a closed geometric 3-manifold. 
\begin{enumerate}
\item $M$ has $\Rb^3$ geometry if and only if $\pi_1(M)$ is virtually Abelian, but not virtually cyclic. 
\item $M$ has ${\rm Nil}$ geometry if and only if $\pi_1(M)$ is virtually nilpotent, but not virtually Abelian.
\item $M$ has ${\rm Sol}$ geometry if and only if $\pi_1(M)$ is virtually solvable, but not virtually nilpotent.
\end{enumerate}
\end{proposition}

\begin{proof} This follows from~\cite[Theorem 4.7.8 and Figure 4.22]{Thurston1997}.
\end{proof} 

\subsection{Non-geometric 3-manifolds} Not every 3-manifold is geometric and such a manifold is called \emph{non-geometric}. The geometric decomposition theorem is the key to understanding the structure of these 3-manifolds. Before stating the theorem, we need two technical definitions.

A submanifold $ N \hookrightarrow M$ is called \emph{incompressible} provided the induced map $\pi_1(N) \to \pi_1(M)$ is injective. A 3-manifold $M$ is called \emph{irreducible} provided any embedded $\Sb^2$ in $M$ is the boundary of a 3-ball  in $M$.

\begin{remark}\label{rem:irred_implies_constraint_on _geom}
If $M$ is a closed irreducible orientable 3-manifold, then $M$ cannot have $\Rb \times \Sb^2$ geometry. Indeed, there are only four closed 3-manifolds admitting a $\Rb \times \Sb^2$ geometry (see~\cite[pg.\ 457]{S1983}) and amongst these only $\Sb^1 \times \Sb^2$ is orientable. 
\end{remark}

\begin{theorem}[Geometric decomposition theorem]
\label{thm:geom-decomp}
Suppose $M$ is a closed irreducible orientable 3-manifold. Then there is a (possibly empty) collection of disjoint and incompressible tori and Klein bottles $\Tc=\{T_1,\dots, T_m\}$ such that each connected component of $(M - \Tc)$ is a geometric 3-manifold. Moreover, any such collection $\Tc$, with minimal number of components in $(M-\Tc)$, is unique up to isotopy. 
\end{theorem}

A different (but related) decomposition of a 3-manifold is given by the  JSJ decomposition theorem, see \cite[Theorem 1.6.1]{AFW2015}. It states the existence of a collection $\Tc$ (of disjoint and incompressible tori and Klein bottle) such that each component of $(M - \Tc)$ is atoroidal (i.e. any $\pi_1$ injective tori is homotopic to a boundary tori) or a Seifert fibered 3-manifold. In the next section, we discuss the structure of Seifert fibered 3-manifolds.

\subsection{Seifert fibered 3-manifolds}
The building blocks of a Seifert fibered 3-manifold are fibered solid tori and Klein bottle. Let $\Db^2=\{z \in \Cb: |z| \leq 1\}$. A fibered solid torus is the quotient space $\Db^2\times [0,1]/{\sim}$ where $$(z,1) \sim (z \cdot \exp^{2 \pi i q/p},0)$$ for co-prime integers $(q,p)$ with $p>0$. A fibered solid Klein bottle is $\Db^2\times [0,1] /{\sim}$ where $\sim$ identifies $\Db^2 \times \{1\}$ with $\Db^2 \times \{0\}$ by a reflection across a diameter of $\Db^2$. Fibered solid tori and Klein bottle are compact 3-manifolds that are finitely covered by $\Db^2 \times \Sb^1$ and admit a foliation by circles. 

\begin{definition} 
A \emph{Seifert fibered 3-manifold} $M$ is a 3-manifold that can be decomposed as a disjoint union of topological circles (called \emph{fibers}) such that each circle has tubular neighborhood in $M$ homeomorphic to a fibered solid torus or Klein bottle. A fiber is called \emph{regular} if it has a neighborhood in $M$ isomorphic to a $\Db^2 \times \Sb^1$ and \emph{critical} otherwise.
\end{definition}  

Now observe that if $M$ is a Seifert fibered 3-manifold, then $M$ is a $\Sb^1$ bundle over a 2-dimensional orbifold $\Sigma$ (called the base orbifold). Indeed, the orbifold $\Sigma$ is obtained by collapsing each fiber to a point and the orbifold points of $\Sigma$ correspond to critical fibers, see \cite[Section 3]{S1983} for details. If $M$ has boundary components, then $\Sigma$ is an orbifold with boundary. The algebraic structure of the fundamental group of such a manifold is given by the following. 

\begin{proposition} [{\cite[Lemma 3.2]{S1983}}]
\label{prop:seifert-alg}
Suppose $M$ is a Seifert fibered 3-manifold with base orbifold $\Sigma$. Then there is an exact sequence
\begin{align*}
1 \rightarrow N \rightarrow \pi_1(M) \rightarrow \pi_1(\Sigma) \rightarrow 1
\end{align*}
where $N$ is a cyclic subgroup of $\pi_1(M)$ generated by a regular fiber. Moreover, $N$ is infinite except in the case where $M$ is covered by $\Sb^3$. 
\end{proposition}

The following results discusses the structure of geometric 3-manifolds which are Seifert fibered.

\begin{proposition}
\label{prop:seifert_geometric_manifolds}
If $M$ is a geometric $3$-manifold with $\Rb^3$, $\Rb \times \Hb^2$, or $\wt{\SL_2(\Rb)}$ geometry, then $M$ is Seifert fibered. Moreover, if $M$ has $\Rb \times \Hb^2$ or $\wt{\SL_2(\Rb)}$ geometry, then the base orbifold is homeomorphic to a 2-dimensional finite area hyperbolic orbifold. 
\end{proposition}

\begin{proof} This follows from \cite[Theorems 2.7 and 3.17]{FB2002}   (also see Theorem 1.8.1 and Table 1 in \cite{AFW2015}). \end{proof} 

Finally, we observe the following about the geometric pieces in the geometric decomposition. Below, $Z_H(h)$ denotes the centralizer of $h$ in the group $H$.  

\begin{proposition}
\label{prop:seifert-piece}
Suppose $M$ is a non-geometric closed irreducible orientable 3-manifold  and $S$ is a connected component in the geometric decomposition of $M$. If $S$ does not admit a $\Hb^3$ geometry, then:
\begin{enumerate}
\item $S$ has a $\Rb \times \Hb^2$ or $\wt{\SL_2(\Rb)}$ geometry,
\item if $\ip{h}$ is the normal subgroup of $\pi_1(S)$ generated by a regular fiber (see Propositions~\ref{prop:seifert_geometric_manifolds} and~\ref{prop:seifert-alg}), then $Z_{\pi_1(S)}(h)=Z_{\pi_1(M)}(h)$. 
\end{enumerate}
\end{proposition}

\begin{proof}
Part (1) follows from Proposition~\ref{prop:finite volume implies compact, sometimes}. Part (2) is well-known, see for instance  \cite[Proof of Theorem 4.4]{F2011}. 
\end{proof}

\section{Centralizers of convex co-compact actions}

 In this section, we will prove the following expanded version of \Cref{thm:center_intro}.

\begin{theorem}\label{thm:center}Suppose $\Omega \subset \Pb(\Rb^d)$ is a properly convex domain, $\Gamma \leq \Aut(\Omega)$ is convex co-compact, $A \leq \Gamma$ is an infinite Abelian subgroup, and $Z_{\Gamma}(A)$ is the centralizer of $A$ in $\Gamma$. If 

\begin{align*}
V := \Spanset \left\{ x \in \overline{\Cc_\Omega(\Gamma)} :  a x = x \text{ for all } a \in A \right\},
\end{align*}
then 
\begin{enumerate}
\item $\Omega \cap \Pb(V)$ is a non-empty $Z_{\Gamma}(A)$-invariant properly convex domain in $\Pb(V)$,
\item $\Omega \cap \Pb(V) \subset \Cc_\Omega(\Gamma)$, 
\item the quotient $Z_{\Gamma}(A) \backslash \Omega \cap \Pb(V)$ is compact, and
\item there exist a non-trivial $Z_{\Gamma}(A)$-invariant direct sum decomposition $V = \oplus_{j=1}^m V_j$ and properly convex domains $F_j \subset \Pb(V_j)$ such that $A$ acts by scaling on each $V_j$ and 
\begin{align*}
\Omega \cap \Pb(V) = {\rm relint}\left(\ConvHull_{\overline{\Omega}} \left( \cup_{j=1}^m F_j\right) \right).
\end{align*}
\end{enumerate}
\end{theorem}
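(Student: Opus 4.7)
The argument proceeds in three steps: use the abelian structure of $A$ to produce a joint real weight decomposition of $V$; deduce parts (1), (3), and the structural half of (5) directly; then tackle the cocompactness assertion (4), which together with the remaining inclusion in (2) is the main obstacle.

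\emph{Weight decomposition.} Since $A$ is abelian, its fixed locus in $\Pb(\Rb^d)$ equals $\bigsqcup_{\chi} \Pb(V_\chi)$ as $\chi$ ranges over the real characters of $A$ with nontrivial $\chi$-eigenspace $V_\chi$. I would set $V_j := \Span\{v \in V_{\chi_j} : [v] \in \overline{\Cc_\Omega(\Lambda)}\}$ for each character $\chi_j$ giving a nonzero contribution, so that $V = \bigoplus_j V_j$ is a direct sum of weight spaces on which $A$ acts by scaling (the scaling assertion in (5)). A direct check shows $C_\Lambda(A)$ preserves each weight space---for $g \in C_\Lambda(A)$ and $v \in V_\chi$, $a(gv) = g(av) = \chi(a)gv$---hence preserves each $V_j$, the subspace $V$, and each $F_j := \relint(\overline{\Cc_\Omega(\Lambda)} \cap \Pb(V_j))$, which is a properly convex domain in $\Pb(V_j)$ by construction. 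Properness of the $\Lambda$-action on $\Cc_\Omega(\Lambda)$ rules out any interior fixed point of the infinite group $A$, forcing $F_j \subset \partiali \Cc_\Omega(\Lambda) \subset \partial \Omega$. To see that $V$ is nonzero at all, I would apply Propositions~\ref{prop:dynamics_of_automorphisms_1}--\ref{prop:dynamics_of_automorphisms_2} to a sequence $a_n \in A$ escaping to infinity: passing to a limit $T \in \Pb(\End(\Rb^d))$, the image of $T$ lies in a face of $\partial\Omega$ stabilized by $A$, and minimality within that face produces real eigenlines of $A$ in $\overline{\Cc_\Omega(\Lambda)}$.

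\emph{Parts (1), (3), and half of (5).} The set $E := \relint(\ConvHull_\Omega\{F_1, \ldots, F_m\})$ is a non-empty, open, $C_\Lambda(A)$-invariant convex subset of $\Pb(V)$---non-empty and open because the $F_j$ span $V$---contained in both $\Omega \cap \Pb(V)$ and $\Cc_\Omega(\Lambda)$. This yields (1) (non-emptiness; proper convexity is inherited from $\Omega$), (3), the inclusion $\supset$ in the formula of (5), and reduces both (2) and the remainder of (5) to proving $\Omega \cap \Pb(V) \subset E$.

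\emph{Cocompactness---the main obstacle.} Proving that $C_\Lambda(A) \backslash E$ is compact is the hardest step, and this is where the full strength of convex cocompactness is essential, as opposed to the weaker naive version treated in~\cite{IZ2019}. The concrete plan is: fix a compact $K \subset \Cc_\Omega(\Lambda)$ with $\Lambda \cdot K = \Cc_\Omega(\Lambda)$; for any $x \in \Omega \cap \Pb(V)$ choose $\gamma \in \Lambda$ with $\gamma x \in K$, and show such $\gamma$ lie in only finitely many cosets of $C_\Lambda(A)$. Since $\gamma V$ is spanned by the fixed points of $\gamma A \gamma^{-1}$ in $\overline{\Cc_\Omega(\Lambda)}$, the projective subspace $\Pb(\gamma V)$ meets $K$; I expect a discreteness argument---using the identification $\Cc_\Omega(\Lambda) = \ConvHull_\Omega \Lc_\Omega(\Lambda)$ (the content of convex cocompactness beyond the naive version) together with the local finiteness of maximal abelian subgroups of $\Lambda$ relevant to any compact subset---to conclude that only finitely many subspaces $\Pb(\gamma V)$ meet $K$ transversally, each contributing boundedly many cosets of $C_\Lambda(A)$. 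Once cocompactness of $C_\Lambda(A)$ on $E$ is in hand, the equality $\Omega \cap \Pb(V) = E$ follows by a soft argument: both are open convex $C_\Lambda(A)$-invariant subsets of $\Pb(V)$ contained in $\Cc_\Omega(\Lambda)$ with the same ideal boundary data $\{\overline{F_j}\}$, so a co-compact action forces them to coincide, completing (2) and (5).
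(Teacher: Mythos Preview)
Your weight decomposition and the setup of the $V_j$ and $F_j$ match the paper's, and your observation that $C_\Lambda(A)$ preserves each weight space is correct. However, the two substantive steps---cocompactness and the equality $\Omega \cap \Pb(V) = E$---are genuine gaps in your proposal, and the paper handles them by a different route.

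For cocompactness, the paper does not argue via local finiteness of the translates $\gamma V$. Instead it imports two results from \cite{IZ2019}: Theorem~\ref{thm:max_abelian} (a maximal abelian subgroup containing $A$ stabilizes a properly embedded simplex in $\Cc$ and fixes its vertices, which gives non-emptiness of $\Omega \cap \Pb(V)$ directly, without the dynamical argument you sketch), and crucially Theorem~\ref{thm:centralizers_ncc}, which says that $C_\Lambda(A)$ already acts cocompactly on the convex hull of the minimal translation set $\Min_\Cc(A) = \Cc \cap \bigcap_{a \in A} \Min_\Omega(a)$. One then checks, via Proposition~\ref{prop:min_set_inv_simplex}, that $E \subset \Min_\Cc(A)$: any point of $E$ lies in a simplex with vertices among the $F_j$, each vertex fixed by $A$, and on such a simplex every element of $A$ realizes its minimal translation length. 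Cocompactness of $C_\Lambda(A)$ on $E$, and hence on $\Omega\cap\Pb(V)$ once the equality is known, follows immediately. Your proposed discreteness argument is too vague to assess; note in particular that $\gamma V = V$ does not imply $\gamma \in C_\Lambda(A)$, so ``finitely many subspaces $\Pb(\gamma V)$ meet $K$'' does not yield ``finitely many cosets of $C_\Lambda(A)$.''

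The equality $\Omega \cap \Pb(V) = E$ is also not soft. Given $p = [w_1 + \cdots + w_m] \in \Omega \cap \Pb(V)$ with $w_j \in V_j$, one must show each $[w_j] \in F_j$, not merely $[w_j] \in \Pb(V_j)$. The paper does this by pushing a basepoint in $E$ toward some $v_1' \in F_1$ along a projective line; the already-established cocompactness on $\Min_\Cc(A)$ produces $g_n \in C_\Lambda(A)$ tracking this sequence, and one passes to a limit $T \in \Pb(\End(V))$ with $T(\Omega\cap\Pb(V)) = F_1$. The block-diagonal form of the $g_n$ relative to $\bigoplus V_j$ forces the limiting block $A_1$ on $V_1$ to be invertible with $[A_1] \in \Aut(F_1)$, whence $[w_1] = [A_1^{-1}] T(p) \in F_1$. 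There is no evident shortcut here: a priori $\Omega \cap \Pb(V)$ could have a strictly larger boundary in $\Pb(V)$ than $E$, and cocompactness on the smaller set alone does not force the two to coincide.
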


\begin{remark} Part (2) implies that $\Omega \cap \Pb(V)$ is a properly embedded closed convex subset of $\Cc_{\Omega}(\Gamma)$. In the extremal case when $\dim V_j = 1$, we have $F_j=\Pb(V_j)$. Further, if $\dim V_j = 1$ for all $1\leq j \leq m$, then $\Omega \cap \Pb(V)$ is a properly embedded simplex. 
\end{remark}

\begin{remark} In Section~\ref{sec:failure of center theorem for naive convex co-compact groups}, we will provide examples of naive convex co-compact groups with a centralizer that doesn't satisfy parts (2) and (3) in Theorem~\ref{thm:center}. \end{remark}

Before proving Theorem~\ref{thm:center} we state and prove two corollaries. 

\begin{corollary}\label{cor:fund_gp_of_manifold} Under the hypothesis of Theorem~\ref{thm:center}: $Z_{\Gamma}(A)$ is virtually the fundamental group of a closed aspherical $(\dim V-1)$-manifold. \end{corollary}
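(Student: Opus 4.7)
The plan is to realize $C_\Lambda(A)$ as virtually the fundamental group of a compact manifold quotient of $\Omega_V := \Omega \cap \Pb(V)$, using the properly discontinuous cocompact action produced by Theorem~\ref{thm:center}. The first point to verify is that $C_\Lambda(A)$ preserves $V$: since $C_\Lambda(A)$ commutes with $A$ and preserves $\overline{\Cc_\Omega(\Lambda)}$, it preserves the set used to define $V$, and hence $V$ itself. Consequently the $C_\Lambda(A)$-action on $\Omega_V$ is by projective transformations of $\Pb(V)$, and so by isometries of the Hilbert metric of $\Omega_V$, which is a proper geodesic metric space.

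Since $\Lambda \leq \PGL_d(\Rb)$ is discrete, $\Lambda$ acts properly on $\Omega$, and hence $C_\Lambda(A)$ acts properly on the locally compact space $\Omega_V$. Combining this with the cocompactness of $C_\Lambda(A) \backslash \Omega_V$ from Theorem~\ref{thm:center} and applying the Milnor--Schwarz lemma to the isometric action on $(\Omega_V, \text{Hilbert metric})$, one concludes that $C_\Lambda(A)$ is finitely generated. Since $\PGL_d(\Rb)$ is linear (e.g.~via the conjugation representation on $\End(\Rb^d)$), Selberg's lemma then produces a finite index torsion-free subgroup $\Gamma_0 \leq C_\Lambda(A)$.

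Finally, $\Gamma_0$ acts freely, properly discontinuously, and cocompactly on the contractible manifold $\Omega_V$, which is an open subset of $\Pb(V)$ of dimension $\dim \Pb(V) = \dim V - 1$. Hence $M := \Gamma_0 \backslash \Omega_V$ is a closed aspherical $(\dim V - 1)$-manifold with $\pi_1(M) \cong \Gamma_0$, showing that $C_\Lambda(A)$ is virtually the fundamental group of such a manifold. I do not see any essential obstacle beyond assembling these standard ingredients (Theorem~\ref{thm:center}, Milnor--Schwarz, Selberg, and the contractibility of properly convex domains); the only point requiring even brief verification is the preservation of $V$ by $C_\Lambda(A)$, which is immediate from the definition of $V$.
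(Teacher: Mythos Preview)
Your proof is correct and follows essentially the same line as the paper's: pass to a torsion-free finite-index subgroup, observe that it acts freely, properly discontinuously, and cocompactly on the contractible manifold $\Omega \cap \Pb(V)$, and take the quotient. The only noteworthy difference is where Selberg's lemma is applied: you apply it to $C_\Lambda(A)$, which forces you to first verify finite generation via Milnor--Schwarz, whereas the paper applies Selberg directly to $\Lambda$ (already finitely generated since it is convex co-compact) and then intersects the resulting torsion-free subgroup with $C_\Lambda(A)$. The paper's route is marginally cleaner since it sidesteps the Milnor--Schwarz step, but both arguments are short and essentially equivalent. Your verification that $C_\Lambda(A)$ preserves $V$ is also already recorded as part (3) of Theorem~\ref{thm:center}, so you need not re-argue it.
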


\begin{proof} By Selberg's lemma there exists a finite index torsion free subgroup $\Gamma_0 \leq \Gamma$. Since $\Gamma$ acts properly discontinuously on $\Omega$, the stabilizer of any point in $\Omega$ is finite. Hence $\Gamma_0$ acts freely on $\Omega$. Then $G : = \Gamma_0 \cap Z_{\Gamma}(A)$ has finite index in $Z_{\Gamma}(A)$ and acts properly discontinuously, freely, and co-compactly on $\Omega \cap \Pb(V)$. Since $\Omega \cap \Pb(V)$ is a properly convex domain it is diffeomorphic to $\Rb^{\dim V-1}$. Hence $G$ is isomorphic to the fundamental group of $G \backslash \Omega \cap \Pb(V)$, a closed aspherical $(\dim V-1)$-manifold.
\end{proof}

\begin{corollary} Under the hypothesis of Theorem~\ref{thm:center}: If $N$ is the normalizer of $Z_{\Gamma}(A)$ in $\Gamma$, then $Z_{\Gamma}(A)$ has finite index in $N$.
\end{corollary}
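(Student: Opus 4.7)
The plan is to apply Theorem~\ref{thm:center} not directly to $A$, but to the center $Z := Z(C)$ of $C := C_\Lambda(A)$. The reason is that $N$ need not normalize $A$ itself, so the subspace $V$ from the theorem may not be $N$-invariant; by contrast, $Z$ is characteristic in $C$ and therefore automatically normalized by $N$. This substitution is the main conceptual step; once it is in place, the rest of the argument is a standard properness/cocompactness computation.

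First, $Z$ is an infinite abelian subgroup of $\Lambda$ (it contains $A$), and $C_\Lambda(Z) = C$: the inclusion $C \leq C_\Lambda(Z)$ is immediate from the definition of the center, while $C_\Lambda(Z) \leq C_\Lambda(A) = C$ follows from $A \leq Z$. Applying Theorem~\ref{thm:center} with $Z$ in place of $A$ then produces a non-empty properly convex domain $D_Z := \Omega \cap \Pb(V_Z)$, where
\[
V_Z := \Span\{v \in \Rb^d \setminus \{0\} : [v] \in \overline{\Cc_\Omega(\Lambda)} \text{ and } z[v] = [v] \text{ for all } z \in Z\},
\]
and $C_\Lambda(Z) = C$ acts cocompactly on $D_Z$.

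The key step is to verify that $N$ preserves $V_Z$. Given $n \in N$, we have $nZn^{-1} = Z$, so $n$ preserves the projective fixed set $\Fix(Z) = \{[v] \in \Pb(\Rb^d) : z[v] = [v] \text{ for all } z \in Z\}$; and $n$ preserves $\overline{\Cc_\Omega(\Lambda)}$ because $n \in \Lambda$. Hence $n$ preserves the set $\overline{\Cc_\Omega(\Lambda)} \cap \Fix(Z)$, whose linear span is $V_Z$, and therefore $N$ preserves $D_Z$.

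Finally, $N \leq \Lambda$ acts properly on $\Omega$ and hence on $D_Z$, while $C \leq N$ acts cocompactly on $D_Z$. Fixing a compact $F \subset D_Z$ with $C \cdot F = D_Z$, properness of the $\Lambda$-action makes $S := \{g \in \Lambda : gF \cap F \neq \emptyset\}$ finite. For every $n \in N$, the image $nF$ is contained in $D_Z = C \cdot F$, so there exists $c \in C$ with $c^{-1}n \in S$. Thus $N$ is covered by finitely many left cosets of $C$, giving $[N:C] < \infty$.
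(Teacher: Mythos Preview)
Your proof is correct and takes a genuinely different route from the paper's.

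The paper works with the original subspace $V$ attached to $A$ and shows directly that $nV=V$ for every $n\in N$. To do this it observes that $C=C_\Lambda(A)$ acts cocompactly on $\Omega\cap\Pb(nV)$, reapplies Theorem~\ref{thm:center} inside the smaller properly convex domain $\Omega\cap\Pb(nV)$ to produce a subspace $V_1\subset V\cap nV$ on which $C$ again acts cocompactly, and then invokes the cohomological-dimension argument of Corollary~\ref{cor:fund_gp_of_manifold} to force $\dim V_1=\dim V$, hence $V_1=V=nV$. Your idea of passing from $A$ to the center $Z=Z(C)$ sidesteps all of this: since $Z$ is characteristic in $C$ and $C_\Lambda(Z)=C$, the subspace $V_Z$ produced by Theorem~\ref{thm:center} is visibly $N$-invariant, and the conclusion follows from the standard properness/cocompactness argument you give. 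Your argument is shorter and avoids both the second application of Theorem~\ref{thm:center} and the appeal to Corollary~\ref{cor:fund_gp_of_manifold}; on the other hand, the paper's argument yields the slightly stronger fact that $N$ already preserves the original subspace $V$ associated to $A$ (not just $V_Z$), though this does not appear to be used elsewhere.
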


\begin{proof}Since $\Gamma$ acts properly discontinuously on $\Omega$ and  $Z_{\Gamma}(A) \backslash \Omega \cap \Pb(V)$ is compact, it is enough to show that $nV= V$ for all $n \in N$. 

Fix $n \in N$. Then $Z_{\Gamma}(A)=nZ_{\Gamma}(A)n^{-1}$ acts co-compactly on 
\begin{align*}
\Omega \cap \Pb(nV) = n \left( \Omega \cap \Pb(V) \right).
\end{align*}
So $Z_{\Gamma}(A) \leq \Aut(\Omega \cap \Pb(nV))$ is a convex co-compact subgroup with 
\begin{align*}
\Cc_{\Omega \cap \Pb(nV)}(Z_{\Gamma}(A)) = \Omega \cap \Pb(nV).
\end{align*}
So by Theorem~\ref{thm:center}, if 
\begin{align*}
V_1 := \Spanset \left\{ x \in \overline{\Omega \cap \Pb(nV)} : ax = x \text{ for all } a \in A \right\},
\end{align*}
then $\Omega \cap \Pb(V_1)$ is non-empty, $Z_{\Gamma}(A)$ preserves $\Omega \cap \Pb(V_1)$, and the quotient $Z_{\Gamma}(A) \backslash \Omega \cap \Pb(V_1)$ is compact.

By Corollary \ref{cor:fund_gp_of_manifold}, $Z_{\Gamma}(A)$ is virtually the fundamental group of closed aspherical manifolds of dimension $(\dim(V_1)-1)$ as well as $(\dim(V)-1)$. Thus $\dim(V_1)=\dim(V)$. Further, by definition, $V_1 \subset V \cap nV$. Thus $V_1=V=nV$. 
\end{proof}

\subsection{Preliminary results} The proof of Theorem~\ref{thm:center} requires some prior results about Abelian subgroups and their centralizers  in naive convex co-compact subgroups. 

\begin{definition}\label{defn:cc_naive}  Suppose $\Omega \subset \Pb(\Rb^d)$ is a properly convex domain. An infinite discrete subgroup $\Gamma \leq \Aut(\Omega)$ is called \emph{naive convex co-compact} if there exists a non-empty closed convex subset $\Cc \subset \Omega$ such that 
\begin{enumerate}
\item $\Cc$ is $\Gamma$-invariant, that is, $g\Cc = \Cc$ for all $g \in \Gamma$, and
\item $\Gamma$ acts co-compactly on $\Cc$. 
\end{enumerate}
In this case, we say that $(\Omega, \Cc, \Gamma)$ is a \emph{naive convex co-compact triple}. 
\end{definition}

Every convex co-compact subgroup is clearly naive convex co-compact, but there exist examples of naive convex co-compact subgroups which are not convex co-compact. For naive convex co-compact groups, we previously established the following results about maximal Abelian subgroups.

\begin{theorem}[I.--Z.~\cite{IZ2019}] \label{thm:max_abelian} Suppose $(\Omega, \Cc, \Gamma)$ is a naive convex co-compact triple and $A \leq \Gamma$ is a maximal Abelian subgroup. Then there exists a properly embedded $k$-dimensional simplex $S \subset \Cc$ such that:
\begin{enumerate}
\item $S$ is $A$-invariant,
\item $A$ fixes each vertex of $S$, and
\item $A$ acts co-compactly on $S$. 
\end{enumerate}
Moreover, $A$ contains a finite index subgroup isomorphic to $\Zb^k$. 
\end{theorem}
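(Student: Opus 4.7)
The plan is to deduce the linear-algebraic assertions (1)--(3) and (5) fairly directly from Theorem~\ref{thm:max_abelian} and basic convexity arguments, and then attack co-compactness (4) as the main technical obstacle.

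Write $\Cc := \Cc_\Omega(\Lambda)$ and choose a maximal Abelian subgroup $A^* \leq \Lambda$ containing $A$. By Theorem~\ref{thm:max_abelian} there is a properly embedded simplex $S^* \subset \Cc$ whose vertices are fixed by $A^*$ and hence by $A$, so these vertices lie in the generating set of $V$; thus $\Span S^* \subset V$ and $S^* \subset \Omega \cap \Pb(V)$. For any other $A$-fixed generator $[v] \in \overline{\Cc}$ and any $p \in S^*$, the open segment $(p,[v])$ lies in $\Omega \cap \Pb(V)$; iterating over a finite spanning set of generators produces an open subset of $\Pb(V)$ inside $\Omega$, giving conclusion (1). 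Conclusion (2) follows because $\overline{\Cc}$ is closed, convex, and contains the generators of $V$, so it contains the convex hull of these generators, which includes $\Omega \cap \Pb(V)$. For (3), if $g \in C_\Lambda(A)$ and $[v]$ is a generator, then $a \, g[v] = g \, a[v] = g[v]$ and $g[v] \in g\overline{\Cc} = \overline{\Cc}$, so $g$ permutes the generators and preserves $V$. For (5), decompose $V = \bigoplus_{j=1}^m V_j$ into the joint $A$-eigenspaces corresponding to distinct characters $\chi_j$ of $A$; each $V_j$ is $C_\Lambda(A)$-invariant because the centralizer commutes with $A$. Since $A$ acts on $\Pb(V_j)$ as the identity, every point of $\overline{\Cc} \cap \Pb(V_j)$ is automatically $A$-fixed, so $V_j = \Span(\overline{\Cc} \cap \Pb(V_j))$, and its relative interior $F_j$ in $\Pb(V_j)$ is a properly convex domain. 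The identity $\Omega \cap \Pb(V) = \relint(\ConvHull_\Omega\{F_1, \dots, F_m\})$ then follows from the unique direct-sum decomposition of vectors in $V$; non-triviality ($m \geq 2$) is forced because otherwise $A$ would fix $\Omega \cap \Pb(V)$ pointwise, contradicting properness of the $\Lambda$-action together with $\abs{A} = \infty$.

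The main obstacle is co-compactness (4). First I would argue that $[N_\Lambda(A) : C_\Lambda(A)] < \infty$: the conjugation action of $N_\Lambda(A)$ on $A$ permutes the finite set $\{\chi_1, \dots, \chi_m\}$ of characters, and elements fixing each character induce an automorphism of $A$ that is trivial on $A / (\bigcap_j \ker \chi_j)$; the joint kernel $\bigcap_j \ker \chi_j$ is itself finite (its elements fix every point of the non-empty $\Omega \cap \Pb(V)$ pointwise, and $\Lambda$ acts properly). Hence it suffices to show $N_\Lambda(A)$ acts co-compactly on $\Omega \cap \Pb(V)$. Suppose for contradiction that $p_n \in \Omega \cap \Pb(V)$ escapes modulo $N_\Lambda(A)$. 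By co-compactness of $\Lambda$ on $\Cc$ there exist $g_n \in \Lambda$ with $g_n p_n$ bounded in $\Cc$; pass to subsequences so that $g_n \to T$ in $\Pb(\End(\Rb^d))$ and the dynamics of Propositions~\ref{prop:dynamics_of_automorphisms_1} and~\ref{prop:dynamics_of_automorphisms_2} constrain $\mathrm{image}(T)$ and $\Pb(\ker T)$ to specific open faces of $\overline{\Omega}$. The technical heart---and this is where most of the work would go---is to exploit that $p_n \in \Pb(V)$ together with the joint-eigenspace structure of $V$ and the simplex $S^*$ from Theorem~\ref{thm:max_abelian} to show that each sequence $g_n^{-1} a g_n \in \Lambda$ is bounded, hence eventually constant by discreteness of $\Lambda$; a diagonal argument over a finite generating set of $A$ then produces a subsequence along which $g_n^{-1} A g_n = A$, i.e., $g_n \in N_\Lambda(A)$, contradicting the supposed escape.
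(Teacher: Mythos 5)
Your proposal does not prove the statement in question. The target is Theorem~\ref{thm:max_abelian}: for a naive convex co-compact triple $(\Omega,\Cc,\Lambda)$ and a \emph{maximal} Abelian subgroup $A\leq\Lambda$, one must \emph{construct} a properly embedded $k$-dimensional simplex $S\subset\Cc$ that is $A$-invariant, whose vertices are fixed by $A$, on which $A$ acts co-compactly, and conclude that $A$ is virtually $\Zb^k$. What you have written is instead an outline of Theorem~\ref{thm:center}: your numbered conclusions (1)--(5) (non-emptiness of $\Omega\cap\Pb(V)$, containment in $\Cc_\Omega(\Lambda)$, invariance under $C_\Lambda(A)$, co-compactness of $C_\Lambda(A)\backslash\Omega\cap\Pb(V)$, and the eigenspace decomposition $V=\oplus_j V_j$ with faces $F_j$) match that theorem's statement, not the three conclusions plus the ``moreover'' clause of Theorem~\ref{thm:max_abelian}. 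Worse, your very first step invokes Theorem~\ref{thm:max_abelian} to produce the simplex $S^*$, so read as a proof of Theorem~\ref{thm:max_abelian} the argument is circular, and read as written it simply addresses a different result. Nowhere do you construct a simplex from the Abelian group $A$ alone, show $A$ fixes its vertices, establish co-compactness of the $A$-action on it, or derive the virtual $\Zb^k$ structure.

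Note also that the paper does not reprove Theorem~\ref{thm:max_abelian}; it is imported from the authors' earlier work~\cite{IZ2019}. A genuine proof requires the machinery developed there (analysis of commuting automorphisms of a properly convex domain via their eigenvalue data and joint fixed points in $\partial\Omega$, minimal translation sets, and the construction of an invariant simplex on which the group acts as a lattice of translations in the Hilbert metric), none of which appears in your outline. If your actual goal was Theorem~\ref{thm:center}, your sketch is broadly aligned with the paper's strategy there (eigenspace decomposition, the faces $F_j$, and limits of centralizer elements in $\Pb(\End(\Rb^d))$ via Propositions~\ref{prop:dynamics_of_automorphisms_1} and~\ref{prop:dynamics_of_automorphisms_2}), but even then your treatment of co-compactness diverges: the paper does not attempt to force $g_n\in N_\Lambda(A)$ by conjugation-boundedness, but instead shows $\Omega\cap\Pb(V)\subset\Min_{\Cc}(A)$ and quotes Theorem~\ref{thm:centralizers_ncc}, which already gives co-compactness of the $C_\Lambda(A)$-action; your proposed ``$g_n^{-1}Ag_n$ is bounded'' step is precisely the part you leave unproved and is not obviously recoverable.
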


To state the next theorem we need to recall some terminology. 

\begin{definition}\label{defn:tau and Min} Suppose $\Omega \subset \Pb(\Rb^d)$ is a properly convex domain and $g \in \Aut(\Omega)$. Define the \emph{minimal translation length of $g$} to be
\begin{align*}
\tau_\Omega(g): = \inf_{x \in \Omega} \dist_\Omega(x, g x)
\end{align*}
and the \emph{minimal translation set of $g$} to be
\begin{align*}
\Min_\Omega(g) := \{ x \in\Omega: \dist_\Omega(x,gx) = \tau_\Omega(g) \}.
\end{align*}
\end{definition}

Cooper--Long--Tillmann~\cite{CLT2015} showed that the minimal translation length of an element can be determined from its eigenvalues. 

\begin{proposition}\cite[Proposition 2.1]{CLT2015}\label{prop:min_trans_compute} If $\Omega \subset \Pb(\Rb^d)$ is a properly convex domain and $g \in \Aut(\Omega)$, then 
\begin{align*}
\tau_\Omega(g) = \frac{1}{2} \log \frac{\lambda_1(g)}{\lambda_d(g)}.
\end{align*}
\end{proposition}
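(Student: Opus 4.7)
The plan is to prove matching lower and upper bounds. Lift $g$ to $\tilde g \in \GL_d(\Rb)$ with $|\det \tilde g|=1$ preserving a properly convex open cone $\tilde\Omega \subset \Rb^d\setminus\{0\}$ projecting onto $\Omega$. By the Krein--Rutman theorem applied to $\tilde g$ and $\tilde g^{-1}$, both $\lambda_1 := \lambda_1(g)$ and $\lambda_d := \lambda_d(g)$ are positive real eigenvalues, realized by eigenvectors $v^+, v^- \in \overline{\tilde \Omega} \setminus \{0\}$. The main analytic tool is the cone-gauge reformulation of the Hilbert distance: for any lifts $\tilde x, \tilde y \in \tilde \Omega$,
\[
d_\Omega(x,y) = \tfrac12 \log\bigl(M(\tilde y/\tilde x)\,M(\tilde x/\tilde y)\bigr), \qquad M(\tilde u/\tilde v) := \inf\{\mu>0 : \mu \tilde v - \tilde u \in \overline{\tilde \Omega}\},
\]
which follows from the cross-ratio definition of $d_\Omega$ after identifying $M(\tilde y/\tilde x)$ with the ratio $|ya|/|xa|$, where $a$ is the endpoint of $\overline\Omega \cap \overline{xy}$ on the $x$-side.

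For the lower bound I would prove $M(\tilde g\tilde x/\tilde x) \geq \lambda_1$ for every $\tilde x \in \tilde \Omega$. If instead $\mu\tilde x - \tilde g\tilde x \in \overline{\tilde \Omega}$ for some $\mu < \lambda_1$, iterating via $\tilde g\overline{\tilde \Omega} \subseteq \overline{\tilde \Omega}$ yields $\mu^n\tilde x - \tilde g^n\tilde x \in \overline{\tilde \Omega}$ for every $n$; pairing with a functional $\phi$ in the dual cone satisfying $\phi(v^+)>0$ then contradicts the asymptotic $\phi(\tilde g^n\tilde x) \asymp \lambda_1^n$, which holds because an interior cone vector $\tilde x \in \tilde \Omega$ necessarily has nonzero projection onto the top generalized eigenspace of $\tilde g$. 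Applying the same reasoning to $\tilde g^{-1}$ gives $M(\tilde x/\tilde g\tilde x) \geq 1/\lambda_d$; multiplying yields $d_\Omega(x,gx) \geq \tfrac12\log(\lambda_1/\lambda_d)$ uniformly in $x \in \Omega$.

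For the upper bound I would examine the $g$-invariant projective line $\ell$ through $[v^+]$ and $[v^-]$. If $\ell \cap \Omega \ne \emptyset$, then $[v^\pm]$ are the two endpoints of $\ell \cap \overline\Omega$, and for any $x = [\alpha v^+ + \beta v^-] \in \ell \cap \Omega$ one has $\tilde g\tilde x = \alpha\lambda_1 v^+ + \beta\lambda_d v^-$; a direct cross-ratio calculation along $\ell$ then gives $d_\Omega(x,gx) = \tfrac12\log(\lambda_1/\lambda_d)$, so the infimum is attained. In the degenerate case $[v^+, v^-] \subset \partial\Omega$ (meaning the two fixed points lie in a common open face of $\partial\Omega$), I would fix $p_0 \in \Omega$ with lift $\tilde p_0 \in \tilde\Omega$ and form the sequence $x_n := [v^+ + v^- + (1/n)\tilde p_0] \in \Omega$; using $\tilde g\tilde x_n = \lambda_1 v^+ + \lambda_d v^- + (1/n)\tilde g\tilde p_0$ together with the vanishing perturbation, a direct estimate of the two relevant $M$-values yields $d_\Omega(x_n, gx_n) \to \tfrac12\log(\lambda_1/\lambda_d)$.

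The main obstacle is nontrivial Jordan structure at the top or bottom absolute-value eigenvalue of $\tilde g$: the iteration in the lower bound then picks up polynomial factors that must be absorbed, and in the degenerate upper-bound case the eigenvectors $v^\pm$ are not uniquely determined and may need to be replaced by cone-extremal vectors in the top/bottom generalized eigenspaces. I would handle both issues by a density argument, perturbing $\tilde g$ within $\GL_d(\Rb)$ to nearby transformations that preserve a nearby properly convex cone, are diagonalizable with simple top and bottom eigenvalues, and whose axis $\ell$ meets the corresponding domain; the formula for general $\tilde g$ then follows by continuity of $\tau_\Omega$ and of the eigenvalues $\lambda_1, \lambda_d$ in the $\GL_d(\Rb)$-parameter.
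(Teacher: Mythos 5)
The paper itself gives no proof of this proposition --- it is quoted directly from Cooper--Long--Tillmann \cite{CLT2015} --- so your argument has to stand on its own. The lower bound does: lifting to a cone-preserving representative, the gauge formula for $\hil$, the Krein--Rutman eigenvectors $v^{\pm}$, and the order-iteration argument give $\hil(x,gx)\geq\tfrac12\log(\lambda_1(g)/\lambda_d(g))$ for every $x\in\Omega$, and the Jordan-block worry is harmless in this direction since positivity of $\tilde x$ gives $\tilde x\succeq\varepsilon v^+$ and hence $\phi(\tilde g^n\tilde x)\geq\varepsilon\lambda_1^n\phi(v^+)$ outright. The axial case of the upper bound is also fine.

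The genuine gap is the degenerate case of the upper bound, which is exactly where the proposition has content beyond the axial picture (every parabolic-type automorphism lives there), and neither of your proposed treatments closes it. First, the sequence $x_n=[v^++v^-+\tfrac1n\tilde p_0]$ depends on a choice of $v^{\pm}$ that Krein--Rutman does not determine, and for admissible choices the claimed limit is false. Take $\Omega\subset\Pb(\Rb^3)$ the open triangle (projectivized positive octant) and $\tilde g(x_1,x_2,x_3)=(x_1+x_2,x_2,x_3)$, so $\lambda_1(g)=\lambda_d(g)=1$ and the proposition asserts $\tau_\Omega(g)=0$ (which is correct: $\hil(x,gx)=\tfrac12\log(1+x_2/x_1)$). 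Both $e_1$ and $e_3$ are spectral-radius eigenvectors of $\tilde g$ and of $\tilde g^{-1}$ lying in the closed cone, so $v^+=v^-=e_3$ is a legitimate output of your construction; with $\tilde p_0=(1,1,1)$ one gets $x_n=[1:1:2n+1]$ and $\hil(x_n,gx_n)=\tfrac12\log 2$ for every $n$. So the construction must single out vectors adapted to the dominant Jordan structure, and even with a good choice the ``direct estimate'' is not routine: in $\mu\tilde x_n-\tilde g\tilde x_n=(\mu-\lambda_1)v^++(\mu-\lambda_d)v^-+\tfrac1n(\mu\tilde p_0-\tilde g\tilde p_0)$ the first two terms may be boundary vectors of the cone and cannot absorb the $O(1/n)$ error no matter how large $\mu$ is, so $M(\tilde g\tilde x_n/\tilde x_n)\leq\lambda_1+o(1)$ does not follow as stated. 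Second, the perturbation/density fallback is circular. A perturbation of $\tilde g$ need not preserve any properly convex domain, and when it does (a diagonalizable perturbation of the unipotent above preserves a simplex) that domain need not be anywhere near $\Omega$; and even along a sequence $(g_k,\Omega_k)\to(g,\Omega)$ the only semicontinuity available for free is $\limsup_k\tau_{\Omega_k}(g_k)\leq\tau_\Omega(g)$, because $\tau$ is an infimum of functions continuous in the data; combined with the formula for $(g_k,\Omega_k)$ this merely reproduces the lower bound. The inequality you actually need, $\tau_\Omega(g)\leq\lim_k\tfrac12\log(\lambda_1(g_k)/\lambda_d(g_k))$, is precisely the upper bound being proved, so invoking ``continuity of $\tau_\Omega$'' assumes the conclusion. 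To repair the argument you must work with $g$ itself in the degenerate case, for instance choosing $v^{\pm}$ in the intersection of $\overline{\tilde\Omega}$ with the dominant Jordan subspaces for $\lambda_1$ and $\lambda_d$ (such vectors exist because the cone is invariant) and estimating along the orbit $g^np_0$, rather than appealing to perturbation.
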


In previous work, we proved that the centralizer of an Abelian subgroups of a naive convex co-compact group is also naive convex co-compact.

\begin{theorem}[I.--Z.~\cite{IZ2019}]\label{thm:centralizers_ncc} Suppose $(\Omega, \Cc, \Gamma)$ is a naive convex co-compact triple and $A \leq \Gamma$ is an Abelian subgroup. Then
\begin{align*}
\Min_{\Cc}(A): = \Cc \cap  \left( \bigcap_{a \in A} \Min_\Omega(a) \right) 
\end{align*}
is non-empty and $Z_{\Gamma}(A)$ acts co-compactly on $\ConvHull_{\Omega}(\Min_{\Cc}(A))$. \end{theorem}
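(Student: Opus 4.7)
The plan is to show that $\Omega \cap \Pb(V)$ equals the convex hull $\mathcal{M}$ in $\Omega$ of $\mathcal{M}_0 := \Min_{\Cc_\Omega(\Lambda)}(A)$, and then deduce everything from Theorem~\ref{thm:centralizers_ncc}. Since $(\Omega,\Cc_\Omega(\Lambda),\Lambda)$ is naive convex co-compact, Theorem~\ref{thm:centralizers_ncc} yields $\mathcal{M}_0 \neq \emptyset$ and a co-compact $C_\Lambda(A)$-action on $\mathcal{M}$. Because $A$ is Abelian, any $v$ with $[v]$ fixed by every element of $A$ is a common eigenvector and so lies in a simultaneous real weight space $W_\chi$ for a character $\chi:A\to\Rb^*$; this at once gives the decomposition $V = \bigoplus_{\chi\in X} V_\chi$ of item (5), where $V_\chi := \Span\{v\in W_\chi : [v]\in\overline{\Cc_\Omega(\Lambda)}\}$ and $X$ records the characters for which this span is non-zero. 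To guarantee the elements of $A$ act semisimply with positive real spectrum on $\Rb^d$ (so the $W_\chi$ are genuine eigenspaces and the formula $\tau_\Omega(a)=\frac{1}{2}\log(\lambda_1(a)/\lambda_d(a))$ has the expected geometric meaning), I would invoke Theorem~\ref{thm:max_abelian} applied to a maximal Abelian subgroup containing $A$: its elements fix the vertices of a properly embedded simplex, forcing simultaneous diagonalizability over $\Rb_{>0}$.

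The crux is the equality $\mathcal{M} = \Omega\cap\Pb(V)$. For $\mathcal{M}\subseteq\Omega\cap\Pb(V)$, take $p\in\mathcal{M}_0$, decompose $p = \sum_\chi p_\chi$ into $A$-weight components, and argue that each non-zero $[p_\chi]$ belongs to $\overline{\Cc_\Omega(\Lambda)}$ by producing a sequence $a_n\in A$ with $[a_n\cdot p]\to[p_\chi]$ in $\Pb(\Rb^d)$. To realize this for each $\chi$, I would use that $A/\text{torsion}\cong\Zb^k$ and that the set of log-characters $\{\log\chi\}_{\chi\in X}\subset\Rb^k$ is finite; selecting sequences that dominate at extremal configurations of this point set gives the required limits, and Proposition~\ref{prop:dist_est_and_faces} combined with properness of the $A$-action forces each limit into $\partial\Omega$. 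Thus $p_\chi\in V_\chi$, so $p\in\Pb(V)$, and taking convex hulls gives $\mathcal{M}\subseteq\Omega\cap\Pb(V)$. For the reverse inclusion, any $[q]\in\Omega\cap\Pb(V)$ with $q=\sum_\chi q_\chi$ satisfies $a[q] = [\sum_\chi\chi(a)q_\chi]$; the projective geodesic from $[q]$ to $a[q]$ extends to a line whose endpoints are the projectivizations of the partial sums of $q_\chi$ over the extremal values of $\chi(a)$, and a direct cross-ratio computation together with Proposition~\ref{prop:min_trans_compute} shows that its Hilbert length equals $\tau_\Omega(a)$. Combined with $[q]\in\Cc_\Omega(\Lambda)$, which follows from the convex hull presentation of item (5), this places $[q]\in\mathcal{M}_0\subseteq\mathcal{M}$.

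Once $\mathcal{M} = \Omega\cap\Pb(V)$ is established, (1) follows from the general fact that the intersection of a properly convex domain with a projective subspace is a properly convex domain in that subspace, (2) from $\mathcal{M}\subseteq\Cc_\Omega(\Lambda)$, (3) because $C_\Lambda(A)$ commutes with $A$ and hence permutes the $A$-fixed vectors in $\overline{\Cc_\Omega(\Lambda)}$, preserving $V$, and (4) is immediate from Theorem~\ref{thm:centralizers_ncc}. For (5), the $C_\Lambda(A)$-invariance of the weight decomposition holds because conjugation by $C_\Lambda(A)$ fixes the characters of $A$, and $F_\chi$ can be identified with $\relint_{\Pb(V_\chi)}(\overline{\Cc_\Omega(\Lambda)}\cap\Pb(V_\chi))$. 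The main obstacle I anticipate is precisely the convex hull formula in (5): verifying that each $F_\chi$ is a non-empty properly convex domain and that every point of $\Omega\cap\Pb(V)$ is realized as a relative-interior point of such a convex combination is the same structural claim that underlies $[q]\in\Cc_\Omega(\Lambda)$ in the reverse inclusion above, so item (5) is essentially the technical heart of the theorem and must be handled in tandem with the equality $\mathcal{M}=\Omega\cap\Pb(V)$.
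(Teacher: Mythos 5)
Your proposal does not prove the statement it was asked to prove: the statement \emph{is} Theorem~\ref{thm:centralizers_ncc}, and your very first step is to invoke Theorem~\ref{thm:centralizers_ncc} to obtain both the non-emptiness of $\Min_{\Cc}(A)$ and the co-compactness of the $C_{\Lambda}(A)$-action on the convex hull of $\Min_{\Cc}(A)$. As a proof of this statement the argument is therefore circular. What you have actually outlined is a proof of Theorem~\ref{thm:center} of this paper (the identification of $\Omega \cap \Pb(V)$ with the convex hull of the common minimal set, the weight decomposition $V = \oplus_j V_j$, and the convex hull formula in item (5)); that is exactly how the paper \emph{uses} Theorem~\ref{thm:centralizers_ncc}, not how it establishes it. In fact the paper contains no proof of Theorem~\ref{thm:centralizers_ncc} at all: it is imported as an external input from the authors' earlier work~\cite{IZ2019}, where it is proved by a flat-torus-theorem-type argument for naive convex co-compact actions.

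Beyond the circularity, your outline does not engage with the actual content or generality of the statement. The theorem concerns an arbitrary naive convex co-compact triple $(\Omega, \Cc, \Lambda)$, where $\Cc$ is any non-empty closed $\Lambda$-invariant convex subset of $\Omega$ with compact quotient, whereas you work throughout with $\Cc_\Omega(\Lambda)$, i.e.\ the more special convex co-compact situation. More importantly, the assertions that genuinely require proof --- that each $a \in A$ attains its minimal translation length at some point of $\Cc$, that the sets $\Cc \cap \Min_\Omega(a)$ over all $a \in A$ have a common point, and that $C_{\Lambda}(A)$ acts co-compactly on the convex hull of this common minimal set --- receive no argument in your write-up; they are precisely what you take as a black box. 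The ingredients you do cite (Theorem~\ref{thm:max_abelian}, Proposition~\ref{prop:min_trans_compute}, the eigenvalue/weight decomposition) are not assembled into an argument for these claims, and assembling them is nontrivial, since the Hilbert metric does not provide the convexity of displacement functions that makes the analogous $\CAT(0)$ statement routine. To address the statement as posed you would need to reproduce (or find a substitute for) the argument of~\cite{IZ2019}, not cite its conclusion.
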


We will also use the following computation.

\begin{proposition}\label{prop:min_set_inv_simplex} Suppose $\Omega \subset \Pb(\Rb^d)$ is a properly convex domain and $S \subset \Omega$ is a simplex (which may not be properly emebedded). If $g \in \Aut(\Omega)$ fixes every vertex of $S$, then $S \subset {\rm Min}_{\Omega}(g)$. 
\end{proposition}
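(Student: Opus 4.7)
The plan is to normalise coordinates so that $S$ is the standard simplex, compute the Hilbert translation length of $g$ on $S$ by a direct formula, and then sandwich the resulting quantity against $\tau_\Omega(g)$ using Proposition~\ref{prop:min_trans_compute}.

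First I would apply a projective transformation bringing $S$ into the form
\begin{align*}
S=\{[x_1:\cdots:x_{k+1}:0:\cdots:0]\in\Pb(\Rb^d):x_1,\ldots,x_{k+1}>0\},
\end{align*}
so that the vertices of $S$ are $[e_1],\ldots,[e_{k+1}]$. Since $g$ fixes each vertex projectively and therefore preserves $S$, a suitable lift $\wt g\in\SL_d^{\pm}(\Rb)$ satisfies $\wt g\, e_i=\mu_i e_i$ with $\mu_i>0$ for $i=1,\ldots,k+1$ (the positivity follows because for any positive tuple $(x_1,\ldots,x_{k+1})$ representing a point of $S$, the image tuple $(\mu_1x_1,\ldots,\mu_{k+1}x_{k+1})$ must have all coordinates of the same sign). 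The numbers $\mu_1,\ldots,\mu_{k+1}$ form a subset of the absolute values of the eigenvalues of $\wt g$, so $\max_i\mu_i\le\lambda_1(g)$ and $\min_i\mu_i\ge\lambda_d(g)$.

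Next I would observe that because $S$ is properly embedded in $\Omega$, the Hilbert metric $d_S$ on $S$ (viewed as a properly convex domain in $\Pb(\Spanset S)$) agrees with the restriction of $d_\Omega$ to $S$. Indeed, for $x,y\in S$, the projective line through $x$ and $y$ lies in $\Pb(\Spanset S)$ and meets $\overline S$ in a segment whose endpoints belong to $\partiali S\subset\partiali\Omega$, so these are also the two points of $\partial\Omega$ used to compute $d_\Omega(x,y)$.

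The core of the proof is then the well-known formula for the Hilbert metric on a simplex,
\begin{align*}
d_S\bigl([x_1:\cdots:x_{k+1}],[y_1:\cdots:y_{k+1}]\bigr)=\tfrac12\log\max_{i,j}\frac{x_iy_j}{x_jy_i},
\end{align*}
which yields, for every $x\in S$,
\begin{align*}
d_S(x,gx)=\tfrac12\log\frac{\max_i\mu_i}{\min_i\mu_i},
\end{align*}
independently of $x$. Combining this with Proposition~\ref{prop:min_trans_compute} and the eigenvalue inequalities above gives $d_S(x,gx)\le\tau_\Omega(g)$. The reverse inequality $d_\Omega(x,gx)\ge\tau_\Omega(g)$ is immediate from the definition, and $d_\Omega(x,gx)=d_S(x,gx)$ by the previous step, so all three quantities agree and $x\in\Min_\Omega(g)$ for every $x\in S$. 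The argument is essentially a computation once $S$ is normalised, so no real obstacle is anticipated; the one point meriting genuine care is the identification $d_S=d_\Omega|_S$, for which the properly embedded hypothesis is precisely what is needed.
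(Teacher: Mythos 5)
Your proof is correct and is essentially the standard computation that the paper invokes here only by citation (\cite[Proposition 7.3]{IZ2019}): normalize $S$, observe that $g$ acts on $\Span S$ diagonally with positive eigenvalues $\mu_1,\dots,\mu_{k+1}$, and combine the explicit simplex formula for the Hilbert metric with Proposition~\ref{prop:min_trans_compute} and the trivial lower bound $\hil(x,gx)\geq\tau_\Omega(g)$. The one step to tighten is the assertion that fixing the vertices already forces $gS=S$: by itself this is false (e.g.\ ${\rm diag}(1,-1)$ fixes both vertices of a $1$-simplex but swaps the two segments joining them), so you should note that $g$ preserves $\overline{\Omega}$ and that $\overline{S}$ is the convex hull of its vertex set inside $\overline{\Omega}$, whence $g\overline{S}=\overline{S}$ and the $\mu_i$ all have the same sign, which is what your positivity argument actually uses.
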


\begin{proof} Let $V := \Spanset S$. The Hilbert metric on $S$ can be explicitly computed, see for instance~\cite[Example 3.1]{IZ2019}, and from this one sees that ${\rm Min}_S(g|_V) = S$ (this is only true since $g$ fixes every vertex of $S$). 

By Proposition~\ref{prop:min_trans_compute} there exist $1 \leq i < j \leq d$ such that 
$$
\tau_S(g|_V) = \frac{1}{2} \log \frac{\lambda_1(g|_V)}{\lambda_{\dim V}(g|_V)}=\frac{1}{2} \log \frac{\lambda_i(g)}{\lambda_j(g)}
$$
and so by Proposition~\ref{prop:min_trans_compute} again $\tau_S(g|_V) \leq \tau_\Omega(g)$. Then, by the definition of the Hilbert metric, if $p \in S$ we have
$$
\dist_\Omega(g(p), p) \leq \dist_S(g(p), p) =\tau_S(g|_V) \leq \tau_\Omega(g).
$$
So $S \subset {\rm Min}_{\Omega}(g)$. 
\end{proof}

\subsection{Proof of Theorem~\ref{thm:center}}

Suppose $\Omega \subset \Pb(\Rb^d)$, $\Gamma \leq \Aut(\Omega)$, and $A \leq \Gamma$ satisfy the hypothesis of the theorem. Let $\Cc : = \Cc_\Omega(\Gamma)$. As in the statement of Theorem~\ref{thm:center} define 
\begin{align*}
V := \Spanset \left\{  x \in \overline{\Cc} : ax= x \text{ for all } a \in A \right\}.
\end{align*}

Let $\wh{A} \leq \GL_d(\Rb)$ be the preimage of $A$ under the projection $\GL_d(\Rb) \rightarrow \PGL_d(\Rb)$. For a homomorphism $\nu: \wh{A} \rightarrow \Rb^{\times}$, let
\begin{align*}
E_{\nu} := \left\{ v\in \Rb^d : av = \nu(a)v \text{ for all } a \in \wh{A} \right\}.
\end{align*}
Then let 
\begin{align*}
\{ \nu_1,\dots, \nu_m \} = \{ \nu : E_{\nu} \neq \{0\} \}
\end{align*}
and
\begin{align*}
F_j := {\rm relint} \Big( \partiali \Cc \cap \Pb(E_{\nu_j})\Big) \subset \partiali\Cc.
\end{align*}
\Cref{lem:convex hull of closed faces is non-empty} below will show that $\{ \nu : E_{\nu} \neq \{0\} \}$ is a non-empty set. Now notice that 
\begin{align}
\label{eqn:fixed points are the closure of the Fjs}
\cup_{j =1}^m \overline{F}_j = \left\{ x \in \overline{\Cc} :a x = x \text{ for all } a \in A \right\}
\end{align}
and in particular
\begin{align*}
V = \Spanset\left\{  \cup_{j =1}^m \overline{F}_j\right\}.
\end{align*}
Next let $V_j :=\Spanset\overline{F}_j$. Then
\begin{align*}
V = V_1 + \dots + V_m
\end{align*}
is a direct sum, as the $\nu_j$-s are distinct.

\begin{lemma}\label{lem:convex hull of closed faces is non-empty}  The sets $\{ \nu : E_{\nu} \neq \{0\}\}$ and $\Omega \cap \ConvHull_{\overline{\Omega}}\left( \cup_{j=1}^m \overline{F}_j \right)$ are non-empty. \end{lemma}

\begin{proof} Fix a maximal Abelian group $A^\prime \leq \Gamma$ containing $A$. Then by Theorem~\ref{thm:max_abelian}, there exists a properly embedded simplex $S \subset \Cc$ where $A^\prime$ fixes the vertices of $S$. Since $A^\prime$ is infinite, $S$ is at least one dimensional. Then the vertices of $S$ lie in $\partiali \Cc$ and are fixed by $A^\prime$. Then, if $v$ is a vertex of $S$, there exists $\nu: \wh{A} \to \Rb^{\times}$ such that $[v] \subset E_{\nu}$. This proves the first part.
By Equation~\eqref{eqn:fixed points are the closure of the Fjs}, the vertices of $S$ are contained in $\cup_{j =1}^m \overline{F}_j$ and so 
\begin{equation*}
S \subset \Omega \cap \ConvHull_{\overline{\Omega}}\left( \cup_{j=1}^m \overline{F}_j \right). \qedhere
\end{equation*}
\end{proof}

\begin{lemma} \label{lem:inclusion-in-min-c}
$\Omega \cap \ConvHull_{\overline{\Omega}}\left( \cup_{j=1}^m \overline{F}_j \right) \subset {\rm Min}_{\Cc}(A)$.  
\end{lemma}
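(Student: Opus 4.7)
I would verify that an arbitrary $p \in \Omega \cap {\rm ConvHull}_\Omega\{F_1,\dots,F_m\}$ satisfies the two defining conditions of $\Min_\Cc(A)$: that $p \in \Cc$, and that $d_\Omega(p,ap)=\tau_\Omega(a)$ for every $a \in A$.

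\medskip

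\emph{Containment $p \in \Cc$.} This is formal. Each $F_j \subset \partial_i\Cc \subset \overline{\Cc}$, and $\overline{\Cc}$ is a convex subset of $\overline{\Omega}$, so ${\rm ConvHull}_\Omega\{F_1,\dots,F_m\} \subset \overline{\Cc}$. Since $\Cc$ is closed in $\Omega$, intersecting with $\Omega$ gives $p \in \Cc$.

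\medskip

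\emph{Reduction to a properly embedded simplex.} I would lift $p = [v]$ and decompose $v = \sum_{i \in I} w_i$ where $w_i \in E_{\nu_i} \setminus \{0\}$ and $[w_i] \in \overline{F_i}$. This is possible: $p$ is a convex combination of points $[u_k] \in F_{j_k}$ with $u_k \in E_{\nu_{j_k}}$, and collecting contributions with the same index $j_k=i$ yields $w_i$ whose projectivization stays in the convex set $\overline{F_i}$. The vectors $\{w_i\}_{i \in I}$ are linearly independent since they lie in distinct $A$-eigenspaces, so $\{[w_i]\}_{i \in I}$ are the vertices of a projective simplex $\Sigma$ of dimension $|I|-1$. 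Each vertex is $A$-fixed (being in $\Pb(E_{\nu_i})$), $p$ lies in the relative interior of $\Sigma$, and convexity of $\Omega$ then forces $\relint\Sigma \subset \Omega$.

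\medskip

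\emph{Applying Proposition~\ref{prop:min_set_inv_simplex}.} Once $\Sigma$ is shown to be properly embedded in $\Omega$, Proposition~\ref{prop:min_set_inv_simplex} applied to each $a \in A$ (which fixes every vertex of $\Sigma$) yields $\Sigma \subset \Min_\Omega(a)$, and in particular $p \in \Min_\Omega(a)$. Combined with $p \in \Cc$, this gives $p \in \Min_\Cc(A)$.

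\medskip

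\emph{Main obstacle.} The crux is verifying that $\Sigma$ is properly embedded in $\Omega$: concretely, that every face of $\Sigma$ lies in $\partial\Omega$, i.e.\ every projective segment between two vertices $[w_i], [w_{i'}]$ with $i \neq i'$ lies in $\partial\Omega$ (and analogously for higher-dimensional faces). I expect this to follow by comparison with the canonical simplex of Theorem~\ref{thm:max_abelian}: after extending $A$ to a maximal Abelian subgroup $A^* \leq \Lambda$, that theorem furnishes a properly embedded simplex $S \subset \Cc$ with $A^*$-fixed (hence $A$-fixed) vertices on which $A^*$ acts cocompactly, and the face structure of $\partial_i\Cc$ along $A$-fixed points should be rigid enough that our $\Sigma$ sits inside (the span of) such an $S$. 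As a fallback, I would replace this step by the following convexity argument: $\Min_\Omega(a)$ is closed and convex in $\Omega$ (as the level set of the continuous convex displacement function $q \mapsto d_\Omega(q,aq)$), so its closure in $\overline{\Omega}$ is convex, reducing the goal to showing $\overline{F_j} \subset \overline{\Min_\Omega(a)}$ for each $j$; and this last inclusion can be obtained by approximating points of $\overline{F_j}$ by simplices from Theorem~\ref{thm:max_abelian} applied to varying maximal Abelian $A^* \supset A$, using that attracting/repelling fixed points of elements of $A$ on $\partial\Omega$ are themselves $A$-fixed (as $A$ is Abelian) and lie in $\overline{\Cc}$, hence already populate $\bigcup_i \overline{F_i}$.
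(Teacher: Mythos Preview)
Your approach coincides with the paper's: express $p$ as a point in the relative interior of a simplex whose vertices lie in the $F_j$'s (hence are $A$-fixed), then invoke Proposition~\ref{prop:min_set_inv_simplex}. The paper does exactly this, choosing $v_j \in F_j$ with $p \in \ConvHull_\Omega\{v_1,\dots,v_m\}$, passing to the minimal face $S_J$ containing $p$, noting $S_J \subset \Omega$ by convexity, and applying the proposition.

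Where you diverge is at your ``main obstacle.'' You are right that the simplex need not be properly embedded: a proper face such as an edge $(v_i,v_{i'})$ may well lie in $\Omega$ rather than in $\partial\Omega$. But neither of your proposed workarounds closes this. Approach~(a) does not work as stated: the canonical simplex of Theorem~\ref{thm:max_abelian} for a maximal $A^* \supset A$ has dimension equal to the rank of $A^*$, and there is no mechanism forcing your $\Sigma$ into its span. Approach~(b) assumes that the displacement function $q \mapsto d_\Omega(q,aq)$ is convex, which is not available in Hilbert geometry in general (these metrics are not $\CAT(0)$; compare Proposition~\ref{prop:Crampons_dist_est}, which only gives a sum bound rather than a convex-combination bound).

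The actual resolution is that proper embedding is unnecessary for the conclusion. If $S \subset \Omega$ is any simplex whose vertices are fixed by $g \in \Aut(\Omega)$, then after lifting $g$ to preserve the cone over $\Omega$ the eigenvalues $\mu_i$ at the vertices are all positive, so $g$ preserves $S$, and for $p \in S$ one has
\[
d_\Omega(p,gp)\;\le\; d_S(p,gp)\;=\;\tfrac12\log\frac{\max_i\mu_i}{\min_i\mu_i}\;\le\;\tfrac12\log\frac{\lambda_1(g)}{\lambda_d(g)}\;=\;\tau_\Omega(g),
\]
using only that $S \subset \Omega$ forces $d_\Omega \le d_S$ on $S$. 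Hence $p \in \Min_\Omega(g)$, and intersecting over $g\in A$ gives $p \in \Min_\Cc(A)$. The paper simply applies Proposition~\ref{prop:min_set_inv_simplex} without verifying proper embedding, implicitly relying on this computation.
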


\begin{proof} Fix $g \in A$ and $p \in \Omega \cap \ConvHull_{\overline{\Omega}}\left( \cup_{j=1}^m \overline{F}_j \right)$. Since each $\overline{F}_j$ is convex, we can write 
$$
p=[v_1+\dots+v_m] 
$$
where $v_j \in V_j$ and $[v_j] \in \overline{F}_j$. Let $J:=\{ j : v_j \neq 0\}$. Since $V = \oplus_{j=1}^m V_j$, the vectors $\{ v_j : j \in J\}$ are linearly independent and hence  the set
\begin{align*}
S_J := {\rm relint} \left(  \ConvHull_{\overline{\Omega}}\left( \{ [v_j] : j \in J\} \right)\right)
\end{align*}
is a $(\abs{J}-1)$-dimensional simplex. By construction, $p \in S_J$ and so by convexity $S_J \subset \Omega$. Further,  $g$ fixes the vertices of $S_J$. Hence $p \in {\rm Min}_{\Omega}(g)$ by Proposition~\ref{prop:min_set_inv_simplex}.
\end{proof}

\begin{lemma}\label{lem:computing the intersection}  $\Omega \cap \Pb(V)= {\rm relint}\left(\ConvHull_{\overline{\Omega}}\left( \cup_{j=1}^m F_j \right)\right)$. \end{lemma}

\begin{proof} Clearly, ${\rm relint}\left(\ConvHull_{\overline{\Omega}}\left( \cup_{j=1}^m F_j \right)\right) \subset \Omega \cap \Pb(V)$. To see the reverse inclusion, fix $p \in \Omega \cap \Pb(V)$. Then we can write
\begin{align*}
p = \left[ w_1+\dots+w_m\right]
\end{align*}
where $w_j \in V_j$. 

\medskip 
\noindent \fbox{\emph{Claim:}} $[w_1] \in F_1$. In particular, $w_1 \neq 0$. 
\medskip 

Fix $x \in F_1$. Then 
\begin{align*}
F_1 = F_\Omega(x) \cap \Pb(V_1) = F_{\Omega \cap \Pb(V)}(x).
\end{align*}
By Lemma~\ref{lem:convex hull of closed faces is non-empty}, there exists $p_0 \in \Omega \cap \ConvHull_{\overline{\Omega}}\left( \cup_{j=1}^m \overline{F}_j \right)$. Then fix a sequence $(p_n)_{n \geq 1}$ in $[p_0, x)$ converging to $x$. Since
\begin{align*}
\{p_1,p_2,\dots\} \subset \Omega \cap \ConvHull_{\overline{\Omega}}\left( \cup_{j=1}^m \overline{F}_j \right),
\end{align*}
Lemma \ref{lem:inclusion-in-min-c} implies that $\{p_1,p_2,\dots\} \subset {\rm Min}_{\Cc}(A)$. So by Theorem~\ref{thm:centralizers_ncc} there exists a sequence $(g_n)_{n \geq 1}$ in $Z_{\Gamma}(A)$ such that 
\begin{align*}
\sup_{n\geq 0} \hil(g_n p_0, p_n) <+\infty.
\end{align*}
Since $g_n(V) = V$ we can view $g_n|_V$ as an element of $\Aut(\Omega \cap \Pb(V))$. By passing to a subsequence we can suppose that $g_n|_V \rightarrow T \in \Pb(\End(V))$. By Propositions~\ref{prop:dynamics_of_automorphisms_1} and~\ref{prop:dynamics_of_automorphisms_2}
\begin{enumerate}
\item $\Pb(\ker T) \cap (\Omega\cap \Pb(V)) = \emptyset$,
\item ${\rm image} \, T = \Span F_{\Omega \cap \Pb(V)}(x)  = V_1$,
\item $T(\Omega \cap \Pb(V)) = F_{\Omega \cap \Pb(V)}(x)=F_1$.
\end{enumerate}

Next let $\overline{g}_n \in \GL(V)$ be a lift of $g_n|_V$. Since each $g_n$ commutes with the elements of $A$, each $\overline{g}_n$ preserves the direct sum $V=\oplus_{j=1}^m V_j$. Then relative to the direct sum $V=\oplus_{j=1}^m V_j$,
\begin{align*}
\overline{g}_n = \begin{pmatrix} A_{1,n} & & \\ & \ddots & \\ & & A_{m,n} \end{pmatrix}
\end{align*}
and $[A_{j,n}] \in \Aut(F_j)$. By scaling each $\overline{g}_n$ and passing to a subsequence we may assume that 
$$
\lim_{n \rightarrow \infty} \overline{g}_n = \begin{pmatrix} A_{1} & & \\ & \ddots & \\ & & A_{m} \end{pmatrix}
$$
exists in $\End(V)$ and is non-zero. Then
\begin{align*}
T = \begin{bmatrix} A_{1} & & \\ & \ddots & \\ & & A_{m} \end{bmatrix}.
\end{align*}
Since ${\rm image}\, T = V_1$, we then have 
\begin{align*}
T = \begin{bmatrix} A_{1} & & &\\ & 0 & & \\ & & \ddots & \\ & &  & 0 \end{bmatrix}
\end{align*}
and $A_1 \in \GL(V_1)$. Then $[A_{1,n}]$ converges to $[A_1]$ in $\PGL(V_1)$. Since $\Aut(F_1)$ is closed in $\PGL(V_1)$, we then have $[A_1] \in \Aut(F_1)$. 

 Since $\ker T = \oplus_{j=2}^m V_j$ and $\Pb(\ker T) \cap (\Omega \cap \Pb(V))= \emptyset$, we must have $w_1 \neq 0$. Further, 
\begin{align*}
[w_1] = [A_1^{-1}] T(p) \in [A_1^{-1}](F_1)= F_1
\end{align*}
which completes the proof of the claim. 

Then by symmetry we see that $[w_j] \in F_j$ for all $1 \leq j \leq m$. Hence 
\begin{equation*}
p \in {\rm relint}\left(\ConvHull_{\overline{\Omega}}\left( \cup_{j=1}^m F_j \right)\right). \qedhere
\end{equation*}
\end{proof}

\begin{proof}[Proof of Theorem~\ref{thm:center}] (1): By construction, $\Omega \cap \Pb(V)$ is a $Z_\Gamma(A)$-invariant properly convex domain in $\Pb(V)$. Since $\overline{F}_j \subset \Pb(V_j)$, we have
$$
\Omega \cap \ConvHull_{\overline{\Omega}}\left( \cup_{j=1}^m \overline{F}_j \right) \subset \Omega \cap \Pb(V).
$$
So Lemma~\ref{lem:convex hull of closed faces is non-empty}  implies that $\Omega \cap \Pb(V)$ is non-empty. 

(2): Since $\cup_{j=1}^m F_j \subset \partiali\Cc$, Lemma~\ref{lem:computing the intersection} implies that $\Omega \cap \Pb(V) \subset \Cc$. 

(3): Lemmas~\ref{lem:inclusion-in-min-c} and~\ref{lem:computing the intersection} imply that $\Omega \cap \Pb(V) \subset {\rm Min}_{\Cc}(A)$. By  Theorem~\ref{thm:centralizers_ncc} the quotient $Z_\Gamma(A) \backslash {\rm Min}_{\Cc}(A)$ is compact and $\Omega \cap \Pb(V)$ is closed, so $Z_\Gamma(A) \backslash \Omega \cap \Pb(V)$ is also compact.

(4): This follows from Lemma~\ref{lem:computing the intersection}.
\end{proof}

\subsection{The failure of Theorem~\ref{thm:center} in the naive convex co-compact setting}\label{sec:failure of center theorem for naive convex co-compact groups}

In this subsection we construct examples of naive convex co-compact groups with a centralizer that does not satisfy the conclusions of Theorem~\ref{thm:center}. The first example is quite simple.

\begin{example} Let $\Omega:=\{ [x_1:x_2:x_3] : x_1,x_2,x_3 >0 \}$ be a two dimensional simplex and let $\Gamma:=\langle a \rangle$ where 
$$
a=\begin{bmatrix} 1 & 0 & 0 \\0 & 2 & 0 \\ 0 & 0 & 2\end{bmatrix}.
$$  
 Then $\Cc:=\Omega \cap \Pb(\Span\{[1:0:0], [0:1:1]\})$ is an $\Gamma$-invariant projective line segment and $\Gamma \backslash \Cc \cong \Sb^1$. So $\Gamma \leq \Aut(\Omega)$  is a naive convex co-compact group. However, since $a$ fixes all three vertices of the simplex $\Omega$, with the notation in Theorem~\ref{thm:center}, $V = \Rb^3$. So $\Omega = \Omega \cap \Pb(V) \not\subset \Cc$ and $Z_{\Gamma}(\Gamma)=\Gamma$ does not act co-compactly on $\Omega = \Omega \cap \Pb(V)$. Thus this example fails to satisfy parts (2) and (3) in Theorem~\ref{thm:center}. 
\end{example}

\begin{example} Let $\Hb^2$ denote real hyperbolic 2-space and fix a convex co-compact subgroup $\Gamma_0 \leq \Isom(\Hb^2)$ which is isomorphic to a free group on two generators. Then, by definition, there exists a closed $\Gamma_0$-invariant non-empty geodesically convex subset $\Cc_0 \subsetneq \Hb^2$ where $\Gamma_0 \backslash \Cc_0$ is compact. 

Using the Klein-Beltrami model, we can identify 
$$
\Hb^2=\left\{ [x_1 : x_2 : x_3]  \in \Pb(\Rb^3) : x_1^2 + x_2^2 < x_3^2\right\}
$$
and $\Isom(\Hb^2) = \Aut(\Hb^2) = {\rm PO}(2,1)$. Since geodesics in $\Hb^2$ are projective line segments, $\Cc_0$ is a convex set in the real projective sense.

Next let 
$$
\Omega := \left\{ [1:x_1 : x_2 : x_3 ] \in \Pb(\Rb^4) : x_3 > 0, \, x_1^2 + x_2^2 < x_3^2\right\}
$$
and 
$$
\Cc := \left\{ [1: x_1 : x_2 : x_3] \in \Omega: [x_1 : x_2 : x_3] \in \Cc_0 \right\}.
$$
Then $\Omega$ is a cone with base $\Hb^2$ and $\Cc$ is a cone with base $\Cc_0$. Define 
$$
\Gamma := \Aut(\Omega) \cap \left\{ \begin{bmatrix} 2^{n} & \\ & 2^{-n} A \end{bmatrix} \in \PGL_4(\Rb) : A \in \GL_3(\Rb), \, \det(A) = \pm 1, \, [A] \in \Gamma_0, n \in \Zb \right\}.
$$
Notice that if $[A] \in \Gamma_0$, then exactly one of 
$$
\begin{bmatrix} 1 & \\ & A \end{bmatrix}, \begin{bmatrix} 1 & \\ & -A \end{bmatrix} \in \PGL_4(\Rb)
$$
is contained in $\Aut(\Omega)$. Hence the map 
$$
\begin{bmatrix} 2^{n} & \\ & 2^{-n} A \end{bmatrix}  \in \Gamma \mapsto (n, [A]) \in \Zb \times \Gamma_0
$$
is an isomorphism. This implies that $\Gamma$ acts co-compactly on $\Cc$ and so $\Gamma \leq \Aut(\Omega)$ is a naive convex co-compact group.

Consider the element
$$
g:=\begin{bmatrix} 2^{n} & \\ & 2^{-n} \id_3 \end{bmatrix} \in \Gamma. 
$$
Then the centralizer of $g$ in $\Gamma$ is $Z_\Gamma(g) = \Gamma$. Further, with the notation in Theorem~\ref{thm:center},
$$
V =\Span\{ x \in \Cc : gx = x \} = \Rb^4
$$
and so $\Omega = \Omega \cap \Pb(V)$. Thus this example fails to satisfy parts (2) and (3) in Theorem~\ref{thm:center}. 
\end{example}

\section{The classification of 3-manifolds admitting convex co-compact representations}
\label{sec:pf_main_thm}

In this section we prove Theorem~\ref{thm:main} (in Propositions~\ref{prop:geom_case} and~\ref{prop:non_geom_case}) and Proposition~\ref{prop:boring_examples} (in Proposition~\ref{prop:geom_case}).

We start by recalling the following facts about convex co-compact groups. 

\begin{proposition}\label{prop:cc_observations} Suppose $\Omega \subset \Pb(\Rb^d)$ is a properly convex domain and $\Gamma \leq \Aut(\Omega)$ is convex co-compact. 
\begin{enumerate}
\item If $x \in \partiali\Cc_\Omega(\Gamma)$, then 
\begin{align*}
F_\Omega(x) \subset \partiali\Cc_\Omega(\Gamma).
\end{align*} 
\item If $\Gamma_1 \leq \Gamma$ has finite index, then $\Gamma_1$ is convex co-compact in $\Aut(\Omega)$ and $\Cc_\Omega(\Gamma_1)= \Cc_\Omega(\Gamma)$.
\item If $S \leq \Gamma$ is virtually solvable, then $S$ is virtually Abelian. 
\item If $V \subset \Rb^d$ is a $\Gamma$-invariant linear subspace and $\Omega \cap\Pb(V) \neq \emptyset$, then $\Cc_\Omega(\Gamma)\subset \Omega \cap \Pb(V)$.
\end{enumerate}
\end{proposition}

\begin{proof} (1). Follows from the definition and Proposition~\ref{prop:dynamics_of_automorphisms_2}.

(2). Follows from the definitions. 

(3). Let $\wh{S} \leq \GL_d(\Rb)$ be the preimage of $S$ under the projection $\GL_d(\Rb) \rightarrow \PGL_d(\Rb)$. By replacing $S$ with a finite index subgroup, we can assume that the Zariski closure of $\wh{S}$ in $\GL_d(\Cb)$ is connected. By Lie's Theorem there exists $g \in \GL_d(\Cb)$ such that $g \wh{S} g^{-1}$ is a subgroup of the complex upper triangular matrices. Then 
\begin{align*}
g\left[ \wh{S},\wh{S}\right] g^{-1}=\left[g \wh{S} g^{-1},g \wh{S} g^{-1}\right]
\end{align*}
is a subgroup of the complex upper triangular matrices with ones on the diagonal. Hence every element of $\left[\wh{S},\wh{S}\right]$ is unipotent which implies by~\cite[Theorem 1.16 (C)]{DGK2017} that $\left[\wh{S},\wh{S}\right]=1$. Thus $\wh{S}$ is Abelian.

(4). Note that $\Omega \cap \Pb(V)$ is a non-empty closed $\Gamma$-invariant properly convex subset of $\Omega$. By \cite[Lemma 4.16]{DGK2017}, $\Cc_{\Omega}(\Gamma)$ is the minimal such set. Thus $\Cc_\Omega(\Gamma) \subset \Omega \cap \Pb(V)$.
\end{proof}

By \Cref{rem:irred_implies_constraint_on _geom}, an irreducible 3-manifold cannot have $\Rb \times \Sb^2$ geometry. Hence the following proposition implies Theorem~\ref{thm:main} part (1) and  Proposition~\ref{prop:boring_examples}.

\begin{proposition}\label{prop:geom_case} Suppose $M$ is a closed geometric 3-manifold, $\rho : \pi_1(M) \rightarrow \PGL_d(\Rb)$ is a convex co-compact representation, and $\Omega \subset \Pb(\Rb^d)$ is a properly convex domain where $\Gamma:=\rho(\pi_1(M)) \leq \Aut(\Omega)$ is convex co-compact. Then:
\begin{enumerate}
\item $M$ has either $\Rb \times \Sb^2$, $\Rb^3$, $\Rb \times \Hb^2$, or $\Hb^3$ geometry. \label{conc:geom_case_1}
\item If $M$ has $\Rb^3$ or $\Rb \times \Hb^2$ geometry, then there exists a four dimensional linear subspace $V \subset \Rb^d$ such that 
\begin{align*}
\Cc_\Omega(\Gamma) = \Omega \cap \Pb(V).
\end{align*}
\item If $M$ has $\Rb^3$ geometry, then $\Cc_\Omega(\Gamma)$ is a properly embedded simplex in $\Omega$,
\item If $M$ has $\Rb \times \Hb^2$ geometry, then $\Cc_\Omega(\Gamma)$ is a properly embedded cone in $\Omega$ with strictly convex base. 
\end{enumerate}
\end{proposition}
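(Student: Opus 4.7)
The plan is to walk through the eight Thurston geometries, ruling out those incompatible with convex cocompactness, and then establish the structural descriptions of $\Cc_\Omega(\Lambda)$ in the $\Rb^3$ and $\Rb \times \Hb^2$ cases. The core tool throughout is Theorem~\ref{thm:center} applied to a carefully chosen infinite abelian subgroup of $\Lambda$.

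First I would handle the easy eliminations. If $M$ has $S^3$ geometry then $\pi_1(M)$ is finite, forcing $\Lambda$ to be finite and violating the infinite-cardinality requirement in the definition of convex cocompact. If $M$ has Nil or Sol geometry then $\pi_1(M)$ is virtually solvable but not virtually abelian; since $\ker\rho$ is finite, the same holds for $\Lambda$ up to commensurability, directly contradicting Observation~\ref{obs:cc_observations}(3). This leaves the five geometries $\Rb \times S^2$, $\Rb^3$, $\Rb \times \Hb^2$, $\wt{\SL_2(\Rb)}$, and $\Hb^3$.

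Next I would treat the remaining non-$\Hb^3$ cases uniformly. In each such case, after passing to a finite-index subgroup, $\pi_1(M)$ contains an infinite abelian subgroup $A$ with $C_{\pi_1(M)}(A) = \pi_1(M)$: take $A \cong \Zb^3$ for $\Rb^3$, $A \cong \Zb$ for $\Rb \times S^2$, and $A$ equal to the central $\Zb$ for $\Rb \times \Hb^2$ and $\wt{\SL_2(\Rb)}$. Applying Theorem~\ref{thm:center} and Corollary~\ref{cor:fund_gp_of_manifold} to $\rho(A)$, I conclude that $\Lambda$ is virtually the fundamental group of a closed aspherical $(\dim V - 1)$-manifold. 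Since $\Lambda$ is commensurable with the $3$-manifold group $\pi_1(M)$, comparing virtual cohomological dimensions forces $\dim V = 4$, except in the $\Rb \times S^2$ case where it gives $\dim V = 2$.

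For the $\Rb^3$ case, Theorem~\ref{thm:center}(5) provides a decomposition $V = \bigoplus_{j=1}^m V_j$ with $A \cong \Zb^3$ scaling each summand. Combining this with Theorem~\ref{thm:max_abelian} applied to the maximal abelian $A$ forces $m=4$ and $\dim V_j = 1$ for every $j$, so that $\Omega \cap \Pb(V)$ is a $3$-dimensional simplex. For the $\Rb \times \Hb^2$ case, $\Lambda$ is virtually $\Zb \times \pi_1(\Sigma)$; taking $A$ to be the central $\Zb$ produces $V = V_1 \oplus V_2$ with $\dim V_1 = 1$, and the cocompact action of the word-hyperbolic surface factor on $F_2$ forces $F_2$ to be strictly convex (since cocompact actions of word hyperbolic groups on properly convex domains yield strictly convex boundaries). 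Hence $\Cc_\Omega(\Lambda) = \Omega \cap \Pb(V)$ is a cone with strictly convex base. In both cases the equality $\Cc_\Omega(\Lambda) = \Omega \cap \Pb(V)$ follows from Observation~\ref{obs:cc_observations}(4) for the inclusion $\subseteq$, together with the fact that both sides are $\Lambda$-cocompact nonempty closed convex subsets of $\Omega$.

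The remaining and hardest step is ruling out $\wt{\SL_2(\Rb)}$ geometry. The same analysis applied to the central $\Zb$ produces a cone structure formally identical to the $\Rb \times \Hb^2$ case. Identifying the stabilizer of $V_1 \oplus V_2$ inside $\PGL_4(\Rb)$ with $\GL(V_2) \cong \GL_3(\Rb)$ yields an embedding $\Lambda \hookrightarrow \GL_3(\Rb)$ in which the central $\Zb$ acts by scalars, and the quotient acts as a cocompact convex-projective surface group in $\PGL_3(\Rb)$. The main obstacle is then to argue — using the interaction of the determinant character with the discreteness and cocompactness of $\Lambda$ — that the central extension $1 \to \Zb \to \Lambda \to \pi_1(\Sigma) \to 1$ must virtually split. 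This would force $\Lambda$ to be virtually $\Zb \times \pi_1(\Sigma)$, contradicting the fact that $\wt{\SL_2(\Rb)}$ fundamental groups are not virtually products and completing the elimination.
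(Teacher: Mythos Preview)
Your overall plan—eliminate $S^3$, Nil, Sol easily, then apply Theorem~\ref{thm:center} to a central abelian subgroup—is the right one and matches the paper. But your organization creates real gaps, and the hardest step is not carried out.

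First, by case-splitting on the geometry \emph{before} applying Theorem~\ref{thm:center}, you are forced to make unjustified claims about the decomposition $V=\bigoplus V_j$. In the $\Rb\times\Hb^2$ case you assert ``produces $V=V_1\oplus V_2$ with $\dim V_1=1$'', but Theorem~\ref{thm:center} only gives $m\geq 2$; with $\dim V=4$ the dimension vector could a priori be $(1,1,1,1)$, $(1,1,2)$, $(2,2)$, or $(1,3)$, and you give no reason to exclude the first three. Likewise your $\Rb^3$ argument invokes Theorem~\ref{thm:max_abelian} to force $m=4$, but that theorem produces a properly embedded simplex in $\Cc$, not an identification of $\Omega\cap\Pb(V)$ with it. The paper avoids all of this by treating the three Seifert cases ($\Rb^3$, $\Rb\times\Hb^2$, $\wt{\SL_2(\Rb)}$) uniformly: take $A$ to be the central $\Zb$ coming from a regular Seifert fibre, apply Theorem~\ref{thm:center}, and then run the case analysis on $(\dim V_1,\dots,\dim V_m)$. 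The first three dimension vectors force $\Omega\cap\Pb(V)$ to be a simplex, hence $\pi_1(M)$ virtually abelian, hence $\Rb^3$ geometry; only the $(1,3)$ case remains, and in it one appeals to Benoist/Benz\'ecri to see that $F_2$ is either a simplex (back to $\Rb^3$) or strictly convex.

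Second, and more seriously, your $\wt{\SL_2(\Rb)}$ elimination is not a proof: you identify the obstacle (showing the central extension virtually splits) but do not overcome it. The paper sidesteps the splitting question entirely. In the strictly convex $(1,3)$ case it constructs an explicit homomorphism $\tau=(\tau_1,\tau_2):\Lambda_V\to(\Rb,+)\times\PGL_3(\Rb)$ by $\tau([\begin{smallmatrix}\lambda & \\ & A\end{smallmatrix}])=(\log(\det A/\lambda^3),[A])$, checks that $\tau$ is injective and proper, and that $\tau_2(\Lambda_V)\leq\Aut(F_2)$ is discrete (using that $\rho(h)$ maps to $(\lambda,\id)$ with $\lambda\neq 0$). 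Since $F_2$ is strictly convex and divisible, $\tau_2(\Lambda_V)$ is a closed hyperbolic surface group, yielding a proper embedding $\pi_1(M)\hookrightarrow(\Rb,+)\times\Isom(\Hb^2)$. The image then acts freely, properly, and (by cohomological dimension $3$) cocompactly on $\Rb\times\Hb^2$, producing a closed $\Rb\times\Hb^2$-manifold with the same fundamental group as $M$; by rigidity of geometric structures, $M$ itself has $\Rb\times\Hb^2$ geometry. No virtual splitting of the extension is ever needed.
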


\begin{proof} We first observe that it is enough to prove the proposition for a finite cover $M^\prime$ of $M$. For part (1), any geometric structure on $M$ lifts to a geometric structure on $M^\prime$ and a closed 3-manifold can only have one type of geometric structure, see \Cref{prop:closed-mfld-unique-gstr}. So it is enough to show that $M^\prime$ has geometry either $\Rb \times \Sb^2$, $\Rb^3$, $\Rb \times \Hb^2$, or $\Hb^3$. Further, by Proposition~\ref{prop:cc_observations} the representation $\rho|_{\pi_1(M^\prime)}$ is convex co-compact and has the same convex hull as $\rho$. So it is enough to prove parts (2)-(4) for $\Gamma^\prime:= \rho(\pi_1(M^\prime))$. Thus we will freely pass to finite covers throughout the proof.

By definition $\Gamma = \rho(\pi_1(M))$ is infinite and so $M$ does not have geometry $\Sb^3$. Proposition~\ref{prop:cc_observations} part (3) implies that $M$ does not have ${\rm Sol}$ or ${\rm Nil}$ geometry: in these cases $\pi_1(M)$ is virtually solvable, but not virtually Abelian (see~\Cref{prop:nil_or_sol_geom}). If $M$ has $\Rb \times \Sb^2$ or $\Hb^3$ geometry, then there is nothing left to prove, so we can assume that $M$ has either $\Rb \times \Hb^2$, $\Rb^3$, or $\wt{\SL_2(\Rb)}$ geometry. By \Cref{prop:seifert_geometric_manifolds}, $M$ is Seifert fibered. 

Further the universal cover $\wt{M}$ of $M$ is diffeomorphic to $\Rb^3$. So $\pi_1(M)$ is torsion free and has cohomological dimension 3.  Since $\pi_1(M)$ is torsion free, $\ker \rho = 1$ by definition. 

Since $M$ is Seifert fibered, $\pi_1(M)$ contains an infinite normal cyclic subgroup $N = \ip{h}$, see \Cref{prop:seifert-alg}. 
Then, since $\Aut(N) \cong \Aut(\Zb) \cong \Zb/2\Zb$, the centralizer $Z_{\pi_1(M)}(h)$ has finite index in $\pi_1(M)$.  Thus by replacing $M$ with a finite cover we can assume that $\pi_1(M) = Z_{\pi_1(M)}(h)$. Then $\Gamma=Z_{\Gamma}(\rho(h))$

Then, by Theorem~\ref{thm:center} there exists a linear subspace $V \subset \Rb^d$ such that 
\begin{enumerate}[label={(\alph*)}]
\item $\Omega \cap \Pb(V)$ is a non-empty $\Gamma$-invariant properly convex domain in $\Pb(V)$,
\item\label{list:intersection is in convex hull} $\Omega \cap \Pb(V) \subset \Cc_\Omega(\Gamma)$, 
\item the quotient $\Gamma \backslash \Omega \cap \Pb(V)$ is compact, and
\item\label{list:e} there exists a $\Gamma$-invariant non-trivial direct sum decomposition $V = \oplus_{j=1}^m V_j$ where $\rho(h)$ acts on each $V_j$ by scaling and there exist properly convex domains $F_j \subset \Pb(V_j)$ such that 
\begin{align*}
\Omega \cap \Pb(V) = {\rm relint}\left(\ConvHull_{\overline{\Omega}}\left( \cup_{j=1}^m F_j \right) \right). 
\end{align*}
\end{enumerate}

Since $\Omega \cap \Pb(V)$ is diffeomorphic to $\Rb^{\dim V-1}$ and $\Gamma \cong \pi_1(M)$ has cohomological dimension 3, we must have $\dim V = 4$. Further, by Proposition~\ref{prop:cc_observations} part (4) and property~\ref{list:intersection is in convex hull} we have $\Cc_\Omega(\Gamma)=\Omega \cap \Pb(V)$. This proves part (2).

Let 
\begin{align*}
\Gamma_V :=\{ g|_V \in \PGL(V) : g \in \Gamma\}.
\end{align*}
 The map $g \in\Gamma \mapsto g|_V \in \PGL(V)$ is proper and injective since $\Gamma$ acts properly on $\Omega$ and $\Gamma$ is torsion free. Hence $\Gamma_V \cong \Gamma$ and $\Gamma_V$ is discrete.

Since $\dim V  = 4$, up to relabelling we have four cases for $(\dim V_1,\dots,\dim V_m)$: (1,1,1,1), (1,1,2), (2,2), and (1,3). Notice that if $\dim V_j=1$, then $F_j =\Pb(V_j)$ is a point in $\Pb(\Rb^d)$ and if $\dim V_j = 2$, then $F_j$ is an open line segment in $\Pb(\Rb^d)$. 

In the first three cases $\Omega \cap \Pb(V)$ is a simplex.  Then, by the description of $\Aut(\Omega \cap\Pb(V))$ in Example~\ref{ex:basic_properties_of_simplices}, $\pi_1(M) \cong \Gamma \cong \Gamma_V$ is virtually isomorphic to $\Zb^3$. So \Cref{prop:nil_or_sol_geom} implies that $M$ has $\Rb^3$ geometry.

In the last case, $\Omega \cap \Pb(V)$ is a cone with base $F_2$. By a result of Benoist~\cite[Proposition 4.4]{B2003}, there exists a discrete subgroup of $\Aut(F_2)$ which acts co-compactly on $F_2$ (i.e. $F_2$ is divisible). Hence by a result of Kuiper~\cite{Kuiper1954}, either $F_2$ is a simplex or $F_2$ is a strictly convex domain. If $F_2$ is a simplex, then $\Omega \cap \Pb(V)$ is a simplex and once again $\pi_1(M)$ is virtually isomorphic to $\Zb^3$. So in this case $M$ has $\Rb^3$ geometry. 

It remains to consider the case when $F_2$ is a strictly convex domain. Then we can identify $V$ with $\Rb^4$ so that 
\begin{align*}
V_1 = \Rb \times \{(0,0,0)\} \quad \text{and} \quad V_2 = \{0\} \times \Rb^3.
\end{align*}
Then by property~\ref{list:e} above
\begin{align*}
\Gamma_V \leq \left\{ \begin{bmatrix} \lambda & \\ & A \end{bmatrix} \in\PGL_4(\Rb): \lambda \in\Rb^{\times} \text{ and } A \in \GL_3(\Rb) \right\}.
\end{align*}
Passing to a finite cover of $M$ we can assume that $\frac{\det(A)}{\lambda^3}>0$ for all elements of $\Gamma_V$.

Let $\tau=(\tau_1,\tau_2): \Gamma_V \rightarrow (\Rb,+) \times \PGL_3(\Rb)$ be the homomorphism
\begin{align*}
\tau \left( \begin{bmatrix} \lambda & \\ & A \end{bmatrix} \right) = \left(\log \frac{\det(A)}{\lambda^3}, [A] \right).
\end{align*}
Then $\tau$ is injective and is proper (i.e. has discrete image). 

Let $\Gamma_2:=\tau_2(\Gamma_V)$. Then $\Gamma_2 \leq \Aut(F_2)$ and the quotient $\Gamma_2 \backslash F_2$ is compact. 
We claim that $\Gamma_2$ is discrete. Suppose $(g_n)_{n \geq1}$ is a sequence in $\Gamma_V$ and $\tau_2(g_n) \rightarrow \id$. By property~\ref{list:e} above, $\tau(\rho(h)) =(\lambda, \id)$ for some non-zero $\lambda\in\Rb$. So we can find a sequence $(m_n)_{n \geq 1}$ in $\Zb$ such that $\tau(\rho(h)^{m_n} g_n)$ is relatively compact in $(\Rb,+) \times \PGL_3(\Rb)$. Since $\tau$ is proper, the set  $\{\rho(h)^{m_n} g_n : n \geq 1\}$ is finite. Then, since $\tau_2(g_n) \rightarrow \id$, we must have $\tau_2(g_n) =\id$ for $n$ sufficiently large. Thus $\Gamma_2$ is discrete.

Applying Selberg's lemma to $\Gamma_2$, we can replace $M$ with a finite cover and assume $\Gamma_2$ is torsion free. Then $\Gamma_2$ acts properly discontinuously, freely, and co-compactly on $F_2$. Since $F_2$ is strictly convex, $\Gamma_2$ is Gromov hyperbolic by a result of Benoist~\cite{B2004}. Thus $\Sigma := \Gamma_2\backslash F_2$ is a hyperbolic surface with $\pi_1(\Sigma) \cong \Gamma_2$. So there exists a proper injective homomorphism $\Gamma_2 \hookrightarrow \Isom(\Hb^2)$.

Then, since $\Gamma_V \cong \Gamma \cong \pi_1(M)$, there exists a proper injective homomorphism 
\begin{align*}
\pi_1(M) \hookrightarrow (\Rb,+) \times \Isom(\Hb^2).
\end{align*} Let $G$ be the image of this map. Then $G$ acts properly discontinuously and freely on $\Rb \times \Hb^2$. Since $G \cong \pi_1(M)$ has cohomological dimension 3, the quotient $N := G \backslash \Rb \times \Hb^2$ is a closed 3-manifold with $\Rb \times \Hb^2$ geometry. Then, since $\pi_1(N)\cong \pi_1(M)$, the manifolds $N$ and $M$ are homeomorphic~\cite{S1983b}. So $M$ also has $\Rb \times \Hb^2$ geometry.
\end{proof}

 \begin{proposition}\label{prop:non_geom_case} Suppose $M$ is a closed irreducible orientable 3-manifold. If $\rho : \pi_1(M) \rightarrow \PGL_d(\Rb)$ is a convex co-compact representation and $M$ is non-geometric, then every component in the geometric decomposition of $M$ is hyperbolic. 
 \end{proposition}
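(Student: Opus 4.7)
The plan is to argue by contradiction: assume some component $N$ of the geometric decomposition of $M$ is not hyperbolic. Then $N$ supports a Seifert fibered Thurston geometry, so $\pi_1(N)$ has a normal infinite cyclic subgroup $\ip{h}$ generated by the regular fiber. Since $N$ is a non-degenerate decomposition piece, $\pi_1(N)$ is non-abelian; by $\pi_1$-injectivity of the decomposing surfaces we have $\pi_1(N) \hookrightarrow \pi_1(M)$, and $\pi_1(N)$ contains a $\Zb^2$ subgroup coming from any boundary torus. After replacing $M$ by a finite cover (which preserves non-geometricity, retains a Seifert fibered decomposition piece, and restricts to a convex co-compact representation of the cover's fundamental group), we may further assume $h$ is central in $\pi_1(N)$, so $\pi_1(N) \subseteq C_{\pi_1(M)}(h)$.

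The next step is to apply Corollary~\ref{cor:fund_gp_of_manifold} to the Abelian subgroup $A = \ip{\rho(h)} \leq \Lambda$: the centralizer $C_\Lambda(\rho(h))$ is virtually the fundamental group of some closed aspherical $k$-manifold $W$ with $k = \dim V - 1$. Since $\rho$ is injective on the torsion-free group $\pi_1(M)$, the image $\rho(\pi_1(N))$ embeds into $C_\Lambda(\rho(h))$ and contains $\Zb^2$, forcing $k \geq 2$. Since $C_\Lambda(\rho(h)) \leq \Lambda \cong \pi_1(M)$ has $\cd \leq 3$, also $k \leq 3$. The case $k = 2$ is impossible: $\pi_1(W)$ would be a closed surface group containing $\Zb^2$, hence virtually $\Zb^2$, which would force the torsion-free non-abelian group $\pi_1(N)$ to embed into $\Zb^2$.

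Thus $k = 3$ and $W$ is a closed aspherical 3-manifold, so a finite-index subgroup $G \leq C_\Lambda(\rho(h))$ isomorphic to $\pi_1(W)$ contains some $\ip{\rho(h)^n}$ as an infinite cyclic central subgroup. Since $\pi_1(M)$ is a Poincar\'{e} duality group of dimension $3$ and $G$ has $\cd = 3$, Strebel's theorem forces $G$, and hence $C_\Lambda(\rho(h))$, to have finite index in $\Lambda$. Therefore some finite cover $\tilde{M}$ of $M$ has $\pi_1(\tilde{M})$ with infinite cyclic center, so by the Seifert fiber space theorem (Casson--Jungreis, Gabai) $\tilde{M}$ is Seifert fibered, and hence geometric; but the preimage of the non-trivial geometric decomposition of $M$ gives a non-trivial geometric decomposition of $\tilde{M}$, forcing $\tilde{M}$ to be non-geometric---a contradiction. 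The main obstacle is Corollary~\ref{cor:fund_gp_of_manifold}: it is what converts the purely projective-geometric hypothesis of convex co-compactness into the rigid topological statement that centralizers of infinite Abelian subgroups of $\Lambda$ are virtually closed aspherical manifold groups, after which the case analysis on $k$ combined with the classical theorems of Strebel and Casson--Jungreis/Gabai closes the argument.
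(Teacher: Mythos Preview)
Your argument is essentially correct and reaches the same conclusion, but it takes a genuinely different route from the paper's proof. The paper uses a stronger input: the equality $C_{\pi_1(S)}(h) = C_{\pi_1(M)}(h)$ (citing \cite{F2011}), which identifies the \emph{full} centralizer in $\pi_1(M)$ with the centralizer inside the Seifert piece. From there the paper directly computes that this centralizer is virtually $\Zb \times \mathsf{F}_m$ with $m \geq 2$, which has cohomological dimension $2$ but is neither a torus group (contains $\mathsf{F}_m$) nor a hyperbolic surface group (infinite center) --- so Corollary~\ref{cor:fund_gp_of_manifold} with $k=2$ is immediately contradicted and the case $k=3$ never arises. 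Your approach, by contrast, uses only the inclusion $\pi_1(N) \subseteq C_{\pi_1(M)}(h)$, so you cannot pin down the centralizer and must instead run a dichotomy on $k$, closing the $k=3$ case with Strebel's theorem and the Seifert fiber space theorem (Casson--Jungreis/Gabai). Your route trades one nontrivial 3-manifold fact for two others; both are legitimate, and yours has the mild advantage of not depending on the precise equality of centralizers.

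One point in your $k=2$ case needs tightening. You conclude that $\pi_1(N)$ would ``embed into $\Zb^2$''; what you actually get is that $\pi_1(N)$ embeds in a group that is virtually $\Zb^2$, hence $\pi_1(N)$ is itself virtually abelian. Being torsion-free and non-abelian does not rule this out (the Klein bottle group is a counterexample). What you need is that $\pi_1(N)$ is not virtually abelian --- and indeed it is not: since $N$ is a non-compact finite-volume Seifert piece in a nontrivial decomposition, its interior has $\wt{\SL_2(\Rb)}$ or $\Rb \times \Hb^2$ geometry, so $\pi_1(N)$ surjects onto a hyperbolic $2$-orbifold group and contains a non-abelian free subgroup. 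With that correction your argument goes through.
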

\begin{proof} Suppose, for a contradiction, that there exists a component $S$ in the geometric decomposition that is not hyperbolic. Then, \Cref{prop:seifert-piece} implies that $S$ is Seifert fibered and has $\wt{\SL_2(\Rb)}$ or $\Rb \times \Hb^2$ geometry.

Let $N = \ip{h}$ denote the infinite cyclic subgroup in $\pi_1(S)$ generated by a regular fiber, see \Cref{prop:seifert-alg}. Then, since $\Aut(N) \cong \Zb/2\Zb$, the centralizer $Z_{\pi_1(S)}(h)$ has finite index in $\pi_1(S)$. Further, by \Cref{prop:seifert-piece} part (2),  
\begin{align*}
Z_{\pi_1(S)}(h) = Z_{\pi_1(M)}(h).
\end{align*}
Since $\pi_1(M)$ is torsion-free, see for instance~\cite[(C.1)]{AFW2015}, and $\ker \rho$ is finite by definition, we see that $\rho$ is injective. Then $Z_{\pi_1(S)}(h)$, and hence $\pi_1(S)$, is virtually the fundamental group of a closed aspherical $k$-manifold by Corollary~\ref{cor:fund_gp_of_manifold}. We will show that this is impossible.

Let $\Sigma$ be the base orbifold given by the Seifert fibration of $S$. By \Cref{prop:seifert_geometric_manifolds}, $\Sigma$ is homeomorphic to a finite area non-compact hyperbolic 2-orbifold (recall that $S$ has $\wt{\SL_2(\Rb)}$ or $\Rb \times \Hb^2$ geometry). Further, by \Cref{prop:seifert-alg}, we have a short exact sequence 
\begin{align*}
1 \rightarrow N \rightarrow \pi_1(S) \rightarrow \pi_1(\Sigma) \rightarrow 1. 
\end{align*}
Fix a finite cover $\Sigma^\prime \rightarrow \Sigma$ such that $\Sigma^\prime$ is a manifold. Next let $G$ be the preimage of $\pi_1(\Sigma^\prime)$ under the map $\pi_1(S) \rightarrow \pi_1(\Sigma)$. Then $G$ has finite index in $\pi_1(S)$ and
\begin{align*}
1 \rightarrow N \rightarrow G \rightarrow \pi_1(\Sigma^\prime) \rightarrow 1
\end{align*}
is a short exact sequence. Since $\Sigma^\prime$ is a finite volume non-compact hyperbolic surface, there exists $m \geq 2$ such that $\pi_1(\Sigma^\prime) \cong \mathsf{F}_m$ where $\mathsf{F}_m$ is the free group on $m$ generators. Hence the short exact sequence splits and $G \cong N \times \pi_1(\Sigma^\prime) \cong \Zb \times  \mathsf{F}_m$.

Since $G$ has finite index in $\pi_1(S)$, our assumption implies that $G$ is virtually the fundamental group of a closed aspherical $k$-manifold. Since $G$ has cohomological dimension two, $k=2$. Since $G$ contains a non-Abelian free subgroup, $G$ is not virtually the fundamental group of a torus. Since $G$ has infinite center, $G$ is not virtually the fundamental group of a closed hyperbolic surface.  So we have a contradiction. 
\end{proof}

\section{Proof of Theorem~\ref{thm:anosov_intro}}\label{sec:Anosov}

In this section we prove Theorem~\ref{thm:anosov_intro} using the following result of Danciger--Gu\'{e}ritaud--Kassel about projective Anosov representations.

\begin{theorem}\cite[Theorem 1.15]{DGK2017}\label{thm:DGK} Suppose $\Gamma$ is a word hyperbolic group. If $\rho : \Gamma \rightarrow \PGL_d(\Rb)$ is a representation, then the following are equivalent:
\begin{enumerate}
\item  $\rho$ is convex co-compact 
\item $\rho$ is projective Anosov and $\rho(\Gamma)$ preserves a properly convex domain in $\Pb(\Rb^d)$.
\end{enumerate}
In this case, if $\Omega \subset \Pb(\Rb^d)$ is a properly convex domain such that $\rho(\Gamma) \leq \Aut(\Omega)$ is convex co-compact, $\Cc:=\Cc_\Omega(\rho(\Gamma))$, and $\xi^{(1)}:\partial_\infty \Gamma \rightarrow \Pb(\Rb^d)$ is the Anosov boundary map, then
\begin{enumerate}[label={(\alph*)}]
\item $\xi^{(1)} \left( \partial_\infty \Gamma\right) = \partiali \Cc$,
\item $\partiali \Cc$ contains no non-trivial line segments, and
\item every point in $\partiali \Cc$ is a $\Cc^1$-smooth point  of $\partial\Omega$.
\end{enumerate}
\end{theorem}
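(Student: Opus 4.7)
The plan is to use Theorem~\ref{thm:DGK} as a pivot, reducing the equivalence to a single missing implication. The implication (1) $\Rightarrow$ (2) together with the three geometric conclusions (a), (b), (c) are immediate from Theorem~\ref{thm:DGK}: if $\rho$ is convex co-compact then $\rho(\Gamma)$ preserves a properly convex domain by definition, so condition (2) of Theorem~\ref{thm:DGK} is satisfied, and that theorem then delivers both that $\rho$ is projective Anosov and the three structural conclusions (a), (b), (c).

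For the substantive direction (2) $\Rightarrow$ (1), Theorem~\ref{thm:DGK} again reduces the problem to producing a $\rho(\Gamma)$-invariant properly convex domain in $\Pb(\Rb^d)$. When $\rho$ is irreducible this is exactly the main result of \cite{Z2017}, so the new content is the reducible case. The plan is to reduce this to the irreducible situation by passing to the minimal $\rho$-invariant subspace carrying the boundary dynamics. Set $V_+ := \Spanset \xi^{(1)}(\partial_\infty \Gamma) \subset \Rb^d$, a $\rho$-invariant subspace, and let $\rho_+ : \Gamma \to \PGL(V_+)$ be the restriction. This restriction is again projective Anosov, with the same boundary map now viewed in $\Pb(V_+)$, and by construction $\xi^{(1)}(\partial_\infty \Gamma)$ spans $V_+$. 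I would then argue that $\rho_+$ is in fact \emph{irreducible}: any further proper $\rho$-invariant subspace $W \subsetneq V_+$ would induce a nontrivial equivariant partition of $\xi^{(1)}(\partial_\infty \Gamma)$, and the connectedness of $\partial_\infty \Gamma$ (from $\Gamma$ one-ended) together with its topological type (non-surface rules out $\partial_\infty \Gamma \cong S^1$) should preclude such a partition. Applying \cite{Z2017} to $\rho_+$ then produces a properly convex $\rho(\Gamma)$-invariant domain $\Omega_+ \subset \Pb(V_+)$.

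The last step is to promote $\Omega_+$ to an ambient $\rho(\Gamma)$-invariant properly convex domain in $\Pb(\Rb^d)$. Here I would invoke the additional result from \cite{DGK2017} referenced in the introductory text: using the dual Anosov boundary map $\xi^{(d-1)}$ to identify a transverse $\rho$-invariant direction $V_- := \bigcap_{\eta \in \partial_\infty \Gamma} \xi^{(d-1)}(\eta)$, one ``thickens'' $\Omega_+$ to an ambient properly convex domain preserved by $\rho(\Gamma)$. The main obstacle I expect is the irreducibility step for $\rho_+$: translating the topological hypotheses on $\partial_\infty \Gamma$ into a linear-algebraic statement ruling out invariant subspaces requires combining the continuity and equivariance of $\xi^{(1)}$ with the contraction dynamics built into projective Anosov, and must genuinely use the non-surface-group hypothesis since the remark after Theorem~\ref{thm:anosov_intro} confirms that Hitchin representations in even dimensions are real exceptions rather than pathologies removable by formal manipulation.
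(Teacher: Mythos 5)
Your proposal does not actually prove the statement in question. Theorem~\ref{thm:DGK} is an external result, quoted verbatim from \cite[Theorem 1.15]{DGK2017}; the paper offers no proof of it (only the remark that the irreducible case was obtained independently in \cite{Z2017}), and your argument cannot serve as one because it is circular: both directions of your sketch invoke Theorem~\ref{thm:DGK} itself. Moreover, the substantive content you develop belongs to a different statement, namely Theorem~\ref{thm:anosov_intro} via Proposition~\ref{prop:stuff_implies_cc}. The hypotheses ``one-ended'' and ``not commensurable to a surface group'' do not appear in Theorem~\ref{thm:DGK}, which is asserted for \emph{every} word hyperbolic group (including surface and free groups, where the Hitchin examples show that no invariant convex domain can be produced from the Anosov property alone); the theorem nevertheless holds there precisely because its condition (2) already \emph{assumes} that $\rho(\Gamma)$ preserves a properly convex domain. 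Consequently, in Theorem~\ref{thm:DGK} the direction $(2)\Rightarrow(1)$ involves no construction of an invariant domain at all, contrary to your reading.

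Even judged as a sketch of the genuinely new step in the paper (Proposition~\ref{prop:stuff_implies_cc}), the central reduction has a gap. From $V_+ := \Spanset \xi^{(1)}(\partial_\infty \Gamma)$ you infer irreducibility of $\rho|_{V_+}$ on the grounds that a proper invariant subspace ``would induce a nontrivial equivariant partition of $\xi^{(1)}(\partial_\infty \Gamma)$''; but a proper invariant subspace $W \subsetneq V_+$ need not meet $\xi^{(1)}(\partial_\infty \Gamma)$ at all, so no partition is forced --- spanning only excludes invariant subspaces \emph{containing} the image. Non-split extensions with an invariant subspace lying off the limit set (and admitting no invariant complement) give reducible projective Anosov representations whose boundary image spans the whole space, so the claimed lemma is false as stated; this is exactly why the reducible case requires a different idea. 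The paper's route avoids irreducibility entirely: part (1) of Proposition~\ref{prop:stuff_implies_cc} uses two points $v_1,v_2$ with $\partial_\infty\Gamma \setminus \{v_1,v_2\}$ connected (this is where the one-ended and non-surface hypotheses enter) to show $\xi^{(1)}(\partial_\infty\Gamma)$ is bounded in an affine chart; part (2) shows every hyperplane $\xi^{(d-1)}(x)$ misses the relative interior of the convex hull $C_0$ of the image, uses Proposition~\ref{prop:asym_behavior} to find a neighborhood $U$ of a point of $\relint(C_0)$ with $\bigcup_{\gamma\in\Gamma}\rho(\gamma)U$ bounded, and takes the relative interior of the convex hull of $\xi^{(1)}(\partial_\infty\Gamma) \cup \bigcup_{\gamma\in\Gamma}\rho(\gamma)U$ as the invariant properly convex domain --- no passage to $V_+$, no appeal to irreducibility, and no ``thickening'' via $\bigcap_{\eta}\xi^{(d-1)}(\eta)$ (which is typically $\{0\}$ by transversality) is needed.
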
 

\begin{remark} In the case of irreducible representations, Theorem~\ref{thm:DGK} was established independently by the second author~\cite{Z2017} using different terminology. 
\end{remark}

Using Theorem~\ref{thm:DGK}, Theorem~\ref{thm:anosov_intro} reduces  to the following proposition.

\begin{proposition}\label{prop:stuff_implies_cc}
Suppose $\Gamma$ is a one-ended word hyperbolic group, $\rho : \Gamma \rightarrow \PGL_d(\Rb)$ is a projective Anosov representation, and $\xi^{(1)}:\partial_\infty \Gamma \rightarrow \Pb(\Rb^d)$ is the Anosov boundary map. 
\begin{enumerate}
\item If $\Gamma$ is not commensurable to a surface group, then $\xi^{(1)} \left( \partial_\infty \Gamma \right)$ is bounded in some affine chart of $\Pb(\Rb^d)$.   
\item If $\xi^{(1)} \left( \partial_\infty \Gamma \right)$ is bounded in some affine chart of $\Pb(\Rb^d)$, then $\rho(\Gamma)$ preserves a properly convex domain in $\Pb(\Rb^d)$. 
\end{enumerate}
\end{proposition}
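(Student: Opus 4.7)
The plan is to prove the two parts separately, treating part (2) as a direct construction and part (1) by induction on $d$, reducing to a modification of Zimmer's irreducible result from~\cite{Z2017}.

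For part (2), let $K := \overline{\xi^{(1)}(\partial_\infty \Gamma)}$, which is compact and by hypothesis is contained in some affine chart $\mathbb{A} \subset \Pb(\Rb^d)$. The closed convex hull of $K$ in $\mathbb{A}$ is a closed properly convex subset $C \subset \Pb(\Rb^d)$, and by equivariance of $\xi^{(1)}$ together with uniqueness of the smallest closed properly convex set containing $K$, $C$ is $\rho(\Gamma)$-invariant. If $\Span(C) = \Rb^d$, then $\interior(C)$ is the desired $\rho(\Gamma)$-invariant properly convex domain. Otherwise, one uses the joint dynamics of both boundary maps $\xi^{(1)}$ and $\xi^{(d-1)}$ to thicken $C$ to a full-dimensional invariant open properly convex set, following the construction in the proof of~\cite[Theorem 1.15]{DGK2017}.

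For part (1), argue by induction on $d$. The base case $d = 2$ holds because for a projective Anosov representation into $\PGL_2(\Rb)$ the boundary map $\xi^{(1)} : \partial_\infty \Gamma \to \Pb(\Rb^2) = S^1$ is continuous and injective (for $d = 2$ transversality of $(\xi^{(1)}, \xi^{(d-1)}) = (\xi^{(1)}, \xi^{(1)})$ is just injectivity), so if $\xi^{(1)}(\partial_\infty \Gamma) = S^1$ then $\xi^{(1)}$ is a homeomorphism and $\partial_\infty \Gamma \cong S^1$, forcing $\Gamma$ to be virtually a surface group, contrary to hypothesis; hence $\xi^{(1)}(\partial_\infty \Gamma) \subsetneq S^1$ lies in a proper arc, i.e., an affine chart. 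For the inductive step, let $W := \Span(\xi^{(1)}(\partial_\infty \Gamma)) \subset \Rb^d$, which is $\rho(\Gamma)$-invariant. If $W \subsetneq \Rb^d$, consider the restricted representation $\rho_W : \Gamma \to \PGL(W)$ with boundary maps $\xi^{(1)} : \partial_\infty \Gamma \to \Pb(W)$ and $\eta(x) := \xi^{(d-1)}(x) \cap W$; transversality of $(\xi^{(1)}, \eta)$ follows from the original transversality by a dimension count (noting that $W \not\subset \xi^{(d-1)}(x)$ for any $x$, since otherwise $\xi^{(1)}(y) \subset \xi^{(d-1)}(x)$ for all $y \neq x$, contradicting transversality of the original pair; hence $\eta(x)$ has codimension one in $W$), and $\rho_W$ inherits the eigenvalue gap condition characterizing projective Anosov because the top eigenline of each $\rho(\gamma)$ already lies in $W$, giving $\lambda_1(\rho_W(\gamma)) = \lambda_1(\rho(\gamma))$ and $\lambda_2(\rho_W(\gamma)) \leq \lambda_2(\rho(\gamma))$. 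Thus $\rho_W$ is projective Anosov, and by induction $\xi^{(1)}(\partial_\infty \Gamma)$ is bounded in some affine chart of $\Pb(W)$; extending the defining hyperplane of that chart to a hyperplane of $\Pb(\Rb^d)$ produces the desired affine chart. If instead $W = \Rb^d$, one adapts the argument of~\cite{Z2017} originally given there under the stronger assumption that $\rho$ is irreducible; the extension to reducible $\rho$ with spanning boundary image is the main new content and uses results from~\cite{DGK2017}.

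The main obstacle will be the case $W = \Rb^d$ with $\rho$ reducible in part (1): although the core of Zimmer's argument relies on $\xi^{(1)}(\partial_\infty \Gamma)$ spanning, reducibility of $\rho$ permits nontrivial invariant subspaces disjoint from $\xi^{(1)}(\partial_\infty \Gamma)$ on which the dynamics must be separately controlled; this control requires analyzing the dual boundary map $\xi^{(d-1)}$ and invoking the convex domain constructions from~\cite{DGK2017}. A secondary subtlety is verifying, in the inductive step, that the restricted representation $\rho_W$ genuinely satisfies the full projective Anosov condition rather than only the eigenvalue gap, which is handled via the eigenvalue-gap characterization of Anosov representations of word hyperbolic groups.
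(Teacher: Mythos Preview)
Your approach differs substantially from the paper's, and the central case of part (1) is left as a gap.

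For part (1), the paper gives a direct, non-inductive argument that works regardless of reducibility: since $\Gamma$ is one-ended and not virtually a surface group, deep results on hyperbolic groups (Tukia, Gabai, Swarup; see \cite[Theorem 2.5]{Z2017}) yield two points $v_1,v_2\in\partial_\infty\Gamma$ with $\partial_\infty\Gamma\setminus\{v_1,v_2\}$ connected. Transversality then forces $\xi^{(1)}(\partial_\infty\Gamma\setminus\{v_1,v_2\})$ into a single connected component of $\Pb(\Rb^d)\setminus(\xi^{(d-1)}(v_1)\cup\xi^{(d-1)}(v_2))$, and such a component is bounded in an affine chart. That is the entire proof --- no induction, no case analysis on $W=\Span\xi^{(1)}(\partial_\infty\Gamma)$, and irreducibility is never used. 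Your inductive reduction is not wrong, but your terminal case $W=\Rb^d$ with $\rho$ reducible is exactly where you stop and say ``adapt \cite{Z2017} using \cite{DGK2017}''; you have not given an argument there, and you describe it as ``the main new content.'' In fact the \cite{Z2017} argument \emph{is} the two-point connectivity argument above and needs no adaptation for reducible $\rho$.

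For part (2), your spanning case is fine in spirit, but you should be aware that ``uniqueness of the smallest closed properly convex set containing $K$'' is not automatic in projective space --- the paper devotes a subsection to showing that the convex hull of a \emph{connected} set is affine-chart independent, which is what makes $C$ invariant. In the non-spanning case you again defer to \cite{DGK2017}. The paper instead gives a unified construction: using that $\partial_\infty\Gamma\setminus\{x\}$ is connected for every $x$ (Swarup), one shows each $\xi^{(d-1)}(x)$ misses $\relint(C_0)$, and then the asymptotic dynamics of Anosov representations (Proposition~\ref{prop:asym_behavior}) guarantee that a small open neighborhood of a point in $\relint(C_0)$ has bounded $\rho(\Gamma)$-orbit; the convex hull of this orbit together with $\xi^{(1)}(\partial_\infty\Gamma)$ has nonempty interior and is invariant. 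This avoids splitting into cases and produces the domain directly.
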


\begin{remark} In the case of irreducible representations, Part (2) was established by Canary--Tsouvalas~\cite[Proposition 2.8]{CK2020}. \end{remark}

Before proving this proposition we recall the definition of Anosov representations into $\PGL_d(\Rb)$ and then show that certain subsets of $\Pb(\Rb^d)$ have a well defined convex hull. Finally, we prove Proposition~\ref{prop:stuff_implies_cc} in the last subsection of this section.

\subsection{Anosov representations} To avoid the theory of semisimple Lie groups we only define Anosov representations into the general linear group.  In this case, Anosov representations are representations with exponential growth rate of singular values and controlled asymptotic behavior. To give the precise definition we need to introduce some terminology. 

If $g \in \PGL_d(\Rb)$ let 
\begin{align*}
\mu_1(g) \geq \dots \geq \mu_{d}(g)
\end{align*}
denote the singular values of some (hence any) lift of $g$ to $\SL_d^{\pm}(\Rb):=\{ h \in \GL_d(\Rb) : \det h = \pm 1\}$. 

\begin{definition} Suppose $\Gamma$ is a word hyperbolic group and $\rho: \Gamma \rightarrow \PGL_{d}(\Rb)$ is a representation.  Two maps $\xi^{(k)}: \partial_\infty \Gamma \rightarrow \Gr_k(\Rb^d)$ and $\xi^{(d-k)}: \partial_\infty \Gamma \rightarrow \Gr_{d-k}(\Rb^d)$ are called:
\begin{enumerate}
\item \emph{$\rho$-equivariant} if $\xi^{(k)}(g x) = \rho(g)\xi^{(k)}(x)$ and $\xi^{(d-k)}(g x) = \rho(g)\xi^{(d-k)}(x)$ for all $g \in \Gamma$ and $x \in \partial_\infty \Gamma$,
\item \emph{dynamics-preserving} if for every $g \in \Gamma$ of infinite order with attracting fixed point $x_g^+ \in \partial_\infty \Gamma$ the points $\xi^{(k)}(x^+_{g}) \in \Gr_k(\Rb^d)$ and $\xi^{(d-k)}(x^+_{g}) \in \Gr_{d-k}(\Rb^d)$ are attracting fixed points of the action of $\rho(g)$ on $ \Gr_k(\Rb^d)$ and $ \Gr_{d-k}(\Rb^d)$, and
\item \emph{transverse} if $\xi^{(k)}(x) + \xi^{(d-k)}(y) = \Rb^{d}$ for every distinct pair $x, y \in \partial_\infty \Gamma$.
\end{enumerate}
\end{definition}

\begin{definition} Suppose $\Gamma$ is word hyperbolic, $S$ is a finite symmetric generating set, and $d_S$ is the induced word metric on $\Gamma$. A representation $\rho: \Gamma \rightarrow \PGL_d(\Rb)$ is \emph{$P_k$-Anosov} if there exist continuous, $\rho$-equivariant, dynamics preserving, and transverse maps $\xi^{(k)}: \partial \Gamma \rightarrow \Gr_k(\Rb^d)$, $\xi^{(d-k)}: \partial \Gamma \rightarrow \Gr_{d-k}(\Rb^d)$ and constants $C,c>0$ such that
\begin{align}
\label{eq:singular_value_est}
\log  \frac{\mu_{k}(\rho(g))}{\mu_{k+1}(\rho(g))} \geq C d_S(g, \id) -c
 \end{align}
for all $g \in \Gamma$.
 \end{definition}
 
 \begin{remark} Kapovich--Leeb--Porti~\cite{KLP2014,KLP2014b} proved that if a representation of a finitely generated group satisfies the estimate in Equation~\eqref{eq:singular_value_est}, then the group is word hyperbolic and the representation is $P_k$-Anosov (also see Bochi--Potrie--Sambarino~\cite[Proposition 4.9]{BPS2019}).
 \end{remark}
 
If $\rho$ is a $P_k$-Anosov representation, the maps $\xi^{(k)}$ and $\xi^{(d-k)}$ are called the \emph{Anosov boundary maps}. Anosov representations have the following well known asymptotic behavior. 
 
 \begin{proposition}\label{prop:asym_behavior} Suppose $\Gamma$ is word hyperbolic and $\rho: \Gamma \rightarrow \PGL_d(\Rb)$ is $P_k$-Anosov. If $(g_n)_{n \geq 1}$ is a sequence in $\Gamma$ with 
 \begin{align*}
 x^+ = \lim_{n \rightarrow \infty} g_n \in \partial \Gamma \quad \text{and}  \quad x^- = \lim_{n \rightarrow \infty} g_n^{-1} \in \partial \Gamma, 
 \end{align*}
 then 
 \begin{align*}
 \xi^{(k)}(x^+) = \lim_{n \rightarrow \infty} \rho(g_n) V
 \end{align*}
 for all $V \in \Gr_k(\Rb^d)$ transverse to $\xi^{(d-k)}(x^-)$. Moreover, the convergence is uniform on compact subsets of 
 \begin{align*}
Z= \left\{  V \in \Gr_k(\Rb^d) : V \cap \xi^{(d-k)}(x^-) = \{0\}\right\}.
 \end{align*}
 \end{proposition}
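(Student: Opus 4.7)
The plan is to use the singular value (Cartan/KAK) decomposition together with the exponential singular value gap guaranteed by the $P_k$-Anosov estimate. First, for each $n$ write $\rho(\gamma_n) = K_n A_n L_n$ with $K_n, L_n \in O(d)$ and $A_n$ diagonal with entries $\mu_1(\rho(\gamma_n)) \geq \dots \geq \mu_d(\rho(\gamma_n))$. Since $\Gamma$ is word hyperbolic and $\gamma_n \to x^+ \in \partial_\infty \Gamma$, we have $d_S(\gamma_n, \id) \to \infty$, so the Anosov estimate~\eqref{eq:singular_value_est} gives
\begin{align*}
\frac{\mu_{k+1}(\rho(\gamma_n))}{\mu_k(\rho(\gamma_n))} \leq e^{c - C d_S(\gamma_n, \id)} \longrightarrow 0.
\end{align*}
By compactness of $O(d)$ we pass to a subsequence and assume $K_n \to K_\infty$ and $L_n \to L_\infty$. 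Define
\begin{align*}
E^+ := K_\infty(\Span(e_1,\dots,e_k)), \quad E^- := L_\infty^{-1}(\Span(e_{k+1},\dots,e_d)).
\end{align*}

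Next I will establish the linear algebra step: for any $V \in \Gr_k(\Rb^d)$ with $V \cap E^- = \{0\}$, $\rho(\gamma_n) V \to E^+$, with uniform convergence on compact subsets of the set $Z$. This is standard: writing $V$ as the graph of a linear map from $\Span(e_1,\dots,e_k)$ to $\Span(e_{k+1},\dots,e_d)$ after applying $L_n$ (possible for large $n$ by the transversality of $V$ with $L_n^{-1}(\Span(e_{k+1},\dots,e_d))$, which follows from $L_n \to L_\infty$ and $V \cap E^- = \{0\}$), one sees the action of $A_n$ crushes the $\Span(e_{k+1},\dots,e_d)$ component relative to the first at the rate $\mu_{k+1}/\mu_k$, so $A_n L_n V$ converges to $\Span(e_1,\dots,e_k)$ and thus $\rho(\gamma_n) V \to K_\infty \Span(e_1,\dots,e_k) = E^+$. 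The convergence rate depends only on the transversality constant and on $\mu_{k+1}(\rho(\gamma_n))/\mu_k(\rho(\gamma_n))$, hence is uniform on compact subsets of $Z$.

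The heart of the matter is identifying $E^+ = \xi^{(k)}(x^+)$ and $E^- = \xi^{(d-k)}(x^-)$. I will first treat the case $\gamma_n = \gamma^n$ for $\gamma$ of infinite order: then $x^+ = x_\gamma^+$ and $x^- = x_\gamma^-$, and the limits $E^\pm$ of the Cartan flags coincide respectively with the attracting fixed subspace of $\rho(\gamma)$ in $\Gr_k(\Rb^d)$ and the attracting fixed subspace of $\rho(\gamma^{-1})$ in $\Gr_{d-k}(\Rb^d)$ (these are determined by the Jordan form of $\rho(\gamma)$, using the singular value gap to conclude that the top $k$ generalized eigenvalues strictly dominate the rest in modulus). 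By the dynamics-preserving property these are $\xi^{(k)}(x_\gamma^+)$ and $\xi^{(d-k)}(x_\gamma^-)$ respectively. For a general sequence $\gamma_n$, I will argue by density: fixed points of infinite order elements are dense in $\partial_\infty \Gamma$, and if $\delta_m \in \Gamma$ has infinite order with $x_{\delta_m}^+ \to x^+$, the two-point compactification dynamics together with the continuity of the Cartan projection force the flag limits along $\gamma_n$ to match those along suitable high powers of the $\delta_m$. Finally, since the limit does not depend on the subsequence used, the full sequence $\rho(\gamma_n) V$ converges, completing the proof.
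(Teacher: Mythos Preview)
Your Cartan/KAK setup and the linear–algebra contraction step are correct and are exactly the mechanism behind the references the paper cites (\cite{GGKW2015,BPS2019,DGK2017}); note that the paper itself gives only a proof sketch deferring to those sources, so you are attempting more than the text does.

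The genuine gap is in the identification $E^+=\xi^{(k)}(x^+)$ and $E^-=\xi^{(d-k)}(x^-)$ for a \emph{general} sequence $(\gamma_n)$. Your ``density'' paragraph does not constitute a proof: knowing the statement for powers $\delta^m$ together with density of $\{x_\delta^+\}$ in $\partial_\infty\Gamma$ does not by itself control the Cartan attractors $U_k(\rho(\gamma_n)):=K_n\,\mathrm{Span}(e_1,\dots,e_k)$ of an arbitrary sequence, because there is no a priori continuity of $U_k(\rho(\cdot))$ as a function of the boundary limit of $\gamma_n$. What is actually needed (and is precisely the content of \cite[Theorem~5.3]{GGKW2015} or \cite[Lemma~4.7]{BPS2019}) is a stability lemma of the form: if $d_S(\gamma,\gamma')$ is bounded and $\mu_k/\mu_{k+1}(\rho(\gamma))$ is large, then $U_k(\rho(\gamma))$ and $U_k(\rho(\gamma'))$ are close in $\Gr_k(\Rb^d)$. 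With this in hand one uses hyperbolicity of $\Gamma$ to fellow-travel $\gamma_n$ by a geodesic ray toward $x^+$, approximate that ray by axes of infinite-order elements $\delta_m$, and transfer the already–established identification from the powers $\delta_m^{j}$ to $\gamma_n$. Alternatively, one can argue via equivariance and continuity of $\xi^{(k)}$: for $y\neq x^-$ the convergence action gives $\gamma_n y\to x^+$, hence $\rho(\gamma_n)\xi^{(k)}(y)=\xi^{(k)}(\gamma_n y)\to\xi^{(k)}(x^+)$; comparing with your linear–algebra step yields $E^+=\xi^{(k)}(x^+)$ \emph{provided} one exhibits some $y\neq x^-$ with $\xi^{(k)}(y)\cap E^-=\{0\}$, which again is not free and must be argued (it fails to be automatic when $\xi^{(k)}(\partial_\infty\Gamma)$ does not span). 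Either route requires real work that your sketch omits; once $E^\pm$ are identified independently of the chosen subsequence, your final sentence correctly upgrades subsequential to full convergence.
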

 
 \begin{proof}[Proof sketch] This is a straightforward consequence of either~\cite[Theorem 5.3]{GGKW2015} or~\cite[Lemma 4.7]{BPS2019}. The $k=1$ case is explicitly given in~\cite[Lemma 8.2]{DGK2017} and the same argument works in the $k>1$ case. Alternatively, the $k>1$ case can be reduced to the $k=1$ case using the Pl\"ucker embedding and~\cite[Proposition 4.3]{GW2012}. 
 \end{proof}

As mentioned in the introduction, $P_1$-Anosov representations are often called \emph{projective Anosov representations} due to the identification  $\Gr_1(\Rb^d) = \Pb(\Rb^d)$. 

\subsection{Convex Hulls} In this section we show how to associate a ``convex hull'' to certain subsets of $\Pb(\Rb^d)$. 

A general subset of $\Pb(\Rb^d)$ has no well defined convex hull, for instance: if $X = \{x_1,x_2\} \subset \Pb(\Rb^d)$ consists of two points, then there is no natural way to select between the two projective line segments joining $x_1,x_2$. However, we will show that a subset which is connected and contained in an affine chart does indeed have a well defined convex hull. 

First, if $X \subset \Pb(\Rb^d)$ is contained in an affine chart $\Ab$, then recall that 
\begin{align*}
\ConvHull_{\Ab}(X) \subset \Ab
\end{align*}
denotes the convex hull of $X$ in $\Ab$. 

\begin{lemma} Suppose $X \subset \Pb(\Rb^d)$ is connected. If $X$ is contained in two affine charts $\Ab_1$ and $\Ab_2$, then 
\begin{align*}
\ConvHull_{\Ab_1}(X) =\ConvHull_{\Ab_2}(X).
\end{align*}
\end{lemma}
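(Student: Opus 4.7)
My plan is to compare the two convex hulls by lifting $X$ to affine hyperplanes in $\Rb^d$ and using the connectedness of $X$ to match the resulting positive cones.

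First I would write $\Ab_i = \Pb(\Rb^d) \setminus \Pb(\ker \phi_i)$ for nonzero linear functionals $\phi_1, \phi_2$, and for each $x \in X$ lift to the unique representative $v_i(x) \in \Rb^d$ with $\phi_i(v_i(x)) = 1$. Setting $\wt X_i := v_i(X) \subset \{\phi_i = 1\}$, the standard identification of the affine chart $\Ab_i$ with the affine hyperplane $\{\phi_i = 1\}$ shows that $\ConvHull_{\Ab_i}(X)$ equals the projectivization of the positive cone $C_i := \{\sum_j t_j v_i(x_j) : t_j \geq 0, \, x_j \in X, \text{ not all } t_j = 0\}$, since any nonzero nonnegative linear combination rescales to a convex combination (note that $\sum_j t_j v_i(x_j)$ has $\phi_i$-value $\sum_j t_j > 0$, so it cannot be the origin).

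The key step is to relate the two lifts. Since $x \in \Ab_2$ forces $\phi_2(v_1(x)) \neq 0$, the scalar $\lambda(x) := 1/\phi_2(v_1(x))$ is well-defined and nonzero, and $v_2(x) = \lambda(x) v_1(x)$. The map $\lambda : X \to \Rb^{\times}$ is continuous, and since $X$ is connected $\lambda$ has constant sign on $X$. Hence either $C_2 = C_1$ (when $\lambda > 0$) or $C_2 = -C_1$ (when $\lambda < 0$); in either case $[C_1] = [C_2]$ in $\Pb(\Rb^d)$, which gives $\ConvHull_{\Ab_1}(X) = \ConvHull_{\Ab_2}(X)$.

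There is no serious obstacle here; the entire argument reduces to the observation that comparing the two normalized lifts of $X$ yields a continuous $\Rb^{\times}$-valued function, which on a connected domain must have constant sign. The disconnected example $X = \{[1:1], [1:-1]\} \subset \Pb(\Rb^2)$ in the charts $\{x_0 \neq 0\}$ and $\{x_1 \neq 0\}$, where the two projective hulls differ precisely by whether they contain $[0:1]$ or $[1:0]$, shows that connectedness of $X$ is essential to the conclusion.
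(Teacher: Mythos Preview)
Your argument is correct. The paper's proof and yours rest on the same idea: the intersection $\Ab_1 \cap \Ab_2$ has two connected components, and the connectedness of $X$ forces it into one of them. The paper phrases this by choosing coordinates so that $\Ab_j = \{x_j \neq 0\}$, observing that $\Ab_1 \cap \Ab_2$ splits into the convex half-spaces $\{x_2 > 0\}$ and $\{x_2 < 0\}$ of $\Ab_1$, and concluding that $\ConvHull_{\Ab_1}(X)$ lies in one of them and hence in $\Ab_2$. You instead lift to $\Rb^d$ and express the same dichotomy as the sign of the continuous function $\lambda(x) = 1/\phi_2(v_1(x))$, then identify the two convex hulls directly as projectivizations of the same (or opposite) positive cone. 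Your version is coordinate-free and gives equality in one stroke, whereas the paper's coordinate argument first reduces to showing $\ConvHull_{\Ab_1}(X) \subset \Ab_2$ and then implicitly uses that line segments agree inside a set convex in both charts; but the substance is the same.
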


\begin{proof} It is enough to show that $\ConvHull_{\Ab_1}(X) \subset \Ab_2$. 

By changing coordinates we can assume
\begin{align*}
\Ab_j = \{ [x_1:x_2:\dots:x_d] : x_j \neq 0\}.
\end{align*}
Then $\Ab_1 \cap \Ab_2$ has two connected components, namely
\begin{align*}
A := \left\{ [1:x_2:\dots:x_d] : x_2 > 0\right\} \quad \text{and} \quad B:= \left\{ [1:x_2:\dots:x_d] : x_2 < 0\right\}.
\end{align*}
Since $X \subset \Ab_1 \cap\Ab_2$ is connected it is contained in exactly one of these components. So by possibly changing coordinates again we may assume that $X \subset A$. Since $A$ is a convex subset of $\Ab_1$ we then have $\ConvHull_{\Ab_1}(X) \subset A \subset \Ab_2$.\end{proof}

\begin{definition}\label{defn:CH} If $X \subset \Pb(\Rb^d)$ is connected and contained in some affine chart, then let 
\begin{align*}
\ConvHull(X) \subset \Pb(\Rb^d)
\end{align*}
denote the convex hull of $X$ in some (hence any) affine chart which contains $X$. \end{definition}

As a consequence of the definition we have the following. 

\begin{observation}\label{obs:CH_invariance} Suppose $X \subset \Pb(\Rb^d)$ is connected and contained in some affine chart. If $g \in \PGL_d(\Rb)$, then 
\begin{align*}
g \ConvHull(X)=\ConvHull(gX).
\end{align*}
\end{observation}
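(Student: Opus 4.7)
The plan is to reduce the general claim to a specific choice of affine chart, and then verify the equality by exploiting the fact that projective maps preserve affine line segments within affine charts. Fix an affine chart $\Ab$ containing $X$, so $\ConvHull(X) = \ConvHull_\Ab(X)$ by Definition~\ref{defn:CH}. Since $g \in \PGL_d(\Rb)$ is a homeomorphism of $\Pb(\Rb^d)$, the image $gX$ is connected and contained in the affine chart $g\Ab$, and hence $\ConvHull(gX) = \ConvHull_{g\Ab}(gX)$. It therefore suffices to prove
\begin{equation*}
g \ConvHull_\Ab(X) = \ConvHull_{g\Ab}(gX).
\end{equation*}

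The key observation is that projective transformations preserve the affine segment structure within affine charts. Indeed, given $x,y \in \Ab$, let $L$ be the projective line through $x$ and $y$. Then $L \cap \Ab = L \setminus \{p\}$ for a single point $p$, and the affine segment $[x,y]_\Ab$ is the arc of $L \setminus \{p\}$ with endpoints $x,y$. Since $g$ is a bijection $\Ab \to g\Ab$ carrying $L$ to $gL$ and $p$ to $gp$, and since $g|_L : L \to gL$ is a homeomorphism of projective lines, the image $g[x,y]_\Ab$ coincides with the affine segment $[gx,gy]_{g\Ab}$ in $g\Ab$.

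The convex hull in an affine chart equals the smallest subset containing the original set that is closed under taking affine segments, so it can be constructed by iterating the segment operation starting from $X$. Because $g|_\Ab : \Ab \to g\Ab$ is a bijection that sends $X$ to $gX$ and commutes with the formation of affine segments, it maps the iteratively constructed convex hull of $X$ bijectively onto that of $gX$, which gives the desired equality. There is no real obstacle; the only subtlety is the preservation-of-segments step, which reduces to the elementary line-by-line verification described above.
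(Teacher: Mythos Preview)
Your proof is correct. The paper does not actually prove this observation; it simply states that it is ``a consequence of the definition'' and moves on. Your argument unpacks precisely why the claim follows from Definition~\ref{defn:CH}: since $\ConvHull(X)$ is computed in \emph{some} affine chart $\Ab$ and $\ConvHull(gX)$ may be computed in $g\Ab$, and since $g|_\Ab:\Ab\to g\Ab$ is a bijection preserving affine line segments, the two convex hulls correspond under $g$. This is the natural verification, and there is nothing to add.
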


\subsection{Proof of Proposition~\ref{prop:stuff_implies_cc}} Suppose $\rho : \Gamma \rightarrow \PGL_d(\Rb)$ satisfies the hypothesis of Proposition~\ref{prop:stuff_implies_cc}. Let $\xi^{(1)}: \partial_\infty \Gamma \rightarrow \Pb(\Rb^d)$ and $\xi^{(d-1)}:\partial_\infty\Gamma \rightarrow \Gr_{d-1}(\Rb^d)$ denote the Anosov boundary maps. For ease of notation, we will view each $\xi^{(d-1)}(x)$ as a subset of $\Pb(\Rb^d)$,  that is we will identify $\xi^{(d-1)}(x)$ with its projectivization in $\Pb(\Rb^d)$.

\subsubsection{Proof of part (1)} Since $\Gamma$ is one-ended and not commensurable to a surface group, a number of deep results about hyperbolic groups~\cite{T1988,G1992,S1996} imply that there exist $v_1,v_2 \in \partial_\infty \Gamma$ distinct such that $\partial_\infty \Gamma \setminus \{v_1,v_2\}$ is connected (see~\cite[Theorem 2.5]{Z2017} for details).

By changing coordinates we can assume that 
\begin{align*}
\xi^{(d-1)}(v_j)=  \{ [x_1:x_2:\dots:x_d] \in \Pb(\Rb^d) : x_j = 0\}.
\end{align*}
Then 
\begin{align*}
\xi^{(1)}(\partial_\infty \Gamma \setminus \{v_1,v_2\}) \subset \Pb(\Rb^d) \setminus \left(\xi^{(d-1)}(v_1) \cup \xi^{(d-1)}(v_2)\right)
\end{align*}
and since $\xi^{(1)}(\partial_\infty \Gamma \setminus \{v_1,v_2\})$ is connected by changing coordinates we can assume that 
\begin{align*}
\xi^{(1)}(\partial_\infty \Gamma \setminus \{v_1,v_2\}) \subset  \{ [x_1:\dots:x_d]  \in \Pb(\Rb^d): x_1 > 0 \text{ and } x_2 > 0\}.
\end{align*}
Then $\xi^{(1)}(\partial_\infty\Gamma)$ is bounded in the affine chart 
\begin{align*}
 \left\{ [x_1:x_2:\dots:x_d]  \in \Pb(\Rb^d): x_1+x_2 \neq 0\right\}.
 \end{align*}
 
\subsubsection{Proof of part (2)} Since $\Gamma$ is a one-ended word hyperbolic group, a result of Swarup~\cite{S1996} implies that
\begin{enumerate}
\item $\partial_\infty \Gamma$ is connected and
\item $\partial_\infty \Gamma \setminus \{x\}$ is connected for every $x \in \partial_\infty \Gamma$.
\end{enumerate}

Fix an affine chart $\Ab \subset \Pb(\Rb^d)$ which contains $\xi^{(1)}(\partial_\infty\Gamma)$. Since $\xi^{(1)}(\partial_\infty\Gamma)$ is connected, $\xi^{(1)}(\partial_\infty\Gamma)$ has a well defined convex hull $C_0$ (in the sense of Definition~\ref{defn:CH} above). Further, since $\xi^{(1)}(\partial_\infty \Gamma)$ is compact and contained in $\Ab$, the set $C_0$ is bounded in $\Ab$. 

\begin{lemma}\label{lem:Anosov_no_intersection} $\xi^{(d-1)}(x) \cap \relint(C_0) = \emptyset$ for all $x \in \partial_\infty \Gamma$. \end{lemma}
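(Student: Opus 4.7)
The plan is to assume for contradiction that $\xi^{(d-1)}(x_0) \cap \relint(C_0) \neq \emptyset$ for some $x_0 \in \partial_\infty \Gamma$, and use the hyperplane $\xi^{(d-1)}(x_0)$ to disconnect $\xi^{(1)}(\partial_\infty \Gamma) \setminus \{\xi^{(1)}(x_0)\}$, contradicting the theorem of Swarup~\cite{S1996} that $\partial_\infty \Gamma \setminus \{x_0\}$ is connected. To set this up, I would first record the standard compatibility $\xi^{(1)}(x) \subset \xi^{(d-1)}(x)$ for every $x \in \partial_\infty \Gamma$: for $\gamma \in \Gamma$ of infinite order with attracting fixed point $x_\gamma^+$, the dynamics-preserving condition identifies $\xi^{(1)}(x_\gamma^+)$ and $\xi^{(d-1)}(x_\gamma^+)$ with the eigenspaces of the top eigenvalue and of the top $d-1$ eigenvalues of (a lift of) $\rho(\gamma)$ respectively, so the first sits inside the second; density of such attracting fixed points in $\partial_\infty\Gamma$ together with continuity of $\xi^{(1)}$ and $\xi^{(d-1)}$ extends the nesting to all of $\partial_\infty \Gamma$. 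Combined with transversality, this also yields that $\xi^{(1)}$ is injective and that
\[
\xi^{(d-1)}(x_0) \cap \xi^{(1)}(\partial_\infty \Gamma) = \{\xi^{(1)}(x_0)\}.
\]

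Next, writing $\Lambda_1 := \xi^{(1)}(\partial_\infty \Gamma) \subset \Ab$, the projective hyperplane $\xi^{(d-1)}(x_0)$ cannot be the hyperplane at infinity of $\Ab$ (since it meets $\relint(C_0) \subset \Ab$), so $H := \xi^{(d-1)}(x_0) \cap \Ab$ is a genuine affine hyperplane in $\Ab$ and splits $\Ab$ into two open half-spaces $\Ab^+$ and $\Ab^-$. Because $H$ meets $\relint(C_0)$, the convex set $C_0 = \ConvHull(\Lambda_1)$ is contained in neither closed half-space, and hence both $\Lambda_1 \cap \Ab^+$ and $\Lambda_1 \cap \Ab^-$ are nonempty.

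Putting these together,
\[
\Lambda_1 \setminus \{\xi^{(1)}(x_0)\} = (\Lambda_1 \cap \Ab^+) \sqcup (\Lambda_1 \cap \Ab^-)
\]
is a disconnection into two nonempty relatively open subsets, which pulls back via the continuous injection $\xi^{(1)} \colon \partial_\infty \Gamma \setminus \{x_0\} \to \Lambda_1 \setminus \{\xi^{(1)}(x_0)\}$ to a disconnection of $\partial_\infty \Gamma \setminus \{x_0\}$, contradicting Swarup. The only step that requires any real care is the compatibility $\xi^{(1)}(x) \subset \xi^{(d-1)}(x)$; the remainder is elementary affine-hyperplane geometry combined with the topology of the Gromov boundary.
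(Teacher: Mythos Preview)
Your proposal is correct and follows essentially the same line as the paper: both use the compatibility $\xi^{(1)}(x)\subset\xi^{(d-1)}(x)$ to see that $\xi^{(d-1)}(x)$ meets the affine chart, then invoke Swarup's connectedness of $\partial_\infty\Gamma\setminus\{x\}$ together with transversality to force $\xi^{(1)}(\partial_\infty\Gamma\setminus\{x\})$ into a single open half-space of the affine hyperplane $\xi^{(d-1)}(x)\cap\Ab$. The only difference is packaging---the paper argues directly that $\relint(C_0)$ lies in that half-space, whereas you phrase the same implication as a contradiction; you are also more explicit about why the compatibility holds, which the paper simply uses without comment.
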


\begin{proof} Fix $x \in\partial_\infty \Gamma$. Then $\xi^{(1)}(x) \in \xi^{(d-1)}(x) \cap \Ab$ and so $ \xi^{(d-1)}(x) \neq \Pb(\Rb^d) \setminus \Ab$. So by changing coordinates we can assume that 
\begin{align*}
\Ab =  \{ [x_1:x_2:\dots:x_d] \in \Pb(\Rb^d) : x_1 \neq 0\}
\end{align*}
and
\begin{align*}
\xi^{(d-1)}(x) =  \{ [x_1:x_2:\dots:x_d]  \in \Pb(\Rb^d): x_2 = 0\}.
\end{align*}
Then $\Ab \setminus\ker \xi^{(d-1)}(x)$ has two connected components
\begin{align*}
Y_1:= \left\{ [1:x_2:\dots:x_d] :  x_2 > 0\right\} \quad \text{and} \quad Y_2:= \left\{ [1:x_2:\dots:x_d] : x_2 < 0\right\}.
\end{align*}
Since $\partial_\infty \Gamma-\{x\}$ is connected,  
\begin{align*}
\xi^{(1)}(\partial_\infty \Gamma-\{x\}) \subset Y_j
\end{align*}
for some $j \in \{1,2\}$. Since $Y_j$ is convex in $\Ab$, we then have $\relint(C_0) \subset Y_j$. So $\xi^{(d-1)}(x) \cap \relint(C_0) = \emptyset$.
\end{proof}

Now fix $p \in \relint(C_0)$ and a bounded neighborhood $N$ of $C_0$ in $\Ab$. 

\begin{lemma} There exists a connected open neighborhood $U$ of $p$ such that 
\begin{align*}
\bigcup_{g \in\Gamma} \rho(g) U \subset N.
\end{align*}
\end{lemma}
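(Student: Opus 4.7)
The plan is to argue by contradiction: if no such neighborhood existed, one could find sequences $p_n \to p$ in $\Ab$ and $\gamma_n \in \Gamma$ such that $\rho(\gamma_n) p_n \notin N$. I will derive a contradiction by splitting into cases according to whether the sequence $(\gamma_n)$ stays bounded or escapes to infinity in $\Gamma$.

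First I would observe that $C_0$ is $\rho(\Gamma)$-invariant as a subset of $\Pb(\Rb^d)$: since $\xi^{(1)}$ is $\rho$-equivariant and $\xi^{(1)}(\partial_\infty \Gamma)$ is connected and contained in the affine chart $\Ab$, Observation~\ref{obs:CH_invariance} gives $\rho(\gamma) C_0 = \ConvHull(\rho(\gamma) \xi^{(1)}(\partial_\infty\Gamma)) = \ConvHull(\xi^{(1)}(\partial_\infty\Gamma)) = C_0$ for every $\gamma \in \Gamma$. In the bounded case, after passing to a subsequence we may assume $\gamma_n = \gamma$ is constant; then $\rho(\gamma) p_n \to \rho(\gamma) p \in C_0 \subset N$ and, since $N$ is open, $\rho(\gamma) p_n \in N$ for all large $n$, contradiction.

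In the unbounded case, pass to a subsequence with $\gamma_n \to x^+$ and $\gamma_n^{-1} \to x^-$ in $\partial_\infty \Gamma \cup \Gamma$. The key input is Proposition~\ref{prop:asym_behavior} applied with $k=1$: uniformly on compact subsets of $\Pb(\Rb^d) \setminus \xi^{(d-1)}(x^-)$, one has $\rho(\gamma_n)(y) \to \xi^{(1)}(x^+)$. By Lemma~\ref{lem:Anosov_no_intersection}, $\xi^{(d-1)}(x^-) \cap \relint(C_0) = \emptyset$, so $p \notin \xi^{(d-1)}(x^-)$, and thus $p$ admits a compact neighborhood $K \subset \Pb(\Rb^d) \setminus \xi^{(d-1)}(x^-)$. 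Since $p_n \to p$, eventually $p_n \in K$, and then uniform convergence on $K$ forces $\rho(\gamma_n) p_n \to \xi^{(1)}(x^+) \in C_0 \subset N$, contradicting $\rho(\gamma_n) p_n \notin N$.

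Once the contradiction is in place, one gets some open neighborhood of $p$ whose full $\rho(\Gamma)$-orbit lies in $N$; shrinking it to a connected open ball $U$ about $p$ preserves the inclusion $\cup_{\gamma \in \Gamma} \rho(\gamma) U \subset N$. The only nontrivial technical point is the bridge between the sequential argument and the existence of the single neighborhood, which is handled by standard first-countability since $\Pb(\Rb^d)$ is metrizable. The main conceptual obstacle is the unbounded case, where one must locate $p$ off the "forbidden" hyperplane $\xi^{(d-1)}(x^-)$; this is precisely what Lemma~\ref{lem:Anosov_no_intersection} has been arranged to supply.
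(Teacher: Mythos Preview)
Your proof is correct and follows essentially the same approach as the paper: argue by contradiction, extract sequences $p_n \to p$ and $\gamma_n \in \Gamma$ with $\rho(\gamma_n)p_n \notin N$, then split according to whether $\gamma_n$ stays in $\Gamma$ or escapes to $\partial_\infty\Gamma$, invoking Lemma~\ref{lem:Anosov_no_intersection} and Proposition~\ref{prop:asym_behavior} in the latter case. Your version is slightly more explicit (you spell out the $\rho(\Gamma)$-invariance of $C_0$ via Observation~\ref{obs:CH_invariance} and the uniform-convergence-on-compacts mechanism), but there is no substantive difference.
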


\begin{proof} This is an immediate consequence of Proposition~\ref{prop:asym_behavior}. Suppose such a neighborhood does not exist. Then there exist sequences $(p_n)_{n \geq 1}$ in $\Ab$  and $(g_n)_{n \geq1}$ in $\Gamma$ such that $p_n \rightarrow p$ and $\rho(g_n) p_n \notin N$ for all $n$. By passing to a subsequence we can suppose that $g_n \rightarrow g_\infty \in \Gamma \cup \partial_\infty \Gamma$. If $g_\infty \in \Gamma$, then 
\begin{align*}
\lim_{n \rightarrow \infty} \rho(g_n) p_n=\rho(g_\infty)p \in C_0
\end{align*}
and hence $\rho(g_n) p_n \in N$ for $n$ sufficiently large. So we must have $g_\infty \in \partial_\infty \Gamma$. In this case, Lemma~\ref{lem:Anosov_no_intersection} implies that 
\begin{align*}
p \notin \bigcup_{y \in \partial_\infty \Gamma} \xi^{(d-1)}(y)
\end{align*}
and so by Proposition~\ref{prop:asym_behavior}
\begin{align*}
\lim_{n \rightarrow \infty} \rho(g_n) p_n=\xi^{(1)}(g_\infty) \in C_0.
\end{align*}
Hence $\rho(g_n) p_n \in N$ for $n$ sufficiently large. So we have a contradiction and thus such a neighborhood $U$ exists.
\end{proof}

Let $C_1$ be the convex hull of 
\begin{align*}
\xi^{(1)}(\partial_\infty\Gamma) \cup \bigcup_{g \in\Gamma} \rho(g) U
\end{align*}
 (in the sense of Definition~\ref{defn:CH} above) and let $\Omega = \relint(C_1)$. Then Observation~\ref{obs:CH_invariance}  implies that $\rho(\Gamma)\leq \Aut(\Omega)$.  Further, since $U$ is open, $\Omega$ is a properly convex domain.

 \section{Structure of the non-geometric examples}
 \label{sec:struct-non-geom}
 
 Suppose $M$ is a closed irreducible orientable non-geometric 3-manifold and $\rho : \pi_1(M) \rightarrow \PGL_d(\Rb)$ is a convex co-compact representation. Let $\Omega \subset \Pb(\Rb^d)$ be a properly convex domain such that $\Gamma:=\rho(\pi_1(M)) \leq \Aut(\Omega)$ is a convex co-compact subgroup and let $\Cc:=\Cc_\Omega(\Gamma)$.

We first prove that $\Gamma$ is a relatively hyperbolic group. 
\begin{proposition}
\label{prop:non_geom_eg_rel_hyp_fund_gp}
The group $\Gamma=\rho(\pi_1(M))$ is relatively hyperbolic with respect to $\{ \rho(\pi_1(T)): T \in \Tc\}$, where $\Tc$ is a  collection of embedded tori  and Klein bottle in the geometric decomposition (see \Cref{thm:geom-decomp}) of $M$. Moreover, each $\rho(\pi_1(T))$ is virtually isomorphic to $\Zb^2$.
\end{proposition}

\begin{proof} By Theorem~\ref{thm:main} part (2), every component in the geometric decomposition of $M$ (i.e. every component in $M-\Tc$) is hyperbolic. Then by Dahmani's~\cite{D2003} combination theorem $\pi_1(M)$ is relatively hyperbolic with respect to $\{\pi_1(T) : T \in \Tc \}$.  Moreover, each $\pi_1(T)$ is virtually isomorphic to $\Zb^2$, since each $T \in \Tc$ is an embedded tori or a Klein bottle. Since $\ker \rho$ is finite by definition, the result follows. 
\end{proof}

 In the rest of this section, we will discuss the structure of $\Cc$ and $M$, construct an equivariant map between boundary quotients (see Section~\ref{subsec:boundary-map}), and prove the minimality of $\Gamma$ action on $\partiali \Cc$ (see Section~\ref{subsec:minimality}).

 \subsection{Structure of $\Cc$} 
\label{subsec:struct-C} In this subsection we describe some consequences of the results in~\cite{IZ2019b}. 
 
Let $\Sc$ be the collection of \textbf{all} properly embedded simplices in $\Cc$ of dimension at least two. By Theorems 1.7 and 1.8 in~\cite{IZ2019b}, $\Sc$ has the following properties:

\begin{enumerate}[label={(a.\arabic*)}]
\item\label{item:b3} $(\Cc,\hil)$ is relatively hyperbolic with respect to $\Sc$.
\item $\Sc$ is closed and discrete in the local Hausdorff topology (induced by $\hil$).
\item $\Sc$ is $\Gamma$-invariant, i.e. if $S \in \Sc$ and $g \in \Gamma$, then $g S \in \Sc$.
\item\label{item:b4} If $S_1,S_2 \in \Sc$ are distinct, then $\partial S_1 \cap \partial S_2 = \emptyset$.
\item Each  quasi-isometrically embedded Euclidean plane in $\Cc$  is  contained in the bounded neighborhood of some $S \in \Sc$.
\item\label{item:b5} Every line segment in $\partiali\Cc$ is contained in the boundary of a simplex in $\Sc$.
\item If $x \in \partiali \Cc$ is not a $\Cc^1$-smooth point of $\partial \Omega$, then there exists  $S \in \Sc$ with $x \in \partial S$.
\end{enumerate}
Further, Theorem 1.7 in~\cite{IZ2019b} implies the following correspondence between simplices in $\Sc$ and Abelian subgroups of $\Gamma$:
\begin{itemize}
\item If $S \in \Sc$, then $S$ is two dimensional, $\Stab_{\Gamma}(S)$ acts co-compactly on $S$, and $\Stab_{\Gamma}(S)$ is virtually isomorphic to $\Zb^2$. 
\item If $A \leq \Gamma$ is an Abelian subgroup with rank at least two, then $A$ is virtually isomorphic to $\Zb^2$ and there exists a unique $S \in \Sc$ such that $A \leq \Stab_{\Gamma}(S)$.  
\end{itemize}
 
Notice that Proposition~\ref{prop:cc_observations} part (1) and Properties~\ref{item:b4}, ~\ref{item:b5} imply that 
\begin{align}
\label{eq:faces_of_simplices}
\partial S = \bigcup_{x \in \partial S} F_\Omega(x) \quad \text{for all} \quad S \in \Sc.
\end{align}

\subsection{Structure of $M$} 
\label{subsec:struct-M}
Since all the geometric components of $M$ are hyperbolic, using a result of Leeb, we can assume that $M$ is a non-positively curved Riemannian manifold~\cite[Theorem 3.3]{L1995}. Let $\wt{M}$ be the universal cover of $M$ endowed with the Riemannian metric making the covering map $\wt{M} \rightarrow M$ a local isometry. Then let $\wt{M}(\infty)$ be the geodesic boundary of $\wt{M}$. Results of Hruska--Kleiner~\cite{HK2005} then imply that there exists a collection $\Fc$ of isometrically embedded Euclidean planes in $\wt{M}$ which satisfies properties similar to $\Sc$ as above.  In particular, $\wt{M}$ is a CAT(0) \emph{space with isolated flats}, in the terminology used in \cite{HK2005}.
\begin{enumerate}[label={(b.\arabic*)}]
\item\label{item:a3} $\wt{M}$ is relatively hyperbolic with respect to $\Fc$.
\item $\Fc$ is closed and discrete in the local Hausdorff topology (induced by the Riemannian metric).
\item $\Fc$ is $\pi_1(M)$-invariant, i.e. if $F \in \Fc$ and $g \in \pi_1(M)$, then $g F \in \Fc$.
\item\label{item:a4} If $F_1,F_2 \in \Fc$ are distinct, then $F_1(\infty) \cap F_2(\infty) = \emptyset$.
\item\label{item:a6}  Each quasi-isometrically embedded Euclidean plane in $\wt{M}$ is contained in the bounded neighborhood of some flat in $\Fc$.
\item\label{item:a5} Each connected component of the Tits boundary is either an isolated point or the boundary of a flat in $\Fc$.
\end{enumerate}

\begin{remark} Recall that the Tits boundary is $\wt{M}(\infty)$ endowed with the Tits metric $\dist_T$, see for instance~\cite[Chapter II, Section 4]{B1995}. We will not need the precise definition of $\dist_T$ in the arguments that follow, only property~\ref{item:a5} and the following facts \cite[Theorem 4.11]{B1995}: 
\begin{enumerate}
\item Any two points are in the same connected component in the Tits boundary if and only if the distance between them is finite.
\item If $\dist_T(\xi,\xi')> \pi $, then there exists a bi-infinite geodesic in $\wt{M}$ whose endpoints at infinity are $\xi$ and $\xi'$.
\item $\dist_T$ is a lower semi-continuous function.  
\end{enumerate}
 \end{remark}

 \subsection{Relative Fellow Traveller Property}
\begin{definition}\label{defn:fellow_traveller}
Suppose $(X,\dist)$ is a metric space and $\Sc' \subset X$ is a family of subsets of $X$. We will say that $(X,\dist)$ satisfies the \emph{relative fellow traveller property relative to $\Sc'$} if for any $\alpha \geq 1$ and $\beta \geq 0$, there exists $L=L(\alpha,\beta) > 0$ such that: if $\gamma : [a,b] \rightarrow X$ and $\sigma : [a^\prime, b^\prime] \rightarrow X$ are $(\alpha,\beta)$-quasi-geodesics with the same endpoints, then there exist partitions 
\begin{align*}
& a=t_0 < t_1 < \dots < t_{m+1} = b \\
& a^\prime=t_0^\prime < t_1^\prime < \dots < t_{m+1}^\prime = b^\prime
\end{align*}
where for all $0 \leq i \leq m$ 
\begin{align*}
\dist(\gamma(t_i), \sigma(t_i^\prime)) \leq L
\end{align*}
and either 
\begin{enumerate}
\item $\dist^{\Haus}( \gamma|_{[t_i,t_{i+1}]},  \sigma|_{[t_i^\prime,t_{i+1}^\prime]}) \leq L$ or
\item $\gamma|_{[t_i,t_{i+1}]},  \sigma|_{[t_i^\prime,t_{i+1}^\prime]} \subset \Nc(S';L)$ for some $S' \in \Sc'$.
\end{enumerate}
 \end{definition}
 
It is a well known result in CAT(0) geometry that co-compact CAT(0) spaces with isolated flats satisfy the relative fellow traveller property relative to totally geodesic flats. 
 \begin{proposition}[{\cite[Proposition 4.1.6]{HK2005}, \cite[Theorem 4.2]{EE2019}}]
 \label{prop:fellow_traveling_CAT(0)}  The space $(\wt{M},d_{\wt{M}})$ satisfies the relative fellow traveller property relative to $\Fc$.
\end{proposition}

The next lemma states that the above relative fellow traveller property carries over to a metric space quasi-isometric to $\wt{M}$. Indeed, this follows from \Cref{prop:fellow_traveling_CAT(0)} since we only need to pull back the quasi-geodesics to $\wt{M}$ using the quasi-isometry. 
 
 \begin{lemma}\label{prop:fellow_traveling} Suppose $(X,\dist)$ is a metric space, $\Psi: (\wt{M},\dist_{\wt{M}}) \to (X,\dist)$ is a quasi-isometry, and $\Sc'$ is a collection of subsets of $X$ with the following property: there exists a bijection $$\Fc \ni F \mapsto S'_F \in \Sc'$$ and $R>0$ such that $\Psi(F) \subset \Nc(S'_F;R)$ for any $F \in \Fc$. Then $(X,\dist)$ satisfies the relative fellow traveller property relative to $\Sc'$. 
\end{lemma} 
We will apply  this lemma later to show that $(\Cc,\hil)$ satisfies the relative fellow traveller property with respect to $\Sc$.

\subsection{Boundary quotients} 

 We recall the boundary quotients $\wt{M}(\infty)/{\sim}$ and $\partiali\Cc/{\sim}$ from the introduction. Let $\wt{M}(\infty) / {\sim}$ be the topological quotient induced by the equivalence relation $\sim$: $x,y \in\wt{M}(\infty)$ are equivalent if either $x=y$ or there exists an isometrically embedded Euclidean plane $F$ with $x,y \in F(\infty)$. Notice that conditions~\ref{item:a4} and~\ref{item:a6} imply that this is an equivalence relation. 
 
 Let $\partiali \Cc / {\sim}$ denote the analogous quotient of $\partiali\Cc$ using properly embedded simplices in $\Sc$ in the place of isometrically embedded flats. 

\begin{observation} 
\label{obs:bdry-quotient}
$\wt{M}(\infty) / {\sim}$ and $\partiali \Cc / {\sim}$ are compact and Hausdorff. 
\end{observation}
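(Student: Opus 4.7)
The plan is to treat compactness (routine) and the Hausdorff property (the content) separately.

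For compactness, both $\wt{M}(\infty)$ (the visual boundary of the proper $\CAT(0)$ space $\wt{M}$) and $\partiali\Cc=\overline{\Cc}\cap\partial\Omega$ (a closed subset of the compact set $\overline{\Omega}$) are compact; since the quotient maps are continuous surjections, both quotients are compact.

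For the Hausdorff property, my plan is to invoke the standard criterion that a quotient of a compact Hausdorff space by an equivalence relation is Hausdorff if and only if the relation is closed in the product, and to verify that $\sim$ is closed in both cases. I sketch the argument for $\partiali\Cc\times\partiali\Cc$; the argument for $\wt{M}(\infty)\times\wt{M}(\infty)$ will be parallel, with flats $F\in\Fc$ and properties \ref{item:a4}, \ref{item:a6}, \ref{item:a5} playing the role of simplices $S\in\Sc$ and properties \ref{item:b4}, \ref{item:b5}. Given $(x_n,y_n)\to(x,y)$ in $\partiali\Cc\times\partiali\Cc$ with $x_n\sim y_n$, I would first pass to a subsequence where either $x_n=y_n$ for all $n$ (whence $x=y$) or $x_n\neq y_n$ and there exist $S_n\in\Sc$ with $x_n,y_n\in\partial S_n$. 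If some fixed $S\in\Sc$ occurs infinitely often among the $S_n$, then $x,y\in\partial S$ by closedness of $\partial S$ and we are done; otherwise the $S_n$ are pairwise distinct, and I would derive a contradiction. To do so I would label the vertices $v_n^1,v_n^2,v_n^3$ of the $2$-simplex $S_n$, pass to a further subsequence so that $v_n^i\to v^i\in\partiali\Cc$ for each $i$, and observe that $\overline{S_n}\to T:=\ConvHull\{v^1,v^2,v^3\}$ in the Hausdorff topology on $\overline{\Omega}$, with $x,y\in T$. I would then split on $\dim T$: if $T$ is a point then $x=y$; if $T$ is a nontrivial line segment, property \ref{item:b5} provides $S\in\Sc$ with $T\subset\partial S$, so $x\sim y$; and if $T$ is a non-degenerate $2$-simplex, then $T$ is properly embedded in $\Cc$ (using openness of $\Omega$ and $\partial T\subset\partiali\Cc$), hence $T\in\Sc$, and the local Hausdorff discreteness of $\Sc$ together with $\overline{S_n}\to T$ forces $S_n=T$ for large $n$, contradicting pairwise distinctness of the $S_n$.

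The principal obstacle is this last sub-case. I need to (i) upgrade the Hausdorff limit $T$ to a genuine element of $\Sc$ and (ii) promote the Hausdorff convergence $\overline{S_n}\to T$ in $\overline{\Omega}$ to local Hausdorff convergence so as to invoke discreteness; property \ref{item:b4} (disjointness of distinct simplex boundaries) will be needed to rule out pathological coincidences, and the openness of $\Omega$ to ensure that $T$ has a nonempty interior that lies in $\Omega$. The analogous obstacle in the $\wt{M}(\infty)/{\sim}$ argument is to identify the limit of the boundaries of a sequence of pairwise distinct flats in $\Fc$ as (at most) the boundary of a single flat in $\Fc$; there the $\CAT(0)$ and Tits-geometric description in \ref{item:a5}, together with the local Hausdorff discreteness of $\Fc$, will play the corresponding roles.
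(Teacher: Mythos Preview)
Your overall strategy is sound and can be completed, but it is considerably more elaborate than what the paper does, and in particular the ``principal obstacle'' you flag is entirely avoidable.

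For $\partiali\Cc/{\sim}$, the paper's argument is a single line: if $x_n\sim y_n$, then $[x_n,y_n]\subset\partiali\Cc$ (either $x_n=y_n$, or both lie in $\partial S_n\subset\partiali\Cc$, and $\partial S_n$ is convex). Since $\partiali\Cc$ is closed, $[x,y]\subset\partiali\Cc$; if $x\neq y$, property~\ref{item:b5} immediately gives $S\in\Sc$ with $[x,y]\subset\partial S$, hence $x\sim y$. There is no need to track the simplices $S_n$, their vertices, or their Hausdorff limit $T$, and no need for the case split on $\dim T$; your $\dim T=2$ subcase (with its appeal to local discreteness of $\Sc$) simply never arises. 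In effect you already have the ingredients for this short proof inside your $\dim T\leq 1$ subcases, but you do not notice that they suffice in general.

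For $\wt{M}(\infty)/{\sim}$, the paper does \emph{not} run a parallel flat-tracking argument. Instead it uses the Tits metric: if $x_n,y_n$ lie in a common $F_n(\infty)$ then $\dist_T(x_n,y_n)\leq\pi$, and lower semicontinuity of the Tits metric with respect to the cone topology gives $\dist_T(x,y)\leq\pi$. Property~\ref{item:a5} then places $x,y$ in a single $F(\infty)$. Your proposed parallel argument (tracking flats $F_n$, extracting limits of their boundary circles, invoking discreteness of $\Fc$) could be made to work, but establishing that the limit is again the boundary of a flat in $\Fc$ is genuinely more delicate than the Tits-metric route.

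In summary: your approach is correct but trades two one-line arguments for a case analysis that creates the very obstacle you then have to overcome. The payoff of your route is that it avoids any Tits-boundary input; the cost is the nontrivial $\dim T=2$ subcase and its $\CAT(0)$ analogue.
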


\begin{proof}Clearly both spaces are compact. Recall, $\wt{M}(\infty) / {\sim}$ is Hausdorff if and only if $R = \{ (x,y) \in \wt{M}(\infty)^2 : x \sim y\}$ is closed \cite[Proposition 1.4.4]{TTD2008}. So suppose that $( (x_n,y_n) )_{n \geq 1}$ is a sequence in $R$ which converges to some $(x,y) \in \wt{M}(\infty)^2$. Let $\dist_T$ be the Tits metric on $\wt{M}(\infty)$. Then Property~\ref{item:a5} implies that $\dist_T(x_n,y_n) \leq \pi$ for all $n$. Then, since the Tits metric is lower semi-continuous (see for instance~\cite[Chapter II, Theorem 4.11]{B1995})
\begin{align*}
\dist_T(x,y) \leq \liminf_{n \rightarrow \infty} \dist_T(x_n,y_n) \leq \pi.
\end{align*}
So by Property~\ref{item:a5}, either $x=y$ or there exists a flat $F \in \Fc$ such that $x,y \in F(\infty)$. Hence $(x,y) \in  R$. So $R$ is closed and $\wt{M}(\infty) / {\sim}$ is Hausdorff.

A similar argument using Property~\ref{item:b5} shows that $\partiali \Cc / {\sim}$  is Hausdorff. 
\end{proof}

\subsection{Boundary maps}\label{subsec:boundary-map} 

Using the {\v S}varc-Milnor lemma, there exists an $\rho$-equivariant quasi-isometry $\Phi : \wt{M} \rightarrow \Cc$ and using the ``connect the dot'' trick, see for instance~\cite[Appendix A]{BF1998}, we may assume that $\Phi$ is continuous.

Notice that $\wt{M} \cup (\wt{M}(\infty) / {\sim})$ and $\Cc \cup  (\partiali \Cc / {\sim})$ compactify $\wt{M}$ and $\Cc$ respectively in a natural way. In this subsection we will prove the following extension result for this compactification. 

\begin{theorem}\label{thm:bd_extensions} $\Phi : \wt{M} \rightarrow \Cc$ extends to a homeomorphism 
\begin{align*}
\wt{M}(\infty) / {\sim} \longrightarrow \partiali \Cc / {\sim}.
\end{align*}
\end{theorem}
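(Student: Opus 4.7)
My approach will be to follow the equivariant quasi-isometry $\Phi$ along CAT(0) geodesic rays and use the relative hyperbolicity of both $\wt{M}$ (with respect to $\Fc$) and $\Cc$ (with respect to $\Sc$) to produce a well-defined limit map on the quotients. Fix a basepoint $o \in \wt{M}$. For each $x \in \wt{M}(\infty)$, let $\gamma_x : [0,\infty) \to \wt{M}$ be the unit-speed geodesic ray from $o$ to $x$, so $\Phi \circ \gamma_x$ is a continuous quasi-geodesic ray in $(\Cc, \hil)$. I will first show that this quasi-geodesic has a well-defined limit in the quotient $\partiali \Cc / {\sim}$: either it escapes every bounded neighborhood of every simplex in $\Sc$, in which case (by relative hyperbolicity together with Theorem~\ref{thm:rh_intersections_of_neighborhoods}) it converges in $\partiali\Cc$ to a $C^1$ point of $\partial\Omega$ by property~(4) of Section~\ref{subsec:struct-C}, or it is eventually trapped in the bounded neighborhood of a unique simplex $S \in \Sc$, in which case the limit class is taken to be $[\partial S]$. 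This defines a map $\wt{\Phi} : \wt{M}(\infty) \to \partiali\Cc / {\sim}$.

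Next I will verify that $\wt{\Phi}$ descends to the quotient on the source. If $x,y \in F(\infty)$ for a common flat $F \in \Fc$, then $\Phi(F)$ is a quasi-isometrically embedded Euclidean plane in $\Cc$, hence by Theorem~\ref{thm:rh_embeddings_of_flats} lies in a bounded neighborhood of some $S \in \Sc$; since $\gamma_x$ and $\gamma_y$ eventually stay close to $F$, both $\Phi \circ \gamma_x$ and $\Phi \circ \gamma_y$ are eventually trapped near $S$, giving the same class $[\partial S]$. Equivariance is immediate from that of $\Phi$ and $\sim$. For injectivity, assume $\wt{\Phi}([x]) = \wt{\Phi}([y])$; applying the relative fellow traveler property (Proposition~\ref{prop:fellow_traveling}) to $\Phi \circ \gamma_x$ and $\Phi \circ \gamma_y$ and transferring the conclusion back to $\wt{M}$ through a coarse inverse $\Psi:\Cc\to\wt{M}$ (which carries simplices into neighborhoods of flats by another application of Theorem~\ref{thm:rh_embeddings_of_flats}), one concludes that $\gamma_x$ and $\gamma_y$ are either asymptotic or both eventually close to a common flat, giving $x \sim y$. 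Surjectivity is obtained by running the same construction with $\Psi$ in place of $\Phi$ to produce $\wt{\Psi}$, and using the fellow traveler property once more to verify that the induced quotient maps are mutual inverses.

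For continuity, I will use that both quotients are compact Hausdorff (Observation~\ref{obs:bdry-quotient}), so it suffices to prove that the graph of the induced quotient map is closed. If $[x_n] \to [x]$ in $\wt{M}(\infty)/{\sim}$ and $\wt{\Phi}([x_n]) \to \zeta$ in $\partiali\Cc/{\sim}$, choose representatives so $\gamma_{x_n}$ converges locally uniformly to $\gamma_x$; the fellow traveler property applied to the pairs $(\Phi \circ \gamma_{x_n}, \Phi \circ \gamma_x)$ yields arbitrarily long common near-peripheral or near-geodesic segments, forcing $\zeta = \wt{\Phi}([x])$. A continuous bijection between compact Hausdorff spaces is a homeomorphism, which completes the argument.

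The main obstacle I anticipate is the dichotomy underpinning the very first step: extracting from the relative hyperbolicity of $(\Cc,\hil)$ with respect to $\Sc$ the statement that every quasi-geodesic ray is either eventually absorbed into a single peripheral or converges to a boundary point of $\partiali\Cc$. This requires careful interplay between Theorem~\ref{thm:rh_intersections_of_neighborhoods}, Proposition~\ref{prop:fellow_traveling}, the face structure described in equation~\eqref{eq:faces_of_simplices}, and the $C^1$ property of non-peripheral boundary points; similar care is needed in the continuity step to control how the trapping simplex (if any) varies under limits of rays.
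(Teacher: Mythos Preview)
Your overall strategy matches the paper's: establish the bijection $F\mapsto S_F$ between $\Fc$ and $\Sc$ via Theorem~\ref{thm:rh_embeddings_of_flats} (this is Lemma~\ref{lem:defn_of_SF} in the paper), push rays through $\Phi$, and use relative fellow traveling to control limits. But the dichotomy you build the construction on is not exhaustive. A quasi-geodesic ray in $(\Cc,\hil)$ can enter and leave bounded neighborhoods of infinitely many distinct simplices without ever being \emph{eventually} trapped near one, and without \emph{escaping} every such neighborhood in the sense you intend; nothing in Theorem~\ref{thm:rh_intersections_of_neighborhoods} or~\ref{thm:rh_embeddings_of_flats} alone rules this out. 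The paper avoids this by casing not on the behavior of $\Phi\circ\gamma_x$ inside $\Cc$ but on whether $\xi\in\bigcup_{F\in\Fc}F(\infty)$, and by working with arbitrary subsequential limits rather than asserting convergence up front. The technical core consists of two lemmas: Lemma~\ref{lem:limit_type} shows that for any sequence $x_n\to\xi$ in $\wt M$ with $\Phi(x_n)\to\eta\in\partiali\Cc$, one has $\xi\in F(\infty)$ if and only if $\eta\in\partial S_F$; Lemma~\ref{lem:limits_exist} shows that when $\xi\notin\bigcup F(\infty)$, two subsequential limits $x\not\sim y$ would force the projective segments $[\Phi(p_n),\Phi(q_n)]$ into a tube around some $S\in\Sc$, hence $x\in\partial S$, a contradiction. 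Both arguments apply Proposition~\ref{prop:fellow_traveling} to compare $\Phi\circ\gamma_n$ against projective-line geodesics, and use the convexity estimates of Propositions~\ref{prop:Crampons_dist_est} and~\ref{prop:CAT0_dist} to propagate neighborhood containment along the whole segment from containment at the endpoints.

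Your injectivity sketch has a related issue: Proposition~\ref{prop:fellow_traveling} compares quasi-geodesics with common endpoints, so it does not apply directly to the two infinite rays $\Phi\circ\gamma_x$, $\Phi\circ\gamma_y$. The paper instead observes that if $[\xi_1]\neq[\xi_2]$ then $\dist_T(\xi_1,\xi_2)=\infty\geq\pi$ by property~\ref{item:a5}, so there is a bi-infinite geodesic $\gamma:\Rb\to\wt M$ from $\xi_2$ to $\xi_1$; when $\overline\Phi([\xi_1])=\overline\Phi([\xi_2])=[\eta]$ with $\eta\notin\bigcup\partial S$, the projective segments $[\Phi(\gamma(n)),\Phi(\gamma(-n))]$ escape every compact set of $\Cc$, and the same argument as in Lemma~\ref{lem:limits_exist} traps $\gamma$ near a single flat, contradicting $[\xi_1]\neq[\xi_2]$.
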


\begin{remark}
\label{rem:generalizes_to_nonpos_curv}
A careful reading of the proof shows that the above theorem holds more generally: whenever  $M$ is a compact non-positively curved Riemannian manifold with isolated flats (i.e. $\wt{M}$ is relatively hyperbolic with respect to the set of all totally geodesic flats in $\wt{M}$) and $\rho:\pi_1(M) \to \PGL_d(\Rb)$ is a convex co-compact representation. 
\end{remark}

The proof of Theorem~\ref{thm:bd_extensions} will require a number of lemmas. 

\begin{lemma}\label{lem:defn_of_SF}
For every $F \in \Fc$, there exists a unique $S_F \in \Sc$ such that $\Phi(F)$ is contained in a bounded neighborhood of $S_F$. Further, the map 
\begin{align*}
F \in \Fc \mapsto S_F \in \Sc
\end{align*}
is a bijection. 
\end{lemma}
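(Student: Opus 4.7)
The plan is to combine the Dru{\c t}u-Sapir results recalled in Section~\ref{sec:background_on_rel_hyp} with the fact that $\Phi$ is a quasi-isometry between two spaces that are both relatively hyperbolic with respect to families of flat-like subsets (properties~\ref{item:a3} and~\ref{item:b3}). Throughout, let $(\alpha,\beta)$ be the quasi-isometry parameters of $\Phi$ and fix a continuous quasi-isometric quasi-inverse $\Psi \colon \Cc \to \wt{M}$ with $\sup_{x\in\wt M} \dist(\Psi\Phi(x),x) < \infty$ and $\sup_{y\in\Cc} \hil(\Phi\Psi(y),y) < \infty$.

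For \emph{existence} of $S_F$: fix $F \in \Fc$. Since $F$ is an isometrically embedded Euclidean plane in $\wt M$, the restriction $\Phi|_F \colon F \to \Cc$ is an $(\alpha,\beta)$-quasi-isometric embedding of $\Rb^2$ into $(\Cc,\hil)$. Theorem~\ref{thm:rh_embeddings_of_flats}, applied via~\ref{item:b3} to this embedding, yields some $M = M(\alpha,\beta)$ and some $S \in \Sc$ with $\Phi(F) \subset \Nc(S; M)$; set $S_F := S$. For \emph{uniqueness}: if $\Phi(F) \subset \Nc(S;M) \cap \Nc(S';M)$ for distinct $S,S' \in \Sc$, then Theorem~\ref{thm:rh_intersections_of_neighborhoods} bounds the diameter of this intersection by $Q(M)$, contradicting the fact that $\Phi(F)$ is a quasi-isometric image of $\Rb^2$ and hence has infinite diameter.

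For \emph{bijectivity}, I will run the symmetric argument. Each $S \in \Sc$ is a properly embedded 2-simplex, and since $S$ is properly embedded the restriction $\hil|_S$ coincides with the intrinsic Hilbert metric of $S$, which in turn is bi-Lipschitz to $\Rb^2$ equipped with a norm. Thus $\Psi|_S \colon S \to \wt M$ provides a quasi-isometric embedding of $\Rb^2$ into $\wt M$, and the same two Dru{\c t}u-Sapir results applied via~\ref{item:a3} and~\ref{item:a4} produce a unique $F_S \in \Fc$ with $\Psi(S) \subset \Nc(F_S; M')$ for some $M' = M'(\alpha,\beta)$. To check that $S \mapsto F_S$ inverts $F \mapsto S_F$, observe that $\Phi(F) \subset \Nc(S_F; M)$ implies, after applying $\Psi$ and using that $\Psi\Phi$ stays within bounded distance of the identity, that $F \subset \Nc(\Psi(S_F); M'')$ for some $M''$; the uniqueness half of the symmetric statement then forces $F_{S_F} = F$. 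The identity $S_{F_S} = S$ follows analogously.

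The main (very mild) obstacle is justifying that $\Psi|_S$ is a quasi-isometric embedding of $\Rb^2$, which reduces to the classical fact that the intrinsic Hilbert metric on a simplex is bi-Lipschitz to a Euclidean norm, together with the equality of intrinsic and ambient Hilbert metrics on properly embedded convex subsets; beyond this, the proof is a clean two-step application of Theorems~\ref{thm:rh_embeddings_of_flats} and~\ref{thm:rh_intersections_of_neighborhoods} in both directions.
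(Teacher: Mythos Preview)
Your argument is correct and is exactly the approach the paper has in mind: the paper's proof is the one-line citation ``This follows from Properties~\ref{item:a3}, \ref{item:b3}, \ref{item:a6} and Theorems~\ref{thm:rh_intersections_of_neighborhoods}, \ref{thm:rh_embeddings_of_flats},'' and you have unpacked precisely these ingredients. Two minor remarks: your claim that $\hil|_S$ coincides with the intrinsic Hilbert metric of $S$ is indeed correct (since $S$ properly embedded in $\Cc$ forces $\partial S \subset \partiali\Cc \subset \partial\Omega$, so the cross-ratio endpoints agree), and your citation of~\ref{item:a4} for uniqueness on the $\wt M$ side is unnecessary---Theorem~\ref{thm:rh_intersections_of_neighborhoods} via~\ref{item:a3} already suffices.
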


\begin{proof} This follows from Properties~\ref{item:a3}, \ref{item:b3}, \ref{item:a6} and Theorems~\ref{thm:rh_intersections_of_neighborhoods}, \ref{thm:rh_embeddings_of_flats}.
\end{proof}

\begin{lemma}
\label{lem:fellow_traveling_projective}
The space $(\Cc,\hil)$ satisfies the relative fellow traveller property relative to $\Sc$.
\end{lemma}
\begin{proof}
This is immediate from \Cref{lem:defn_of_SF} and \Cref{prop:fellow_traveling}.
\end{proof}

The next lemma requires the following well-known estimates for the distance functions $\hil$ and $\dist_{\wt M}$ on $\Omega$ and $\wt{M}$. For proofs, see for instance~\cite[Lemma 8.3]{C2009} and~\cite[Chapter I, Proposition 5.4]{B1995}.

 \begin{proposition}\label{prop:Crampons_dist_est}  If $\sigma_1:[0,T_1] \rightarrow \Omega$ and  $\sigma_2 : [0,T_2] \rightarrow\Omega$ are geodesics in $\Omega$ parametrizing projective line segments, then 
\begin{align*}
\hil( \sigma_1(\lambda T_1),\sigma_2(\lambda T_2)) \leq  \hil( \sigma_1(0),\sigma_2(0))+\hil(\sigma_1(T_1),\sigma_2(T_2))
\end{align*}
for all $\lambda \in [0,1]$. 
 \end{proposition}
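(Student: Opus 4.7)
The stated bound is a sub-additivity inequality for the Hilbert distance between two Hilbert-arclength parametrized projective line segments, compared at proportional times. My plan is to deduce it from the stronger claim that
\[
f(\lambda) := \hil(\sigma_1(\lambda T_1), \sigma_2(\lambda T_2))
\]
is a convex function of $\lambda$ on $[0,1]$. Given convexity, the desired inequality is immediate, since
$f(\lambda) \leq (1-\lambda) f(0) + \lambda f(1) \leq f(0) + f(1)$.

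First I would make two standard reductions. The four points $\sigma_i(0)$ and $\sigma_i(T_i)$ span a linear subspace $V \subset \Rb^d$ with $\dim V \leq 4$, and the Hilbert metric on $\Omega \cap \Pb(V)$ coincides with the restriction of $\hil$, so we may pass to this sub-domain and assume the ambient projective dimension is at most three. Next, fix an affine chart in which $\Omega$ is bounded; in this chart each $\sigma_i$ traces out a Euclidean line segment, and the Hilbert-arclength parametrization is a smooth (but nonlinear) reparametrization of the natural affine parameter along this segment.

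The crux of the argument is convexity of $f$. I would prove it by writing $f(\lambda)$ as half the logarithm of the cross-ratio of $\sigma_1(\lambda T_1), \sigma_2(\lambda T_2)$ with the two points in which the secant line through them meets $\partial\Omega$, and then verifying convexity by a direct computation exploiting the convexity of $\partial\Omega$ and the Finsler description of the Hilbert metric. Alternatively, one can appeal to the Busemann-convexity of Hilbert geometries along pairs of projective geodesics, established in \cite[Lemma 8.3]{C2009} (whose $\CAT(0)$ analogue is \cite[Chapter I, Proposition 5.4]{B1995}).

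The main obstacle will be precisely the convexity claim. Two features make it delicate: the secant line joining $\sigma_1(\lambda T_1)$ and $\sigma_2(\lambda T_2)$ itself varies with $\lambda$, so its intersection points with $\partial\Omega$ move nontrivially and depend on the support structure of $\partial\Omega$; and the Hilbert-arclength parametrization is not affine in $\lambda$ in the chart, so even the two moving points are non-affine functions of $\lambda$. A careful monotonicity argument, or equivalently a reduction to the projective invariance of $\hil$ combined with a local computation at each $\lambda$, handles both subtleties — this is essentially Crampon's computation, after which the stated inequality is a one-line consequence.
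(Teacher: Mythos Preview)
Your strategy hinges on the claim that $f(\lambda) = \hil(\sigma_1(\lambda T_1), \sigma_2(\lambda T_2))$ is convex in $\lambda$, but this is false for a general properly convex domain $\Omega$. A Hilbert geometry $(\Omega, \hil)$ is Busemann convex (i.e., satisfies $f(\lambda) \leq (1-\lambda)f(0) + \lambda f(1)$ for projective-line geodesics) if and only if $\Omega$ is an ellipsoid; this is a classical rigidity result going back to Kelly--Straus. So the ``direct computation exploiting the convexity of $\partial\Omega$'' you sketch cannot succeed, and your appeal to~\cite[Lemma 8.3]{C2009} as establishing Busemann convexity is a misreading: Crampon's lemma is precisely the weaker additive bound $f(\lambda) \leq f(0) + f(1)$ stated in the proposition, not convexity of $f$.

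The paper itself does not prove this proposition; it simply cites~\cite[Lemma 8.3]{C2009}. Crampon's argument there is a direct cross-ratio estimate that does not pass through convexity of $f$. If you want to supply a self-contained proof, you should aim for the additive inequality directly rather than trying to strengthen it to a statement that is generically false.
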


\begin{proposition}\label{prop:CAT0_dist} If $\gamma_1: [0,T_1] \rightarrow \wt{M}$ and  $\gamma_2 : [0,T_2] \rightarrow \wt{M}$ are  geodesics in $\wt{M}$, then 
\begin{align*}
\dist_{\wt{M}}( \gamma_1(\lambda T_1),\gamma_2(\lambda T_2)) \leq (1-\lambda) \dist_{\wt{M}}( \gamma_1(0),\gamma_2(0))+\lambda\dist_{\wt{M}}( \gamma_1(T_1),\gamma_2(T_2))
\end{align*}
for all $\lambda \in [0,1]$. 
\end{proposition}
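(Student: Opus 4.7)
The statement is a standard application of the convexity of the distance function along geodesics in a CAT(0) space, and the plan essentially reduces to citing and unpacking Ballmann's Proposition I.5.4 in the context at hand.

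First I would verify that $\wt M$ is a CAT(0) space. Since $M$ was chosen as a closed non-positively curved Riemannian manifold via Leeb's result, its Riemannian universal cover $\wt M$ is simply connected and non-positively curved; hence by the Cartan–Hadamard theorem $\wt M$ is a complete CAT(0) space in the Riemannian distance $\dist_{\wt M}$. In particular geodesics between any two points are unique, and one has the full CAT(0) four-point comparison at one's disposal.

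Second, I would reduce to the case $T_1 = T_2 = 1$ by reparametrizing each $\gamma_i$ linearly as $s \in [0,1] \mapsto \gamma_i(sT_i)$. This changes the speeds of the geodesics but not their images, and the parameter $\lambda T_i$ in the original parametrization corresponds to the parameter $\lambda$ in the rescaled one; since both sides of the claimed inequality depend only on the four points $\gamma_i(0)$, $\gamma_i(T_i)$, $\gamma_i(\lambda T_i)$, this reduction is lossless.

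Third, I would invoke the core CAT(0) fact: if $c_1, c_2 : [0,1] \to X$ are constant-speed geodesics in a CAT(0) space $X$, then the function
\[
s \longmapsto \dist_X(c_1(s), c_2(s))
\]
is convex on $[0,1]$. Evaluating this convexity at $s = \lambda$ yields exactly the inequality in the proposition (modulo matching the convex-combination weights with the paper's parametrization convention). The convexity itself is proven in two quick steps: first, for any fixed $p \in X$ and any geodesic $c$, the function $s \mapsto \dist_X(p, c(s))$ is convex, which is a direct consequence of the CAT(0) four-point inequality applied to the Euclidean comparison triangle on $\{p, c(0), c(1)\}$; then, by combining this one-point convexity with the triangle inequality applied to an intermediate point on the geodesic from $c_1(s)$ to $c_2(s)$, one upgrades to convexity of the two-geodesic distance. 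Both steps are standard (e.g.\ Bridson–Haefliger, Chapter II, Proposition 2.2).

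There is no serious obstacle here; the main thing is to record that the ambient space is CAT(0) and that the inequality is simply the evaluation of the convex function $s \mapsto \dist_{\wt M}(\gamma_1(sT_1), \gamma_2(sT_2))$ at an intermediate parameter.
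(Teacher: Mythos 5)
Your proposal is correct and follows essentially the same route as the paper, which gives no argument of its own but simply cites Ballmann, Chapter~I, Proposition~5.4 (equivalently Bridson--Haefliger II.2.2): the convexity of $t \mapsto \dist_{\wt{M}}(\gamma_1(tT_1),\gamma_2(tT_2))$ on the Hadamard space $\wt{M}$, which you justify via Leeb's non-positively curved metric and Cartan--Hadamard exactly as intended. One caveat: the convexity argument yields
\begin{align*}
\dist_{\wt{M}}(\gamma_1(\lambda T_1),\gamma_2(\lambda T_2)) \leq (1-\lambda)\,\dist_{\wt{M}}(\gamma_1(0),\gamma_2(0)) + \lambda\,\dist_{\wt{M}}(\gamma_1(T_1),\gamma_2(T_2)),
\end{align*}
so the coefficients in the printed statement are swapped; as literally written the proposition fails (take two geodesic segments in $\wt{M}$ issuing from a common point and $\lambda$ close to $1$), and your parenthetical about ``matching the convex-combination weights'' is really pointing at a typo in the paper rather than at a parametrization convention. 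This is harmless where the proposition is used, since the proof of Lemma~\ref{lem:limit_type} only needs a bound by (a convex combination of, or even the sum of) the endpoint distances, in parallel with Proposition~\ref{prop:Crampons_dist_est}. A small inaccuracy in your sketch of the cited convexity: the auxiliary point in the standard comparison argument lies at parameter $\lambda$ on the diagonal geodesic from $\gamma_1(0)$ to $\gamma_2(T_2)$, and one applies the triangle inequality through that point together with the $\CAT(0)$ comparison in the two triangles it creates, rather than taking an intermediate point on the geodesic from $\gamma_1(\lambda T_1)$ to $\gamma_2(\lambda T_2)$; since you correctly cite the standard reference, this does not affect the validity of the argument.
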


In the proofs of the next two lemmas, let $\alpha \geq 1$ and $\beta \geq 0$ denote the quasi-isometry parameters for $\Phi$.

\begin{lemma}\label{lem:limit_type} Suppose $(x_n)_{n \geq 1}$ is a sequence in $\wt{M}$ where $x_n \rightarrow \xi \in \wt{M}(\infty)$ and $\Phi(x_n)\rightarrow \eta \in \partiali\Cc$. Then for any $F \in \Fc$: 
\begin{align*}
\xi \in  F(\infty) \quad \text{if and only if} \quad \eta \in \partial S_F.
\end{align*}
\end{lemma}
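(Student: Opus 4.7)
The plan is a common transport scheme for both implications: compare the given sequence with a geodesic ray lying in the flat (respectively simplex), push the comparison across $\Phi$ or a coarse inverse $\Psi$, and then use the relative fellow traveler property to keep the transported ray close to the corresponding simplex/flat.

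For $\xi \in F(\infty) \Rightarrow \eta \in \partial S_F$: fix $p_0 \in F$, let $\sigma \colon [0, \infty) \to F$ be the CAT(0) ray from $p_0$ to $\xi$ (which lies in $F$ since $F$ is a totally geodesic flat with $\xi \in F(\infty)$), and let $\gamma_n \colon [0, T_n] \to \wt M$ be the geodesic from $p_0$ to $x_n$. The defining property of the cone topology, together with the equicontinuity of unit-speed geodesics and Proposition~\ref{prop:CAT0_dist}, gives $\gamma_n \to \sigma$ uniformly on compact subintervals. Hence for every $R > 0$ and all $n$ large enough, $\Phi(\gamma_n|_{[0, R]})$ lies in $N_{C+1}(S_F)$, using $\Phi(F) \subset N_C(S_F)$ from Lemma~\ref{lem:defn_of_SF}. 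Let $\tilde\sigma_n \colon [0, \tilde T_n] \to \Omega$ be the projective line segment from $\Phi(p_0)$ to $\Phi(x_n)$ parameterized by Hilbert arclength. Both $\Phi \circ \gamma_n$ and $\tilde\sigma_n$ are $(\alpha', \beta')$-quasi-geodesics in $\Cc$ with the same endpoints, so by Proposition~\ref{prop:fellow_traveling} they admit matching partitions whose corresponding pieces either have bounded Hausdorff distance or lie in a common bounded neighborhood of some $S \in \Sc$. Combining the long-duration containment of $\Phi \circ \gamma_n$ in $N_{C+1}(S_F)$ with the bounded-diameter property (Theorem~\ref{thm:rh_intersections_of_neighborhoods}) for intersections of distinct simplex neighborhoods forces the corresponding initial arc of $\tilde\sigma_n$ into some $N_{C''}(S_F)$, with $C''$ independent of $R$ and $n$. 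Since $\Phi(x_n) \to \eta$, Proposition~\ref{prop:Crampons_dist_est} implies that $\tilde\sigma_n$ converges on compact subintervals to the projective line ray $\tilde\sigma \colon [0, \infty) \to \Omega$ from $\Phi(p_0)$ to $\eta$. Letting $R \to \infty$ gives $\tilde\sigma([0, \infty)) \subset N_{C''}(S_F)$, and the proper embedding of $S_F$ yields $\eta = \lim_{t \to \infty} \tilde\sigma(t) \in \partial S_F$.

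For $\eta \in \partial S_F \Rightarrow \xi \in F(\infty)$: take a coarse inverse $\Psi \colon \Cc \to \wt M$ of $\Phi$; by Lemma~\ref{lem:defn_of_SF} applied in the reverse direction, $\Psi(S_F)$ lies in a bounded neighborhood of $F$. Run the same scheme inside $\wt M$ with $q_0 \in S_F$ replacing $p_0$, the projective line ray in $\overline{S_F}$ from $q_0$ to $\eta$ (which lies in $\overline{S_F}$ by convexity) replacing $\sigma$, and the projective segments from $q_0$ to $\Phi(x_n)$ replacing $\gamma_n$. Since $\Psi \Phi(x_n)$ is within bounded $\dist_{\wt M}$-distance of $x_n$, we have $\Psi \Phi(x_n) \to \xi$ in $\wt M(\infty)$. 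The analogous fellow-traveling inside $\wt M$, which is relatively hyperbolic with respect to $\Fc$ by Property~\ref{item:a3} and satisfies the same bounded-intersection property, confines the CAT(0) geodesic ray from $\Psi(q_0)$ to $\xi$ to a bounded neighborhood of $F$, whence $\xi \in F(\infty)$ by the standard fact that bounded neighborhoods of an isometrically embedded Euclidean plane in a CAT(0) space meet $\wt M(\infty)$ exactly in $F(\infty)$.

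The main obstacle is extracting, from the abstract fellow-traveler partition in Proposition~\ref{prop:fellow_traveling}, a uniform containment of the transported ray in a neighborhood of the correct simplex (or flat) rather than an excursion through some other element of $\Sc$ (or $\Fc$). This is precisely where the bounded-diameter property of neighborhood intersections (Theorem~\ref{thm:rh_intersections_of_neighborhoods}) is essential: once the comparison quasi-geodesic remains in a neighborhood of $S_F$ for an interval exceeding that bound, the matched fellow-traveling piece cannot accompany a different simplex, and therefore must itself lie in a neighborhood of $S_F$.
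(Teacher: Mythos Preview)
Your proposal is correct and follows essentially the same strategy as the paper: transport geodesics across the quasi-isometry, apply the relative fellow traveler property (Proposition~\ref{prop:fellow_traveling}) together with the bounded-intersection property (Theorem~\ref{thm:rh_intersections_of_neighborhoods}) to pin the image near $S_F$ (respectively $F$), and conclude using the face structure. The paper's execution differs only in minor bookkeeping: it fixes an arbitrary basepoint rather than one in $F$, and instead of controlling the transported ray piece-by-piece it extracts a single sequence of times $T_n' \to \infty$ with $\sigma_n(T_n')$ uniformly close to $S_F$ and then invokes Proposition~\ref{prop:Crampons_dist_est} once to fill in the whole initial segment (this neatly sidesteps the case where a fellow-traveling piece is short, which your piecewise scheme would still need to handle). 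For the converse the paper simply says ``a similar argument with Proposition~\ref{prop:CAT0_dist} replacing Proposition~\ref{prop:Crampons_dist_est}'' rather than passing through an explicit coarse inverse, but this amounts to the same thing.
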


\begin{proof} Fix some $x_0 \in \wt{M}$ and for each $n$ let $\gamma_n:[0,b_n] \rightarrow \wt{M}$ be the geodesic segment joining $x_0$ to $x_n$. Then $\gamma_n$ converges locally uniformly to a geodesic ray $\gamma: [0,\infty) \rightarrow \wt{M}$ with $\gamma(\infty) = \xi$. Next for each $n$ let $y_n := \Phi(x_n)$ and $\sigma_n:[0,b_n^\prime] \rightarrow \Cc$ be the geodesic parameterizing the line segment $[y_0,y_n]$. 

$(\Rightarrow)$: Suppose $\xi \in F(\infty)$ for some $F \in \Fc$. Then there exists $R_1 > 0$ such that $\gamma \subset \Nc(F;R_1)$. Then there exists $T_n \rightarrow \infty$ such that $\gamma_n|_{[0,T_n]} \subset \Nc(F;R_1)$. Next by Lemma~\ref{lem:defn_of_SF}, there exists $R_2 > 0$  such that 
\begin{align*}
\Phi(F) \subset \Nc(S_F; R_2).
\end{align*}
Then
\begin{align*}
\Phi \circ \gamma_n|_{[0,T_n]} \subset \Nc(S_F; R_3)
\end{align*}
where $R_3 := \alpha R_1+\beta+R_2$. 

By \Cref{lem:fellow_traveling_projective}, $(\Cc,\hil)$ satisfies the relative fellow traveller property relative to $\Sc$. Let  $L=L(\alpha,\beta)>0$ be the constant as in \Cref{defn:fellow_traveller} for $(\Cc,\hil)$ and $\Sc$. Then for $n \geq 0$,  let 
\begin{align*}
[s_n,t_n] \subset [0,b_n] \quad \text{and} \quad [s_n^\prime, t_n^\prime] \subset [0,b_n^\prime]
\end{align*}
be intervals in partitions satisfying \Cref{defn:fellow_traveller} with $T_n \in [s_n,t_n]$. 

By Theorem~\ref{thm:rh_intersections_of_neighborhoods} there exists $D > 0$ so that: if $S_1,S_2 \in \Sc$ and 
\begin{align*}
\diam \left(\Nc(S_1; L) \cap \Nc(S_2;R_3) \right)\geq \frac{1}{\alpha}D-\beta,
\end{align*}
then $S_1=S_2$. 

\medskip

\noindent \fbox{\emph{Claim 1:}} There exists $(T_n^\prime)_{n \geq 1}$ such that $\lim_{n \rightarrow \infty} T_n^\prime= \infty$ and 
\begin{align*}
\hil(\sigma_n(T_n^\prime), S_F) \leq L+R_3.
\end{align*} 

\medskip

We define $T_n^\prime \in [s_n^\prime,t_n^\prime]$ as follows. If 
\begin{align}
\label{eq:case_one}
\dist_\Omega^{\Haus}\left(\Phi \circ \gamma_n|_{[s_n,t_n]},\sigma_n|_{[s_n^\prime, t_n^\prime]}\right) \leq L,
\end{align}
then pick $T_n^\prime \in[s_n^\prime,t_n^\prime]$ such that 
\begin{align*}
\hil( \Phi \circ \gamma_n(T_n), \sigma_n(T_n^\prime)) \leq L.
\end{align*}
Notice that in this case
\begin{align*}
\hil(\sigma_n(T_n^\prime),S_F) \leq L+R_3.
\end{align*}
If the estimate in Equation~\eqref{eq:case_one} does not hold, then there exists some $S \in \Sc$ such that
\begin{align}
\label{eqn:both-near-S}
\Phi \circ \gamma_n|_{[s_n,t_n]}, \sigma_n|_{[s_n^\prime,t_n^\prime]} \subset \Nc(S;L).
\end{align}
In this case define
\begin{align*}
T_n^\prime := \left\{ \begin{array}{ll}
s_n^\prime & \text{ if } T_n - s_n \leq D\\
t_n^\prime & \text{otherwise}.
\end{array}
\right.
\end{align*}
If $T_n - s_n > D$, then
\begin{align*}
\Phi \circ \gamma_n|_{[s_n,T_n]} \subset  \Nc(S;L) \cap \Nc(S_F;R_3)
\end{align*}
and 
\begin{align*}
\diam \left( \Phi \circ \gamma_n|_{[s_n,T_n]} \right) \geq \frac{1}{\alpha} (T_n-s_n) - \beta > \frac{1}{\alpha}D-\beta.
\end{align*}
So $S_F = S$ and by Equation \eqref{eqn:both-near-S}
\begin{align*}
\sigma_n(T_n^\prime)=\sigma_n(t_n') \in  \Nc(S_F;L).
\end{align*}
If $T_n -s_n \leq D$, then 
\begin{align*}
\hil(\sigma_n(T_n^\prime),S_F) &= \hil(\sigma_n(s_n^\prime),S_F) \leq L+\hil( \Phi \circ \gamma_n(s_n), S_F) \\
& \leq L+R_3.
\end{align*}

Finally, by construction, $\lim_{n \rightarrow \infty} T_n^\prime= \infty$. Thus Claim 1 is established.

\medskip

\noindent \fbox{\emph{Claim 2:}} $\sigma_n|_{[0,T_n^\prime]} \subset \Nc(S_F;L+2R_3)$.

\medskip

Fix $t \in [0,T_n^\prime]$. Then by Proposition~\ref{prop:Crampons_dist_est} and Claim 1
\begin{align*}
\hil(\sigma_n(t), S_F) &\leq \hil( \sigma_n(0), S_F)+\hil( \sigma_n( T_n^\prime), S_F) \\
&\leq L+2R_3
\end{align*}
since $\sigma_n(0)=\Phi\circ\gamma_n(0)$. 

Now $\sigma_n$ converges locally uniformly to the geodesic $\sigma:[0,\infty) \rightarrow \Cc$ parametrizing the line segment $[y_0, \eta)$. By Claim 2
\begin{align*}
\sigma \subset \Nc(S_F; L+2R_3).
\end{align*}
So by Observation~\ref{obs:dist_est_and_faces} and Equation~\eqref{eq:faces_of_simplices}
\begin{align*}
\eta = \lim_{t \rightarrow \infty} \sigma(t) \in \partial S_F
\end{align*}

$(\Leftarrow)$: Suppose that $\eta \in \partial S_F$. A similar argument, where Proposition~\ref{prop:Crampons_dist_est} is replaced by Proposition~\ref{prop:CAT0_dist} (in the proof of Claim 2), shows that $\xi \in F(\infty)$. Also, we require the relative fellow traveller property for $\wt{M}$ which now follows from \Cref{prop:fellow_traveling_CAT(0)}.
\end{proof}

\begin{lemma}\label{lem:limits_exist} If  $\xi \in \wt{M}(\infty)\backslash \cup_{F \in \Fc} F(\infty)$, then
\begin{align*}
\lim_{p \in \wt{M}, ~p \rightarrow \xi} \Phi(p)
\end{align*}
exists in $\partiali\Cc / {\sim}$. 
\end{lemma}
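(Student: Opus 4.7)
My plan is to prove both (i) any subsequential limit of $\Phi(p_n)$ lies in a singleton $\sim$-class in $\partial_i\Cc/\!\sim$, and (ii) all such subsequential limits coincide.

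For (i), given $p_n \to \xi$, I pass to a subsequence with $\Phi(p_n) \to \eta \in \overline{\Omega}$. Since $\Phi$ is an $(\alpha,\beta)$-quasi-isometry and $\dist_{\wt{M}}(x_0, p_n) \to \infty$, also $\hil(\Phi(x_0),\Phi(p_n)) \to \infty$, so $\eta \in \partial_i\Cc$. By the contrapositive of Lemma~\ref{lem:limit_type} combined with the bijection $F \mapsto S_F$ from Lemma~\ref{lem:defn_of_SF}, the hypothesis $\xi \notin \bigcup_{F \in \Fc} F(\infty)$ gives $\eta \notin \bigcup_{S \in \Sc}\partial S$; hence $[\eta] = \{\eta\}$ in the quotient.

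For (ii), suppose $\eta,\eta'$ are two such subsequential limits, arising from sequences $p_n, q_n \to \xi$ with $\Phi(p_n)\to\eta$, $\Phi(q_n)\to\eta'$. By convexity, $[\eta,\eta']\subset \overline{\Cc}\subset \overline{\Omega}$. Either $[\eta,\eta']\subset \partial\Omega$, in which case $[\eta,\eta'] \subset \partial_i\Cc$ and Property~\ref{item:b5} places this segment inside $\partial S$ for some $S \in \Sc$—contradicting $\eta \notin \bigcup\partial S$ unless $\eta = \eta'$—or the open segment $(\eta,\eta')$ enters $\Omega$, whence by convexity $(\eta,\eta')\subset\Omega$. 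In the latter case, let $m_n$ denote the CAT$(0)$ midpoint of the geodesic $[p_n,q_n]$ in $\wt{M}$. Since $p_n, q_n \to \xi$, standard CAT$(0)$ convexity of the distance function forces $m_n \to \xi$, so (i) applied to $(m_n)$ produces a subsequential limit $\zeta \in \partial_i\Cc \setminus \bigcup\partial S$ of $\Phi(m_n)$. Meanwhile, the quasi-isometric bounds place $\Phi(m_n)$ at a location on the quasi-geodesic $\Phi\circ[p_n,q_n]$ whose Hilbert distance from both $\Phi(p_n)$ and $\Phi(q_n)$ is comparable to (a uniform fraction of) $\hil(\Phi(p_n),\Phi(q_n))$. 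Comparing this quasi-geodesic with the projective line segment $\tau_n := [\Phi(p_n),\Phi(q_n)]$ through the relative fellow-traveler property (Proposition~\ref{prop:fellow_traveling}), I find that $\Phi(m_n)$ stays at bounded Hilbert distance from a point $w_n \in \tau_n$ whose Hilbert-arclength parameter is bounded away from $0$ and $1$. As $\tau_n \to [\eta,\eta']$, the accumulation points of $w_n$ lie in $(\eta,\eta') \subset \Omega$, and $\Phi(m_n)$ inherits this, forcing $\zeta \in \Omega$—contradicting $\zeta \in \partial\Omega$.

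The main obstacle is the peripheral alternative in Proposition~\ref{prop:fellow_traveling}: the quasi-geodesic $\Phi\circ[p_n,q_n]$ and the projective geodesic $\tau_n$ might not fellow-travel within a bounded Hilbert distance but instead share long common excursions into neighborhoods $\Nc(S_n;L)$ of simplices $S_n \in \Sc$. I resolve this by splitting into subcases: if the $S_n$ subsequentially stabilize to some $S_\infty \in \Sc$, then both $\eta$ and $\eta'$ lie in $\partial S_\infty$, already contradicting the conclusion of (i); if the $S_n$ escape in the local Hausdorff topology, then Theorem~\ref{thm:rh_intersections_of_neighborhoods} bounds the length of each simultaneous excursion, and the bounded Hilbert distance conclusion is restored. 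Either way, $\Phi(m_n)$ stays at bounded Hilbert distance from the central portion of $\tau_n$, completing the contradiction and establishing the lemma.
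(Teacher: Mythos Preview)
Your setup in part (i) and the opening dichotomy in part (ii)---either $[\eta,\eta']\subset\partial\Omega$ (handled by Property~\ref{item:b5}) or $(\eta,\eta')\subset\Omega$---matches the paper and is correct.

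The gap is in the second case. Your claim that ``the accumulation points of $w_n$ lie in $(\eta,\eta')\subset\Omega$'' does not follow from $w_n$ sitting at a bounded fraction of the Hilbert arclength of $\tau_n$. The Hilbert length of $\tau_n$ tends to infinity, and when the two endpoints $\Phi(p_n)\to\eta$ and $\Phi(q_n)\to\eta'$ approach the boundary at different rates the Hilbert midpoint of $\tau_n$ can perfectly well converge to $\eta$ or $\eta'$ rather than to an interior point. (Already on an interval $(-1,1)$ with endpoints $-1+1/n$ and $1-1/n^2$ the Hilbert midpoint tends to $1$.) So the contradiction $\zeta\in\Omega$ versus $\zeta\in\partial\Omega$ never materializes: all you can extract is $\zeta\in\{\eta,\eta'\}$, which is consistent with everything else you know. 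Your treatment of the peripheral alternative is also too loose: that a single interval of the partition near $\Phi(m_n)$ lies in $\Nc(S_\infty;L)$ does not force both endpoints $\eta,\eta'$ into $\partial S_\infty$, and Theorem~\ref{thm:rh_intersections_of_neighborhoods} bounds overlaps of neighborhoods of \emph{distinct} simplices, not excursions into a single one.

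The paper avoids the midpoint entirely. It parametrizes $\sigma_n=[\Phi(p_n),\Phi(q_n)]$ so that $\sigma_n$ converges locally uniformly to a geodesic $\sigma$ parametrizing $(\eta,\eta')$; then $\sigma(0)$ is a \emph{fixed} interior point of $\Cc$. Meanwhile the $\wt M$-geodesics $\gamma_n$ from $p_n$ to $q_n$ leave every compact set (both endpoints tend to $\xi$), so $\Phi\circ\gamma_n$ is eventually far from $\sigma(0)$. Applying Proposition~\ref{prop:fellow_traveling} to $\Phi\circ\gamma_n$ and $\sigma_n$, the bounded-Hausdorff alternative is impossible near $\sigma(0)$, forcing $\sigma_n|_{[-R,R]}\subset\Nc(S_n;L)$ for each $R$; local discreteness of $\Sc$ then pins down a single $S$ containing $\sigma$ in its neighborhood, whence $\eta\in\partial S$---the desired contradiction. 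The key difference is that the paper anchors the argument at a fixed compact set rather than tracking a moving ``middle'' point whose limiting position is uncontrolled.
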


\begin{proof} Fix $\xi \in \wt{M}(\infty) \backslash \cup_{F \in \Fc} F(\infty)$ and assume for a contradiction that
\begin{align*}
\lim_{p \in \wt{M}, ~p \rightarrow \xi} \Phi(p)
\end{align*}
does not exist in $\Cc \cup \partiali\Cc / {\sim}$. Then, since $\overline{\Cc}$ is compact, we can find sequences $(p_n)_{n \geq 1}, (q_n)_{n \geq 1}$ in $\wt{M}$ such that $p_n \rightarrow \xi$, $q_n \rightarrow \xi$, $\Phi(p_n) \rightarrow x \in\partiali\Cc$, $\Phi(q_n) \rightarrow y\in \partiali\Cc$, and $x \not\sim y$. Let $x_n = \Phi(p_n)$ and $y_n = \Phi(q_n)$. 

By the previous lemma 
\begin{align}
\label{eq:not_in_simplex}
x,y \in \partiali\Cc \backslash \cup_{S \in \Sc} \partial S.
\end{align}
Hence $(x,y) \subset \Cc$ by Property~\ref{item:b5}. Let $\gamma_n$ be the geodesic joining $p_n$ to $q_n$ in $\wt{M}$ and let $\sigma_n$ be a geodesic which parameterizes the line segment $[x_n,y_n]$. We can pick our parametrization so that $\sigma_n$ converges locally uniformly to a geodesic $\sigma: \Rb \rightarrow \Cc$ parametrizing the line segment $(x,y).$ We can further assume that $\lim_{t \to \infty} \sigma(t)=y$.

\medskip

\noindent \fbox{\emph{Claim:}} There exists some $S \in \Sc$ such that $\sigma \subset \Nc(S;L)$. 

\medskip

Let $L=L(\alpha,\beta)>0$ be the constant in \Cref{defn:fellow_traveller} for $(\Cc,\hil)$ and $\Sc$ (see \Cref{lem:fellow_traveling_projective}). Since $\Sc$ is closed and discrete in the local Hausdorff topology, there exist only finitely many $S \in \Sc$ with $\sigma(0) \in \Nc(S;L+1)$. So it is enough to show: for any $R > 0$ there exists some $S \in \Sc$ such that $\sigma|_{[-R,R]} \subset \Nc(S;L+1)$.

Fix $R > 0$. Since the sequences $(p_n)_{n \geq 1}$ and $(q_n)_{n \geq 1}$ both converge to $\xi$, for any fixed compact set $K \subset \wt{M}$ there exists $N > 0$ such that $\gamma_n \cap K = \emptyset$ for all $n \geq N$. Then there exists $N_0 > 0$ such that 
\begin{align*}
\min\left\{ \hil(\sigma_n(s), \Phi \circ \gamma_n) : s \in [-R,R] \right\} > L
\end{align*}
for all $n \geq N_0$. So by \Cref{defn:fellow_traveller}, for every $n \geq N_0$ there exists $S_n \in \Sc$ such that 
\begin{align*}
\sigma_n|_{[-R,R]} \subset \Nc(S_n;L).
\end{align*}
Then 
\begin{align*}
\sigma|_{[-R,R]} \subset \Nc(S_n;L+1)
\end{align*}
for $n$ sufficiently large. This proves the claim. 

Then by Observation~\ref{obs:dist_est_and_faces} and Equation~\eqref{eq:faces_of_simplices}
\begin{align*}
y = \lim_{t \rightarrow \infty} \sigma(t) \in \partial S
\end{align*}
which contradicts Equation~\eqref{eq:not_in_simplex}.
\end{proof}

\begin{lemma} $\Phi : \wt{M} \rightarrow \Cc$ extends to a continuous map 
\begin{align*}
\overline{\Phi}: \wt{M} \cup \left(\wt{M}(\infty) / {\sim} \right) \longrightarrow \Cc \cup \left( \partiali\Cc / {\sim} \right).
\end{align*}
\end{lemma}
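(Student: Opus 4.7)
The strategy is to define $\overline{\Phi}$ on each boundary class using Lemmas~\ref{lem:limits_exist} and~\ref{lem:limit_type}, then verify continuity by lifting to the (metrizable) cone compactification and invoking the universal property of quotient maps. Write $\pi_1 : \wt{M} \cup \wt{M}(\infty) \to \wt{M} \cup (\wt{M}(\infty)/{\sim})$ and $\pi_2 : \Cc \cup \partiali\Cc \to \Cc \cup (\partiali\Cc/{\sim})$ for the two quotient projections.

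\emph{Definition of $\overline{\Phi}$.} On $\wt{M}$ set $\overline{\Phi} = \pi_2 \circ \Phi$. For a boundary class $[\xi] \in \wt{M}(\infty)/{\sim}$: if $\xi \notin \bigcup_{F \in \Fc} F(\infty)$, so that $[\xi] = \{\xi\}$ is a singleton, let $\overline{\Phi}([\xi])$ be the unique limit in $\partiali\Cc/{\sim}$ provided by Lemma~\ref{lem:limits_exist}; if $[\xi] = F(\infty)$ for some $F \in \Fc$, set $\overline{\Phi}([\xi]) = \pi_2(\partial S_F)$, which is a single point of $\partiali\Cc/{\sim}$ by construction of the equivalence relation on $\partiali\Cc$.

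\emph{Continuity.} I will show that the composition $\wh{\Phi} := \overline{\Phi} \circ \pi_1 : \wt{M} \cup \wt{M}(\infty) \to \Cc \cup (\partiali\Cc/{\sim})$ is continuous; then $\overline{\Phi}$ is continuous by the universal property of $\pi_1$. Since the cone compactification $\wt{M} \cup \wt{M}(\infty)$ is compact metrizable (as $\wt M$ is a proper $\CAT(0)$ space), it suffices to verify sequential continuity at each $\xi \in \wt{M}(\infty)$. As a preliminary step I first check this for sequences $(p_n) \subset \wt{M}$ with $p_n \to \xi$: the case $\xi \notin \bigcup F(\infty)$ is exactly Lemma~\ref{lem:limits_exist}, while in the case $\xi \in F(\infty)$ every subsequential limit of $\Phi(p_n)$ in the compact set $\Cc \cup \partiali\Cc$ lies in $\partial S_F$ by Lemma~\ref{lem:limit_type}, and hence projects under $\pi_2$ to $\overline{\Phi}([\xi]) = \pi_2(\partial S_F)$.

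\emph{Diagonal argument and descent.} For a general sequence $(x_n) \to \xi$ in $\wt{M} \cup \wt{M}(\infty)$, after passing to a subsequence I may assume $\wh{\Phi}(x_n) \to y$ in the compact Hausdorff target, and I must show $y = \wh{\Phi}(\xi)$. For each $n$ with $x_n \in \wt{M}(\infty)$, I approximate $x_n$ by a point $p_n \in \wt{M}$ far along the geodesic ray from a fixed basepoint to $x_n$, chosen so that $p_n \to \xi$ in the cone topology and, using the $\wt{M}$-sequence case applied at $x_n$, so that $\wh{\Phi}(p_n)$ lies within $1/n$ of $\wh{\Phi}(x_n)$ in any fixed metric on $\Cc \cup (\partiali\Cc/{\sim})$. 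Then $p_n \to \xi$ and $\wh{\Phi}(p_n) \to y$, so the preliminary step forces $y = \wh{\Phi}(\xi)$. Finally, $\wh{\Phi}$ is constant on every equivalence class (any two points of $F(\infty)$ map to the single point $\pi_2(\partial S_F)$), so $\wh{\Phi}$ factors through $\pi_1$ and defines the desired continuous map $\overline{\Phi}$.

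The real geometric content has already been absorbed by Lemmas~\ref{lem:limit_type} and~\ref{lem:limits_exist}, which tie the asymptotic behavior of the quasi-isometry $\Phi$ to the bijection $F \leftrightarrow S_F$; the remaining work is a bookkeeping exercise. The one mildly delicate point, which I expect to be the main source of care rather than a true obstacle, is handling convergence of sequences already lying in $\wt{M}(\infty)$ — this is why I work on the metrizable cover $\wt{M} \cup \wt{M}(\infty)$ and pass to the quotient only at the end, rather than trying to verify sequential continuity directly in the quotient topology.
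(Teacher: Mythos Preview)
Your proposal is correct and follows essentially the same approach as the paper: both define the extension via Lemmas~\ref{lem:limit_type} and~\ref{lem:limits_exist} and then establish continuity at boundary points by a diagonal argument, approximating each boundary term $x_n$ by an interior point $p_n$ so that $p_n \to \xi$ and $\Phi(p_n)$ tracks the image of $x_n$. The only cosmetic difference is that you lift to $\wt{M}\cup\wt{M}(\infty)$ and descend via the universal property of $\pi_1$, whereas the paper works directly with classes and carries out the approximation in $\overline{\Cc}$ using a metric $d_{\Pb}$ rather than in the quotient.
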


\begin{proof} Define
\begin{align*}
\overline{\Phi}(\xi) = \left\{\begin{array}{ll}
\Phi(\xi) & \text{ if } \xi \in \wt{M} \\
\lim_{p \in \wt{M}, ~p \rightarrow \xi} \Phi(p) & \text{ if } \xi \in  \wt{M}(\infty) / {\sim}
\end{array}\right.
\end{align*}
Notice that Lemmas~\ref{lem:limit_type} and~\ref{lem:limits_exist} imply that $\overline{\Phi}$ is well defined. 

By definition, to show that $\overline{\Phi}$ is continuous it is enough to prove: if $(\xi_n)_{n \geq 1}$ is a sequence in $\wt{M}(\infty)$ converging to $\xi \in \wt{M}(\infty)$, then 
\begin{align*}
\overline{\Phi}([\xi]) = \lim_{n \rightarrow \infty} \overline{\Phi}([\xi_n]).
\end{align*}
For each $n$ fix a sequence $(x_{n,m})_{m \geq 1}$ in $\wt{M}$ with $\lim_{m \rightarrow \infty} x_{n,m} = \xi_n$. Using Lemmas~\ref{lem:limit_type},~\ref{lem:limits_exist} and passing to subsequences we can assume that 
\begin{align*}
\lim_{m \rightarrow \infty} \Phi(x_{n,m}) = \eta_n \in \overline{\Phi}([\xi_n]).
\end{align*}
Fix a metric $d_{\Pb}$ on $\overline{\Omega}$ that generates the standard topology. Then we can pick a sequence $(m_n)_{n \geq 1}$ such that 
\begin{align*}
\lim_{n \rightarrow \infty} x_{n,m_n} = \xi
\end{align*}
and
\begin{align*}
\lim_{n\rightarrow \infty}d_{\Pb}\left( \eta_n, \Phi(x_{n,m_n}) \right)=0.
\end{align*}
Then by Lemma~\ref{lem:limit_type}, ~\ref{lem:limits_exist} and our choice of $(m_n)_{n\geq 1}$ we have
\begin{equation*}
\overline{\Phi}([\xi]) = \lim_{n \rightarrow \infty} \Phi(x_{n,m_n}) = \lim_{n \rightarrow \infty} \eta_n = \lim_{n \rightarrow \infty} \overline{\Phi}([\xi_n]). \qedhere
\end{equation*} 

\end{proof}

\begin{lemma} $\overline{\Phi}$ induces a homeomorphism $ \wt{M}(\infty) / {\sim} \longrightarrow  \partiali\Cc / {\sim}$.\end{lemma}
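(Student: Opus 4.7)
The plan is to exploit that both $\wt{M}(\infty)/{\sim}$ and $\partiali\Cc/{\sim}$ are compact Hausdorff (Observation~\ref{obs:bdry-quotient}), so any continuous bijection between them is automatically a homeomorphism. The previous lemma already gives a continuous map $\overline{\Phi}$; restricting to the ideal boundary, it descends to a well-defined map $\phi\colon \wt{M}(\infty)/{\sim} \to \partiali\Cc/{\sim}$. Well-definedness on equivalence classes is immediate from Lemma~\ref{lem:limit_type}: if $\xi \sim \xi'$ with $\xi,\xi' \in F(\infty)$ for some $F \in \Fc$, then every subsequential limit of $\Phi(p_n)$ for $p_n \to \xi$ or $p_n \to \xi'$ lies in $\partial S_F$, which is a single equivalence class in $\partiali\Cc/{\sim}$. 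It therefore suffices to prove that $\phi$ is a bijection.

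The strategy is to construct an inverse. Using the Fundamental Lemma of Geometric Group Theory together with the connect-the-dots trick (exactly as for $\Phi$), choose a continuous quasi-isometry $\Psi\colon \Cc \to \wt{M}$ that is a coarse inverse to $\Phi$, in the sense that $\Psi\circ\Phi$ and $\Phi\circ\Psi$ are at uniformly bounded distance from the respective identity maps. Then the analogues of Lemmas~\ref{lem:defn_of_SF}, \ref{lem:limit_type}, and \ref{lem:limits_exist} hold for $\Psi$ with the roles of $\Fc$ and $\Sc$ swapped. The main point is that $\Psi(S_F)$ is a quasi-flat in $\wt{M}$, hence contained in a bounded neighborhood of some $F' \in \Fc$ by Theorem~\ref{thm:rh_embeddings_of_flats}; applying $\Phi$ and invoking $\Phi\Psi$ close to the identity together with Lemma~\ref{lem:defn_of_SF} forces $F' = F$, so the correspondence $F \leftrightarrow S_F$ is reversed correctly. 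Consequently, $\Psi$ extends continuously to an $\overline{\Psi}$ of the same type as $\overline{\Phi}$, and induces a continuous map $\psi\colon \partiali\Cc/{\sim} \to \wt{M}(\infty)/{\sim}$.

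A short diagram chase closes the argument. Given $[\xi] \in \wt{M}(\infty)/{\sim}$, pick $p_n \to \xi$ in $\wt{M}$ and pass to a subsequence so that $\Phi(p_n) \to \eta \in \partiali\Cc$, giving $\phi([\xi]) = [\eta]$. Since $\Psi\Phi(p_n)$ lies at uniformly bounded $\dist_{\wt{M}}$-distance from $p_n$, and bounded perturbations share visual limits in a $\CAT(0)$ space, we get $\Psi\Phi(p_n) \to \xi$, hence $\psi([\eta]) = [\xi]$; the identity $\phi\circ\psi = \id$ is symmetric. Thus $\phi$ is a continuous bijection between compact Hausdorff spaces, and therefore a homeomorphism. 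I expect the main effort to be in verifying the symmetric analogues of the earlier lemmas for $\Psi$; once that bookkeeping is in place, the remainder is formal.
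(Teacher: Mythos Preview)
Your approach is correct and takes a somewhat different organizational route from the paper. The paper proves directly that the restriction of $\overline{\Phi}$ to $\wt{M}(\infty)/{\sim}$ is surjective and injective. Surjectivity is short: given $\eta\in\partiali\Cc$, choose $x_n\to\eta$ in $\Cc$, use that $\Phi$ is a quasi-isometry to find $y_n\in\wt{M}$ with $\hil(\Phi(y_n),x_n)$ bounded, pass to subsequences, and use Proposition~\ref{prop:dist_est_and_faces} together with Property~\ref{item:b5}. Injectivity is essentially the symmetric analogue of Lemma~\ref{lem:limits_exist}: if $\overline{\Phi}([\xi_1])=\overline{\Phi}([\xi_2])=[\eta]$ with $[\xi_1]\neq[\xi_2]$, one uses $\dist_T(\xi_1,\xi_2)=\infty$ to get a geodesic in $\wt{M}$ joining them, then repeats the fellow-travelling argument of Lemma~\ref{lem:limits_exist} with the roles of $\wt{M}$ and $\Cc$ swapped to reach a contradiction.

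Your route --- building a coarse inverse $\Psi$, proving the symmetric versions of Lemmas~\ref{lem:defn_of_SF}--\ref{lem:limits_exist}, and extending to $\overline{\Psi}$ --- packages the same ingredients more symmetrically but requires restating and verifying all three lemmas for $\Psi$. The paper avoids that bookkeeping by extracting only the pieces needed (the surjectivity step and one ad hoc injectivity argument). One small point to be careful with: in the verification of $\phi\circ\psi=\id$, the statement ``bounded perturbations share visual limits'' is true in the $\CAT(0)$ space $\wt{M}$ but not literally in $(\Cc,\hil)$; there, if $\hil(y_n,z_n)$ is bounded and $y_n\to\eta$, then subsequential limits of $z_n$ lie only in $F_\Omega(\eta)$ (Proposition~\ref{prop:dist_est_and_faces}), which by Equation~\eqref{eq:faces_of_simplices} and Property~\ref{item:b5} is contained in the equivalence class of $\eta$. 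That is enough for your argument, but the asymmetry should be acknowledged.
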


\begin{proof} By definition $\overline{\Phi}$ maps $\wt{M}(\infty) / {\sim}$ into  $\partiali\Cc / {\sim}$. Since both spaces are compact and Hausdorff, it suffices to show that $\overline{\Phi}$ is onto and one-to-one. 

\emph{Onto:} Fix $[\eta] \in\partiali\Cc / {\sim}$. Then fix a sequence $(x_n)_{n \geq 1}$ in $\Cc$ with $x_n\rightarrow \eta$. Since $\Phi$ is a quasi-isometry there exists $y_n \in \wt{M}$ such that 
\begin{align*}
\sup_{n \geq 1} \hil(\Phi(y_n), x_n) <+\infty.
\end{align*}
Passing to subsequences we can suppose that $y_n\rightarrow \xi \in\wt{M}(\infty)$ and $\Phi(y_n) \rightarrow \eta^\prime \in\partiali\Cc$. Then $[\eta,\eta^\prime] \subset \partial \Omega$ by Observation~\ref{obs:dist_est_and_faces}. So Property~\ref{item:b5} implies that
\begin{align*}
\overline{\Phi}([\xi]) = [\eta^\prime]=[\eta].
\end{align*}

\emph{One-to-One:} Suppose for a contradiction that 
\begin{align*}
\overline{\Phi}([\xi_1]) =[\eta]=\overline{\Phi}([\xi_2])
\end{align*}
and $[\xi_1],[\xi_2] \in \wt{M}(\infty) /{\sim}$ are distinct. 

If $\eta \in \cup_{S \in \Sc} \partial S$, then Lemma~\ref{lem:limit_type} implies that $[\xi_1]=[\xi_2]$. So we must have $\eta \notin \cup_{S \in \Sc} \partial S$ and hence $[\eta]=\{\eta\}$.  Then \Cref{lem:limit_type} implies that $[\xi_1] \cup [\xi_2] \subset \wt{M}(\infty)\setminus \cup_{F \in \Fc} F(\infty)$.

Since $[\xi_1] \neq [\xi_2]$, Property~\ref{item:a5} implies that $d_T(\xi_1,\xi_2)=\infty > \pi$. So there exists a geodesic $\gamma :\Rb \rightarrow \wt{M}$ with 
\begin{align*}
\lim_{t \rightarrow \infty} \gamma(t) =\xi_1 \quad \text{and} \quad \lim_{t \rightarrow -\infty} \gamma(t) =\xi_2
\end{align*}
(see for instance~\cite[Chapter II, Theorem 4.11]{B1995}). Let $x_n:=\Phi(\gamma(n))$, $y_n:=\Phi(\gamma(-n))$, and $\sigma_n$ be a geodesic which parameterizes the line segment $[x_n,y_n]$. Then $\sigma_n$ leaves every compact subset of $\Cc$ since $x_n,y_n \rightarrow \eta$. Then arguing as in the proof of Lemma~\ref{lem:limits_exist} there exists a flat $F \in\Fc$ and $R >0$ such that  
\begin{align*}
\gamma \subset \Nc(F;R).
\end{align*}
Then $\xi_1,\xi_2 \in F(\infty)$ and so $[\xi_1]=[\xi_2]$. Hence we have a contradiction.
\end{proof}

\subsection{Minimality of the boundary action}
\label{subsec:minimality}

Using Theorem~\ref{thm:bd_extensions} we will prove the following. We remind the reader that we are still in the same setup as in the beginning of this \cref{sec:struct-non-geom}.

\begin{theorem}\label{thm:minimal} $\Gamma$ acts minimally on $\partiali\Cc$. \end{theorem}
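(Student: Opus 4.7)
The plan is to reduce minimality on $\partiali\Cc$ to minimality on the quotient $\partiali\Cc/{\sim}$ and then exploit the density of ``non-simplex'' points in $\partiali\Cc$. I would organize the argument in three steps.

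First I would establish that $\Lambda$ acts minimally on $\partiali\Cc/{\sim}$. The group $\Gamma=\pi_1(M)$ is non-elementary relatively hyperbolic with respect to the rank-two Abelian subgroups from the geometric decomposition, so by standard results for convergence actions of such groups, $\Gamma$ acts minimally on its Bowditch boundary. Tran's identification~\cite{T2013} of the Bowditch boundary with $\wt{M}(\infty)/{\sim}$ is $\Gamma$-equivariant, and Theorem~\ref{thm:bd_extensions} provides a $\rho$-equivariant homeomorphism $\wt{M}(\infty)/{\sim}\to\partiali\Cc/{\sim}$; composing these transports minimality to $\partiali\Cc/{\sim}$.

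Next I would show that $\partiali\Cc\setminus\bigcup_{S\in\Sc}\partial S$ is dense in $\partiali\Cc$. Since $\pi_1(M)$ has cohomological dimension three and a torsion-free finite-index subgroup $\Lambda_0\leq\Lambda$ (provided by Selberg's lemma) acts freely and cocompactly on the contractible set $\Cc$, the quotient $\Lambda_0\backslash\Cc$ is a closed aspherical manifold of dimension $\dim\Cc$, forcing $\dim\Cc=3$. In particular $\Cc$ is open in its span, so $\partiali\Cc=\partial\Cc$ is homeomorphic to a $2$-sphere, while each $\partial S$ for $S\in\Sc$ is homeomorphic to a $1$-sphere. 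Hence every $\partial S$ is closed and nowhere dense in $\partiali\Cc$; since $\Sc$ is countable (a finite union of $\Lambda$-orbits), the Baire category theorem in the compact Hausdorff space $\partiali\Cc$ yields the claimed density.

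Finally I would combine these ingredients. Fix $x\in\partiali\Cc$ arbitrary and let $y\in\partiali\Cc\setminus\bigcup_{S\in\Sc}\partial S$ be any non-simplex point; then $[y]=\{y\}$ is a singleton. By step one, there exist $\gamma_n\in\Lambda$ with $[\gamma_n x]\to[y]$ in $\partiali\Cc/{\sim}$. Compactness of the closed set $\partiali\Cc$ yields a subsequence with $\gamma_n x\to w\in\partiali\Cc$; continuity of the quotient projection together with the Hausdorff property of $\partiali\Cc/{\sim}$ from Observation~\ref{obs:bdry-quotient} force $[w]=[y]=\{y\}$, so $w=y$. Thus $\overline{\Lambda x}$ contains every non-simplex point, and by step two it contains the dense subset $\partiali\Cc\setminus\bigcup_{S\in\Sc}\partial S$. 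Being closed, $\overline{\Lambda x}=\partiali\Cc$, proving the theorem. The step I expect to be most delicate is the second: one genuinely needs the topological dimension comparison to conclude each $\partial S$ is nowhere dense. If this proves awkward, a substitute would be to use density of loxodromic fixed points in the Bowditch boundary and verify via Theorem~\ref{thm:bd_extensions} that such fixed points correspond to non-simplex points that accumulate on every $\partial S$.
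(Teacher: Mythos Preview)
Your Steps 1 and 3 are correct and give a clean reduction that the paper does not use: the paper never invokes minimality of the action on the Bowditch boundary, and instead argues directly that every closed $\Lambda$-invariant subset of $\partiali\Cc$ contains the closure of the extreme points. Your framing is a genuine alternative, provided Step 2 can be established.

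Step 2, however, has a real gap. The claim that $\dim\Cc=3$ is not justified. The quotient $\Lambda_0\backslash\Cc$ is a compact aspherical manifold \emph{with boundary} whenever $\partialn\Cc\neq\emptyset$, and in that case the cohomological-dimension identity you invoke fails: one only gets $\cd(\Lambda_0)\leq\dim\Cc$, not equality. There is no reason in general for $\Cc_\Omega(\Lambda)$ to be open in its span when $d>4$; for representations into high-dimensional $\PGL_d(\Rb)$ the convex core can have dimension strictly larger than $3$, and $\partiali\Cc$ need not be a $2$-sphere. So the assertion that each $\partial S$ is nowhere dense in $\partiali\Cc$ by a dimension count does not go through.

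The paper's route to the same density statement is more delicate and is where the $2$-sphere topology actually enters, but on the $\CAT(0)$ side where it is genuinely available. The paper constructs, for each $S\in\Sc$, an explicit projection $T$ (a limit in $\Pb(\End(\Rb^d))$ of elements of $\Stab_\Lambda(S)$) sending $\partiali\Cc\setminus\partial S$ into an open edge of $S$, and then shows that $\partiali\Cc\setminus\partial S$ has at most two connected components by pushing the question through the homeomorphism of Theorem~\ref{thm:bd_extensions} to $\wt M(\infty)\cong S^2$ and applying the Jordan curve theorem to $F(\infty)$ there. Cocompactness of the stabilizer on the edge then forces the image of $T$ to be the whole edge, giving $\overline{\partiali\Cc\setminus\partial S}=\partiali\Cc$, and Baire finishes as you say.

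Your proposed fallback does not close the gap either: density of loxodromic fixed points in the Bowditch boundary only gives density of non-simplex classes in the \emph{quotient} $\partiali\Cc/{\sim}$, and there is no general reason the quotient map $\partiali\Cc\to\partiali\Cc/{\sim}$ pulls back dense sets to dense sets. That is exactly the statement you are trying to prove.
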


We first construct certain projections to the faces of the properly embedded simplices in $\Sc$ (which are all two dimensional by  the remarks in Section \ref{subsec:struct-C}).

\begin{lemma}\label{lem:projection} Suppose $S \in \Sc$ has vertices $\{v_1,v_2,v_3\}$. Then there exists a finite index Abelian subgroup $A \leq \Stab_{\Gamma}(S)$  which fixes the vertices of $S$ and there exists a sequence $(a_n)_{n \geq 1}$ in $A$ such that 
\begin{enumerate}
\item $a_n \rightarrow T \in \Pb(\End(\Rb^d))$,
\item $T(\Omega) = (v_1,v_2)$,
\item $\Pb(\ker T) \cap \overline{\Cc} = \{ v_3\}$, 
\item $T(\partiali\Cc-\partial S) \subset (v_1,v_2)$. 
\end{enumerate}
\end{lemma}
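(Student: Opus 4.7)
The plan is to exploit the virtually $\Zb^2$ structure of $\Stab_{\Lambda}(S)$ from Section~\ref{subsec:struct-C} together with the dynamical Propositions~\ref{prop:dynamics_of_automorphisms_1} and~\ref{prop:dynamics_of_automorphisms_2}. I construct $A$ as a finite index Abelian subgroup fixing the vertices, use cocompactness of $A$ on $S$ to drive the dynamics along a carefully chosen ray in $S$, identify the limit $T$ via the dynamical propositions, and use the rigidity of $\Sc$ (Properties~\ref{item:b4} and~\ref{item:b5}) to pin down the preimages of $v_1, v_2, v_3$ in $\overline{\Cc}$.

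For the construction: by Observation~\ref{obs:cocompact-action-on-simplices}(1), the subgroup of $\Stab_{\Lambda}(S)$ fixing each $v_i$ has finite index and still acts cocompactly on $S$; passing to a further finite index subgroup, take $A \cong \Zb^2$ so that each $a \in A$ acts on a fixed lift $e_i$ of $v_i$ by a positive scalar $\lambda_i(a)$. Fix $p_0 = [e_1+e_2+e_3] \in S$, $x = [e_1+e_2] \in (v_1,v_2)$, and a sequence $(p_n) \subset [p_0, x)$ with $p_n \to x$; cocompactness of $A$ on $S$ produces $a_n \in A$ with $\sup_n \hil(a_n p_0, p_n) < \infty$, which in log-eigenvalue coordinates forces $\log\lambda_1(a_n) - \log\lambda_2(a_n)$ bounded and $\log\lambda_3(a_n) - \log\lambda_1(a_n) \to -\infty$. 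Passing to a subsequence, arrange $a_n \to T$ in $\Pb(\End(\Rb^d))$.

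For (2), Proposition~\ref{prop:dynamics_of_automorphisms_2} gives $T(\Omega) = F_{\Omega}(x)$, and I identify $F_{\Omega}(x) = (v_1,v_2)$. Since $x \in \partiali\Cc$, Observation~\ref{obs:cc_observations}(1) gives $F_{\Omega}(x) \subseteq \partiali\Cc$; any line segment through $x$ in $F_{\Omega}(x)$ lies in some $\partial S' \in \Sc$ by Property~\ref{item:b5}, and Property~\ref{item:b4} together with $x \in \partial S \cap \partial S'$ forces $S' = S$; since $\partial S$ near $x$ is just the direction $[v_1,v_2]$, $F_{\Omega}(x)$ is $1$-dimensional and equals $(v_1, v_2)$. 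In particular $T$ has rank $2$ with image $\Span(e_1, e_2)$. For (3), Proposition~\ref{prop:dynamics_of_automorphisms_1} yields $v_3 = \lim a_n^{-1}(p_0) \in \Pb(\ker T)$ (from $\lambda_3(a_n)^{-1}$ dominating); conversely, if $[w] \in \overline{\Cc} \cap \Pb(\ker T)$ with $[w] \neq v_3$, then the segment $[v_3, [w]]$ lies in $\Pb(\ker T) \cap \overline{\Cc}$ and is disjoint from $\Omega$ by Proposition~\ref{prop:dynamics_of_automorphisms_1}, so it lies in $\partiali\Cc$ and by Properties~\ref{item:b5} and~\ref{item:b4} in $\partial S$; this forces $[w]$ onto $[v_3, v_1]$ or $[v_3, v_2]$, where the diagonal action of $T$ on $V_A := \Span(e_1, e_2, e_3)$ intersects $\ker T$ only at $v_3$, a contradiction.

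For (4), fix $[w] \in \partiali\Cc \setminus \partial S$ and suppose $T([w]) = v_1$ (the case $v_2$ is symmetric). Since $T(\Omega) = (v_1, v_2) \not\ni v_1$, the projective hyperplane $H_1 := T^{-1}(v_1)$ satisfies $H_1 \cap \Omega = \emptyset$, so the segment $[v_1, [w]] \subseteq H_1 \cap \overline{\Cc}$ lies entirely in $\partiali\Cc$; Properties~\ref{item:b5} and~\ref{item:b4} again confine it to $\partial S$, so $[w] \in [v_1, v_2] \cup [v_1, v_3]$. On $[v_1, v_2]$, the restriction of $T$ is injective and $T([w]) = v_1$ forces $[w] = v_1 \in \partial S$; and $[v_1, v_3] \subseteq \partial S$ directly; either way contradicting $[w] \notin \partial S$. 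The main obstacle is precisely this identification of $T^{-1}(v_i) \cap \overline{\Cc}$ inside $\partial S$: it requires coupling the convex-geometric input $T^{-1}(v_i) \cap \Omega = \emptyset$ (a consequence of $T(\Omega) = (v_1, v_2)$) with the rigidity of how segments in $\partiali\Cc$ are organized into simplex boundaries encoded in Properties~\ref{item:b4} and~\ref{item:b5}. A secondary technical point is verifying that $v_1, v_2 \notin \Pb(\ker T)$, which uses a uniform bound $\|\hat a_n\|_{op} \lesssim \lambda_1(a_n)$ coming from the fixed change-of-basis matrix that simultaneously diagonalizes $A$ on $V_A$.
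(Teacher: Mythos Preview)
Your argument is correct and follows essentially the same route as the paper's: both build $A$ from the virtually-$\Zb^2$ stabilizer, apply Proposition~\ref{prop:dynamics_of_automorphisms_2} along a ray toward a point of $(v_1,v_2)$ to obtain $T$ with $T(\Omega)=(v_1,v_2)$, and then use Properties~\ref{item:b4} and~\ref{item:b5} exactly as you do---via line segments through $v_3$ (for part (3)) and through $v_i$ (for part (4))---to confine $\Pb(\ker T)\cap\overline{\Cc}$ and $T^{-1}(v_i)\cap\partiali\Cc$ to $\partial S$. The only wrinkle is your justification of the ``secondary technical point'' $v_1,v_2\notin\Pb(\ker T)$: the bound $\|\hat a_n\|_{op}\lesssim\lambda_1(a_n)$ is not automatic, since nothing prevents the block of $\hat a_n$ acting on a complement of $V_A$ from dominating; the conclusion follows more cleanly (and this is what the paper does) from $T(S)\subset T(\Omega)=(v_1,v_2)$, because evaluating at $[e_1+e_2+e_3]\in S$ gives $T([e_1+e_2+e_3])=[d_1:d_2:0]\in(v_1,v_2)$, forcing $d_1,d_2\neq 0$ with the same sign.
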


\begin{proof} By the discussion in Section~\ref{subsec:struct-C}, $\Stab_{\Gamma}(S)$ is virtually Abelian and acts co-compactly on $S$. So there exists a finite index Abelian subgroup $A \leq \Stab_{\Gamma}(S)$ which fixes the vertices of $S$ and acts co-compactly on $S$. Then using Proposition~\ref{prop:dynamics_of_automorphisms_2} we can find a sequence $(a_n)_{n \geq 1}$ in $A$ such that $a_n \rightarrow T \in \Pb(\End(\Rb^d))$ where $T(\Omega) = (v_1,v_2)$. Fix a lift $(\overline{a}_n)_{n \geq 1}$ of $(a_n)_{n \geq 1}$ to $\GL_d(\Rb)$ and a lift $\overline{T}$ of $T$ to $\End(\Rb^d)$ such that $\overline{a}_n \rightarrow \overline{T}$ in $\End(\Rb^d)$.

Let $e_1,\dots,e_d$ be the standard basis of $\Rb^d$. By changing coordinates we can assume $[e_j] = v_j$ for $j \in\{1,2,3\}$. Then
\begin{align*}
\overline{a}_n = \begin{pmatrix} d_{n,1} & & & ^tu_{n,1} \\ & d_{n,2} & &  ^tu_{n,2} \\ & & d_{n,3} & ^tu_{n,3} \\ & & & C_n \end{pmatrix}
\end{align*}
where $d_{n,j} \in \Rb$, $u_{n,j} \in \Rb^{d-3}$, and $C_n \in\GL_{d-3}(\Rb)$. Further, since $T(\Omega)=(v_1,v_2)$ and $S \subset \Omega$,
\begin{align*}
\overline{T}=\lim_{n \rightarrow \infty} \overline{a}_n = \begin{pmatrix} d_{1} & & & ^tu_{1} \\ & d_{2} & &  ^tu_{2} \\ & & 0 & 0 \\ & & & 0 \end{pmatrix}
\end{align*}
where $d_1,d_2 \in\Rb$ are non-zero. Then $\Pb(\ker T) \cap\partial S = \{ v_3\}$. Since $\Pb(\ker T) \cap \Omega =\emptyset$, Properties~\ref{item:b4},~\ref{item:b5} imply that 
\begin{align*}
\Pb(\ker T) \cap \overline{\Cc} \subset \Pb(\ker T) \cap\partial S=\{v_3\}.
\end{align*}
So $T$ induces a continuous map $\partiali\Cc \backslash \{v_3\} \rightarrow [v_1,v_2]$ with $T(v_1)=v_1$ and $T(v_2)=v_2$. 

If $p \in T^{-1}(v_1) \cap (\partiali\Cc \backslash \{v_3\})$, then $[p,v_1] \subset T^{-1}(v_1)$. Since $T(\Omega) = (v_1,v_2)$, then $[p,v_1] \subset \partial \Omega$. So, by Properties~\ref{item:b4} and~\ref{item:b5}, $[p,v_1] \subset \partial S$ . Thus $T^{-1}(v_1)\cap (\partiali\Cc \backslash \{v_3\}) \subset \partial S$. The same argument shows that $T^{-1}(v_2) \cap (\partiali\Cc \backslash \{v_3\}) \subset \partial S$. Thus $T(\partiali\Cc-\partial S) \subset (v_1,v_2)$. 
\end{proof}

Let $\Ec \subset \partiali\Cc$ denote the set of extreme points of $\Omega$ in $\partiali\Cc$. Then by Property~\ref{item:b5} 
\begin{align*}
\Ec = \partiali\Cc - \cup_{S \in \Sc} (\partial S \backslash \Ec).
\end{align*}

\begin{lemma} If $E \subset \partiali\Cc$ is closed, non-empty, and $\Gamma$-invariant, then $\overline{\Ec} \subset E$. In particular, $\Gamma$ acts minimally on $\overline{\Ec}$.

\end{lemma}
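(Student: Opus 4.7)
Since $E$ is closed, it suffices to show $\Ec \subseteq E$. The plan is to combine minimality of the $\Lambda$-action on the Bowditch boundary (transferred via Theorem~\ref{thm:bd_extensions}) with rank-one limits produced by Proposition~\ref{prop:dynamics_of_automorphisms_2}: first establish $\partiali\Cc \setminus \bigcup_{S \in \Sc} \partial S \subseteq E$ using minimality, then show every vertex of a simplex in $\Sc$ that happens to lie in $\Ec$ belongs to $E$ as a limit of $\Lambda$-translates of some witness in $\partiali\Cc \setminus \bigcup_S \partial S$.

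First I would establish the inclusion $\partiali\Cc \setminus \bigcup_S \partial S \subseteq E$. Let $\pi : \partiali\Cc \to \partiali\Cc/{\sim}$ be the quotient map. By Theorem~\ref{thm:bd_extensions} combined with Tran's identification of $\wt{M}(\infty)/{\sim}$ with the Bowditch boundary of $\Gamma$, and since $\Gamma$ is non-elementary relatively hyperbolic, the $\Lambda$-action on $\partiali\Cc/{\sim}$ is minimal. Assuming $E \ne \emptyset$ (otherwise there is nothing to prove), $\pi(E)$ is a closed, non-empty, $\Lambda$-invariant subset of $\partiali\Cc/{\sim}$, so $\pi(E) = \partiali\Cc/{\sim}$. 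By Property~\ref{item:b4}, the singleton fibers of $\pi$ are precisely $\{x\}$ for $x \in \partiali\Cc \setminus \bigcup_S \partial S$, which gives the inclusion.

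Next, let $x \in \Ec$. If $x \notin \bigcup_S \partial S$, we are done. Otherwise, $x$ lies in a unique $\partial S$ and, being extreme, must be a vertex of the triangle $\partial S$. To show $x \in E$, I would construct a rank-one projection $T \in \Pb(\End(\Rb^d))$ with image $\{x\}$. By cocompactness of $\Lambda$ on $\Cc$, I can choose $(\lambda_n) \subset \Lambda$ with $\sup_n \hil(\lambda_n p_0, p_n) < \infty$ for some sequence $p_n \in [p_0, x)$ with $p_n \to x$. After passing to a subsequence, $\lambda_n \to T$ in $\Pb(\End(\Rb^d))$; by Proposition~\ref{prop:dynamics_of_automorphisms_2}, $T(\Omega) = F_\Omega(x)$, and since $x$ is extreme, $F_\Omega(x) = \{x\}$. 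Thus $T$ has rank one with image $\{x\}$ and $\Pb(\ker T)$ is a hyperplane. Finally I would find $y \in E$ with $y \notin \Pb(\ker T)$; then Observation~\ref{obs:limits_of_maps} gives $\lambda_n y \to T(y) = x$, and closedness plus $\Lambda$-invariance of $E$ yield $x \in E$. The witness would be drawn from $\partiali\Cc \setminus \bigcup_S \partial S \subseteq E$. Reducing to the $\Lambda$-invariant span of $\partiali\Cc$ (using Observation~\ref{obs:cc_observations}), I may assume $\Spanset\partiali\Cc = \Rb^d$, so $\partiali\Cc \not\subseteq \Pb(\ker T)$; since $\Lambda \cong \pi_1(M)$ has cohomological dimension three, $\Cc$ has projective dimension at least three and $\partiali\Cc$ has topological dimension at least two, while $\bigcup_S \partial S$ is a countable union of one-dimensional triangle boundaries, hence meager in $\partiali\Cc$. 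Therefore $(\partiali\Cc \setminus \Pb(\ker T)) \setminus \bigcup_S \partial S$ is non-empty and supplies $y$.

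The main obstacle I anticipate is this last density step: certifying that $\partiali\Cc \setminus \bigcup_S \partial S$ contains a point outside the specific hyperplane $\Pb(\ker T)$ cut out by $T$. Should the dimension argument need a more combinatorial substitute, a fallback is to iterate Lemma~\ref{lem:projection}, starting from some $y \in E \cap \partial S$ (which exists since $\pi(E) \ni [\partial S]$): first apply a rank-two projection from that lemma to send $y$ into an open edge of $S$ incident to $x$, then use a rank-one limit of the abelian subgroup $A \leq \Stab_\Lambda(S)$ fixing the vertices to drive the image to $x$, handling the residual cases where $y$ is a non-$x$ vertex via an additional application of the minimality input.
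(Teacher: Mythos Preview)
Your overall strategy is sound and does yield the lemma, but it takes a genuinely different route from the paper and has one soft spot worth flagging.

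\textbf{Comparison with the paper.} The paper's proof never invokes Theorem~\ref{thm:bd_extensions} or minimality of the Bowditch boundary action. Instead it fixes an arbitrary $x\in E$, builds the same rank-one limit $T_0$ with image $e$, and when $x\in\Pb(\ker T_0)$ it picks \emph{any} $S\in\Sc$ with $x\notin\partial S$ and applies Lemma~\ref{lem:projection} once: the resulting $T(x)$ lies in $E\cap\partial S$, and since $\Pb(\ker T_0)\cap\partiali\Cc$ is convex, Properties~\ref{item:b4} and~\ref{item:b5} force $\partial S\cap\Pb(\ker T_0)=\emptyset$, so $T_0(T(x))=e$. Your approach instead front-loads a global fact (via the boundary map and Bowditch minimality you get $\partiali\Cc\setminus\bigcup_S\partial S\subset E$) and then searches for a witness there. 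The paper's argument is more self-contained within the convex-projective framework; yours reuses machinery already built in Section~\ref{subsec:boundary-map}. Both are legitimate.

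\textbf{The soft spot.} Your claim that $\bigcup_{S}\partial S$ is meager in $\partiali\Cc$ because ``$\partiali\Cc$ has topological dimension at least two while each $\partial S$ is one-dimensional'' is not justified as stated: a closed $1$-dimensional subset of a $2$-dimensional space can have nonempty interior when the ambient space is not a manifold, and you have not established that $\partiali\Cc$ is locally Euclidean. (Indeed, the density of $\partiali\Cc\setminus\partial S$ is exactly what the \emph{next} two lemmas in the paper establish, using Lemma~\ref{lem:projection} and the Jordan curve theorem.) The fix is short and uses only what you already have: since $\Pb(\ker T)\cap\Omega=\emptyset$, the set $\Pb(\ker T)\cap\partiali\Cc$ is convex, hence by Properties~\ref{item:b4} and~\ref{item:b5} it is either a single point or contained in a single $\partial S'$. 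In either case its intersection with $\partiali\Cc\setminus\bigcup_S\partial S$ is at most one point, while that set is uncountable (it is in bijection with the conical limit points of the Bowditch boundary via your Step~1), so a witness $y$ exists. With this adjustment your argument is complete; your fallback via iterated applications of Lemma~\ref{lem:projection} is then unnecessary.
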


\begin{proof} It suffices to fix $e \in \Ec$ and show that $e \in E$. Using Proposition~\ref{prop:dynamics_of_automorphisms_1} we can find a sequence $(g_n)_{n \geq 1}$ in $\Gamma$ where $g_n \rightarrow T_0 \in \Pb(\End(\Rb^d))$, ${\rm image}\, T_0 = e$, and $\Pb(\ker T_0) \cap \Omega = \emptyset$. 

Fix $x \in E$. ~If $x \notin \Pb(\ker T_0)$, then 
\begin{align*}
e = T_0(x) = \lim_{n \rightarrow \infty} g_nx \in E. 
\end{align*}
So suppose that $x \in \Pb(\ker T_0)$.

\medskip

\noindent \fbox{\emph{Claim:}} There exists a properly embedded simplex $S \in \Sc$ with $x \notin\partial S$.

\medskip

Otherwise, $x$ would be contained in every properly embedded simplex in $\Sc$. Then Property~\ref{item:b4} implies that $\Sc$ consists of a single 2-dimensional simplex $S$. Then $S$ is $\Gamma$-invariant and so Proposition~\ref{prop:cc_observations} part (4) implies that $\Cc_{\Omega}(\Gamma)=S$, which implies that $\Gamma \cong \pi_1(M)$ is virtually isomorphic to $\Zb^2$, which is impossible. This proves the claim. 

Using the claim, fix a properly embedded simplex $S \in \Sc$ with $x \notin\partial S$. Since $\Pb(\ker T_0) \cap \Omega = \emptyset$ and $x \in \Pb(\ker T_0)$, Properties~\ref{item:b4} and~\ref{item:b5} imply that $\partial S \cap \Pb(\ker T_0)=\emptyset$. Let $(a_n)_{n \geq 1}$ and $T$ be as in Lemma~\ref{lem:projection}. Then 
\begin{align*}
T(x) = \lim_{n \rightarrow \infty} a_nx \in E
\end{align*}
since $x \notin \Pb(\ker T)$. Then $T(x)  \in \partial S$ by our choice of $T$. So  $T(x)  \not \in \Pb(\ker T_0)$. Thus 
\begin{equation*}
e = T_0(T(x)) = \lim_{n \rightarrow \infty} g_nT(x)  \in E. \qedhere
\end{equation*}
\end{proof}

By the above lemma, it suffices to show that $\overline{\Ec}=\partiali \Cc$.  Since $\Sc$ is countable, the boundary of a simplex is closed, and 
\begin{align*}
\Ec = \partiali\Cc - \cup_{S \in \Sc} (\partial S \backslash \Ec) \supset \cap_{S \in \Sc} \partiali\Cc - \partial S,
\end{align*}
the Baire category theorem implies that $\Ec$ is dense in $\partiali\Cc$ if $\partiali\Cc - \partial S$ is dense in $\partiali\Cc$ for all $S \in \Sc$. The next two lemmas will prove that this is indeed the case, thus finishing the proof of Theorem \ref{thm:minimal}.

\begin{lemma} If $S \in \Sc$, then $\partiali\Cc - \partial S$ has at most two connected components. \end{lemma}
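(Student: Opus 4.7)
The plan is to transfer the separation statement from the universal cover $\wt{M}$ to $\Cc$ via the boundary homeomorphism of Theorem~\ref{thm:bd_extensions}. Let $F \in \Fc$ be the unique flat corresponding to $S$ under the bijection of Lemma~\ref{lem:defn_of_SF}, and let $q_{1} \colon \wt{M}(\infty) \to \wt{M}(\infty)/{\sim}$ and $q_{2} \colon \partiali\Cc \to \partiali\Cc/{\sim}$ denote the two quotient maps.

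First I would observe that, since $\wt{M}$ is a $3$-dimensional Hadamard manifold, its $\CAT(0)$ compactification $\wt{M} \cup \wt{M}(\infty)$ is homeomorphic to a closed $3$-ball with boundary the $2$-sphere $\wt{M}(\infty)$. As $F$ is an isometrically embedded Euclidean plane, $F(\infty)$ is a topologically embedded circle in $\wt{M}(\infty)$, so by the Jordan--Schoenflies theorem $\wt{M}(\infty) \setminus F(\infty) = B_{+} \sqcup B_{-}$ is a disjoint union of two open topological disks. By Property~\ref{item:a4}, each $F'(\infty)$ with $F' \in \Fc \setminus \{F\}$ is disjoint from $F(\infty)$ and, being connected, lies entirely in $B_{+}$ or in $B_{-}$. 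Hence $B_{\pm}$ are $\sim$-saturated, and the images $q_{1}(B_{\pm})$ are disjoint, connected, open subsets of $\wt{M}(\infty)/{\sim}$ whose union equals $\wt{M}(\infty)/{\sim} \setminus \{q_{1}(F(\infty))\}$. Lemma~\ref{lem:limit_type} implies that the homeomorphism of Theorem~\ref{thm:bd_extensions} sends $q_{1}(F(\infty))$ to $q_{2}(\partial S)$, so the complement $\partiali\Cc/{\sim} \setminus \{q_{2}(\partial S)\}$ likewise has exactly two connected components $\wt{U}_{+}$ and $\wt{U}_{-}$.

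Since $q_{2}^{-1}(\{q_{2}(\partial S)\}) = \partial S$, the preimages $V_{\pm} := q_{2}^{-1}(\wt{U}_{\pm})$ are disjoint open subsets of $\partiali\Cc$ whose union is $\partiali\Cc \setminus \partial S$. The main (and only) remaining step is to verify that each $V_{\pm}$ is connected. Suppose for contradiction that $V_{+} = A \sqcup B$ is a disjoint union of two non-empty open subsets. The non-trivial fibers of $q_{2}$ lying in $V_{+}$ are either singletons or topological circles $\partial S'$ with $S' \in \Sc \setminus \{S\}$; in either case they are connected, so each such fiber lies entirely in $A$ or entirely in $B$. Consequently $A$ and $B$ are $\sim$-saturated, hence $q_{2}(A)$ and $q_{2}(B)$ are disjoint open subsets of $\wt{U}_{+}$ whose union is $\wt{U}_{+}$, contradicting the connectedness of $\wt{U}_{+}$. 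Therefore each $V_{\pm}$ is connected and $\partiali\Cc \setminus \partial S = V_{+} \sqcup V_{-}$ has at most two connected components.
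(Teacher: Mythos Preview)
Your argument is correct and follows essentially the same strategy as the paper: use the homeomorphism of Theorem~\ref{thm:bd_extensions} to transport the problem to $\wt{M}(\infty)\cong S^2$ and invoke the Jordan curve theorem for the embedded circle $F(\infty)$. The only difference is directional: the paper argues by contradiction, assuming three closed pieces $K_1,K_2,K_3$ of $\partiali\Cc$ meeting pairwise in $\partial S$, pushing them forward to $\wt{M}(\infty)$, and contradicting Jordan; you instead start from the two Jordan domains in $\wt{M}(\infty)$ and pull them back, which obliges you to add the (correct) observation that the fibers of $q_2$ are connected so that connectedness survives the pullback.
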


\begin{proof} Suppose not. Then there exist non-empty closed sets $K_1,K_2,K_3 \subset \partiali\Cc$ such that 
\begin{align*}
\partiali\Cc = K_1 \cup K_2 \cup K_3 \quad \text{and} \quad K_i \cap K_j =\partial S \text{ when } i \neq j. 
\end{align*}

Let $\pi_1 : \partiali\Cc \rightarrow (\partiali\Cc / {\sim})$ and $\pi_2 : \wt{M}(\infty) \rightarrow (\wt{M}(\infty) / {\sim})$ be the natural projections and let $f :(\wt{M}(\infty) / {\sim}) \rightarrow (\partiali\Cc / {\sim})$ be the homeomorphism in Theorem \ref{thm:bd_extensions}

Notice that $\pi_1(K_j)$ is closed since $K_j$ is compact, $\pi_1$ is continuous, and $\partiali\Cc / {\sim}$ is Hausdorff. So 
\begin{align*}
\wh{K}_j :=(f \circ \pi_2)^{-1}(\pi_1(K_j))
\end{align*}
 is closed. Further, if $F \in \Fc$ is the flat with $f([F(\infty)]) = [\partial S]$, then
\begin{align*}
\wt{M}(\infty) = \wh{K}_1 \cup \wh{K}_2 \cup \wh{K}_3 \quad \text{and} \quad \wh{K}_i \cap \wh{K}_j =F(\infty) \quad \text{when} \quad i \neq j. 
\end{align*}
So $\wt{M}(\infty) - F(\infty)$ has at least three connected components. However, $\wt{M}(\infty) \cong \mathbb{S}^2$ and $F(\infty)$ is an embedded simple closed curve, so the Jordan curve theorem says that $\wt{M}(\infty) - F(\infty)$ has two connected components. So we have a contradiction.
\end{proof}

\begin{lemma} If $S \in \Sc$, then $\partiali\Cc = \overline{\partiali\Cc - \partial S }$.
\end{lemma}

\begin{proof} Let $v_1,v_2,v_3 \in \partial S$ be the vertices of $S$. By symmetry, it suffices to show that $(v_1,v_2)$ is contained in $\overline{\partiali\Cc - \partial S}$. Let $A$, $(a_n)_{n \geq 1}$, and $T$ be as in Lemma~\ref{lem:projection}. Since $A \leq \Stab_{\Gamma}(S)$ and $T$ is a limit of elements in $A$, 
 \begin{equation*}
X: = T(\partiali \Cc - \partial S) \subset \overline{\partiali\Cc - \partial S}. 
 \end{equation*}
Thus the lemma reduces to showing that $X=(v_1,v_2)$. Lemma \ref{lem:projection} part (4) implies that $X \subset (v_1,v_2)$.

Since $A$ is Abelian, we have $a \circ T = T \circ a$ for every $a \in A$. Hence $A X = X$. Since $X$ is the continuous image of a space with two connected components, $X$ has at most two connected components. So there exists a finite index subgroup $A_1 \leq A$ which preserves each connected component of $X$. Since $A$ acts co-compactly on $S$,  $A_1$ also acts co-compactly on $S$ and hence also the boundary face $(v_1,v_2)$, see Observation~\ref{obs:cocompact-action-on-simplices}. Thus $X=T(\partiali \Cc - \partial S)=(v_1,v_2)$ which proves the lemma.
\end{proof}

\subsection{Examples where $\Gamma$ does not act minimally on $\partiali \Cc_{\Omega}(\Gamma)$} Notice that Theorem \ref{thm:minimal} is no longer true if $M$ is assumed to have $\Rb^3$ geometry or $\Rb \times \Hb^2$ geometry. In the former case, Proposition~\ref{prop:boring_examples} implies that $\Cc_\Omega(\Gamma)$ is a simplex and the vertices of this simplex are a closed $\Gamma$-invariant set. In the latter case, Proposition~\ref{prop:boring_examples} implies that $\Cc_\Omega(\Gamma)$ is a cone with a strictly convex base. Then the base of this cone is a closed $\Gamma$-invariant set.

\section{Rank one automorphisms in convex co-compact groups}
\label{sec:rank-one}

Following recent work of the first author~\cite{I2019} we define rank one automorphisms in convex co-compact subgroups, but first some remarks about proximal elements of $\PGL_d(\Rb)$. 

An element $g \in \PGL_d(\Rb)$ is called \emph{proximal} if $\ev_1(g)>\ev_2(g)$. In this case let $g^+ \in \Pb(\Rb^d)$ denote the eigenline corresponding to $\lambda_1(g)$ and let $H_g^- \subset \Rb^d$ be the  unique $g$-invariant linear hyperplane with $g^+ \oplus H_g^- = \Rb^d.$

An element $g \in \PGL_d(\Rb)$ is \emph{biproximal} if $g, g^{-1}$ are both proximal. In this case, define $g^- : =(g^{-1})^+$ and $H_g^+ := H_{g^{-1}}^-$. 

\begin{observation}\label{obs:iterates_of_proximal} If $g \in \PGL_d(\Rb)$ is proximal, then 
\begin{align*}
T_g := \lim_{n \rightarrow \infty} g^n
\end{align*}
exists in $\Pb(\End(\Rb^d))$. Moreover, ${ \rm image}\, T_g =g^+$ and $\ker T_g = H_g^-$. Hence 
\begin{align*}
g^+ = \lim_{n \rightarrow \infty} g^nx
\end{align*}
for all $x \in \Pb(\Rb^d) \setminus \Pb(H^-_g)$.
\end{observation}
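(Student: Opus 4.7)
The proof breaks into two parts: the convergence statement about $g^n$, and the two consequences (the pointwise limit and the disjointness from $\Omega$). My plan is to work with a lift and reduce everything to a standard computation with the Jordan decomposition.

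First, lift $g$ to some $\hat{g} \in \SL_d^{\pm}(\Rb)$. The proximality assumption $\lambda_1(g) > \lambda_2(g)$ forces the top eigenvalue of $\hat{g}$ to be a simple real eigenvalue $\mu$ with $|\mu| = \lambda_1(g)$: if $\mu$ were complex, its conjugate would give a second eigenvalue of the same modulus, and if the generalized eigenspace had dimension $\geq 2$, the size of a Jordan block would again force a coincidence of moduli (or, more directly, two $\Rb$-linearly independent eigenvectors with the same eigenvalue, contradicting the strict gap). Write $\Rb^d = L \oplus H$ for the corresponding $\hat{g}$-invariant splitting, where $L$ is the eigenline for $\mu$ and $H$ is the sum of the remaining generalized eigenspaces; then $[L] = g^+$ and $H = H_g^-$ by definition.

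Now consider $\mu^{-n}\hat{g}^n \in \End(\Rb^d)$. On $L$ this operator is the identity, while on $H$ the operator norm satisfies $\|\hat{g}^n|_H\| = O(n^{d-1}\lambda_2(g)^n)$ by the Jordan form estimate, so $\mu^{-n}\hat{g}^n|_H \to 0$. Therefore $\mu^{-n}\hat{g}^n$ converges in $\End(\Rb^d)$ to the projection $\pi_L$ onto $L$ along $H$, which is a nonzero endomorphism. Projectivizing, $g^n \to T_g := [\pi_L]$ in $\Pb(\End(\Rb^d))$, with $\mathrm{image}(\pi_L) = L$ and $\ker \pi_L = H$, giving $\mathrm{image}(T_g) = g^+$ and $\ker T_g = H_g^-$ as claimed.

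The pointwise convergence $g^+ = \lim_{n\to\infty} g^n x$ for $x \notin \Pb(H_g^-)$ is then immediate from Observation~\ref{obs:limits_of_maps}, since the image of $T_g$ is the single projective point $g^+$. For the final assertion, assume $g \in \Aut(\Omega)$ and fix $p_0 \in \Omega$. By Observation~\ref{obs:limits_of_maps} again, $g^n p_0 \to g^+ \in \overline{\Omega}$, and since the orbit of a proximal (hence infinite-order) element cannot stay in a compact subset of $\Omega$ (the $\Aut(\Omega)$-action on $\Omega$ is proper), we in fact get $g^+ \in \partial\Omega$. Passing to a subsequence so that $g^{-n_k}(p_0)$ converges to some $y \in \overline{\Omega}$, the same properness argument gives $y \in \partial\Omega$. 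The hypotheses of Proposition~\ref{prop:dynamics_of_automorphisms_1} are now met for the sequence $g^{n_k}$ with limit $T_g$, and its conclusion $\Pb(\ker T_g) \cap \Omega = \emptyset$ is exactly $\Pb(H_g^-) \cap \Omega = \emptyset$.

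The only mildly delicate step is ruling out the possibility that $\hat{g}$ has a nontrivial Jordan block or a complex pair at the top eigenvalue; everything else is bookkeeping. The rest is a direct application of the already-established Propositions~\ref{prop:dynamics_of_automorphisms_1} and Observation~\ref{obs:limits_of_maps}.
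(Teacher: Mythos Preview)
Your argument is correct and is the standard one; the paper states this result as an ``Observation'' without proof, so there is nothing to compare against on their side. One tiny wobble: when you write ``By Observation~\ref{obs:limits_of_maps} again, $g^n p_0 \to g^+ \in \overline{\Omega}$'', you are implicitly using $p_0 \notin \Pb(H_g^-)$, which is exactly what you are trying to prove about all of $\Omega$. This is harmless---either choose $p_0 \in \Omega \setminus \Pb(H_g^-)$ (nonempty since $\Omega$ is open and $\Pb(H_g^-)$ is a hyperplane), or simply argue by properness and subsequences for both the forward and backward orbits, as you already do for $g^{-n_k}(p_0)$; Proposition~\ref{prop:dynamics_of_automorphisms_1} does not require you to identify the limit point, only that it lies in $\partial\Omega$.
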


\begin{definition}\cite{I2019}
\label{defn:rank-one-auto}
Suppose $\Omega \subset \Pb(\Rb^d)$ is a properly convex domain and $\Gamma \leq \Aut(\Omega)$ is convex co-compact. An element $g \in \Gamma$ is a \emph{rank one automorphism in $\Gamma$} if $g$ is biproximal and $(g^+,g^-) \subset \Omega$. 
\end{definition}

Rank one automorphisms were defined differently and more generally in~\cite{I2019}, but the next proposition shows that the two definitions are equivalent for convex co-compact subgroups (also see Section 6 and Appendix A of \cite{I2019}).

\begin{proposition}\label{prop:rank-one-auto} Suppose $\Omega \subset \Pb(\Rb^d)$ is a properly convex domain and $\Gamma \leq \Aut(\Omega)$ is convex co-compact. If $g$ is a rank one automorphism in $\Gamma$, then:
\begin{enumerate}
\item $g^\pm$ is a $\Cc^1$-smooth extreme point of $\Omega$ with $T_{g^\pm} \partial \Omega = \Pb(H_g^\pm)$. 
\item If $z \in \partiali\Cc_\Omega(\Gamma)$, then 
\begin{align*}
(g^+, z) \cup (z,g^-) \subset \Cc_\Omega(\Gamma).
\end{align*}
\item $\Pb(H_g^{\pm}) \cap \overline{\Cc_\Omega(\Gamma)}=\{g^{\pm}\}$ and so
\begin{align*}
g^{\pm}= \lim_{n \rightarrow \pm \infty} g^n z
\end{align*}
for all $z \in \overline{\Cc_\Omega(\Gamma)}\setminus \{g^{\mp}\}$.
\end{enumerate}
\end{proposition}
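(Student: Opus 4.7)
\emph{Extremality of $g^{\pm}$.} To obtain $F_\Omega(g^+) = \{g^+\}$, I plan to apply Proposition~\ref{prop:dynamics_of_automorphisms_2} with $p_0 \in (g^+, g^-) \subset \Omega$, $p_n := g^n p_0 \in [p_0, g^+)$, and $g_n := g^n$; the hypothesis $\hil(g^n p_0, p_n) = 0$ is trivial, and by Observation~\ref{obs:iterates_of_proximal} we have $g^n \to T_g$ in $\Pb(\End(\Rb^d))$ with $T_g(\Omega) = \{g^+\}$, since $\Pb(\ker T_g) = \Pb(H_g^-)$ is disjoint from $\Omega$. The extremality of $g^-$ then follows by applying the same argument to $g^{-1}$.

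\emph{Third and fourth assertions.} The key convex-analytic input is that $\Pb(H_g^-)$ is a supporting hyperplane of $\overline{\Omega}$ at $g^-$, so $\Pb(H_g^-) \cap \overline{\Omega}$ is a face: for every $y$ in it, $F_\Omega(y) \subset \Pb(H_g^-)$. To prove $\Pb(H_g^-) \cap \overline{\Cc_\Omega(\Lambda)} = \{g^-\}$, I would argue by contradiction, assuming some $y \neq g^-$ lies in this intersection. Then $y \in \partiali \Cc_\Omega(\Lambda)$, and since $\Lambda$ acts cocompactly on $\Cc_\Omega(\Lambda)$, $y$ is a conical limit point: there exist $\gamma_n \in \Lambda$ and $p_n \in [p_0, y)$ with $p_n \to y$ and $\sup_n \hil(\gamma_n p_0, p_n) < \infty$. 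Passing to a subsequence so that $\gamma_n \to T$ in $\Pb(\End(\Rb^d))$, Proposition~\ref{prop:dynamics_of_automorphisms_2} yields $\mathrm{image}(T) \subset \Span F_\Omega(y) \subset H_g^-$. Iterating and exploiting both the $\Lambda$-invariance of $\overline{\Cc_\Omega(\Lambda)}$ and the eigenvalue dynamics of $g|_{H_g^-}$ (for which $g^-$ is the unique attracting fixed point of $g^{-1}$) will then give the contradiction. The statement for $H_g^+$ is symmetric. The fourth assertion is then immediate: for $z \in \overline{\Cc_\Omega(\Lambda)} \setminus \{g^-\}$, the third assertion gives $z \notin \Pb(H_g^-)$, whence $g^n z \to g^+$ by Observation~\ref{obs:iterates_of_proximal}.

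\emph{Segments lie in $\Cc_\Omega(\Lambda)$.} Fix $z \in \partiali \Cc_\Omega(\Lambda) \setminus \{g^+\}$. Convexity of $\overline{\Cc_\Omega(\Lambda)}$ gives $[g^+, z] \subset \overline{\Cc_\Omega(\Lambda)}$, and convexity of $\Omega$ forces $(g^+, z)$ to lie either entirely in $\Omega$ or entirely in $\partial\Omega$. Supposing the latter for contradiction, the hypothesis $(g^+, g^-) \subset \Omega$ rules out $z = g^-$, so the third assertion gives $z \notin \Pb(H_g^-) \cup \Pb(H_g^+)$. For any $w \in (g^+, z)$ the open segment $(g^+, z)$ sits inside a single open face $F_\Omega(w) \subset \partial\Omega$, and the $g$-translates carry $(g^+, z)$ to $(g^+, g^n z) \subset g^n F_\Omega(w)$; by the fourth assertion $g^n z \to g^+$, so these translates accumulate at the extreme point $g^+$. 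Combining this accumulation with $F_\Omega(g^+) = \{g^+\}$ and the $\Lambda$-invariance of $\overline{\Cc_\Omega(\Lambda)}$ produces the desired contradiction, so $(g^+, z) \subset \Omega \cap \overline{\Cc_\Omega(\Lambda)} = \Cc_\Omega(\Lambda)$. The statement for $(z, g^-)$ follows symmetrically, replacing $g$ by $g^{-1}$.

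\emph{Main obstacle.} The crux is the iterative step in the second paragraph: producing a concrete contradiction from $y \in \Pb(H_g^-) \cap \overline{\Cc_\Omega(\Lambda)}$ with $y \neq g^-$ requires carefully combining the explicit eigenvalue structure of $g$ on $H_g^-$ with the $\Lambda$-invariance of $\overline{\Cc_\Omega(\Lambda)}$, paralleling the arguments of Section~6 and Appendix~A of~\cite{I2019}.
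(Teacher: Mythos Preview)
Your extremality argument matches the paper's. However, the overall logical order of your approach is reversed from the paper's, and this creates genuine gaps that you have correctly flagged but not resolved.

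The paper proceeds in the opposite order. It first proves (Claim~1) that the only fixed points of $g$ in $\partiali\Cc_\Omega(\Lambda)$ are $g^+$ and $g^-$, and the key tool here is Theorem~\ref{thm:center} applied to $A=\langle g\rangle$: since $\langle g\rangle$ has finite index in $C_\Lambda(g)$ (the centralizer acts properly on $(g^+,g^-)$, on which $\langle g\rangle$ already acts cocompactly), the subspace $V$ produced by Theorem~\ref{thm:center} satisfies $\dim\Pb(V)=1$, forcing $\Omega\cap\Pb(V)=(g^+,g^-)$ and hence, from the very definition of $V$, that $\{x\in\partiali\Cc:gx=x\}=\{g^+,g^-\}$. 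With Claim~1 in hand, the segment assertion (Claim~2) is proved by contradiction: if $[g^+,z]\subset\partiali\Cc$, one shows that a subsequential limit $z'=\lim g^{-n_j}z$ lies in $\Pb(H_g^+\cap H_g^-)$, so $\partiali\Cc\cap\Pb(H_g^+\cap H_g^-)$ is nonempty, closed, convex and $g$-invariant, hence by Brouwer contains a $g$-fixed point; but this set contains neither $g^+$ nor $g^-$, contradicting Claim~1. The ``moreover'' part is then a one-line consequence of Claim~2: if $y\in\Pb(H_g^-)\cap\overline{\Cc}$ with $y\neq g^-$, then $(g^-,y)\subset\Cc\subset\Omega$ by Claim~2, yet $(g^-,y)\subset\Pb(H_g^-)$ and $\Pb(H_g^-)\cap\Omega=\emptyset$.

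Your route tries to prove the hyperplane-intersection statement first and then deduce the segment statement, and both steps are incomplete as written. For the hyperplane step, your conical-limit argument produces some $T$ with image in $H_g^-$, but it is not clear how to extract a contradiction from this; the eigenvalue dynamics of $g|_{H_g^-}$ alone do not rule out fixed points of $g$ in $\Pb(H_g^-)\cap\partiali\Cc$ other than $g^-$, which is exactly what Claim~1 (via Theorem~\ref{thm:center}) supplies. For the segment step, extremality of $g^+$ does \emph{not} preclude $[g^+,z]\subset\partial\Omega$: a vertex of a simplex is extreme yet is the endpoint of many boundary segments. Your ``accumulation'' argument therefore does not yield a contradiction, since shrinking boundary segments $(g^+,g^nz)$ are perfectly compatible with $F_\Omega(g^+)=\{g^+\}$. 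The paper sidesteps this by passing to the codimension-two subspace $\Pb(H_g^+\cap H_g^-)$, which excludes both $g^\pm$, and then invoking Claim~1.
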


The following argument is essentially the proof of~\cite[Lemma 6.4]{I2019}.

\begin{proof} Let $T_g := \lim_{n \rightarrow \infty} g^n$. Then ${\rm image}\, T_g = g^+$ by Observation~\ref{obs:iterates_of_proximal}. If we apply Proposition~\ref{prop:dynamics_of_automorphisms_2} to $(g^n)_{n \geq 1}$ and a point $p \in (g^+,g^-)$, then $T_g(\Omega) = F_\Omega(g^+)$ and so $F_\Omega(g^+) = \{g^+\}$. So $g^+$ is an extreme point. 

Next we claim that $g^+$ is a $\Cc^1$-smooth point of $\partial \Omega$ and $T_{g^+} \partial \Omega = \Pb(H_g^+)$. Suppose not, then there exists a supporting hyperplane $H$ of $\Omega$ at $g^+$ with $H \neq \Pb(H_g^+)$. Fix $v \in H \setminus \Pb(H_g^+)$, then $g^{-n}(v) \rightarrow g^-$. By compactness, we can find a subsequence such that  $g^{-n_j}H$ converges to a projective hyperplane $H^\prime$. For every $j$, we have 
$$
g^+ = g^{-n_j}(g^+) \in g^{-n_j}H \quad \text{and} \quad g^{-n_j}(v) \in g^{-n_j} H. 
$$
Hence $g^+, g^- \in H^\prime$. Further, $H^\prime \cap \Omega = \emptyset$. So we have a contradiction. Thus $g^+$ is a $\Cc^1$-smooth point of $\partial \Omega$ with $T_{g^+} \partial \Omega = \Pb(H_g^+)$. 

By symmetry, $g^-$ is a $\Cc^1$-smooth extreme point of $\Omega$ with $T_{g^-} \partial \Omega = \Pb(H_g^-)$. Thus part (1) is true. 

For notational convenience let $\Cc :=\Cc_\Omega(\Gamma)$. 

\medskip 

\noindent \fbox{\emph{Claim:}} $\{ x \in \partiali \Cc : gx = x\} = \{ g^+, g^-\}$. 

\medskip

First notice that $\ip{g}$ has finite index in its centralizer $Z_{\Gamma}(g)$: the centralizer acts properly discontinuously on $(g^+,g^-)$ and the quotient $\ip{g} \backslash (g^+,g^-)$ is compact. Next apply Theorem~\ref{thm:center} to $A = \ip{g}$ to obtain a $Z_{\Gamma}(g)$-invariant linear subspace $V \subset \Rb^d$ where the quotient $Z_{\Gamma}(g) \backslash \Omega \cap \Pb(V)$ is compact. Since $Z_{\Gamma}(g)$ is virtually isomorphic to $\Zb$ and $\Omega \cap \Pb(V)$ is diffeomorphic to $\Rb^{\dim(V)-1}$, we must have $\dim \Pb(V) = 1$. Further, $g^+, g^- \in \Pb(V)$. So $\Omega \cap \Pb(V) = (g^+,g^-)$. Then, from the definition of $V$ in Theorem~\ref{thm:center}, we have 
\begin{align*}
\{ x \in \partiali \Cc : gx = x\} = \{ g^+, g^-\}.
\end{align*}
Thus the claim is true. 

We now prove part (2). By symmetry it is enough to assume that $[g^+,z] \subset \partiali\Cc$ for some $z \in \partiali \Cc \setminus\{ g^+\}$ and derive a contradiction. Let $e_1,\dots, e_d$ be the standard basis of $\Rb^d$. By changing coordinates, we can assume that $g^+ = [e_1]$, $H_g^+ \cap H_g^- = \Span\{e_2,\dots,e_{d-1}\}$, and $g^- = [e_d]$. By compactness, $z^\prime := \lim_{j \rightarrow \infty} g^{-n_j} z$ exists for some sequence $n_j \rightarrow \infty$. Then $[g^+,z^\prime] \subset \partiali\Cc$ and so $z^\prime \neq g^-$. Thus, by Observation~\ref{obs:iterates_of_proximal}, $z \in \Pb(H_g^+)$. Since $z \neq g^+$ and $g$ is biproximal, then 
$$
z^\prime = \lim_{j \rightarrow \infty} g^{-n_j} z \in \Pb(\Span\{e_2,\dots,e_{d}\})=\Pb(H_g^-).
$$ 
Thus $z^\prime \in \Pb(H_g^+ \cap H_g^-)$. Then $\partiali\Cc \cap \Pb(H_g^+ \cap H_g^-)$ is a non-empty closed convex $g$-invariant subset. So $g$ has a fixed point in $\partiali \Cc \cap \Pb(H_g^+ \cap H_g^-)$. But this contradicts the Claim. Thus part (2) is true. 

We now prove part (3). By Observation \ref{obs:iterates_of_proximal} and Proposition~\ref{prop:dynamics_of_automorphisms_1}, $\Pb(H_g^\pm) \cap \Omega = \emptyset$. So part (2) of this proposition implies that 
\begin{align*}
\Pb(H_g^{\pm}) \cap \overline{\Cc}=\{g^{\pm}\}.
\end{align*}
Then Observation \ref{obs:iterates_of_proximal} implies that 
\begin{align*}
g^{\pm}= \lim_{n \rightarrow \pm \infty} g^n z
\end{align*}
for all $z \in \overline{\Cc}\setminus \{g^\mp\}$. 
\end{proof}

\begin{figure}[b]
\includegraphics[scale=0.5]{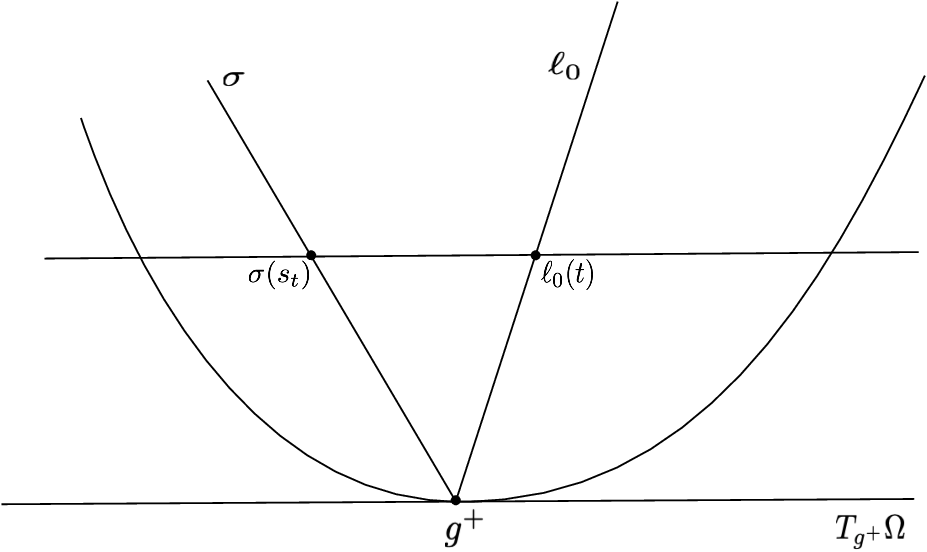}
\caption{Two asymptotic geodesic $\sigma$ and $\ell_0$ limiting to the $C^1$-smooth point $g^+$}
\centering
\label{fig:c1_point}
\end{figure}

\begin{proposition}
\label{prop:dist-estimate-rank-one-auto}
Suppose $\Omega \subset \Pb(\Rb^d)$ is a properly convex domain and $\Gamma \leq \Aut(\Omega)$ is convex co-compact. If $g$ is a rank one automorphism in $\Gamma$ and $\sigma:\Rb\to \Cc_{\Omega}(\Gamma)$ is a projective line geodesic with $\lim_{t \to \infty}\sigma(t)=g^+$, then there exists a unit speed parametrization $\ell:\Rb \to \Cc_\Omega(\Gamma)$ of $(g^-,g^+)$ with $\lim_{t \to \infty}\ell(t)=g^+$ such that 
\begin{align*}
\lim_{t \to \infty} \hil(\sigma(t), \ell(t))=0.
\end{align*}
\end{proposition}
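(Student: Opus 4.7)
My plan is to first set $z_-:=\lim_{t\to-\infty}\sigma(t)\in\overline{\Cc}\cap\partial\Omega$ and reduce to the non-trivial case. If $z_-=g^-$, then $\sigma$ is itself a unit-speed parametrization of $(g^-,g^+)$ with $\sigma(t)\to g^+$, so I take $\ell:=\sigma$ and there is nothing more to prove. Otherwise $z_-\ne g^-$, and the ``moreover'' part of Proposition~\ref{prop:rank-one-auto} gives $z_-\notin\Pb(H_g^+)$, since $\Pb(H_g^+)\cap\overline{\Cc}=\{g^-\}$.

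The strategy is to parametrize $\ell$ so that $g\circ\ell(t)=\ell(t+\tau)$ where $\tau:=\tau_\Omega(g)>0$, and then to establish, for a single constant $c\in\Rb$ independent of $s$, the pointwise limit
$$g^{-n}\sigma(n\tau+s)\longrightarrow\ell(s+c)\qquad(n\to\infty)$$
for every fixed $s\in\Rb$. Since $\hil$ is $g$-invariant, this directly yields
$$\hil(\sigma(n\tau+s),\ell(n\tau+s+c))=\hil(g^{-n}\sigma(n\tau+s),\ell(s+c))\xrightarrow[n\to\infty]{}0,$$
and once I check the convergence is uniform for $s\in[0,\tau]$ (by continuous dependence on $s$), writing any large $t$ as $n\tau+s$ with $s\in[0,\tau)$ gives the conclusion $\hil(\sigma(t),\ell(t+c))\to 0$.

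To establish the pointwise limit I would pick a lift $\wt g\in\GL_d(\Rb)$ and a basis $(e_1,\ldots,e_d)$ adapted to $\wt g$ with $\Rb e_1$ its top eigenline and $\Rb e_d$ its bottom one, so that $g^+=[e_1]$, $g^-=[e_d]$, $H_g^+=\Span\{e_1,\ldots,e_{d-1}\}$, and the axis lies in $\Pb(\Span\{e_1,e_d\})$. The line $\sigma$ lifts as $\wt\sigma(t)=e_1+\beta(t)w$ for a fixed vector $w\in\Rb^d$ with $z_-=[e_1+\beta_-w]$; the condition $z_-\notin\Pb(H_g^+)$ forces the $e_d$-coefficient $w_d$ of $w$ to be non-zero. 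A direct cross-ratio computation for the Hilbert metric along the line $\sigma$, combined with unit speed and $\sigma(t)\to g^+$, will give the asymptotic $\beta(t)\sim Be^{-2t}$ for some non-zero constant $B$. Using Proposition~\ref{prop:min_trans_compute} in the form $e^{-2\tau}=\lambda_d(g)/\lambda_1(g)$, applying $\wt g^{-n}$ and factoring out $\lambda_1^{-n}$ will then produce
$$\wt g^{-n}\wt\sigma(n\tau+s)=\lambda_1^{-n}\left(e_1+Bw_d\,e^{-2s}e_d+\varepsilon_n(s)\right),$$
with an error $\varepsilon_n(s)$ of size $O(n^{d-2}(\lambda_d(g)/\lambda_{d-1}(g))^n)$. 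Projectively the limit is the point $[e_1+Bw_d\,e^{-2s}e_d]$, which lies on the axis, and matching the analogous unit-speed description $\ell(s+c)=[e_1+Ce^{-2(s+c)}e_d]$ pins down the required $c$.

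The main obstacle is this asymptotic calculation: both the cross-ratio estimate $\beta(t)\sim Be^{-2t}$ and the control of intermediate-eigenvalue contributions to $\wt g^{-n}w$ when $\wt g$ has Jordan blocks require some care. However, the spectral gaps $\lambda_1(g)>\lambda_2(g)$ and $\lambda_{d-1}(g)>\lambda_d(g)$ forced by biproximality are exactly what make $\varepsilon_n(s)$ vanish.
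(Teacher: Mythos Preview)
Your approach is correct and will go through, but it differs substantially from the paper's argument.

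The paper argues purely metrically. It fixes an arbitrary unit-speed parametrization $\ell_0$ of the axis and first proves the weaker statement $\dist_\Omega(\ell_0(t),\sigma)\to 0$: writing $t=n_t\tau_g+r$ with $r\in[0,\tau_g]$, one has $\dist_\Omega(\ell_0(t),\sigma)=\dist_\Omega\bigl(\ell_0(r),(g^{-n_t}w,g^+)\bigr)$, and this goes to zero simply because $g^{-n_t}w\to g^-$ by Proposition~\ref{prop:rank-one-auto}. No coordinates, no eigenvalue arithmetic. The correct shift is then extracted by a soft $\limsup/\liminf$ argument using only that $\ell_0$ and $\sigma$ are unit-speed geodesics: if $s_t$ is a near-realizer of the distance, one shows $s_t-t$ converges and sets $\ell(t)=\ell_0(t-\tau)$.

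Your route trades this for an explicit linear-algebra computation of $g^{-n}\sigma(n\tau+s)$ in a Jordan basis adapted to $g$. What you gain is an explicit identification of the limiting point on the axis and of the shift $c$; what you pay is the bookkeeping with the cross-ratio asymptotic $\beta(t)\sim Be^{-2t}$, the middle-eigenvalue terms, and Jordan blocks. Two small cautions if you carry this out: (i) the top and bottom eigenvalues of a lift of $g$ are real but could be negative, which makes $\lambda_1^n/\lambda_1(g)^n$ oscillate; pass to $g^2$ at the outset to force positivity. (ii) ``Continuous dependence on $s$'' does not by itself upgrade pointwise to uniform convergence; you need to invoke the explicit uniform error bound $\varepsilon_n(s)=O\bigl(n^{d-2}(\lambda_d(g)/\lambda_{d-1}(g))^n\bigr)$ you already wrote down, together with the uniform relative error in $\beta(n\tau+s)\sim Be^{-2(n\tau+s)}$, both of which are clearly uniform for $s\in[0,\tau]$.
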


\begin{proof} Let $\ell_0 : \Rb \rightarrow \Cc_\Omega(\Gamma)$ be any unit speed parameterization of $(g^-,g^+)$ with $\lim_{t \to \infty}\ell_0(t)=g^+$. Since $g^+$ is a $\Cc^1$-smooth point of $\Omega$, the definition of the Hilbert metric implies that for each $t \geq 0$ there exists $s_t \geq 0$ (see Figure \ref{fig:c1_point}) such that 
\begin{align*}
\lim_{t \rightarrow \infty} \dist_\Omega( \ell_0(t), \sigma(s_t)) = 0.
\end{align*}

We claim that $\lim_{t \rightarrow \infty} s_t - t$ exists. Pick sequences $(a_m)_{m \geq 1}, (b_m)_{m \geq 1}$ converging to infinity such that 
\begin{align*}
\limsup_{t \rightarrow \infty} s_t - t = \lim_{m \rightarrow \infty} s_{b_m}-b_m \quad \text{and} \quad \liminf_{t \rightarrow \infty} s_t - t = \lim_{m \rightarrow \infty} s_{a_m}-a_m.
\end{align*}
By replacing $(b_m)_{m \geq 1}$ with a subsequence we can suppose that $b_m > a_m+\dist_\Omega( \ell_0(0), \sigma(0))$ for all $m$. Then $s_{b_m} > s_{a_m}$ and 
\begin{align*}
0&= \lim_{m \rightarrow\infty} \dist_\Omega( \ell_0(b_m),\ell_0(a_m)) - (b_m-a_m) = \lim_{m \rightarrow\infty}\dist_\Omega( \sigma(s_{b_m}),\sigma(s_{a_m})) - (b_m-a_m)\\
& = \lim_{m \rightarrow\infty} (s_{b_m}-s_{a_m}) - (b_m-a_m) = \limsup_{t \rightarrow \infty} ~(s_t - t) - \liminf_{t \rightarrow \infty} ~(s_t - t).
\end{align*}
Thus the limit $\tau:=\lim_{t \rightarrow \infty} s_t - t$ exists. Then
\begin{align*}
0=\lim_{t \rightarrow \infty} \dist_\Omega( \ell_0(t), \sigma(s_t)) =\lim_{t \rightarrow \infty} \dist_\Omega( \ell_0(t), \sigma(t+\tau))= \lim_{t \rightarrow \infty} \dist_\Omega( \ell_0(t-\tau), \sigma(t))
\end{align*}
and so the geodesic $t \rightarrow \ell(t) := \ell_0(t - \tau)$ satisfies the lemma. 
\end{proof}

\section{The geodesic flow}

Suppose $\Omega \subset \Pb(\Rb^d)$ is a properly convex domain,  $\Gamma \leq \Aut(\Omega)$ is a convex co-compact subgroup, and  $\Cc : = \Cc_\Omega(\Gamma)$. The purpose of this section is to relate the dynamics of the geodesic flow to the dynamics of the boundary action. 

For the reader's convenience, we begin by recalling some notation from the introduction. Given $v \in T^1 \Omega$, we let $\gamma_v : \Rb \rightarrow \Omega$ denote the projective line geodesic with $\gamma_v^\prime(0)=v$. Then the geodesic flow on $T^1 \Omega$ is defined by $\phi_t(v) = \gamma_v^\prime(t)$. We will consider the $\Gamma$-invariant geodesic flow invariant subset of $T^1 \Omega$ defined by
\begin{align*}
\Gc_\Omega(\Gamma) :=\left\{ v \in T^1 \Omega : \pi_{\pm}(v) \in \partiali\Cc\right\}
\end{align*}
where 
$$
\pi_\pm(v) := \lim_{t \rightarrow \pm \infty} \gamma_v(t) \in \partial \Omega
$$
denotes the forward/backward limit points of $\gamma_v$. The geodesic flow descends to a flow on the quotient space $\Gamma \backslash \Gc_\Omega(\Gamma)$, which we will also denote by $\phi_t$. 

Also, recall that the geodesic flow $\phi_t$ on $\Gamma \backslash \Gc_{\Omega}(\Gamma)$ is said to be topologically transitive if for non-empty open sets $U,V \subset \Gamma \backslash \Gc_{\Omega}(\Gamma)$, there exists $t\in \Rb$ such that $\phi_t(U) \cap V \neq \emptyset.$  Our goal in this section is to prove the following theorem.

\begin{theorem}  
\label{thm:top-trans-equiv-minimal}
$\Gamma$ acts minimally on $\partiali\Cc$ if and only if the flow $\phi_t$ restricted to $\Gamma \backslash \Gc_\Omega(\Gamma)$ is topologically transitive. 
\end{theorem}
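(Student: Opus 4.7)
The plan is to interpret both directions via the endpoint map $\pi: \Gc_\Omega(\Lambda) \to \partiali\Cc^{(2)}$, $v \mapsto (\pi_-(v), \pi_+(v))$, where $\partiali\Cc^{(2)} := \{(x,y) \in \partiali\Cc \times \partiali\Cc : (x,y) \subset \Omega\}$. This map is $\Lambda$-equivariant, continuous, open, and has the $\phi_t$-orbits as fibers, exhibiting $\Gc_\Omega(\Lambda)$ as an $\Rb$-bundle over $\partiali\Cc^{(2)}$. A standard verification then yields that topological transitivity of $\phi_t$ on $\Lambda \backslash \Gc_\Omega(\Lambda)$ is equivalent to the $\Lambda$-action on $\partiali\Cc^{(2)}$ admitting a dense orbit.

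For the $(\Rightarrow)$ direction, the crux is the following \emph{key lemma}: under minimality of $\Lambda$ on $\partiali\Cc$, the set of rank one axis-pairs $\{(g^-, g^+) : g \in \Lambda \text{ rank one}\}$ is dense in $\partiali\Cc^{(2)}$. I would prove this by ping-pong: first secure the existence of a rank one element $g_0 \in \Lambda$ (the alternative, where $\Lambda$ virtually stabilizes a simplex as in Proposition~\ref{prop:boring_examples}, produces $\Lambda$-fixed vertices in $\partiali\Cc$ precluding minimality). Given a target $(x,y) \in \partiali\Cc^{(2)}$ and $\epsilon > 0$, minimality yields $\eta_1, \eta_2 \in \Lambda$ with $\eta_1 g_0^- \approx x$ and $\eta_2 g_0^- \approx y$, and the conjugates $h_i := \eta_i g_0 \eta_i^{-1}$ are rank one with $h_i^-$ near the targets. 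After possibly perturbing $\eta_i$ to separate the fixed points $h_1^\pm, h_2^\pm$, the north-south dynamics of Proposition~\ref{prop:rank-one-auto} combined with a standard ping-pong analysis show that for all sufficiently large $n$, $g := h_2^{-n} h_1^n \in \Lambda$ is rank one with $g^- \approx h_1^- \approx x$ and $g^+ \approx h_2^- \approx y$. Granted the key lemma, for any non-empty open $\wt U, \wt V \subseteq \Gc_\Omega(\Lambda)$ and $u \in \wt U, v \in \wt V$, apply the key lemma to produce rank one $g_1, g_2 \in \Lambda$ whose axis-pairs approximate $(\pi_-(u), \pi_+(u))$ and $(\pi_-(v), \pi_+(v))$ respectively. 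Then Proposition~\ref{prop:dist-estimate-rank-one-auto} implies that the projective line geodesic $\sigma: \Rb \to \Cc$ with $\lim_{t \to -\infty} \sigma(t) = g_1^-$ and $\lim_{t \to +\infty} \sigma(t) = g_2^+$ shadows the axis of $g_1$ as $t \to -\infty$ and the axis of $g_2$ as $t \to +\infty$; hence $\sigma'(t_1) \in \wt U$ and $\sigma'(t_2) \in \wt V$ for some $t_1 < t_2$, giving $\phi_{t_2 - t_1}(\wt U) \cap \wt V \neq \emptyset$.

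For the $(\Leftarrow)$ direction, suppose $\phi_t$ is topologically transitive and let $v_0$ have dense flow orbit. By the reduction, $\Lambda \pi(v_0)$ is dense in $\partiali\Cc^{(2)}$, so in particular $\Lambda \pi_\pm(v_0)$ are dense in $\partiali\Cc$. Let $E \subseteq \partiali\Cc$ be a non-empty closed $\Lambda$-invariant set. If $\pi_+(v_0) \in E$ or $\pi_-(v_0) \in E$, invariance together with density forces $E = \partiali\Cc$. Otherwise, $\Lambda\pi(v_0) \subseteq (\partiali\Cc \setminus E)^2$, so density of $\Lambda\pi(v_0)$ in $\partiali\Cc^{(2)}$ implies $E$ has empty interior. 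Using Proposition~\ref{prop:rank-one-auto}, for any rank one $g \in \Lambda$ and any $y \in E$ with $y \neq g^-$, $g^n y \to g^+ \in E$, whence $\Lambda g^+ \subseteq E$. A variant of the key-lemma ping-pong, adapted to work under the weaker hypothesis that one $\Lambda$-orbit on $\partiali\Cc^{(2)}$ is dense (replacing uses of minimality by density of $\Lambda\pi_\pm(v_0)$), then shows that rank one attracting fixed points are dense in $\partiali\Cc$, forcing $E = \partiali\Cc$ and contradicting its empty interior.

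The main obstacle is the ping-pong step in the key lemma: verifying that $h_2^{-n} h_1^n$ is biproximal with its eigenlines spanning a segment inside $\Omega$ (qualifying as rank one per Definition~\ref{defn:rank-one-auto}) and controlling its fixed-point approximations, which rests on careful use of Propositions~\ref{prop:rank-one-auto} and~\ref{prop:dist-estimate-rank-one-auto} combined with standard rank-one ping-pong geometry. A closely related subtlety arises in the $(\Leftarrow)$ direction, where the key lemma must be adapted to run under topological transitivity rather than minimality.
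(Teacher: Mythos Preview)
Your $(\Rightarrow)$ direction (minimality $\Rightarrow$ transitivity) follows essentially the same strategy as the paper: approximate a target pair of boundary points by the axis-pair of a rank one element, then use Proposition~\ref{prop:dist-estimate-rank-one-auto} to shadow. The paper carries out the approximation (Proposition~\ref{prop:approx-by-periodic-orbits}) not by ping-pong but by limits in the compactification $\Pb(\End(\Rb^d))$: given extreme points $x_1,x_2$ with $(x_1,x_2)\subset\Cc$, it picks $g_n,h_n\in\Lambda$ with $g_np\to x_1$, $h_np\to x_2$, shows $\psi_n:=g_nh_n^{-1}$ converges to some $T$ with ${\rm image}(T)=x_1$ and $\ker T\oplus x_1=\Rb^d$, and reads off biproximality and $\psi_n^+\to x_1$, $\psi_n^-\to x_2$ from continuity of eigenvalues. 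Your ping-pong route is a legitimate alternative, and the biproximality of $h_2^{-n}h_1^n$ can be verified by the same eigenvalue argument. One gap: your existence argument for a first rank one element appeals to a dichotomy (``the alternative\dots $\Lambda$ virtually stabilizes a simplex'') which is not available here for general convex co-compact $\Lambda$; the paper instead takes $x\in\partiali\Cc$ with $\dim F_\Omega(x)$ maximal and $y\in\Ec\setminus\overline{F_\Omega(x)}$, checks $(y,x)\subset\Cc$, uses minimality to push $y$ near $x$, and feeds the resulting pair of extreme points into Proposition~\ref{prop:approx-by-periodic-orbits}.

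Your $(\Leftarrow)$ direction (transitivity $\Rightarrow$ minimality) is much more complicated than it needs to be and has a genuine gap. The paper handles this in a few lines with no rank one input: it shows that $\pi_+:\Gc_\Omega(\Lambda)\to\partiali\Cc$ is an open surjection, and then given any $x\in\partiali\Cc$ and open $U\subset\partiali\Cc$, lifts $x$ to some $w_x\in\pi_+^{-1}(x)$, applies transitivity to find $g\in\Lambda$ and $t$ with $g\phi_t(w_x)\in\pi_+^{-1}(U)$, whence $gx=\pi_+(g\phi_t(w_x))\in U$. Your argument, by contrast, needs rank one elements to exist and their attracting fixed points to be dense under only the hypothesis of a single dense $\Lambda$-orbit in $\partiali\Cc^{(2)}$; the ``adapted key lemma'' you invoke is not justified, and in particular the existence step for a rank one element (which in the forward direction already relied on an unestablished dichotomy) does not obviously go through when the point $\pi_+(v_0)$ with dense orbit need not lie in $\Ec$.
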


 The proof is split into the two subsequent subsections.

\subsection{Transitivity implies minimality} Suppose that the geodesic flow is topologically transitive on $\Gamma \backslash \Gc_\Omega(\Gamma)$. Then there exists $v_0 \in \Gc_\Omega(\Gamma)$ such that 
\begin{align}
\label{eqn:top_transitive_defn}
\overline{\Gamma \cdot \bigcup_{t \geq 0} \phi_t(v_0)} = \Gc_\Omega(\Gamma). 
\end{align}
Let $v_0^\pm := \pi_\pm(v_0) \in \partiali\Cc$. 

We first prove a lemma about the boundary maps $\pi_+$ and $\pi_-$ (which does not require the topological transitivity assumption). 

\begin{lemma}\label{lem: plus-minus boundary maps are onto}
The maps $\pi_+: \Gc_{\Omega}(\Gamma) \to \partiali \Cc$ and $\pi_-: \Gc_{\Omega}(\Gamma) \to \partiali \Cc$ are surjective open maps onto $\partiali \Cc$. 
\end{lemma}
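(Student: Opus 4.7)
The plan is to prove surjectivity and openness of $\pi_+$; the case of $\pi_-$ is symmetric (reversing the orientation of the geodesic exchanges the two maps). Continuity of $\pi_+$ is automatic, being the restriction to $\Gc_\Omega(\Lambda)$ of the continuous endpoint map $T^1 \Omega \to \partial \Omega$.

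For surjectivity, my main claim will be: for every $x \in \partiali\Cc$ there exists $y \in \partiali\Cc$ with the open projective line segment $(y,x) \subset \Omega$. Given such $y$, any unit tangent vector $v$ based at a point of $(y,x)$ and oriented toward $x$ lies in $\Gc_\Omega(\Lambda)$ and satisfies $\pi_+(v) = x$. To establish the claim I will use the identity $\overline{\Cc} = \ConvHull_\Omega(\partiali\Cc)$ in $\overline{\Omega}$, which follows from the fact that $\overline{\Cc}$ is by definition the closed convex hull of $\Lc_\Omega(\Lambda)$, together with the chain $\Lc_\Omega(\Lambda) \subset \partiali\Cc \subset \overline{\Cc}$. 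If no such $y$ existed, then by the standard convexity dichotomy for line segments joining two boundary points of a convex body, every $y \in \partiali\Cc$ would satisfy $[y,x] \subset \partial \Omega$, forcing $y$ to share a common proper closed face of $\overline{\Omega}$ with $x$; combined with $\Lambda$-invariance of $\partiali\Cc$ and the hypothesis $\Cc \neq \emptyset$, this yields a contradiction.

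For openness, fix $v_0 \in \Gc_\Omega(\Lambda)$ with $\pi_+(v_0) = x_0$, $\pi_-(v_0) = y_0$, and basepoint $p_0 \in (y_0, x_0) \subset \Omega$, and let $V$ be an open neighborhood of $v_0$ in $\Gc_\Omega(\Lambda)$. I will show that $\pi_+(V)$ contains a neighborhood of $x_0$ in $\partiali\Cc$ by constructing, for each $x \in \partiali\Cc$ sufficiently close to $x_0$, an element $v \in V$ with $\pi_+(v) = x$. To do so, parametrize $(y_0, x)$ continuously in $x$ and pick a point $p = p(x) \in (y_0, x)$ close to $p_0$; openness of $\Omega$ gives $p \in \Omega$ for $x$ close enough to $x_0$, and convexity of $\overline{\Omega}$ then forces $(y_0, x) \subset \Omega$ in its entirety, since a projective line meets $\overline{\Omega}$ in a single closed interval whose open interior equals its intersection with $\Omega$ and whose endpoints lie in $\partial\Omega$. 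The unit tangent vector $v$ at $p$ oriented toward $x$ then lies in $T^1\Omega$ with $\pi_-(v) = y_0 \in \partiali\Cc$ and $\pi_+(v) = x \in \partiali\Cc$, so $v \in \Gc_\Omega(\Lambda)$; continuity of the construction yields $v \to v_0$ as $x \to x_0$, whence $v \in V$ for $x$ close enough.

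The main obstacle I anticipate is the surjectivity step --- specifically, ruling out the configuration where $\partiali\Cc$ is contained in the union of all proper closed faces of $\overline{\Omega}$ that meet $x$. In the clean case, where $x$ lies in a unique minimal closed face $\overline{F_\Omega(x)}$, the inclusion $\partiali\Cc \subset \overline{F_\Omega(x)} \subset \partial \Omega$ combined with $\overline{\Cc} = \ConvHull_\Omega(\partiali\Cc)$ forces $\overline{\Cc} \subset \partial\Omega$ and hence $\Cc = \emptyset$, a contradiction. The subtler case, where $x$ is a vertex-like point incident to several distinct closed faces whose union has convex hull equal to $\overline{\Omega}$, is where one genuinely needs to invoke $\Lambda$-invariance of $\partiali\Cc$ and the co-compact action on $\Cc$ to eliminate the pathological configuration.
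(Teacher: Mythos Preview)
Your openness argument is correct and is essentially the paper's argument written out in more detail: fix the backward endpoint $y_0$, vary the forward endpoint $x$ near $x_0$, and observe that $(y_0,x)\subset\Omega$ persists for $x$ close to $x_0$, producing a vector in $V$ mapping to $x$.

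Your surjectivity argument, however, has a genuine gap, and you have correctly located it yourself but not closed it. The deduction ``$[y,x]\subset\partial\Omega$ for every $y\in\partiali\Cc$ forces $\partiali\Cc\subset\overline{F_\Omega(x)}$'' is false in general: $[y,x]\subset\partial\Omega$ only gives $y\in\overline{F_\Omega(z)}$ for some midpoint $z$, which is a closed face \emph{containing} $x$, not the minimal one $\overline{F_\Omega(x)}$. Different $y$'s may land in different such faces, and the convex hull of a union of faces meeting at $x$ can easily contain interior points of $\Omega$ (think of a vertex of a simplex). So the ``clean case'' reasoning does not go through, and you leave the ``subtler case'' entirely open. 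Invoking $\Lambda$-invariance of $\partiali\Cc$ alone does not help here; you really need the co-compact action on $\Cc$.

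The paper's route is different and avoids this face-combinatorics entirely. It first proves openness, then argues by contradiction: if $C':=\partiali\Cc\setminus\pi_+(\Gc_\Omega(\Lambda))$ were nonempty, it would be closed (by openness of $\pi_+$) and $\Lambda$-invariant. Fix $w\in C'$ and $p_0\in\Cc$, take $w_n\in[p_0,w)$ with $w_n\to w$, and use co-compactness to find $h_n\in\Lambda$ with $\hil(h_np_0,w_n)$ bounded. Translating back, the segments $h_n^{-1}[p_0,w]$ subconverge to a segment $[p_\infty,w_\infty]$ with both endpoints in $\partiali\Cc$ and with $(p_\infty,w_\infty)\subset\Cc$ (since it passes through a bounded neighborhood of $p_0$). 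Hence $w_\infty\in\pi_+(\Gc_\Omega(\Lambda))$. But $w_\infty$ is a limit of $h_n^{-1}w\in C'$, contradicting closedness and $\Lambda$-invariance of $C'$. This limiting argument is exactly what supplies the missing ingredient in your approach: it manufactures, for a given $x\in\partiali\Cc$, a second ideal boundary point $y$ with $(y,x)\subset\Cc$, by pushing a half-open segment $[p_0,x)$ out to infinity using the co-compact action.
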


\begin{proof}
By symmetry, it suffices to prove the lemma for $\pi:=\pi_+\big|_{\Gc_{\Omega}(\Gamma)}$. Observe that $\pi \circ \phi_t =\pi$ and, if $g \in \Gamma$, then $\pi \circ g = g \circ \pi. $

We first show that $\pi$ is an open map. Fix $v \in \Gc_{\Omega}(\Gamma)$. Set  $v_+:=\pi_+(v)$ and $v_-:=\pi_-(v)$. Then $(v_-,v_+) \subset \Cc$ which implies that there exists an open neighborhood $U_+$ of $v_+$ in $\partiali \Cc$ such that $(v_-,u) \subset \Cc$ for all $u \in U_+$. So for each $u \in U_+$, we can choose a vector $w_u \in \Gc_{\Omega}(\Gamma)$ such that $\pi(w_u)=u \in U_+$. Thus $\pi$ is an open map. 

We will now show that $\pi(\Gc_{\Omega}(\Gamma))=\partiali \Cc$. Suppose not. Then consider 
$$
C':=\partiali \Cc \setminus \pi(\Gc_{\Omega}(\Gamma)).
$$ 
Since $\pi$ is an open map, $C'$ is a closed $\Gamma$-invariant set. Fix $p_0 \in \Cc$ and $w \in C'$. Choose a sequence $(w_n)_{n \geq 1}$ in $[p_0,w)$ such that $\lim_{n \to \infty} w_n=w.$ Then, for each $n \geq 1$ there exists $h_n \in \Gamma$ such that  $R^\prime:=\sup_{n \in \Nb} \hil (h_np_0, w_n) <\infty. $   Passing to subsequences, we can assume that the limits 
$$
p_\infty:=\lim_{n \to \infty} h_n^{-1}p_0 \in \overline{\Cc} \quad \text{and} \quad w_\infty:=\lim_{n \to \infty} h_n^{-1}w \in \partiali\Cc
$$
exist. Then $(p_\infty, w_\infty) \subset \Cc$ since 
\begin{align*}
 \hil (h_n^{-1}w_n,p_0) \leq R^\prime.
\end{align*} 
Further, $p_\infty \in \partiali\Cc$ since
\begin{align*}
 \lim_{n \to \infty} \hil (h_n^{-1}p_0, p_0) \geq  \lim_{n \to \infty}\hil (h_n^{-1}p_0, h_n^{-1}w_n) - R^\prime = \lim_{n \to \infty} \hil (p_0,w_n)-R^\prime=\infty. 
 \end{align*}
Thus  $w_\infty \in \pi(\Gc_{\Omega}(\Gamma))$.  This is a contradiction since $w_\infty=\lim_{n \to \infty} h_n^{-1}w_n$ and $w_n \in C'$,  which is a $\Gamma$-invariant closed set.
\end{proof}

\begin{lemma}\label{lem: v0+ has dense orbit} $\overline{\Gamma \cdot v_0^+}= \partiali\Cc$. \end{lemma} 

\begin{proof} Fix $x \in \partiali\Cc$. By Lemma~\ref{lem: plus-minus boundary maps are onto} there exists  $w \in \Gc_\Omega(\Gamma)$ with $\pi_+(w) = x$. Next fix sequences $(t_n)_{n \geq 1}$ in $[0,\infty)$ and $(g_n)_{n \geq 1}$ in $\Gamma$ such that $g_n \phi_{t_n}(v_0) \rightarrow w$. Then $g_n \cdot v_0^+ \rightarrow x$. 
\end{proof} 

Let $\Ec \subset \partiali \Cc$ denote the set of extreme points of $\Omega$ in $\partiali\Cc$. 

\begin{lemma} $\overline{\Cc} = {\rm ConvHull}_{\overline{\Omega}} ( \Ec)$. In particular, $\Ec \neq \emptyset$. \end{lemma} 

\begin{proof} This follows from Proposition~\ref{prop:cc_observations} part (1). More precisely, for any $x \in \overline{\Cc}$ the proposition implies that 
$$
x \in {\rm ConvHull}_{\overline{\Omega}}\left( \partial F_\Omega(x) \right) = {\rm ConvHull}_{\overline{\Omega}}\left( \partial F_\Omega(x) \cap \partiali\Cc\right).
$$
Then, by induction on $\dim F_\Omega(x)$, we see that $x \in {\rm ConvHull}_{\overline{\Omega}} ( \Ec)$.
\end{proof}

\begin{lemma}\label{lem:forward asymp points with dense orbits are extreme} $v_0^+ \in \Ec$. \end{lemma} 

\begin{proof} Suppose not. Then there exist $x,y \in \partial F_\Omega(v_0^+)$ with $v_0^+ \in (x,y)$. Fix $e \in \Ec$. By Lemma~\ref{lem: plus-minus boundary maps are onto} there exists $w \in \Gc_\Omega(\Gamma)$ with $\pi_+(w) = e$. By Equation \eqref{eqn:top_transitive_defn}, we can find sequences $(t_n)_{n \geq 1}$ in $[0,\infty)$ and $(g_n)_{n \geq 1}$ in $\Gamma$ such that $t_n \rightarrow \infty$ and $g_n \phi_{t_n}(v_0) \rightarrow w$.

 Let $p_n : = g_n\gamma_{v_0}(t_n)$, $x_n := g_nx$, $y_n := g_n y$, and $z_n := g_n \gamma_{v_0}(0)$. Notice that $p_n \rightarrow \gamma_{w}(0) \in \Cc$ and by passing to a subsequence we can suppose that 
$$
x_n, y_n, z_n \rightarrow x_\infty, y_\infty, z_\infty \in \overline{\Cc}.
$$
By the definition of the Hilbert metric, see Observation~\ref{obs:dist_est_and_faces}, and the fact that $t_n \rightarrow \infty$ we have
$$
\lim_{n \rightarrow \infty} d_\Omega\Big( p_n, (x_n, z_n) \cup (y_n, z_n) \Big) = \lim_{n \rightarrow \infty} d_\Omega\Big(  \gamma_{v_0}(t_n), (x,  \gamma_{v_0}(0)) \cup (y, \gamma_{v_0}(0)) \Big) =\infty. 
$$ 
So 
$$
[x_\infty, y_\infty] \cup [y_\infty, z_\infty] \cup [z_\infty, x_\infty] \subset \partiali\Cc.
$$
Thus
$$
S := \Omega \cap  {\rm ConvHull}_{\overline{\Omega}}(\{x_\infty, y_\infty, z_\infty\})
$$
is a properly embedded simplex containing $\gamma_w(0)$. However, by construction,
 $$e=\pi_+(w) =\lim_{n \to\infty} \pi_+(g_n\gamma_{v_0}(t_n))=\lim_{n \to \infty} g_n v_0^+ \in (x_\infty, y_\infty)$$ 
 and so we have a contradiction. 
\end{proof}

\begin{lemma} If $x \in \partiali \Cc$, then $\overline{\Gamma \cdot x}= \partiali\Cc$. Hence, $\Gamma$ acts minimally on $\partiali\Cc$. \end{lemma} 

\begin{proof} By Lemma~\ref{lem: v0+ has dense orbit}, it is enough to consider the case when $x \neq v_0^+$ and show that $v_0^+ \in \overline{\Gamma \cdot x}$. 
\medskip

\noindent \fbox{\emph{Case 1:}} Assume $(x, v_0^+) \subset \Cc$. By Equation \eqref{eqn:top_transitive_defn}, there exist sequences $(t_n)_{n \geq 1}$ in $[0,\infty)$ and $(g_n)_{n \geq 1}$ in $\Gamma$ such that $t_n \rightarrow \infty$ and $g_n \phi_{t_n}(v_0) \rightarrow -v_0$. Notice that $g_n \cdot v_0^+ \rightarrow v_0^-$ and $g_n\gamma_{v_0}(0) \rightarrow v_0^+$. Passing to a subsequence we can suppose that $g_n \rightarrow T \in \Pb(\End(\Rb^d))$. Since $g_n\gamma_{v_0}(0)\rightarrow v_0^+$, Proposition~\ref{prop:dynamics_of_automorphisms_1} and Lemma~\ref{lem:forward asymp points with dense orbits are extreme} imply that 
$$
{\rm image} \, T \subset \Spanset F_{\Omega}(v_0^+) = v_0^+.
$$
Thus ${\rm image} \, T = v_0^+$. Since $g_n \cdot v_0^+ \rightarrow v_0^-$, Observation~\ref{obs:limits_of_maps} implies that  $v_0^+ \in \Pb(\ker T)$. Proposition~\ref{prop:dynamics_of_automorphisms_1} also implies that $\Omega \cap \Pb(\ker T) = \emptyset$. Then, since $(x, v_0^+) \subset \Cc$, we see that $x \notin \Pb(\ker T)$. Thus 
$$
\lim_{n \rightarrow \infty} g_n x = T(x) = v_0^+
$$
and hence $v_0^+ \in \overline{\Gamma \cdot x}$. 

\medskip

\noindent \fbox{\emph{Case 2:}}  Assume $[x,v_0^+] \subset \partiali\Cc$. By Lemma~\ref{lem: plus-minus boundary maps are onto} there exists $x^\prime \in \partiali\Cc \setminus\{x\}$ with $(x,x^\prime) \subset \Cc$. By Lemma~\ref{lem: v0+ has dense orbit} there exists a sequence $(h_m)_{m \geq 1}$ in $\Gamma$ with $h_m \cdot v_0^+ \rightarrow x^\prime$. Then $(x, h_m \cdot v_0^+) \subset \Cc$ for $m$ large. Fix such an $m$. Then $(h_m^{-1} \cdot x, v_0^+) \subset \Cc$. Then by Case 1 there exists a sequence $(g_n)_{n \geq 1}$ in $\Gamma$ with $\lim_{n \rightarrow \infty} g_n h_m^{-1}\cdot x = v_0^+$. Hence $v_0^+ \in \overline{\Gamma \cdot x}$. 
\end{proof}

\subsection{Minimality implies transitivity} Suppose $\Gamma$ acts minimally on $\partiali \Cc$. 

As in the previous section, let $\Ec \subset \partiali\Cc$ denote the set of extreme points of $\Omega$ in $\partiali\Cc$.  We first prove the following proposition about approximation of certain geodesics in $\Cc$ using rank one automorphisms (see \Cref{defn:rank-one-auto}).

\begin{proposition}
\label{prop:approx-by-periodic-orbits}
If $x_1, x_2 \in \Ec$ and $(x_1,x_2) \subset \Cc$, then there exists a sequence $(\psi_n)_{n \geq 1}$ of rank one automorphisms in $\Gamma$  such that $\lim_{n \to \infty} \psi_n^+  = x_1$ and $\lim_{n \to \infty}\psi_n^- = x_2$.
\end{proposition}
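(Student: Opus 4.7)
The plan is to use the minimality of $\Lambda \curvearrowright \partiali\Cc$ to spread the fixed points of a single rank one automorphism around the boundary and then to glue two such conjugates via a ping-pong argument. First I would fix one rank one automorphism $g_0 \in \Lambda$; such an element is provided by the earlier structural results (and is the reason Section~\ref{sec:rank-one} was set up in the first place). By the assumed minimality, the $\Lambda$-orbits of $g_0^+$ and $g_0^-$ are dense in $\partiali\Cc$, so one can choose $\alpha_n, \beta_n \in \Lambda$ with $\alpha_n g_0^+ \to x_1$ and $\beta_n g_0^- \to x_2$. Passing to subsequences and using compactness of $\overline{\Cc}$, I may further assume $\alpha_n g_0^- \to y_1$ and $\beta_n g_0^+ \to y_2$ for some $y_1, y_2 \in \partiali\Cc$. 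Setting $a_n := \alpha_n g_0 \alpha_n^{-1}$ and $b_n := \beta_n g_0 \beta_n^{-1}$, each $a_n, b_n$ is rank one in $\Lambda$, with $(a_n^+, a_n^-) \to (x_1, y_1)$ and $(b_n^+, b_n^-) \to (y_2, x_2)$.

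Next I would run a ping-pong on $a_n$ and $b_n$. For fixed $n$, consider $\psi_{n,k,m} := a_n^k b_n^m$. By Proposition~\ref{prop:rank-one-auto}, $\Pb(H_{a_n}^-) \cap \overline{\Cc} = \{a_n^-\}$ and $\Pb(H_{b_n}^+) \cap \overline{\Cc} = \{b_n^+\}$, so as long as $a_n^- \neq b_n^+$ the north–south dynamics from Observation~\ref{obs:iterates_of_proximal} forces $a_n^k$ to contract a neighborhood of $b_n^+$ arbitrarily close to $a_n^+$, and dually $b_n^{-m}$ contracts a neighborhood of $a_n^-$ arbitrarily close to $b_n^-$. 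A standard contraction/eigenvalue-gap estimate (of Abels–Margulis–Soifer type) then shows that for all sufficiently large $k, m$, depending on $n$, the element $\psi_{n,k,m}$ is biproximal with attracting fixed point close to $a_n^+$ and repelling fixed point close to $b_n^-$. A diagonal selection of $k_n, m_n$ then produces $\psi_n$ with $\psi_n^+ \to x_1$ and $\psi_n^- \to x_2$.

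It then remains to check that $\psi_n$ is actually \emph{rank one} rather than merely biproximal, i.e.\ that $(\psi_n^+, \psi_n^-) \subset \Omega$. This should hold for all large $n$ because $(x_1, x_2) \subset \Cc \subset \Omega$ by hypothesis, $x_1$ and $x_2$ are extreme points, and the set of ordered pairs $(p,q) \in \partial\Omega \times \partial\Omega$ with $(p,q) \subset \Omega$ is open near such a pair by the characterization of open faces (Observation~\ref{obs:faces}) together with Observation~\ref{obs:cc_observations}.

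The main obstacle will be the transversality condition $y_1 \neq y_2$ needed to launch the ping-pong: if the free choices of $(\alpha_n)$ and $(\beta_n)$ accidentally produce $y_1 = y_2$, then $a_n^-$ and $b_n^+$ collide in the limit and the contraction estimate degenerates. The plan for handling this is to exploit the freedom in choosing $(\alpha_n), (\beta_n)$ from dense $\Lambda$-orbits: by minimality, one can arrange $(y_1, y_2)$ to land in any prescribed open subset of $\partiali\Cc \times \partiali\Cc$, so a small perturbation (or, if necessary, pre-composition of $g_0$ with a different rank one element whose axis is independent, again available by minimality) forces $y_1 \neq y_2$. Carrying out this genericity step carefully is where the bulk of the work of the proof will go.
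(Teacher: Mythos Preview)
Your approach has a genuine circularity. You begin by fixing a rank one automorphism $g_0 \in \Lambda$, asserting that ``such an element is provided by the earlier structural results.'' But Section~\ref{sec:rank-one} only \emph{defines} rank one automorphisms and records their properties (Proposition~\ref{prop:rank-one-auto}, Proposition~\ref{prop:dist-estimate-rank-one-auto}); it nowhere proves that $\Lambda$ contains one. In fact, in the paper the existence of a rank one automorphism is established \emph{after} this proposition, in the Claim inside the proof that minimality implies transitivity, and that Claim explicitly invokes Proposition~\ref{prop:approx-by-periodic-orbits} to produce the element. So your first sentence assumes exactly what this proposition is ultimately used to supply.

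The paper's proof avoids this by never needing a pre-existing rank one element. It works directly with sequences $g_n, h_n \in \Lambda$ pushing a base point toward $x_1$ and $x_2$, passes to limits $T, S$ in $\Pb(\End(\Rb^d))$, and uses that $x_1, x_2$ are \emph{extreme} to force ${\rm image}(T) = x_1$ and ${\rm image}(S) = x_2$ (rank one limits). Minimality is then used once, to adjust the ``backward'' limits $y_1, y_2$ so that $(y_1, y_2) \subset \Cc$; this is the analogue of your transversality step, but it is done by post-composing with fixed $\phi_1, \phi_2 \in \Lambda$ rather than by appealing to a second rank one axis. Finally one sets $\psi_n = g_n h_n^{-1}$ and reads off biproximality from the rank one limit $\psi = \lim \psi_n$ via continuity of eigenvalues, and $(\psi_n^+, \psi_n^-) \subset \Omega$ from $(x_1, x_2) \subset \Omega$. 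No ping-pong is needed. If you want to salvage your strategy, you would first need an independent argument that $\Lambda$ contains a rank one automorphism under the hypothesis of minimality; absent that, the paper's direct construction is both shorter and logically prior.
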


\begin{proof}
Fix $p \in \Cc$. By Proposition~\ref{prop:dynamics_of_automorphisms_2}, there exist sequence $(g_n)_{n \geq 1}$, $(h_n)_{n \geq 1}$ in $\Gamma$ such that $\lim_{n \to \infty} g_n p =x_1$ and $\lim_{n \to \infty} h_n p = x_2$. By passing to a subsequence, we can assume that 
\begin{align*}
T:=\lim_{n \to \infty} g_n \quad \text{and} \quad S:=\lim_{n \to \infty} h_n
\end{align*} 
exist in $\Pb(\End(\Rb^d))$. Passing to a further subsequence, we can assume that 
\begin{align*}
y_1:=\lim_{n \to \infty} g_n^{-1}p \quad \text{and} \quad y_2 := \lim_{n \to \infty} h_n^{-1}p
\end{align*}
exist in $\partiali \Cc$. Proposition \ref{prop:dynamics_of_automorphisms_1} implies that 
\begin{align*}
T(\Omega) = F_{\Omega}(x_1)=\{x_1\},  \quad \Pb(\ker T) \cap \Omega = \emptyset, \quad \text{and} \quad y_1 \in \Pb(\ker T),
\end{align*} 
while 
\begin{align*}
 S(\Omega) = F_{\Omega}(x_2)=\{x_2\},  \quad \Pb(\ker S) \cap \Omega = \emptyset, \quad \text{and} \quad y_2 \in \Pb(\ker S).
 \end{align*}

\medskip

\noindent \fbox{\emph{Claim 1:}} By possibly changing the sequences $g_n$ and $h_n$, we can assume that $(y_1,y_2) \subset \Cc$.

\medskip 

Since $\Gamma$ acts minimally on $\partiali \Cc$, there exist $\varphi_1,\varphi_2 \in \Gamma$ such that each $\varphi_j(y_j)$ is arbitrary close to $x_j$ for $j=1,2$. In particular, since $(x_1,x_2) \subset \Cc$, we may assume $(\varphi_1(y_1), \varphi_2(y_2)) \subset \Cc$. Consider the sequences 
\begin{align*}
g_n':=g_n \varphi_1^{-1} \quad \text{and} \quad h_n':=h_n \varphi_2^{-1}.
\end{align*} 
Then 
\begin{align*}
\lim_{n \to \infty} g_n'p = T(\varphi_1^{-1}(p)) = x_1
\end{align*} 
and $\lim_{n \to \infty} g_n'^{-1}p = \varphi_1(y_1)$. Likewise $\lim_{n \to \infty} h_n'p= x_2$ and $\lim_{n \to \infty} h_n'^{-1}p = \varphi_2(y_2)$.  Thus replacing $g_n$ and $h_n$ by $g_n'$ and $h_n'$ respectively establishes the claim. 

\medskip

\noindent \fbox{\emph{Claim 2:}}  After possibly passing to a subsequence, each $\psi_n:=g_nh_n^{-1}$ is a rank one automorphism in $\Gamma$, $\lim_{n \to \infty} \psi_n^+=x_1$, and $\lim_{n \to \infty} \psi_n^-=x_2$.

\medskip 

We can assume that $\psi:=\lim_{n \rightarrow \infty} \psi_n$ exists in $\Pb(\End(\Rb^d))$.   Since $(y_1,y_2) \subset \Cc$, $y_1 \in \Pb(\ker T)$, and $\Pb(\ker T) \cap \Omega = \emptyset$ we must have $y_2 \notin \Pb(\ker T)$. Likewise, since $y_2 \in \Pb(\ker S)$, we must have $y_1 \not \in \Pb(\ker S)$. Thus by Observation~\ref{obs:limits_of_maps}
\begin{equation*}
\lim_{n \to \infty} \psi_n (p)= T(y_2)=x_1 \quad \text{and} \quad \lim_{n \to \infty} \psi_n^{-1}(p)=S(y_1)=x_2.
\end{equation*}
Then by Proposition  \ref{prop:dynamics_of_automorphisms_1} 
\begin{align*}
  {\rm image}\, \psi \subset \Spanset \{ F_{\Omega}(x_1)\}=x_1,
\end{align*}    
$x_2 \in \Pb(\ker \psi)$,  and $\Pb(\ker \psi) \cap \Omega =\emptyset$. So ${\rm image}\, \psi=x_1$ and 
\begin{align*}
\ker \psi \oplus  {\rm image}\, \psi= \Rb^d
\end{align*} 
since $(x_1,x_2) \subset \Omega$. 

Let $e_1,\dots, e_d$ be the standard basis in $\Rb^d$. By changing coordinates we can assume that $[e_1]={\rm image}\, \psi$ and $\ker \psi = \Span\{e_2,\dots,e_d\}$. Let $\overline{\psi}_n \in \GL_d(\Rb)$ be the lift of $\psi_n$ with $(\overline{\psi}_n)_{11}=1$. Then
\begin{align*}
\overline{\psi}_n = \begin{pmatrix} 1 & ^tb_n \\ c_n & D_n \end{pmatrix} \in \GL_d(\Rb)
\end{align*}
where $b_n, c_n \in \Rb^{d-1}$ and $D_n \in \End(\Rb^{d-1})$. Since
\begin{align*}
\lim_{n \rightarrow \infty} \psi_n =\psi = \begin{bmatrix} 1 & 0 \\ 0 & 0 \end{bmatrix} 
\end{align*}
in $\Pb(\End(\Rb^d))$, the sequences $b_n,c_n, D_n$ all converge to zero. Then by the continuity of eigenvalues  
\begin{align*}
\lim_{n \rightarrow \infty} \frac{\lambda_2(\psi_n)}{\lambda_1(\psi_n)} = \lim_{n \rightarrow \infty} \frac{\lambda_2(\overline{\psi}_n)}{\lambda_1(\overline{\psi}_n)} =0.
\end{align*}
Thus $\psi_n$ is proximal for $n$ sufficiently large. 

Now fix a contractible neighborhood $U$ of $x_1$ such that $U \cap \Pb(\ker \psi) = \emptyset$. Then by Observation~\ref{obs:limits_of_maps}
\begin{align*}
\psi_n(\overline{U}) \subset U
\end{align*}
for $n$ sufficiently large. Thus $\psi_n^+ \in U$ when $n$ is large. Since $U$ was an arbitrary contractible neighborhood of $x_1$ we have $\lim_{n \to \infty}\psi_n^+ = x_1$. 

The same argument applied to $\psi_n^{-1}$ shows that $\psi_n^{-1}$ is proximal and $\lim_{n \to \infty} \psi_n^-=\lim_{n \to \infty} (\psi_n^{-1})^+ =x_2$. Finally, $(\psi_n^+, \psi_n^-) \subset \Omega$ for $n$ sufficiently large since $(x_1,x_2) \subset \Omega$. Thus $\psi_n$ is a rank one automorphism in $\Gamma$ for $n$ sufficiently large. 
\end{proof}

\begin{corollary}
\label{cor:approx-by-periodic-orbits}
If $g, h$ are rank one automorphisms in $\Gamma$, then there exists a sequence $(\psi_n)_{n \geq 1}$ of rank one automorphisms in $\Gamma$ such that $\lim_{n \to \infty} \psi_n^+ = g^+$ and $\lim_{n \to \infty} \psi_n^- = h^-$.
\end{corollary}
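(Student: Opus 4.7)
The plan is to reduce the corollary to a direct application of Proposition~\ref{prop:approx-by-periodic-orbits} with $x_1 = g^+$ and $x_2 = h^-$. The two hypotheses of that proposition to be verified are (i) $x_1, x_2 \in \Ec$, and (ii) $(x_1, x_2) \subset \Cc$. Both follow essentially for free from Proposition~\ref{prop:rank-one-auto}. For (i), that proposition states that the attracting and repelling fixed points of any rank one automorphism are extreme points of $\Omega$; since $g^+$ and $h^-$ lie in $\partiali \Cc$ (they are forward, respectively backward, limits of $\Lambda$-orbits inside $\Cc$), they belong to $\Ec$. For (ii), I would invoke the assertion of Proposition~\ref{prop:rank-one-auto} that $(g^+, z) \cup (z, g^-) \subset \Cc$ for every $z \in \partiali \Cc$, applied to the rank one automorphism $g$ with the choice $z = h^-$; this immediately yields $(g^+, h^-) \subset \Cc$.

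With both hypotheses in hand, Proposition~\ref{prop:approx-by-periodic-orbits}---which is available because the subsection operates under the standing hypothesis that $\Lambda$ acts minimally on $\partiali \Cc$---produces a sequence $(\psi_n)$ of rank one automorphisms with $\psi_n^+ \to g^+$ and $\psi_n^- \to h^-$, exactly as required. There is no substantive obstacle here: the corollary is a bookkeeping consequence of Propositions~\ref{prop:rank-one-auto} and~\ref{prop:approx-by-periodic-orbits}. The only mild caveat is the degenerate case $g^+ = h^-$, in which the ``open segment'' $(g^+, h^-)$ collapses to a point; this can be dispatched by a short diagonal argument, using minimality of $\Lambda \curvearrowright \partiali \Cc$ to produce $\Lambda$-conjugates $\lambda_k h \lambda_k^{-1}$ whose repelling fixed points converge to $g^+$ without ever equalling it, applying the non-degenerate case to each conjugate, and extracting a diagonal sequence.
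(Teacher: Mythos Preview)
Your proposal is correct and matches the paper's proof essentially verbatim: the paper simply cites Proposition~\ref{prop:rank-one-auto} to get $g^+,h^- \in \Ec$ and $(g^+,h^-) \subset \Omega$, then invokes Proposition~\ref{prop:approx-by-periodic-orbits}. Your treatment is in fact slightly more careful, since you explicitly address the degenerate case $g^+ = h^-$ (which the paper passes over in silence and which does not arise in the subsequent application), and your diagonal argument for that case is sound.
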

\begin{proof} Proposition \ref{prop:rank-one-auto} implies that $g^+, h^- \in \Ec$ and $(g^+,h^-) \subset \Omega$. The result then follows from Proposition \ref{prop:approx-by-periodic-orbits}. 
\end{proof}

Now we complete the proof that minimality of the boundary action  implies topological transitivity  of the geodesic flow.

\begin{lemma} The geodesic flow on $\Gamma \backslash \Gc_\Omega(\Gamma)$ is topologically transitive. \end{lemma}

\begin{proof}
It suffices to fix $\Gamma$-invariant non-empty open sets $U, V \subset  \Gc_{\Omega}(\Gamma)$ and show that there exists $T \in \Rb$ such that $\phi_T(U) \cap V \neq \emptyset$.

\medskip

\noindent  \fbox{\emph{Claim:}} There exists a rank one automorphism $g$ in  $\Gamma$ and for any such element $g$
\begin{align*}
\partiali \Cc=\overline{\Gamma \cdot g^+}.
\end{align*}

\medskip 

Since $\Gamma$ acts minimally on $\partiali\Cc$ it is enough to show that $\Gamma$ contains a rank one automorphism. Fix $x \in \partiali \Cc$ such that $\dim F_{\Omega}(x)$ is maximal. Then fix $y \in \Ec \setminus \overline{F_\Omega(x)}$ (recall that $\Ec$ is the set of extreme points of $\Omega$ in $\partiali \Cc$). If $z \in (y,x)$, then 
\begin{align*}
\ConvHull_{\overline{\Omega}}\left( \{ y\} \cup F_{\Omega}(x)\right) \subset \overline{F_\Omega(z)}.
\end{align*}
So by maximality of $\dim F_{\Omega}(x)$, we must have $z \in \Omega$. So $(y,x) \subset \Cc$.  

Since $\Gamma$ acts minimally on $\partiali \Cc$, there exists a sequence $(g_n)_{n \geq 1}$ in  $\Gamma$ such that 
$$
\lim_{n \to \infty} g_n y=x.
$$
Thus, for $n$ large enough, we have $y, g_n y \in  \Ec$ and  $(y,  g_n y) \subset \Omega$. Then $\Gamma$ contains a rank one automorphism by Proposition \ref{prop:approx-by-periodic-orbits}. This proves the claim. 

Given $x', y' \in \partiali\Cc$ with $(x',y') \subset \Cc$, let $L(x',y') \subset \Gc_\Omega(\Gamma)$ denote the set of vectors $v \in T^1 \Omega$ with $\pi_+(v)=y'$ and $\pi_-(v)=x'$. 

Fix a rank one automorphism $g \in \Gamma$. Since $\partiali \Cc=\overline{\Gamma \cdot g^+}$, there exists $h_1, h_2 \in \Gamma$ such that $L(h_1g^+, h_2 g^+) \cap U \neq \emptyset$. Then Corollary \ref{cor:approx-by-periodic-orbits} implies that there exists a rank one automorphism $\psi_1$ such that $L(\psi_1^-, \psi_1^+) \cap U \neq \emptyset$. Similarly, there exists a rank one automorphism $\psi_2$ such that $L(\psi_2^-,\psi_2^+) \cap V \neq \emptyset$. Since $\Gamma$ acts minimally on $\partiali \Cc$ and $V$ is $\Gamma$-invariant,  we may replace $\psi_2$ with a conjugate and assume that $\psi_1^- \neq \psi_2^+$. Then Proposition~\ref{prop:rank-one-auto} implies that $(\psi_1^-, \psi_2^+) \subset \Cc$. So there exists $w \in \Gc_{\Omega}(\Gamma)$ such that $\lim_{t \rightarrow -\infty} \gamma_w(t) = \psi_1^-$ and $\lim_{t \to \infty} \gamma_w(t)=\psi_2^+$. 

By Proposition \ref{prop:dist-estimate-rank-one-auto}, there exists a geodesic parametrization $\ell:\Rb \to \Cc$ of $(\psi_2^-, \psi_2^+)$  such that $\lim_{t \to \infty} \hil(\ell(t),\gamma_w(t))=0$. This, along with the fact that 
\begin{align*}
\{ \psi_2^n \ell(0): n \in \Zb\} \subset \ell(\Rb)=(\psi_2^-,\psi_2^+),
\end{align*}
implies that there exist $k_2 \in \Nb$ and $T_2 \in \Rb$ such that $\psi_2^{-k_2}\phi_{T_2}(w) \in V$. Since $V$ is $\Gamma$-invariant, $\phi_{T_2}(w) \in V$. Similarly there exists $T_1 \in \Rb$ such that $\phi_{T_1}(w) \in U$. Then $\phi_{T_2-T_1}(U) \cap V \neq \emptyset$.  Hence the geodesic flow is topologically transitive on $\Gamma \backslash \Gc_{\Omega}(\Gamma)$.
\end{proof}

\bibliographystyle{alpha}
\bibliography{geom}

\end{document}